 \newtheorem{thm}{Theorem}[section]
\newtheorem{theorem}[thm]{Theorem}
\newtheorem{prop}[thm]{Proposition}
\newtheorem{lemma}[thm]{Lemma}
\newtheorem{cor}[thm]{Corollary}
\newtheorem{lem-def}[thm]{Lemma-Definition}
\theoremstyle{definition}
\newtheorem{defn}[thm]{Definition}
\newtheorem{rem}[thm]{Remark}
\newtheorem{remark}[thm]{Remark}
 \newtheorem{construction}[thm]{Construction}
\newtheorem{axiom}[thm]{Axiom}
\newtheorem{example}[thm]{Example}
\newtheorem{notation}[thm]{Notation}
\newtheorem{Notation}[thm]{Notation}
\newtheorem{convention}[thm]{Convention}
\numberwithin{equation}{section}
\newcommand{\Zp}{{\bZ_p}}
\newcommand{\Qp}{{\bQ_p}}
\newcommand{\Gal}{{\mathrm{Gal}}}           
\newcommand{\an}{{\mathrm{an}}}             
\newcommand{\dR}{{\mathrm{dR}}}             
\newcommand \inj {\hookrightarrow }
\newcommand \into {\hookrightarrow }
\renewcommand \to {\rightarrow}
\renewcommand{\projlim}{\varprojlim}
\renewcommand{\injlim}{\varinjlim}
\def\det{\mathrm{det}}
\newcommand{\rep}{{\mathrm{Rep}}}
\def\cont{\mathrm{cont}}
\def\inf{{\mathrm{inf}}}
\newcommand{\gal}{{\mathrm{Gal}}}
\def\an{\mathrm{an}}
\newcommand{\Lie}{\mathrm{Lie}}
\newcommand{\pa}{{\mathrm{pa}}}
\newcommand{\la}{{\mathrm{la}}}
\newcommand{\dan}{\text{$\mbox{-}\mathrm{an}$}}
\newcommand{\dla}{\text{$\mbox{-}\mathrm{la}$}}
\newcommand{\dpa}{\text{$\mbox{-}\mathrm{pa}$}}
\renewcommand{\log}{\mathrm{log}}
\newcommand{\nablagamma}{{\nabla_\gamma}}
\newcommand{\nablatau}{{\nabla_\tau}}
\newcommand{\nnabla}{{N_\nabla}}
\newcommand{\hatgla}{{\hat{G}\dla}}
\newcommand{\hatg}{{\hat{G}}}
\newcommand{\Kpinfty}{{K_{p^\infty}}}
\newcommand{\kpinfty}{{K_{p^\infty}}}
\newcommand{\Kinfty}{{K_{\infty}}}
\newcommand{\kinfty}{{K_{\infty}}}
\newcommand{\hatkinfty}{{\widehat{K_{\infty}}}}
\newcommand{\gammak}{{\Gamma_K}}
\newcommand{\gk}{{G_K}}
\newcommand{\gkinfty}{{G_{K_\infty}}}
\newcommand{\gkpinfty}{{G_{K_{p^\infty}}}}
\newcommand\dr{{\mathrm{dR}}}
\newcommand\rig{{\mathrm{rig}}}
\newcommand{\ainf}{{\mathbf{A}_{\mathrm{inf}}}}
\newcommand{\bcrisplus}{{\mathbf{B}^+_{\mathrm{cris}}}}
\newcommand{\bdrplus}{{\mathbf{B}^+_{\mathrm{dR}}}}
\newcommand{\bdr}{{\mathbf{B}_{\mathrm{dR}}}}
\newcommand{\be}{{\mathbf{B}_e}}
\newcommand{\smat}[1]{\left( \begin{smallmatrix} #1 \end{smallmatrix} \right)}
\newcommand{\bbdrplus}{{\mathbb{B}_{\mathrm{dR}}^{+}}}
\newcommand*{\wt}[1]{\widetilde{#1}}
 \newcommand{\A}{ {\mathbf{A}}   }
\newcommand{\wta}{   {\widetilde{{\mathbf{A}}}}  }
\newcommand{\wtb}{   {\widetilde{{\mathbf{B}}}}  }
\newcommand{\wtbrig}{   {\widetilde{{\mathbf{B}}}_\rig^\dagger}  }
\newcommand{\wtbrigL}{   {\widetilde{{\mathbf{B}}}_{\rig, L}^\dagger}  }
\newcommand{\wtB}{   {\widetilde{{\mathbf{B}}}}  }
\newcommand{\wte}{   {\widetilde{{\mathbf{E}}}}  }
\newcommand{\wtm}{{\wt{M}}}
\newcommand{\ocpflat}{{\mathcal{O}_C^\flat}}
\newcommand{\rg}{\mathrm{R}\Gamma}
\newcommand{\bB}{{\mathbb B}}
\newcommand{\bM}{{\mathbb M}}
\newcommand{\bQ}{{\mathbb Q}}
\newcommand{\bZ}{{\mathbb Z}}
\newcommand{\bbb}{{\mathbb{B}}}
  \newcommand{\bbd}{{\mathbb{D}}}
   \newcommand{\bbD}{{\mathbb{D}}}
 \newcommand{\Q}{{\mathbb{Q}}}
  \newcommand{\z}{{\mathbb{Z}}}
  \newcommand{\bbz}{{\mathbb{Z}}}
\newcommand{\zp}{{\mathbb{Z}_p}}
\newcommand{\qp}{{\mathbb{Q}_p}}
\newcommand{\fp}{{\mathbb{F}_p}}
\def\upi{\underline \pi}
\newcommand{\cale}{{\mathcal{E}}}
\newcommand{\fkg}{{\mathfrak{g}}}
\newcommand{\fkt}{{\mathfrak{t}}}
\newcommand{\barK}{{\overline{K}}}
 \newcommand{\bfa}{\mathbf{A}}
\newcommand{\bfb}{\mathbf{B}}
\newcommand{\bfD}{ {\mathbf{D}}}
\newcommand{\bfd}{ {\mathbf{D}}}
\newcommand{\wtd}{\wt{\bfd}}
\newcommand{\bfB}{\mathbf{B}}
   \renewcommand{\phi}{{\varphi}}  
\renewcommand{\mod}{\mathrm{Mod}}
\newcommand{\brigkinfty}{{\mathbf{B}_{\rig, \kinfty}^\dagger}}
\newcommand{\hatgpa}{{\hat{G}\dpa}}
\newcommand{\wtD}{\wt{\mathbf{D}}}
\newcommand{\phitau}{{\varphi, \tau}}
\newcommand{\phigamma}{{\varphi, \gammak}}
\author[]{Hui Gao}   \address{Department of Mathematics and Shenzhen International Center for Mathematics, Southern University of Science and Technology, Shenzhen 518055, China}   \email{gaoh@sustech.edu.cn}
\author[]{Luming Zhao}   \address{School of Mathematical Sciences, Peking University, Beijing 100084, China}   \email{lumingzhao@math.pku.edu.cn}
\begin{document}
\title[]{Cohomology of $(\varphi, \tau)$-modules} 
 \subjclass[2010]{Primary  14F30, 11S25}
  
\begin{abstract}   
\normalsize{ 
We construct   cohomology theories for $(\varphi, \tau)$-modules, and study their relation with cohomology of  $(\varphi, \Gamma)$-modules, as well  as Galois cohomology. The method is axiomatic, and can treat the \'etale case, the overconvergent case, and the rigid-overconvergent case simultaneously. We use  recent advances in  locally analytic cohomology as a key ingredient.
}
\end{abstract}

\date{\today}
\maketitle
\setcounter{tocdepth}{1}
\tableofcontents

\section{Introduction}
 \subsection{Overview}
Let $K$ be a mixed characteristic complete discrete valuation field with perfect residue field of characteristic $p$, and let $\gk=\gal(\barK/K)$ be the Galois group.
In \cite{Fon90}, Fontaine introduces the $(\phi, \Gamma)$-modules to classify $p$-adic representations of $\gk$. These modules are defined over a    concrete ``one-variable" ring, and are further equipped with a Frobenius operator $\phi$ and a $\gammak=\gal(\kpinfty/K)$-action (here $\kpinfty$ is the cyclotomic extension adjoining all $p$-power roots of unity). Thus, the data of a Galois representation can now be described using just \emph{two} matrices: one for $\phi$ and one for $\gamma$ (a topological generator of $\gammak$; for simplicity, the readers can assume $p>2$ in the introduction). This very concrete description makes it possible to carry out explicit  computations about Galois representations.

Let $V$ be a $p$-adic Galois representation of $\gk$, and let $\bM$  be the associated  $(\phi, \Gamma)$-module. In \cite{Her98, Her01}, Herr defines a three term complex
\begin{equation} \label{eqcohophigammaintro}
 C_{\phigamma}(\bM):=\quad   [
 \bM\xrightarrow{\phi-1, \gamma-1} \bM \oplus \bM \xrightarrow{1-\gamma, \phi-1} \bM].
\end{equation} 
(Here the second arrow sends $(a, b)$ to $(1-\gamma)(a)+(\varphi-1)(b)$).
Herr shows that this complex is quasi-isomorphic to $\rg(\gk, V)$; using this,  Herr  gives an explicit reproof  of Tate's theorems on cohomology of Galois representations (when $K/\qp$ is a finite extension).  
 In Herr's work, he only considers  \'etale  $(\phi, \Gamma)$-modules. In  \cite{CC98},  (rigid)-overconvergent $(\phi, \Gamma)$-modules are constructed by Cherbonnier-Colmez;   exactly the same formula  as \eqref{eqcohophigammaintro} gives rise to Herr complex for these variants of $(\phi, \Gamma)$-modules. R.~Liu \cite{LiuIMRN08} shows that these overconvergent complexes still compute Galois cohomology  (when $K/\qp$ is a finite extension); in addition, Liu also studies cohomology of  rigid-overconvergent $(\phi, \Gamma)$-modules that are not necessarily associated with Galois representations.
 These results are important tools in many aspects of  $p$-adic Hodge theory.

In this paper, we study similar cohomological questions for the $(\phi, \tau)$-modules. The $(\phi, \tau)$-modules are first introduced by Caruso \cite{Car13} (building on earlier work of Breuil \cite{Bre99b}), and recently have found many applications in \emph{integral} $p$-adic Hodge theory. Indeed, the Breuil--Kisin modules, first considered by Breuil \cite{Bre99b} then constructed in full generality by Kisin \cite{Kis06}, can be regarded as a special case of  $(\phi, \tau)$-modules. The theory of  Breuil--Kisin modules is a \emph{fundamental tool} in every aspect of integral $p$-adic Hodge theory: they are used to construct (various) moduli of Galois representations \cite{Kis08, EG23}, and inspire the  construction of several  unifying  cohomology theories \cite{BMS1, BMS2, BS22}.
Recently, overconvergent $(\phi, \tau)$-modules are constructed  in \cite{GL20, GP21}  by the first named author jointly with T.~Liu and Poyeton respectively, in analogy with the overconvergent $(\phi, \Gamma)$-modules constructed by Cherbonnier--Colmez. These overconvergent $(\phi, \tau)$-modules have found several applications, e.g., in the study of Breuil--Kisin $\gk$-modules \cite{Gao23}, and most recently in Hodge--Tate and $\bbdrplus$-prismatic crystals \cite{GMWHT, GMWdR}. 
As there will only be more applications of the $(\phi, \tau)$-modules,  it has thus become urgent to study other \emph{foundational} properties of these modules. This paper grows out with these intended applications in mind, and fills some blanks in the literature.
In addition,  our  approach even leads to some new results for the cohomology of $(\phi, \Gamma)$-modules (which has been such a widely-known theory for circa 20 years); for the curious readers, cf. Rem. \ref{rem: introend}(\ref{itemremintro2}) and the historical and comparative Remarks \ref{remhistphigamma} and \ref{remfinalourmethodphigamma}.

Before we discuss   cohomology theories, we already need to point out that  there are several different \emph{variants} of $(\phi, \tau)$-modules (similar to the $(\phi, \Gamma)$-modules). We dub the three main  genres  with the adjectives: \'etale, overconvergent and rigid-overconvergent; these are defined over the Laurent ring, the overconvergent Laurent ring, and the Robba ring respectively.
In this introduction, to avoid introducing too many notations, we shall focus on the \emph{rigid-overconvergent $(\phi, \tau)$-modules} (also called the \emph{$(\phi, \tau)$-modules over the Robba ring}). These are indeed the most useful variants in all the recent applications mentioned in the last paragraph, and have the richest cohomology theory.

 To guide the readers (who might already have some experience with cohomology of $(\phi, \Gamma)$-modules), we summarize our main discussions in this introduction.
 In \S \ref{subsec: intro phi tau mod}, we quickly recall $(\phi, \tau)$-modules.
 In \S \ref{subsec coho 1} (for any $p$), we produce a cohomology theory using $\phi$-operator and group actions; this uses the least machinery and  is not too difficult.
 In \S \ref{subsec coho 2}, assuming $p>2$, we can refine results in \S \ref{subsec coho 1} by producing a 3-term $(\phi, \tau)$-complex  resembling Herr's complex \eqref{eqcohophigammaintro}: however the terms are much more \emph{complicated} in this case, making the $(\phi, \tau)$-complex very unsatisfying. 
 Finally in \S \ref{subsec coho 3}, (for any $p$ again), we produce a 3-term complex using $\phi$-operator and \emph{Lie algebra} actions. This is the most satisfying complex, and makes use of recent advances in \emph{locally analytic cohomology};  we do warn that its cohomologies are ``bigger" than desired but nonetheless can be ``pruned".

\subsection{Category of  $(\phi, \tau)$-modules}\label{subsec: intro phi tau mod}
In this subsection, we quickly recall the category of  $(\phi, \tau)$-modules.

\begin{notation}[Fields and groups] \label{notafields} 
 Let  $p$ be a prime. Let $k$ be a perfect  field of characteristic $p$, let $W(k)$ be the ring of Witt vectors, and let $K_0 :=W(k)[1/p]$.
Let $K$ be a totally ramified finite extension of $K_0$, let $\mathcal O_K$ be the ring of integers, and let $e := [K: K_0]$.
Fix an algebraic closure $\overline {K}$ of $K$ and set $G_K:=\Gal(\overline{K}/K)$.
 Let $\mu_1 \in \barK$ be a primitive $p$-root of unity, and inductively, for each $n \geq 2$, choose $\mu_n \in \barK$ a $p$-th root of $\mu_{n-1}$. Fix $\pi \in   K$  a   uniformizer. Fix a sequence of elements $\pi _n \in \overline K$ inductively such that $\pi_0 = \pi$ and $(\pi_{n+1})^p = \pi_n$.
  Define the fields
$$K_{\infty}   = \cup _{n = 1} ^{\infty} K(\pi_n), \quad K_{p^\infty}=  \cup _{n=1}^\infty
K(\mu_{n}), \quad L =  \cup_{n = 1} ^{\infty} K(\pi_n, \mu_n).$$
Let $$G_{\kinfty}:= \gal (\overline K / K_{\infty}), \quad G_{\kpinfty}:= \gal (\overline K / K_{p^\infty}), \quad G_L: =\gal(\overline K/L).$$
Further define $\Gamma_K, \hat{G}$ as in the following diagram, where we let $\tau$ be a topological generator of $\gal(L/\kpinfty) \simeq \zp$, cf. Notation \ref{nota hatG} for more details.
\[
\begin{tikzcd}
                                       & L                                                                                             &                             \\
\kpinfty \arrow[ru, "\langle\tau\rangle", no head] &                                                                                               & \kinfty \arrow[lu, no head] \\
                                       & K \arrow[lu, "\Gamma_K", no head] \arrow[ru, no head] \arrow[uu, "\hat{G}"', no head, dashed] &
\end{tikzcd}
\]
Let $E(u)=\mathrm{Irr}(\pi, K_0) \in W(k)[u]$ be the Eisenstein polynomial for $\pi$.
\end{notation}

\begin{construction}
We now briefly recall the three variants of $(\varphi, \tau)$-modules. All rings here and in the introduction are reviewed in details in \S \ref{sec: rings and gps}.
\begin{enumerate} 
\item Recall that the $\varphi$-action  of an \emph{\'etale} $(\varphi, \tau)$-module is defined over the fraction field of $\bfa_\kinfty=(W(k)[[u]][\frac{1}{u}])^{\wedge_p}$ where $\wedge_p$ denotes the $p$-adic completion; more  concretely, it is the field
\begin{equation*}\label{eqbkinf}
\mathbf{B}_{\Kinfty}   :=\{ \sum_{i=-\infty}^{+\infty} a_i u^i : a_i \in K_0, \lim_{i \to -\infty}v_p(a_i) =+\infty, \text{ and } \inf_{i \in \mathbb{Z}}v_p(a_i) >-\infty \}.
\end{equation*}
Here $v_p$ is the valuation on $\barK$ such that $v_p(p)=1$. 
 The $\tau$-action   is defined over a bigger filed ``$\wt{\mathbf{B}}_L$" which we do not recall here, see \S \ref{sec: rings and gps}.
 Indeed, roughly speaking, a  $(\varphi, \tau)$-module is a finite free $\mathbf{B}_{\Kinfty}$-vector space $\bfD_\kinfty$  equipped with  certain commuting maps $\varphi: \bfD_\kinfty \to \bfD_\kinfty$ and $\tau: \wt{\mathbf{B}}_L\otimes_{\mathbf{B}_{\Kinfty} } \bfD_\kinfty \to \wt{\mathbf{B}}_L\otimes_{\mathbf{B}_{\Kinfty} } \bfD_\kinfty$.
By \cite[Thm. 1]{Car13}, the \'etale  $(\varphi, \tau)$-modules classify all Galois representations of $G_K$.

\item By \cite{GL20, GP21}, \'etale $(\varphi, \tau)$-modules  are \emph{overconvergent}. That is, (roughly speaking), the $\varphi$-action can be defined over the sub-field:
\begin{equation*}\label{eqbkinfd}
\mathbf{B}_{K_\infty}^\dagger: =\{ \sum_{i=-\infty}^{+\infty} a_i u^i \in \mathbf{B}_{K_\infty}, \lim_{i \to -\infty}(v_p(a_i) +i\alpha)= +\infty \text{ for some } \alpha >0  \};
\end{equation*}
 also, the $\tau$-action can  be defined over some sub-field $\wtB_L^\dagger \subset \wtB_L$.
 
 \item Let us introduce the following Robba ring (which contains $\mathbf{B}_{K_\infty}^\dagger$),
\begin{equation*}\label{eqbrig}
\begin{split}
\mathbf{B}_{\rig, K_\infty}^\dagger: =\{f(u)= \sum_{i=-\infty}^{+\infty} a_i u^i, a_i \in K_0,   f(u) \text{ converges } \\
 \text{ for all } u \in \overline{K} \text{ with } 0<v_p(u)<\rho(f) \text{ for some } \rho(f)>0\}.
\end{split}
\end{equation*}
Similar to \emph{rigid-overconvergent} $(\phi, \Gamma)$-modules (i.e., $(\phi, \Gamma)$-modules over the Robba ring)  which are not necessarily \'etale, one can also define \emph{rigid-overconvergent} $(\phi, \tau)$-modules, now with $\tau$-action defined over another ring $\wtb_{\rig, L}^\dagger$.  
\end{enumerate}
\end{construction}

\begin{theorem}[cf. \S \ref{sec:equiv cats}] (Let $p$ be any prime).
There is an equivalence between the category of  $(\phi, \Gamma)$-modules over the Robba ring and the category of $(\phi, \tau)$-modules over the Robba ring
    \[ \mod_{\varphi, \gammak}(\bB_{\rig, \kpinfty}^\dagger) \simeq \mod_{\varphi, \tau}(\bfB_{\rig, \kinfty}^\dagger,  \wtb_{\rig, L}^\dagger) \]
\end{theorem}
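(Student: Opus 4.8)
The plan is to realize both categories as equivalent to a single Galois‑theoretic category and then compose the two equivalences. Let $\wtbrig$ be the big Robba ring (built from $\cflat$, so that $G_K$ acts, $\wtbrig^{G_L}=\wtb_{\rig,L}^\dagger$, and $\wtbrig^{G_{\kpinfty}}$ is the perfect cyclotomic Robba ring over which $\gammak$ acts), and let $\mathcal C$ be the category of finite free $\varphi$‑modules over $\wtbrig$ — \emph{not} assumed étale — equipped with a continuous semilinear $G_K$‑action commuting with $\varphi$; these are the Kedlaya–Liu ``$\varphi$‑modules with Galois action''. I will construct equivalences $\mod_{\varphi,\gammak}(\bB_{\rig,\kpinfty}^\dagger)\simeq\mathcal C$ and $\mod_{\varphi,\tau}(\bfB_{\rig,\kinfty}^\dagger,\wtb_{\rig,L}^\dagger)\simeq\mathcal C$, whence the theorem.

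For the $(\varphi,\gammak)$‑side this is the Cherbonnier–Colmez–Kedlaya–Berger overconvergence theory, reviewed among the ring constructions of \S\ref{sec: rings and gps}. A $(\varphi,\gammak)$‑module $D$ over $\bB_{\rig,\kpinfty}^\dagger$ is sent to $\wtbrig\otimes_{\bB_{\rig,\kpinfty}^\dagger}D$: using that $\wtbrig^{G_{\kpinfty}}$ is the perfect cyclotomic Robba ring on which $\gammak=G_K/G_{\kpinfty}$ acts, the $\gammak$‑action on $D$ promotes, by $G_{\kpinfty}$‑Galois descent along $\wtbrig^{G_{\kpinfty}}\hookrightarrow\wtbrig$ (for which the relevant $H^1$ with $\GL_n$‑coefficients vanishes), to a continuous semilinear $G_K$‑action, landing in $\mathcal C$. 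A quasi‑inverse sends $\widetilde N\in\mathcal C$ to $\widetilde N^{G_{\kpinfty}}$, a $\varphi$‑module over the perfect cyclotomic Robba ring with $\gammak$‑action, and then invokes Kedlaya's theorem that such an object descends uniquely to the imperfect ring $\bB_{\rig,\kpinfty}^\dagger$; that $\widetilde N\isoto\wtbrig\otimes\widetilde N^{G_{\kpinfty}}$ is Galois descent again. All of this holds for every prime $p$ and for non‑étale objects.

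For the $(\varphi,\tau)$‑side, recall from \S\ref{sec:equiv cats} that a $(\varphi,\tau)$‑module over the Robba ring is equivalently a $(\varphi,\hat G)$‑module: a $\varphi$‑module $\bfD_\kinfty$ over $\bfB_{\rig,\kinfty}^\dagger$ together with a continuous semilinear action of $\hat G=\Gal(L/K)$ on $\wtD:=\wtb_{\rig,L}^\dagger\otimes_{\bfB_{\rig,\kinfty}^\dagger}\bfD_\kinfty$ commuting with $\varphi$ (the single operator $\tau$, through its defining relations, encodes the whole $\hat G$‑action). Base changing along $\wtb_{\rig,L}^\dagger=\wtbrig^{G_L}\hookrightarrow\wtbrig$ and using $\hat G=G_K/G_L$, $G_L$‑Galois descent upgrades the $\hat G$‑action on $\wtD$ to a continuous semilinear $G_K$‑action on $\wtbrig\otimes_{\wtb_{\rig,L}^\dagger}\wtD$, landing in $\mathcal C$. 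The quasi‑inverse sends $\widetilde N\in\mathcal C$ to $\widetilde N^{G_L}$, a $\varphi$‑module over $\wtb_{\rig,L}^\dagger$ with $\hat G$‑action, then to $(\widetilde N^{G_L})^{\Gal(L/\kinfty)}=\widetilde N^{G_{\kinfty}}$, a $\varphi$‑module over the perfect Kummer Robba ring $\wtbrig^{G_{\kinfty}}$, and finally descends this to the imperfect ring $\bfB_{\rig,\kinfty}^\dagger$. This last descent is exactly the Kummer‑tower overconvergence statement; for étale objects it is the theorem of \cite{GL20, GP21}, and in general one either dévisses along the (canonical, hence $\varphi$‑ and $\hat G$‑stable) Kedlaya slope filtration to the étale case, or reruns the locally analytic vectors argument of \cite{GP21} directly over the Robba ring. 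Composing the two equivalences through $\mathcal C$ gives the theorem.

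The main obstacle is precisely this Kummer‑side descent for not‑necessarily‑étale objects. Unlike the cyclotomic tower, $\kinfty/K$ is not Galois, so no nontrivial group acts on $\bfB_{\rig,\kinfty}^\dagger$ and Kedlaya's ``perfect $\Rightarrow$ imperfect'' criterion cannot be applied on the nose; one must genuinely exploit the $\tau$‑, equivalently $\hat G$‑, structure on the larger ring $\wtb_{\rig,L}^\dagger$ — this is where locally analytic vectors enter — to produce the descent, check it is functorial, and see it is a two‑sided inverse of base change. Everything else is formal: the two continuous Galois descents, the compatibility of the induced $G_K$‑actions with $\varphi$, and the verification that the composites are mutually inverse. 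In particular the argument is insensitive to $p$, so the $p=2$ subtleties in the structure of $\hat G$ play no role — only Galois descent and the cited overconvergence results, valid for all $p$, are used.
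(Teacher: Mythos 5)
Your architecture (pass to perfect rings with full Galois action, use Berger's theory on the cyclotomic side, then descend on the Kummer side) is in the same spirit as the paper's proof of Thm.~\ref{thmequidagger}, and you correctly locate the crux: descending a not-necessarily-\'etale module from the perfect level to $\bfB_{\rig,\kinfty}^\dagger$, where no group acts. But precisely at that crux your proposal has a genuine gap. Your first option, d\'evissage along the Kedlaya slope filtration, does not reduce to the \'etale case as stated: the graded pieces are isoclinic of arbitrary slope, not \'etale (twisting them into \'etale objects is itself nontrivial for non-integral slopes), and even granting descent of the graded pieces you must descend the extension classes, which requires an $H^1$-comparison of the same order of difficulty as the statement being proved. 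Your second option, ``rerun the locally analytic vectors argument of \cite{GP21} directly over the Robba ring,'' is not carried out --- and in fact it is exactly what the paper cites: \cite[Prop.~6.1.6, Rem.~6.1.7]{GP21} already gives the equivalence $\mod_{\varphi,\tau}(\bfB_{\rig,\kinfty}^\dagger, \wtb_{\rig,L}^{\dagger,\pa}) \simeq \mod_{\varphi,\hatg}(\wtb_{\rig,L}^{\dagger,\pa})$ for general (non-\'etale) modules, so no rerunning is needed, but leaving it as ``one reruns the argument'' in a blind proof is an unproven step. You also skip the intermediate pro-analytic category entirely: the paper needs the small but essential observation (Lem.~\ref{lemlamod}, via the already-known cyclotomic descent) that every object of $\mod_{\varphi,\hatg}(\wtb_{\rig,L}^\dagger)$ descends to a \emph{pro-analytic} module, which is what makes the Gao--Poyeton descent applicable.

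A secondary, smaller issue: your auxiliary category $\mathcal C$ of $\varphi$-modules over $\wtbrig$ with $G_K$-action introduces an extra descent (taking $G_{\kpinfty}$- or $G_L$-invariants of non-\'etale modules over $\wtbrig$ and recovering the module by base change), which you justify only by asserting vanishing of the relevant $H^1$ with $\GL_n$-coefficients. For non-\'etale objects this is not a formal Galois-descent statement over such rings; it is essentially Berger's $B$-pair theory (\cite{Ber08ANT, Berjussieu}), which is also how the paper handles the perfect-ring column --- so this part is repairable by citation, but as written it is asserted rather than proved. The ``promotion'' of the $\gammak$- (resp.\ $\hatg$-) action to a $G_K$-action after base change, by contrast, needs no descent at all: it is just the diagonal action on the tensor product.
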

Here we use the blackboard font to denote rings and modules in $(\phi, \Gamma)$-module theory; the algebraic structure of $\bB_{\rig, \kpinfty}^\dagger$ is quite similar to $\bfB_{\rig, \kinfty}^\dagger$, i.e., is a Robba ring with one variable; but their Frobenius and Galois structures are completely different.
To discuss cohomology theory of these modules, we first fix some notations.

\begin{notation}
For an object
\[ (\bfD_{\rig, \kinfty}^\dagger, \wtD_{\rig, L}^\dagger) \in  \mod_{\varphi, \tau}(\bfB_{\rig, \kinfty}^\dagger,  \wtb_{\rig, L}^\dagger ). \]
Denote its corresponding object in $\mod_{\varphi, \gammak}(\bB_{\rig, \kpinfty}^\dagger)$ by $\bbD_{\rig, \kpinfty}^\dagger$. 
These notations are meant to be suggestive; e.g., for $X\in \{\kinfty, L, \kpinfty\}$,   $D_{\ast,X}^\ast$ is \emph{fixed} under $G_X$-action. In addition, we have (base change) isomorphisms
\[ \bfD_{\rig, \kinfty}^\dagger \otimes_\brigkinfty \wtbrigL \simeq \wtD_{\rig, L}^\dagger \simeq \bbD_{\rig, \kpinfty}^\dagger \otimes_{\bB_{\rig, \kpinfty}^\dagger} \wtbrigL \]
\end{notation} 

\begin{remark}
There will be many ``$D$"-modules in the main text, cf. diagrams in Notations \ref{notaetalmod} and \ref{notarigmod}. Thus in the introduction, we  have chosen to use the full (although complicated) notations to avoid further confusions. (For example, the simpler notation $\bfD_\kinfty$ will mean the \'etale $\phi$-module).
\end{remark}

\subsection{Cohomology I: $\phi+$group} \label{subsec coho 1}

A useful realization is that the Herr complex \eqref{eqcohophigammaintro} can be interpreted using (continuous, as always in this paper) group cohomology. Indeed, the Herr complex $C_{\phi, \gammak}(\bbD_{\rig, \kpinfty}^\dagger)$ can be interpreted as 
\[ \rg(\Gamma_K, \bbD_{\rig, \kpinfty}^\dagger)^{\phi=1} \]
where  $\rg(\Gamma_K, \bbD_{\rig, \kpinfty}^\dagger) =[\bbD_{\rig, \kpinfty}^\dagger\xrightarrow{\gamma-1} \bbD_{\rig, \kpinfty}^\dagger]$ (say, when $p>2$) is the (continuous) group cohomology, and $\phi=1$ denotes the  homotopy fiber of $\phi-1$.
In more concrete terms, we are simply saying that   $C_{\phi, \gammak}(\bbD_{\rig, \kpinfty}^\dagger)$ is the totalization of the double complex
\[
\begin{tikzcd}
{\bbD_{\rig, \kpinfty}^\dagger} \arrow[d, "\gamma-1"] \arrow[r, "\varphi-1"] & {\bbD_{\rig, \kpinfty}^\dagger} \arrow[d, "\gamma-1"] \\
{\bbD_{\rig, \kpinfty}^\dagger} \arrow[r, "\varphi-1"]                       & {\bbD_{\rig, \kpinfty}^\dagger}                      
\end{tikzcd}
\]
We would like to imitate this in the $(\phi, \tau)$-module case; however, a naive idea to replace ``$\gamma-1$" by ``$\tau-1$" would fail, because $\tau$-action on 
$\bfD_{\rig, \kinfty}^\dagger$ is not stable. Nonetheless, we have a $\hatg$-action on $\wtD_{\rig, L}^\dagger$; thus we can form the complex
\[ C_{\phi, \hatg}(\wtD_{\rig, L}^\dagger):= \rg(\hatg, \wtD_{\rig, L}^\dagger)^{\phi=1} \]
Let us explain this complex in more concrete terms (when $p>2$). 
The $p$-adic Lie group $\hatg$ is topologically generated by $\gamma$ and $\tau$; using a Lazard--Serre resolution, cf. Lem. \ref{lemgroupiwasawa},    $\rg(\hatg, \wtD_{\rig, L}^\dagger)$ can be explicitly expressed by a three term complex
 \begin{equation} \label{intro3termhatg}
C_{\gamma, \tau}(\wtD_{\rig, L}^\dagger):=\quad [\wtD_{\rig, L}^\dagger \xrightarrow{\gamma-1, \tau-1} \wtD_{\rig, L}^\dagger\oplus \wtD_{\rig, L}^\dagger \xrightarrow{\tau^{\chi(\gamma)}-1, \delta-\gamma}\wtD_{\rig, L}^\dagger ]  
\end{equation}
Here, $\chi$ is cyclotomic character, and $\delta = \frac{\tau^{\chi(\gamma)} - 1}{\tau - 1}$  is an element in the Iwasawa algebra of $\hatg$.
However, one needs to add in the $\phi=1$ fiber, and thus it would produce a \emph{four} term complex $C_{\phi, \gamma, \tau}(\wtD_{\rig, L}^\dagger)$.
It turns out  it is a correct complex.

\begin{theorem}[cf. Thm. \ref{thmcohononetale}] \label{thmintrogpcoho}
 (Let $p$ be any prime). 
There is a natural quasi-isomorphism
\[C_{\phi, \hatg}(\wtD_{\rig, L}^\dagger) \simeq C_\phigamma (\bbD_{\rig, \kpinfty}^\dagger) \]
If these modules are associated to $V\in \rep_\gk(\qp)$, then the complexes are further quasi-isomorphic to $\rg(\gk, V)$.
\end{theorem}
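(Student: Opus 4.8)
The plan is to prove the two quasi-isomorphisms in turn, the first by comparing group cohomology over the subgroups and extensions appearing in the diagram of Notation \ref{notafields}, the second by invoking the known comparison for $(\varphi,\Gamma)$-modules together with $(\varphi,\tau)$-descent. For the first quasi-isomorphism $C_{\phi,\hatg}(\wtD_{\rig,L}^\dagger) \simeq C_{\phigamma}(\bbD_{\rig,\kpinfty}^\dagger)$, I would first strip off the Frobenius: since both sides are defined as the homotopy fiber of $\varphi-1$ on a group-cohomology complex, and taking homotopy fibers commutes with any exact functor, it suffices to produce a $\varphi$-equivariant quasi-isomorphism
\[
\rg(\hatg,\wtD_{\rig,L}^\dagger) \;\simeq\; \rg(\Gamma_K,\bbD_{\rig,\kpinfty}^\dagger).
\]
The group $\hatg$ sits in an extension $1 \to \gal(L/\kpinfty) \to \hatg \to \Gamma_K \to 1$ (up to the usual subtleties recorded in Notation \ref{nota hatG}; for $p>2$ this is literally $\zp \rtimes \Gamma_K$), so there is a Hochschild--Serre spectral sequence
\[
\rg\bigl(\Gamma_K,\ \rg(\gal(L/\kpinfty),\wtD_{\rig,L}^\dagger)\bigr) \;\simeq\; \rg(\hatg,\wtD_{\rig,L}^\dagger).
\]
Thus the first step reduces to the claim that the inner cohomology $\rg(\gal(L/\kpinfty),\wtD_{\rig,L}^\dagger)$ is concentrated in degree $0$ and equals $\bbD_{\rig,\kpinfty}^\dagger$ — i.e. a Tate/Sen-type vanishing statement: $H^0(\tau\mathbb{Z}_p, \wtD_{\rig,L}^\dagger) = \bbD_{\rig,\kpinfty}^\dagger$ and $H^1(\tau\mathbb{Z}_p, \wtD_{\rig,L}^\dagger)=0$. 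This is the descent from the ``$L$-level'' to the ``$\kpinfty$-level'' along the procyclic group generated by $\tau$, and it is exactly where the ring-theoretic input (the structure of $\wtbrigL$ over $\bB_{\rig,\kpinfty}^\dagger$, almost purity / Tate--Sen axioms for these Robba-type rings) enters.

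For the second quasi-isomorphism, when the modules come from $V \in \rep_\gk(\qp)$, I would not reprove the $(\varphi,\Gamma)$-case but cite it: by R.~Liu's theorem (\cite{LiuIMRN08}, and the rigid-overconvergent refinement) the Herr-type complex $C_{\phigamma}(\bbD_{\rig,\kpinfty}^\dagger)$ computes $\rg(\gk,V)$. Combined with the first quasi-isomorphism this already gives the result. Alternatively, and more in the spirit of an ``axiomatic'' proof that also covers the non-\'etale rigid-overconvergent case directly, one can argue intrinsically: rewrite
\[
C_{\phi,\hatg}(\wtD_{\rig,L}^\dagger) = \rg(\hatg,\wtD_{\rig,L}^\dagger)^{\varphi=1} \simeq \rg\bigl(\hatg, (\wtD_{\rig,L}^\dagger)^{\varphi=1}\text{-type object}\bigr),
\]
identify $(\wtb_{\rig,L}^\dagger)^{\varphi=1}$ (or the relevant $\be$-style ring) with a period ring through which $V$ is recovered as $G_L$-invariants tensored up, and then descend along $\hatg = \gk/G_L$ using $\rg(G_L, V\otimes \wt{\mathbf B}_{?}) \simeq V\otimes(\text{something acyclic})$ followed by Hochschild--Serre for $1\to G_L \to \gk \to \hatg \to 1$. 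Either route requires the $(\varphi,\tau)$-analogue of the fundamental exact sequence and the overconvergence theorem of \cite{GL20,GP21} to know $\wtD_{\rig,L}^\dagger$ genuinely descends $V\otimes\wtB_{\rig}$.

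The main obstacle I expect is the inner vanishing $H^{>0}(\gal(L/\kpinfty), \wtD_{\rig,L}^\dagger)=0$ together with $H^0 = \bbD_{\rig,\kpinfty}^\dagger$. On the $(\varphi,\Gamma)$-side the analogous statement is the classical fact that $\wt{\mathbf B}_{\rig}^{\dagger,\dagger}$-type rings are ``almost acyclic'' for the relevant procyclic groups (Tate--Sen method, cf. Colmez, Liu), but here the group is $\gal(L/\kpinfty)\simeq\zp$ acting through $\tau$ on the larger ring $\wtbrigL$, and one must control this action carefully — in particular the fact that $\tau$ does \emph{not} preserve $\bfD_{\rig,\kinfty}^\dagger$, which is precisely why we passed to the $L$-level. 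I would handle this by establishing a Tate--Sen formalism for the pair $(\wtbrigL, \tau\zp)$ (normalized traces / near-splitting of $\tau-1$), then bootstrap from $\bB_{\rig,\kpinfty}^\dagger$-coefficients to arbitrary finite-free modules by d\'evissage. A secondary technical point is checking $\varphi$-equivariance throughout the Hochschild--Serre comparison so that passing to the $\varphi=1$ fiber is legitimate; this is routine but needs the resolutions (the Lazard--Serre resolution of Lem. \ref{lemgroupiwasawa}) to be chosen $\varphi$-equivariantly, or one works in the derived category where no choice is needed.
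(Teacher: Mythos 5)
There is a genuine gap at the heart of your first step. You reduce, via Hochschild--Serre for $1\to\gal(L/\kpinfty)\to\hatg\to\gammak\to 1$, to the claim that $\rg(\gal(L/\kpinfty),\wtD_{\rig,L}^\dagger)$ is concentrated in degree $0$ \emph{and equals} $\bbD_{\rig,\kpinfty}^\dagger$. The $H^0$ identification is false: the $\gal(L/\kpinfty)$-invariants of $\wtD_{\rig,L}^\dagger$ are the \emph{tilde} module $\wtd_{\rig,\kpinfty}^\dagger$ over the perfect Robba ring $\wtb_{\rig,\kpinfty}^\dagger$, which strictly contains $\bbD_{\rig,\kpinfty}^\dagger$ over the imperfect ring $\bB_{\rig,\kpinfty}^\dagger$ (indeed $(\wtb_{\rig,L}^\dagger)^{\gal(L/\kpinfty)}=\wtb_{\rig,\kpinfty}^\dagger\neq\bB_{\rig,\kpinfty}^\dagger$). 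A Tate--Sen formalism for the pair $(\wtb_{\rig,L}^\dagger,\gal(L/\kpinfty))$, even if established, can only descend from the $L$-level to the $\kpinfty$-level inside the perfect/completed world; removing the tilde is a decompletion in a different direction and cannot come from the $\tau$-action, since $\wtb_{\rig,\kpinfty}^\dagger$ is already $\tau$-invariant. This missing step is exactly Step 2 of the paper's proof of Thm.~\ref{thmcohononetale}: after $\varphi$-descent to closed intervals (Lem.~\ref{lemphidescent}), one compares $\rg(\gammak,\wtd_\kpinfty^{[r,s]})$ with $\rg(\gammak,\bbD_{\kpinfty,\infty}^{[r,s]})$ using the vanishing of higher locally analytic vectors (Example~\ref{example: porat}, Thm.~\ref{prop: no higher lav coho compa}) together with Berger's computation of the locally analytic vectors (Prop.~\ref{prop: lav of mod}). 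Relatedly, your proposed vanishing $H^1(\gal(L/\kpinfty),\wtD_{\rig,L}^\dagger)=0$ is asserted directly on the LF-ring level, where the paper explicitly does not know the relevant axiom for the perfect Robba ring (Rem.~\ref{rem: do not konw axiom q}); the paper sidesteps this by first $\varphi$-descending to Banach pieces $[r,s]$, where the TS-1 descent of Prop.~\ref{prop: verified TS-1 coho compa} applies. Your separation of the $\varphi=1$ fiber from the group cohomology, and the Hochschild--Serre reduction itself, are fine and match the paper.

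A secondary issue concerns the second assertion: you propose to cite R.~Liu's theorem that $C_\phigamma(\bbD_{\rig,\kpinfty}^\dagger)\simeq\rg(\gk,V)$, but that result requires $K/\qp$ finite, whereas the theorem here is stated for arbitrary $K$ (and avoiding that restriction, and the $\psi$-operator, is one of the paper's points). The paper instead proves $C_\phi(\wtd_{\rig,\kpinfty}^\dagger)\simeq\rg(\gkpinfty,V)$ via Kedlaya--Liu (perfectoid base plus the slope theory for relative Frobenius) and then applies Hochschild--Serre over $\gammak$; your sketched ``alternative route'' through $(\wtb_{\rig,L}^\dagger)^{\varphi=1}$ and descent along $\gk/G_L$ is closer in spirit to this, but as written it is too vague to substitute for that argument.
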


\begin{remark} We give some historical remarks; cf. \S \ref{subsec: final historical remarks} for more.
\begin{enumerate}
\item  In \cite{TR11}, Tavares Ribeiro defines a similar $C_{\phi, \hatg}$-complex for \'etale $(\phi, \tau)$-modules, and shows that it computes Galois cohomology. His proof is ``direct" (using devissage  and $\delta$-functors), and thus involves many complicated computations (which quite resembles Herr's original approach \cite{Her98}).

\item In contrast, our proof of Thm. \ref{thmintrogpcoho} is \emph{axiomatic}; it involves certain standard cohomology vanishing computations that work in the \'etale case as well, and thus recovers Tavares Ribeiro's results in a very conceptual way. 

 \item The comparison  
 \[  C_\phigamma (\bbD_{\rig, \kpinfty}^\dagger) \simeq \rg(\gk, V)\]
 was formerly known by \cite[Prop. 2.7]{LiuIMRN08}, but only when $K/\qp$ is a finite extension; Liu's method uses $\psi$-operator  and  does \emph{not} work when $K/\qp$ is an infinite extension. In contrast, our   conceptual proof (which  uses   other complexes as well) works for any $K$, and does not use $\psi$-operator.
 \end{enumerate}
\end{remark}

\begin{rem}\label{remunsatisfy}
    We find the  4-term complex $C_{\phi, \gamma, \tau}(\wtD_{\rig, L}^\dagger)$   unsatisfying in (at least) two ways:
\begin{enumerate}
    \item Firstly, \emph{a priori}, it is not  obvious to see  its cohomology is   concentrated in degree $[0, 2]$;
    \item  Secondly and   more importantly, the module $\bfD_{\rig, \kinfty}^\dagger$ itself does not appear in this complex!
\end{enumerate}
\end{rem}

\subsection{Cohomology II: $\phi+\tau$ } \label{subsec coho 2}

We first try to ``resolve"  Item (1) in Rem. \ref{remunsatisfy}.

\begin{theorem}[cf. Thm. \ref{thmcohononetale}] \label{thmintro3term}
Suppose $p>2$. 
Let $\wtD_{\rig, \kinfty}^\dagger=(\wtD_{\rig, L}^\dagger)^{\gal(L/\kinfty)}$.
Define a 3-term complex
\begin{equation}\label{eqintro3term}
    C_{\phi, \tau}(\wtD_{\rig, \kinfty}^\dagger, \wtD_{\rig, L}^\dagger):=\quad  [\wtD_{\rig, \kinfty}^\dagger \xrightarrow{\phi-1, \tau-1} \wtD_{\rig, \kinfty}^\dagger \oplus (\wtD_{\rig, L}^\dagger)^{\delta-\gamma=0} \xrightarrow{1-\tau, \phi-1} (\wtD_{\rig, L}^\dagger)^{\delta-\gamma=0}] 
\end{equation}
Then
\[C_{\phi, \tau}(\wtD_{\rig, \kinfty}^\dagger, \wtD_{\rig, L}^\dagger) \simeq C_{\phi, \gamma, \tau}(\wtD_{\rig, L}^\dagger) \simeq C_{\phi, \gamma}(\bbD_{\rig, \kpinfty}^\dagger)\]
 \end{theorem}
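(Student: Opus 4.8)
The plan is to strip the Frobenius off both sides, reduce the first quasi-isomorphism to a comparison of two complexes computing $\rg(\hatg,\wtD_{\rig,L}^\dagger)$, and carry out that comparison via an explicit small chain map whose mapping cone I can show is acyclic. The second quasi-isomorphism $C_{\phi,\gamma,\tau}(\wtD_{\rig,L}^\dagger)\simeq C_{\phi,\gamma}(\bbD_{\rig,\kpinfty}^\dagger)$ is just Theorem~\ref{thmintrogpcoho} (together with Lem.~\ref{lemgroupiwasawa}, which for $p>2$ identifies $C_{\gamma,\tau}(\wtD_{\rig,L}^\dagger)$ with $\rg(\hatg,\wtD_{\rig,L}^\dagger)$), so I focus on the first. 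First I would record that $\tau-1$ carries $\wtD_{\rig,\kinfty}^\dagger=(\wtD_{\rig,L}^\dagger)^{\gal(L/\kinfty)}$ into $(\wtD_{\rig,L}^\dagger)^{\delta-\gamma=0}$: if $\gamma x=x$ then $(\delta-\gamma)(\tau-1)x=(\tau^{\chi(\gamma)}-1)x-(\tau^{\chi(\gamma)}-1)x=0$, using $\delta(\tau-1)=\tau^{\chi(\gamma)}-1$ and $\gamma\tau=\tau^{\chi(\gamma)}\gamma$. Thus
\[
C_\tau:=\bigl[\,\wtD_{\rig,\kinfty}^\dagger\xrightarrow{\ \tau-1\ }(\wtD_{\rig,L}^\dagger)^{\delta-\gamma=0}\,\bigr]
\]
is a well-defined two-term complex of $\phi$-modules (since $\phi$ commutes with $\gamma$ and $\tau$), and unwinding the definitions $C_{\phi,\tau}(\wtD_{\rig,\kinfty}^\dagger,\wtD_{\rig,L}^\dagger)=\fib(\phi-1\colon C_\tau\to C_\tau)$ while $C_{\phi,\gamma,\tau}(\wtD_{\rig,L}^\dagger)=\fib(\phi-1\colon C_{\gamma,\tau}(\wtD_{\rig,L}^\dagger)\to C_{\gamma,\tau}(\wtD_{\rig,L}^\dagger))$. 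Since $\fib(\phi-1)$ is exact and every map below is $\phi$-equivariant, it suffices to produce a $\phi$-equivariant quasi-isomorphism $C_\tau\to C_{\gamma,\tau}(\wtD_{\rig,L}^\dagger)$.

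Next I would write down $\iota\colon C_\tau\to C_{\gamma,\tau}(\wtD_{\rig,L}^\dagger)$ given by the inclusion $\wtD_{\rig,\kinfty}^\dagger\hookrightarrow\wtD_{\rig,L}^\dagger$ in degree $0$, by $y\mapsto(0,y)$ into the ``$\tau$-component'' in degree $1$, and by $0$ in degree $2$; it is a morphism of complexes (immediate from $\gamma x=x$ on $\wtD_{\rig,\kinfty}^\dagger$, from $(\delta-\gamma)y=0$ on $(\wtD_{\rig,L}^\dagger)^{\delta-\gamma=0}$, and from the compatibility just noted) and is $\phi$-equivariant. It then remains to check that $\Cone(\iota)$ is acyclic. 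Written out (with the appropriate signs), $\Cone(\iota)$ is a complex in degrees $-1,\dots,2$ with terms $\wtD_{\rig,\kinfty}^\dagger$, $(\wtD_{\rig,L}^\dagger)^{\delta-\gamma=0}\oplus\wtD_{\rig,L}^\dagger$, $\wtD_{\rig,L}^\dagger\oplus\wtD_{\rig,L}^\dagger$, $\wtD_{\rig,L}^\dagger$ and differentials $x\mapsto(-(\tau-1)x,x)$, $(y,z)\mapsto((\gamma-1)z,(\tau-1)z+y)$, $(a,b)\mapsto(\tau^{\chi(\gamma)}-1)a+(\delta-\gamma)b$. Degrees $-1$ and $0$ are exact by direct inspection. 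For degree $1$ I would use the identity $(\delta-\gamma)(\tau-1)=(\tau^{\chi(\gamma)}-1)(1-\gamma)$ (again from $\gamma\tau=\tau^{\chi(\gamma)}\gamma$), which shows that a $1$-cocycle $(a,b)$ is a coboundary precisely when $a\in(\gamma-1)\wtD_{\rig,L}^\dagger$; and $H^2(\Cone(\iota))=\wtD_{\rig,L}^\dagger/\bigl((\tau^{\chi(\gamma)}-1)\wtD_{\rig,L}^\dagger+(\delta-\gamma)\wtD_{\rig,L}^\dagger\bigr)$. So the acyclicity of $\Cone(\iota)$ reduces to two surjectivity statements on $\wtD_{\rig,L}^\dagger$: that $\gamma-1$ is surjective (this handles $H^{\le1}$), and that $\delta-\gamma$ is surjective (this handles $H^2$).

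Finally I would deduce these two surjectivities from the cohomological inputs of the set-up. Surjectivity of $\gamma-1$ says $\rg(\gal(L/\kinfty),\wtD_{\rig,L}^\dagger)$ is concentrated in degree $0$, equal to $\wtD_{\rig,\kinfty}^\dagger$; this is the decompletion/Tate--Sen acyclicity for the cyclotomic-type tower $L=\kinfty(\mu_{p^\infty})$ over $\kinfty$ that already underlies the descent theory of rigid-overconvergent $(\phi,\tau)$-modules (\S\ref{sec: rings and gps}). Surjectivity of $\delta-\gamma$ is equivalent to surjectivity of $\delta^{-1}\gamma-1$ (since $\delta$ is a unit of the Iwasawa algebra of $\langle\tau\rangle$, as $\chi(\gamma)\in\zp^\times$), i.e. to the same acyclicity for the semilinear $\zp$-action whose topological generator is $\delta^{-1}\gamma$; this operator satisfies the same commutation relation $(\delta^{-1}\gamma)\tau(\delta^{-1}\gamma)^{-1}=\tau^{\chi(\gamma)}$ with $\tau$ and differs from $\gamma$ by a unit, so the same Tate--Sen input applies. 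Granting both, $\Cone(\iota)$ is acyclic, $\iota$ is a quasi-isomorphism, and applying $\fib(\phi-1)$ gives $C_{\phi,\tau}(\wtD_{\rig,\kinfty}^\dagger,\wtD_{\rig,L}^\dagger)\simeq C_{\phi,\gamma,\tau}(\wtD_{\rig,L}^\dagger)$.

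The main obstacle I anticipate is this last ``twisted'' acyclicity—controlling $\rg(\gal(L/\kinfty),\wtD_{\rig,L}^\dagger)$ for the $\delta^{-1}\gamma$-action rather than the $\gamma$-action—and, more broadly, verifying that the required vanishing holds uniformly across the \'etale, overconvergent and rigid-overconvergent cases; this is precisely where the axiomatic packaging of the cohomology-vanishing hypotheses does the work. By contrast, once the commutation identity $(\delta-\gamma)(\tau-1)=(\tau^{\chi(\gamma)}-1)(1-\gamma)$ is in hand, the rest of the argument is formal homological algebra.
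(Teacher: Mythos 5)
Your homological skeleton is sound and in fact mirrors the paper's axiomatic argument (Thm.\ \ref{thmaxiomnophi}): passing to the homotopy fiber of $\phi-1$ and comparing $C_\tau$ with $C_{\gamma,\tau}$ via the double complex (equivalently, your cone computation) reduces everything to the surjectivity of $\gamma-1$ and $\delta-\gamma$ on $\wtD_{\rig,L}^\dagger$. The gap is in how you obtain these surjectivities. Since $\wtD_{\rig,L}^\dagger=\bfD_{\rig,\kinfty}^\dagger\otimes_{\brigkinfty}\wtb_{\rig,L}^\dagger$ with a $\gal(L/\kinfty)$-fixed basis, surjectivity of $\gamma-1$ on the module is equivalent to surjectivity of $\gamma-1$ on the ring $\wtb_{\rig,L}^\dagger$, i.e.\ to (part of) Axiom \ref{axiomgroupQ} for the perfect Robba ring --- and the paper explicitly records in Rem.\ \ref{rem: do not konw axiom q} that this is \emph{not known}. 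The Tate--Sen (TS-1) vanishing you invoke is established only for the Banach-type rings $\wta_L$, $\wta_L^{[r,s]}$, $\wta_L^{[r,+\infty]}$, $\wta_L^\dagger$ (Cor.\ \ref{cor: verify axiom q for rings}); it is not known to pass to the LF-ring $\wtb_{\rig,L}^\dagger$ (the inverse limit in $s$ and the colimit in $r$ with $p$ inverted are exactly where the valuation-theoretic method loses control), and the decompletion results underlying \S\ref{sec:equiv cats} use locally/pro-analytic vectors, not $H^1$-vanishing, so they do not supply it either. Your ``twisted'' acyclicity for $\delta^{-1}\gamma$ inherits the same problem (and note the paper gets $\delta-\gamma$-surjectivity more cheaply from $(\delta-\gamma)(\tau-1)=(1-\tau^{\chi(\gamma)})(\gamma-1)$ together with surjectivity of $\gamma-1$ and $\tau^{\chi(\gamma)}-1$ --- but the $\tau$-part is equally unknown at the Robba level).

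The paper's actual proof runs the two reductions in the opposite order precisely to avoid this point: in Thm.\ \ref{thmcohononetale}, Step 1, one first applies $\varphi$-descent to closed intervals (Lem.\ \ref{lemphidescent}, via Kedlaya--Liu), so that every group-cohomology comparison only needs to be proved for the Banach modules $\wtd_L^{[r,pr]}$, $\wtd_L^{[pr,pr]}$, where the required surjectivities do hold by TS-1 descent (Prop.\ \ref{prop: verified TS-1 coho compa}); the Robba-level statement is then reassembled from these. So to repair your argument you must either prove surjectivity of $\gamma-1$ (and $\tau^{\chi(\gamma)}-1$, hence $\delta-\gamma$) on $\wtb_{\rig,L}^\dagger$ --- which is an open point, not a citable input --- or interpose the $\varphi$-descent step \emph{before} invoking any group-cohomology vanishing. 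The remaining ingredients of your write-up (the chain map $\iota$, the cone computation, the exactness of $\fib(\phi-1)$, and quoting Thm.\ \ref{thmintrogpcoho} for the second quasi-isomorphism) are fine.
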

 \begin{proof}[Sketch of main ideas.]  
 Note that the complex $C_{\gamma, \tau}(\wtD_{\rig, L}^\dagger)$ \eqref{intro3termhatg} is the totalization of the double complex
\[ \begin{tikzcd}
\wtD_{\rig, L}^\dagger  \arrow[r, "\tau-1"] \arrow[d, "\gamma-1"'] & \wtD_{\rig, L}^\dagger  \arrow[d, "\delta-\gamma"] \\
\wtD_{\rig, L}^\dagger  \arrow[r, "\tau^{\chi(\gamma)}-1"]         & \wtD_{\rig, L}^\dagger                            
\end{tikzcd} \]
We can prove that both vertical arrows $\gamma-1$ and $\delta-\gamma$ are \emph{surjective}. Thus $C_{\gamma, \tau}(\wtD_{\rig, L}^\dagger)$ is quasi-isomorphic to the ``vertical kernel" complex, which is precisely
\[   [ \wtD_{\rig, \kinfty}^\dagger   \xrightarrow{\tau-1} (\wtD_{\rig, L}^\dagger)^{\delta-\gamma=0}] \]
One can conclude by incorporating  $\phi$-cohomology.
 \end{proof}

\begin{rem} \label{rem: intro p equal 2}
   We point our a thorny issue when $p=2$. In this case, the groups $\gal(L/\kinfty)$ and $\gal(\kpinfty/K)$ are both open subgroups of $\mathbb{Z}_2^\times$ and hence are not necessarily pro-cyclic. In theory of $(\phi, \Gamma)$-modules, one can first take a finite (and Galois) extension $K'/K$ inside $\kpinfty$ such that $\kpinfty/K'$ is pro-cyclic, then study $(\phi, \Gamma)$-modules over $K'$, and finally descend to $K$-level. This strategy unfortunately breaks for $(\phi, \tau)$-modules. Indeed, the uniformizer of $K'$ is different from $K$, and hence its Kummer tower extension is \emph{completely} different from that of $K$. Indeed, even though it might still be possible to construct certain \emph{explicit} Lazard--Serre resolution as in Lem. \ref{lemgroupiwasawa} when $p=2$, the complex is likely to be very complicated and unsuitable to construct simple $(\phi,\tau)$-complexes as in Thm. \ref{thmintro3term}.
 \end{rem}

Even though we \emph{fabricated} a 3-term complex in Thm. \ref{thmintro3term}, the module  $\bfD_{\rig, \kinfty}^\dagger$ remains at large.
With this in mind, it is tempting to construct $C_{\phi, \tau}(\bfD_{\rig, \kinfty}^\dagger, \wtD_{\rig, L}^\dagger) $ simply by replacing $\wtD_{\rig, \kinfty}^\dagger$ in Eqn. \eqref{eqintro3term} by $\bfD_{\rig, \kinfty}^\dagger$. 
Item (1) in the next Thm. \ref{thm3complexwrong}  tells us that it is indeed not a whimsical thought; Item (2) then informs us that unfortunately (and somewhat surprisingly), this (natural) idea does \emph{not} work.

\begin{thm} \label{thm3complexwrong}
Suppose $p>2$.    Let $V \in \rep_\gk(\qp)$, and consider its associated modules. (cf.  Notations \ref{notaetalmod} and \ref{notarigmod} for the many different ``$D$"-modules here; alternatively, the readers could ignore all others and  focus on $ C_{\phi, \tau}(\bfD_{\rig, \kinfty}^\dagger, \wtD_{\rig, L}^\dagger) $ in  Item (2) here.)
\begin{enumerate}
    \item \emph{(cf. Thm. \ref{thmetalcompa}, Thm. \ref{thmcohononetale})}. We have quasi-isomorphisms
\[ C_{\varphi, \tau}(\bfD_{\kinfty}, \wtd_L) \simeq C_{\varphi, \tau}(\wtD_{\kinfty}, \wtd_L) \simeq C_\phitau(\wt\bfD_\kinfty^\dagger,\wtd_L^\dagger) \simeq C_{\phi, \tau}(\wtD_{\rig, \kinfty}^\dagger, \wtD_{\rig, L}^\dagger) \simeq \rg(\gk, V) \]
in particular: when considering cohomology of ``\'etale" $(\varphi, \tau)$-modules (as in the first two complexes), it does not hurt to remove tilde (on the $\varphi$-modules).

\item \emph{(cf. Prop. \ref{propwrongphitau}).} We have  a quasi-isomorphism 
\begin{equation}\label{eqwrongintrocomp1}
   C_{\phi, \tau}(\bfD_{\kinfty}^\dagger, \wtD_{L}^\dagger)  \simeq C_{\phi, \tau}(\bfD_{\rig, \kinfty}^\dagger, \wtD_{\rig, L}^\dagger);
\end{equation}
but they are (in general) \emph{not} quasi-isomorphic to $C_{\phi, \tau}(\wtD_{\rig, \kinfty}^\dagger, \wtD_{\rig, L}^\dagger)$ (equivalently, to $\rg(\gk, V)$): this is already so when $V=\qp$ is the trivial representation!  Thus, both   complexes in Eqn. \eqref{eqwrongintrocomp1} are \emph{wrong} ones.
\end{enumerate}
\end{thm}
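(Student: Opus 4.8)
The plan is to prove Theorem \ref{thm3complexwrong} in two separate halves, mirroring its two items, and to extract as much as possible from the already-established comparison results.

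For Item (1), the strategy is to reduce the four-term chain of quasi-isomorphisms to bite-sized pieces, each of which is either already in the excerpt or is a standard d\'evissage. The last equivalence $C_{\phi, \tau}(\wtD_{\rig, \kinfty}^\dagger, \wtD_{\rig, L}^\dagger) \simeq \rg(\gk, V)$ is exactly Theorem \ref{thmintro3term} composed with Theorem \ref{thmintrogpcoho}, so nothing new is needed there. The equivalence $C_\phitau(\wt\bfD_\kinfty^\dagger,\wtd_L^\dagger) \simeq C_{\phi, \tau}(\wtD_{\rig, \kinfty}^\dagger, \wtD_{\rig, L}^\dagger)$ should follow from the fact that passing from the overconvergent Robba-type ring to the full Robba ring, or rather from the overconvergent $\wtB^\dagger$ to $\wtb_{\rig}^\dagger$, induces an isomorphism on the relevant $\phi$- and $\tau$-cohomology; this is the ``rigid-overconvergent vs. overconvergent'' comparison and I would cite/prove it via the Dieudonn\'e--Manin / slope-filtration type argument that the Robba ring is faithfully flat enough on the invariants that matter, together with the fact that $\wtb_{\rig,L}^\dagger$ and $\wtB_L^\dagger$ agree after the group-invariance conditions are imposed. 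The first equivalence $C_{\varphi, \tau}(\bfD_{\kinfty}, \wtd_L) \simeq C_{\varphi, \tau}(\wtD_{\kinfty}, \wtd_L)$ — ``removing the tilde'' — is where a genuine computation enters: one must show $H^\bullet$ of the complex $[\bfD_\kinfty \to \wtD_\kinfty]$ (difference of the two $\phi$-modules, tensored up) vanishes, i.e. that $\wtD_\kinfty / \bfD_\kinfty$, or rather the cone, is $\phi$-acyclic. This is the $(\phi,\tau)$-analogue of the familiar statement in $(\phi,\Gamma)$-theory that $\widetilde{\mathbf{B}}/\mathbf{B}$ contributes nothing, and I would prove it by a direct Frobenius-descent argument on the coefficient ring extension $\bfB_\kinfty \hookrightarrow \wtB_\kinfty$.

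For Item (2), the approach is concrete and example-driven, as the statement itself suggests. First I would establish the quasi-isomorphism \eqref{eqwrongintrocomp1}, $C_{\phi, \tau}(\bfD_{\kinfty}^\dagger, \wtD_{L}^\dagger) \simeq C_{\phi, \tau}(\bfD_{\rig, \kinfty}^\dagger, \wtD_{\rig, L}^\dagger)$: both complexes have the same second and third terms (the $\wtD_{\rig,L}^\dagger$ and its $\delta-\gamma=0$ part are unchanged under overconvergent-vs-rigid, by the Item (1) type arguments), and the first terms differ only by $\bfD^\dagger_\kinfty$ versus $\bfD^\dagger_{\rig,\kinfty}$, so the cone is controlled by $\bfB_{\rig,\kinfty}^\dagger / \bfB_{\kinfty}^\dagger$ tensored with $\bfD$; one shows this is $(\phi-1)$-acyclic and $(\tau-1)$-acyclic, again Frobenius descent. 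Then comes the heart of the matter: compute $C_{\phi, \tau}(\bfD_{\rig, \kinfty}^\dagger, \wtD_{\rig, L}^\dagger)$ explicitly when $V = \qp$, so that $\bfD_{\rig,\kinfty}^\dagger = \bfB_{\rig,\kinfty}^\dagger$ with $\phi$ the arithmetic Frobenius and $\tau$ trivial on it, while $\wtD_{\rig,L}^\dagger = \wtb_{\rig,L}^\dagger$. One computes $H^0$ and $H^1$ of this complex and compares with $\rg(\gk,\qp)$, whose cohomology in low degrees is $\qp$ in degree $0$ and $\Hom_{\cts}(\gk,\qp)$ in degree $1$ — a space whose dimension is known (related to $[K:\qp]+1$ or, for infinite $K/\qp$, genuinely different). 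The discrepancy will appear already in $H^1$ or $H^2$: the term $(\wtD_{\rig,L}^\dagger)^{\delta-\gamma=0}$ is ``too big'' in the $\bfD^\dagger$-version because one has lost the extra invariance that in the correct complex \eqref{eqintro3term} is enforced by having $\wtD_{\rig,\kinfty}^\dagger$ (which is $G_{\kinfty}$-invariant, so much smaller) rather than $\bfD_{\rig,\kinfty}^\dagger$ (which is only $G_{\kinfty}$-invariant on a non-$\tau$-stable piece) as the source.

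The main obstacle I expect is precisely this last explicit computation for the trivial representation: one must get one's hands on the ring $\wtb_{\rig, L}^\dagger$ and the operators $\tau-1$ and $\delta - \gamma$ on it concretely enough to see that $H^1\big(C_{\phi, \tau}(\bfB_{\rig, \kinfty}^\dagger, \wtb_{\rig, L}^\dagger)\big)$ is strictly larger than $H^1(\gk,\qp)$. This amounts to producing an explicit ``extra'' cocycle — most naturally a class built from the element $u$ or $\log$-type element living in $\wtb_{\rig,L}^\dagger \setminus \bfB_{\rig,\kinfty}^\dagger$ which is killed by $\phi - 1$ up to a coboundary and satisfies the $\delta-\gamma = 0$ condition but is not a coboundary in the $\bfD^\dagger$-complex — and then checking it maps to zero (or to something visibly non-matching) under the comparison with Galois cohomology. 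Concretely I would look for the failure at the level of $H^2$: the correct complex has $H^2(\gk,\qp_K) $ of the expected Tate dimension, whereas the $\bfD^\dagger$-complex's $H^2$, being a cokernel of $\phi-1$ on $(\wtb_{\rig,L}^\dagger)^{\delta-\gamma=0}$, will be non-vanishing or wrong-dimensional because $\phi-1$ fails to be surjective on that larger space. Pinning down that non-surjectivity — i.e. exhibiting an explicit element of $(\wtb_{\rig,L}^\dagger)^{\delta-\gamma=0}$ not in the image of $\phi-1$, and verifying it genuinely has no preimage even in the rigid ring — is the technical crux, and I would handle it by Newton-polygon / convergence estimates on the Robba ring $\bfB_{\rig,\kinfty}^\dagger$, exactly the kind of estimate that distinguishes $\bfB^\dagger$ from $\bfB_{\rig}^\dagger$ and that underlies Liu's $\psi$-operator arguments in \cite{LiuIMRN08}, but now used in reverse to detect a defect rather than prove an isomorphism.
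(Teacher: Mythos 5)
Your outline for Item (1) and for the positive half of Item (2) is essentially the paper's route (reduce everything to comparisons of $\varphi$-cohomology of the individual columns of the double complex, using that $C_\phi(\bfD_\kinfty)\simeq C_\phi(\wtD_\kinfty)$ computes $\rg(\gkinfty,V)$ and that overconvergent-versus-Robba $\varphi$-cohomology agrees \`a la Kedlaya), though you leave the actual inputs vague, and your claim that the second and third terms of the two complexes in \eqref{eqwrongintrocomp1} are ``the same'' is not literally true: $(\wtD_{L}^\dagger)^{\delta-\gamma=0}$ and $(\wtD_{\rig,L}^\dagger)^{\delta-\gamma=0}$ differ, and what one actually compares is $C_\phi(\wtD_L^\dagger)$ with $C_\phi(\wtD_{\rig,L}^\dagger)$ inside the double complex.

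The genuine gap is in your plan for the negative statement. You propose to detect the failure by showing that $\phi-1$ is not surjective on $(\wtb_{\rig,L}^\dagger)^{\delta-\gamma=0}$, i.e.\ that the ``third term is too big''. But that term occurs \emph{identically} in the wrong complex $C_{\phi,\tau}(\bfD_{\rig,\kinfty}^\dagger,\wtD_{\rig,L}^\dagger)$ and in the correct complex $C_{\phi,\tau}(\wtD_{\rig,\kinfty}^\dagger,\wtD_{\rig,L}^\dagger)$: the two double complexes have equal second columns and differ only in the first column. Hence any defect of $\phi-1$ on $(\wtb_{\rig,L}^\dagger)^{\delta-\gamma=0}$ affects both complexes equally and cannot distinguish them; the cone of the natural map between the two total complexes is exactly (a shift of) the cone of $C_\phi(\bfD_{\rig,\kinfty}^\dagger)\to C_\phi(\wtD_{\rig,\kinfty}^\dagger)$. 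So the whole problem is to show, already for $V=\qp$, that the $\varphi$-cohomology of the imperfect Kummer-tower module differs from that of the tilde module. The paper does this (for $K/\qp$ finite, which suffices for ``in general'') by a $\psi$-operator argument on $E=\bfb_\kinfty/\bfb_\kinfty^\dagger$: a Cherbonnier--Colmez/Liu-type lemma shows $\psi-1$ is bijective on $E$, while the explicit element $\sum_{i\le 0,\,p\nmid i} p^{\lfloor \log(-i)\rfloor}u^i$ gives a nonzero class in $E^{\psi=0}$, so $\phi-1$ cannot be bijective on $E$; combined with $C_\phi(\bfD_\kinfty^\dagger)\simeq C_\phi(\bfD_{\rig,\kinfty}^\dagger)$ and $C_\phi(\bfD_\kinfty)\simeq C_\phi(\wtD_{\rig,\kinfty}^\dagger)\simeq\rg(\gkinfty,\qp)$ this yields the non-quasi-isomorphism. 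Your closing remark about using Liu's $\psi$-estimates ``in reverse'' is the right instinct, but it must be aimed at $\bfB_\kinfty^\dagger\subset\bfB_\kinfty$ (equivalently $\bfB_{\rig,\kinfty}^\dagger$ versus the tilde rings), not at the $(\delta-\gamma=0)$-subspace of $\wtb_{\rig,L}^\dagger$; as written, your computation would not produce the required discrepancy even if carried out.
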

\begin{proof}[Sketch of main ideas.]
    First we point out the complex $C_{\varphi, \tau}(\bfD_{\kinfty}, \wtd_L)$ in Item (1) is already  studied by the second named author in \cite{Zhao25} (using devissage and $\delta$-functors); we shall use a more conceptual strategy  which works for other complexes as well. For Item (2), to show $C_{\phi, \tau}(\bfD_{\rig, \kinfty}^\dagger, \wtD_{\rig, L}^\dagger)$ is \emph{not} quasi-isomorphic to $C_{\phi, \tau}(\wtD_{\rig, \kinfty}^\dagger, \wtD_{\rig, L}^\dagger)$; it is equivalent to show that the $\varphi$-cohomology
    \[ [\bfD_{\rig, \kinfty}^\dagger \xrightarrow{\varphi-1} \bfD_{\rig, \kinfty}^\dagger] \]
    is \emph{not} quasi-isomorphic to
     \[ [\wtD_{\rig, \kinfty}^\dagger \xrightarrow{\varphi-1} \wtD_{\rig, \kinfty}^\dagger]; \]
     this can be achieved by making use of \emph{other} $\varphi$-complexes (as well as some ``$\psi$-complexes").
\end{proof}

 \subsection{Cohomology III: $\phi+$Lie algebra} \label{subsec coho 3}
 So far, $\bfD_{\rig, \kinfty}^\dagger$ is still fugitive from any (correct) cohomology theory; in addition, the appearance of $(\wtD_{\rig, L}^\dagger)^{\delta-\gamma=0}$ in \eqref{eqintro3term} (similarly in other complexes in Thm. \ref{thm3complexwrong}(1)) is very unsatisfying because it is only a \emph{very} implicit   $\qp$-vector  space, and \emph{not} a module over $\bfb_{\rig, \kinfty}^\dagger$ or any other interesting rings.
 Note that it appears precisely because $\tau$-action on $\bfD_{\rig, \kinfty}^\dagger$ is \emph{not stable}.
 
 In a previous work \cite{Gao23} by the first named author, we construct a \emph{differential operator}, which is ``essentially" the Lie algebra operator associated to $\tau$, that is stable on  $\bfD_{\rig, \kinfty}^\dagger$!
Indeed, let
 $\nabla_\tau:= (\log \tau^{p^n})/{p^n}$ for $n\gg 0$ be the Lie-algebra operator with respect to the $\tau$-action, and define
 $$N_\nabla:=\frac{1}{p\mathfrak{t}}\cdot \nabla_\tau$$
 where $\mathfrak{t}$ is a certain  ``normalizing"  element (cf. Def. \ref{defnfkt}). (Note that there might be   some modifications  in certain cases when $p=2$). Then
\[N_\nabla(\bfD_{\rig, \kinfty}^\dagger) \subset  \bfD_{\rig, \kinfty}^\dagger.\]  
 This ``infinitesimal $\tau$-operator" brings the hope to construct a 3-term complex using $\bfD_{\rig, \kinfty}^\dagger$ \emph{only}. Define
\begin{equation}
C_{\phi, N_\nabla}(\bfD_{\rig, \kinfty}^\dagger):=   [ \bfD_{\rig, \kinfty}^\dagger \xrightarrow{\phi-1, N_\nabla} \bfD_{\rig, \kinfty}^\dagger\oplus \bfD_{\rig, \kinfty}^\dagger \xrightarrow{-N_\nabla, \frac{pE(u)}{E(0)}\phi-1} \bfD_{\rig, \kinfty}^\dagger ]
\end{equation} 
here the normalization by the invertible element $\frac{pE(u)}{E(0)}$ on the second arrow (cf. Notation \ref{notafields}) is needed to make the diagram a complex.
In fact, inspired by above, one can also form a similar complex for $(\phi, \Gamma)$-modules
\begin{equation}
 C_{\phi,  \nabla_\gamma}(\bbD_{\rig, \kpinfty}^\dagger):=   [ \bbD_{\rig, \kpinfty}^\dagger \xrightarrow{\phi-1,  \nabla_\gamma} \bbD_{\rig, \kpinfty}^\dagger\oplus \bbD_{\rig, \kpinfty}^\dagger \xrightarrow{-\nabla_\gamma, \phi-1} \bbD_{\rig, \kpinfty}^\dagger  ]
\end{equation} 
where $\nabla_\gamma$ is the Lie algebra operator for $\gammak$-action.
However, these two complexes are not  ``correct" complexes: even their $H^0$'s do not compute $H^0(\gk, V)$ (when they are associated to  $V \in \rep_\gk(\qp)$).
A key observation is that if we further take ``$\tau=1$ invariants" resp. ``$\gammak=1$ invariants" of these cohomology groups, then we again obtain the \emph{correct} cohomology.

\begin{theorem}[cf. Thm. \ref{thm: phi Lie algebra}] 
\label{thm:introliecoho}
 (Let $p$ be any prime). 
Let $H^i_{\phi, N_\nabla}$ resp. $H^i_{\phi, \nabla_\gamma}$  be the cohomology groups of $C_{\phi, N_\nabla}(\bfD_{\rig, \kinfty}^\dagger)$ resp. $C_{\phi,  \nabla_\gamma}(\bbD_{\rig, \kpinfty}^\dagger)$. One can consider their ``$\tau=1$ invariants" (in some sense) resp. $\gammak=1$ invariants, and we have 
\begin{equation}\label{eqintrocohoinv}
  (H^i_{\phi, N_\nabla}(\bfD_{\rig, \kinfty}^\dagger))^{\tau=1} \simeq  (H^i_{\phi, \nabla_\gamma}(\bbD_{\rig, \kpinfty}^\dagger))^{\gammak=1} \simeq H^i_{\phi, \gammak}(\bbD_{\rig, \kpinfty}^\dagger)  
\end{equation} 
Indeed, what we  actually prove are the following (stronger) results; for simplicity, assume all the modules are associated to a Galois representation $V\in \rep_\gk(\qp)$ (for general cases, one replaces $V$ below by a $B$-pair):
\begin{eqnarray}
     \label{eqintroliecoho}
H^i_{\phi, N_\nabla}  &  \simeq\injlim_n H^i(G_{K(\pi_n)}, V )  & =\bigcup_n H^i(G_{K(\pi_n)}, V )\\
  H^i_{\phi, \nabla_\gamma} &\simeq \injlim_n H^i(G_{K(\mu_n)}, V)  &=\bigcup_n H^i(G_{K(\mu_n)}, V)   
\end{eqnarray}
  \end{theorem}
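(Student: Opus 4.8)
The plan is to deduce Theorem~\ref{thm:introliecoho} from the ``$\phi+$group'' description already established in Theorems~\ref{thmintrogpcoho} and~\ref{thmintro3term}, by systematically trading the (genuine) $\tau$-action for the infinitesimal operator $N_\nabla$ (resp. $\gamma$ for $\nabla_\gamma$) via locally analytic cohomology. First I would record the identity
$N_\nabla = \frac{1}{p\fkt}\nabla_\tau$ on $\bfD_{\rig,\kinfty}^\dagger$ from \cite{Gao23}, together with the commutation relation between $\phi$ and $N_\nabla$ that forces the normalizing factor $\frac{pE(u)}{E(0)}$ in the second differential; this makes $C_{\phi,N_\nabla}(\bfD_{\rig,\kinfty}^\dagger)$ a genuine complex. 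The key point is that $\nabla_\tau$ is the Lie algebra operator of the $\tau$-direction inside $\hatg$, so by the higher locally analytic cohomology machinery (Lazard isomorphism / Tamme-style descent), the Lie-algebra complex computes the locally analytic cohomology $\rg_{\mathrm{la}}$ of the corresponding subgroup, and the extra ``$\tau=1$ invariants'' exactly cut this locally analytic cohomology down to the honest continuous group cohomology.

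The main steps, in order, would be: (1) interpret $C_{\phi,\nabla_\gamma}(\bbD_{\rig,\kpinfty}^\dagger)$ as $\rg_{\mathrm{la}}(\Gamma_K,\bbD_{\rig,\kpinfty}^\dagger)^{\phi=1}$, i.e. replace the continuous group cohomology $[\bbD\xrightarrow{\gamma-1}\bbD]$ in the double complex of \S\ref{subsec coho 1} by the locally analytic Lie-algebra complex $[\bbD\xrightarrow{\nabla_\gamma}\bbD]$ — this is where I invoke the comparison between locally analytic group cohomology and Lie-algebra cohomology for the (one-dimensional, pro-cyclic for $p>2$) group $\Gamma_K$, extended to general $p$ by the usual trick of passing to an open subgroup and taking invariants; (2) prove the analogous statement on the $\tau$-side, but here one cannot literally do this over $\bfD_{\rig,\kinfty}^\dagger$ because $\tau$ does not act on it, so instead I would work with $\wtD_{\rig,L}^\dagger$, write $N_\nabla$ as the normalized $\nabla_\tau$, and show $C_{\phi,N_\nabla}(\bfD_{\rig,\kinfty}^\dagger)$ sits inside the $\hatg$-locally-analytic picture after restricting to $\gal(L/\kinfty)$; (3) identify the cohomology of these Lie-algebra complexes with $\injlim_n \rg(G_{K(\pi_n)},V)$ (resp. $\injlim_n \rg(G_{K(\mu_n)},V)$): the point is that the Lie-algebra-invariants (= locally analytic vectors killed by $\nabla$) of $V\otimes(\text{period ring})$ assemble the cohomology of all the finite-level subfields in the tower, because passing to $K(\pi_n)$ replaces $\tau$ by $\tau^{p^n}$ and in the limit only the infinitesimal operator survives — this is the content of \eqref{eqintroliecoho}; (4) finally, take $\tau=1$ (resp. $\gammak=1$) invariants of both sides: on the cohomology side this intersects $\bigcup_n H^i(G_{K(\pi_n)},V)$ with the $\tau$-invariants, and since $\langle\tau\rangle$ together with the tower generates $\gkinfty$ inside $\gk$ modulo $\gkpinfty$-type subgroups, the fixed part is exactly $H^i(\gk,V)$, i.e. $H^i_{\phi,\gammak}(\bbD_{\rig,\kpinfty}^\dagger)$ by Herr's theorem \eqref{eqcohophigammaintro} and Theorem~\ref{thmintrogpcoho}. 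The outer equality in \eqref{eqintrocohoinv} then follows by comparing the two towers through Theorem~\ref{thmintrogpcoho}.

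For the general (non-\'etale) case I would replace $V$ throughout by the associated $B$-pair $(W_e, W_{\dR}^+)$, and replace $H^i(G_X,V)$ by the cohomology of the $B$-pair over $X$; the locally analytic and Lie-algebra comparisons are insensitive to this since they only use the module structure over the period rings, and the vanishing inputs needed (acyclicity of $\rg_{\mathrm{la}}$ of pro-$p$-groups on the relevant period rings in positive degrees beyond the expected range) are the same ``standard cohomology vanishing computations'' cited in Theorem~\ref{thmintrogpcoho}.

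\textbf{Main obstacle.} The hard part will be step~(3): making precise the claim that the Lie-algebra cohomology $H^i_{\phi,N_\nabla}$ equals the \emph{union} $\bigcup_n H^i(G_{K(\pi_n)},V)$ rather than something strictly larger (such as a full locally analytic cohomology that need not be a filtered union of honest Galois cohomologies). This requires a careful analysis of how $N_\nabla$ interacts with the Kummer tower $K(\pi_n)$ — in particular controlling the ``normalizing element'' $\fkt$ and the $p=2$ modifications — and showing that the locally analytic vectors that appear are precisely the ones coming from finite level, i.e. an exhaustion/overconvergence argument in the spirit of \cite{Gao23}. A secondary difficulty, already flagged in Remark~\ref{rem: intro p equal 2}, is that for $p=2$ the groups $\gal(L/\kinfty)$ and $\gammak$ need not be pro-cyclic, so the clean three-term Lazard--Serre resolution is unavailable; here I would argue with an open pro-cyclic subgroup and Hochschild--Serre, checking that the relevant invariants still behave, which is exactly why the theorem is phrased via invariants of the (possibly larger) Lie-algebra cohomology rather than via a naive three-term complex.
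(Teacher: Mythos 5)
You have the right skeleton --- Lie-algebra cohomology $\leftrightarrow$ locally analytic cohomology $\leftrightarrow$ continuous cohomology at finite levels of the tower, then Herr/$B$-pair comparison --- and you have even located the hard point correctly, but the proposal does not contain the ideas that get through it, and as written two of its steps would fail. First, you cannot invoke the Lazard/Tamme comparison directly for $\bbD_{\rig,\kpinfty}^\dagger$ or $\wtD_{\rig,L}^\dagger$: these are LF modules whose group actions are only \emph{pro-analytic}, and the locally analytic cohomology machinery is valid only for LB/Banach representations (Rem.~\ref{rem: caution pro ana}; the paper explicitly does not know how to argue over the pro-analytic ring, Rem.~\ref{rem: diffi lpa ring}). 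The paper's fix is the $\varphi$-descent to closed intervals (Lem.~\ref{lemphidescent}), which replaces $C_{\phi,\nabla_\gamma}(\bbD_{\rig,\kpinfty}^\dagger)$ by $C_{\phi,\nabla_\gamma}(\bbD_{\kpinfty}^{[r,pr]})$, etc.; only then does Thm.~\ref{thm: Tamme ana coho} apply, and it is its second half, $\rg(\Lie G,W)\simeq\injlim_{G'}\rg_\la(G',W)$ over open subgroups $G'$, that produces the colimit over $K(\mu_n)$ resp.\ $K(\pi_n,\mu_n)$ --- i.e.\ it is what turns your ``only the infinitesimal operator survives'' heuristic into a proof of your step~(3). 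To then replace locally analytic cohomology of the locally analytic vectors by continuous cohomology of the full Banach module $\wtD_L^{[r,pr]}$ (so that the finite-level comparisons of Thm.~\ref{thmcohononetale} and Thm.~\ref{thm: coho B pair} can be fed in) one needs Porat's vanishing of higher locally analytic vectors (Example~\ref{example: porat}, Thm.~\ref{prop: no higher lav coho compa}); this is the ingredient that handles $i=2$ and it is absent from your plan.

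Second, on the $\tau$-side the bridge from the one-operator complex $C_{\phi,N_\nabla}(\bfD_{\rig,\kinfty}^\dagger)$ to the $\Lie\hatg$-picture is not just ``restricting to $\gal(L/\kinfty)$'': it is the identification $(\rg(\Lie\hatg,\wtD_L^{[r,s],\la}))^{\gal(L/\kinfty)=1}\simeq C_{N_\nabla}(\bfD^{[r,s]}_{\kinfty,\infty})$ of Cor.~\ref{cor: lie alg coho verified applied}, whose proof needs the surjectivity of $\nabla_\gamma$ and $\nabla_\tau$ on $\wtb_L^{[r,s],\hatgla}$ --- the monodromy-descent results of \S\ref{sec: verify lie}, built on $\fkt$ and Poyeton's element $b$ --- plus the computation of the invariant locally analytic vectors as $\bfD^{[r,s]}_{\kinfty,\infty}$ (Prop.~\ref{prop: lav of mod}) and Lem.~\ref{lemphicoho} to remove the $\varphi^{-m}$-twists; ``an exhaustion argument in the spirit of \cite{Gao23}'' does not supply this mechanism. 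Two minor points: passing to an open subgroup for $p=2$ in your step~(1) is unnecessary (working with $\Lie\gammak$ is precisely what makes pro-cyclicity irrelevant), and your step~(4) is fine but is, as Rem.~\ref{rem: artificial tau 1} notes, an essentially formal (and deliberately artificial) corollary of the colimit statement, needing only Hochschild--Serre and the vanishing of finite-group cohomology on $\qp$-vector spaces.
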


 \begin{rem}
  Taking $\gammak=1$ invariants in \eqref{eqintrocohoinv}   is a legitimate and natural process. In contrast, the ``$\tau=1$ invariants" in \eqref{eqintrocohoinv}    is only meant to be   \emph{illustrative} here, and is indeed very \emph{artificial}; cf.  Rem. \ref{rem: artificial tau 1}.   We expect that the (stronger) comparisons in \eqref{eqintroliecoho} should be more useful.
 \end{rem}

\begin{remark} \label{rem: introend}
\begin{enumerate} 
\item When $i=0$ resp. $1$, Thm. \ref{thm:introliecoho} can be obtained relatively easily by hand, since these cohomology groups correspond to fixed points resp. extensions of modules.   However for $i=2$, there does not seem to be any naive method; indeed, our proof (which works uniformly for all $i$) uses (vanishing of) \emph{higher locally analytic vectors} recently studied by Porat \cite{Poratlav}, which in turn builds  on foundational works by Pan \cite{Pan22} and Rodrigues Jacinto--Rodr\'{\i}guez Camargo \cite{RJRC22}) etc.

\item  \label{itemremintro2}   
The ingredients of the complex $C_{\phi,  \nabla_\gamma}(\bbD_{\rig, \kpinfty}^\dagger)$ are long available since \cite{CC98, Ber02}; but as far as the authors are aware, the complex has not been studied in the literature; thus Thm. \ref{thm:introliecoho}   is  new even for this case.

\item Theorem \ref{thm:introliecoho} is strongly inspired by  \emph{Sen theory over the Kummer tower}, developed in \cite{GMWHT, GMWdR} by the first named author with Min and Wang; cf. \cite[\S 7]{GMWHT}. 
There, the   Lie algebra cohomology (without $\phi$) plays a crucial role in understanding prismatic cohomology of Hodge--Tate resp. $\bbdrplus$-crystals. 
It is thus natural to speculate that the complex $C_{\phi, N_\nabla}(\bfD_{\rig, \kinfty}^\dagger)$ (before or after taking   ``$\tau=1$" invariants of its cohomology) ---which is the \emph{only} useful complex capturing $\bfD_{\rig, \kinfty}^\dagger$ (and using it \emph{only}), and which works for any prime  $p$---to show up in other prismatic set-ups, and possibly to be a most useful complex in   future applications of $(\phi, \tau)$-modules.
\end{enumerate}
\end{remark}

\subsection{Structure of the paper} \label{subsec structure}
In \S \ref{sec: rings and gps}, we review notations on many rings (and pay special attention to \emph{integral} subrings). 
In \S \ref{sec:lav}, we first review the structure of some $p$-adic Lie groups; we then review  the notion of  \emph{higher} locally analytic vectors and their applications to locally analytic cohomologies. 
In \S \ref{sec:equiv cats}, we review equivalences of many module categories, particular those between $(\varphi, \Gamma)$-modules and  $(\varphi, \tau)$-modules.
We then carry out an axiomatic  study of cohomologies from \S \ref{sec: axiom gp coho} to \S\ref{sec: verify lie}.
In \S \ref{sec: axiom gp coho}, we axiomatically study group cohomology of $\hatg$; axioms  there are verified in \S \ref{sec: ts-1} using TS-1 descent.
In \S \ref{sec: axiom lie alg}, we axiomatically study Lie algebra cohomology of $\Lie \hatg$; axioms there are verified in \S \ref{sec: verify lie}.
In the final two sections \S \ref{sec: phi coho} and \S\ref{sec: coho phi tau}, we prove our main theorems on cohomology comparison. 
Indeed, we shall start with \S \ref{sec: phi coho} to deal with $\phi$-cohomology \emph{separately}; this makes many (although not all) cohomology comparisons in \S \ref{sec: coho phi tau} much more \emph{transparent}.

 \subsection*{Acknowledgement}
 We thank Olivier Brinon, Gal Porat and Juan Esteban Rodr\'{\i}guez Camargo for insightful discussions and correspondences. 
 H.G.~ thanks Yu Min and Yupeng Wang for useful collaborations in \cite{GMWHT, GMWdR}.
 L.Z.~ thanks Tong Liu and Ruochuan Liu for their generous hospitality during his postdoctoral stays at Purdue University and Peking University respectively. We thank the referees for carefully reading the paper and providing useful comments. 
   Hui Gao  is partially supported by the National Natural Science Foundation of China under agreements No. NSFC-12071201, NSFC-12471011.

\section{Notations: rings and integral subrings } \label{sec: rings and gps} 

 We first review the many period rings from $p$-adic Hodge theory; the rings in $(\varphi, \Gamma)$-module theory and in  $(\varphi, \tau)$-module theory have similar structures, and thus we introduce them in an \emph{axiomatic} fashion. We take special care about \emph{``integral overconvergent rings"}, cf. Rem \ref{rem int oc ring}.

 \subsection{Perfect rings} 
\begin{notation} \label{nota:ainf}
Let $C$ be the $p$-adic completion of $\overline{K}$, and let $\mathcal{O}_{C}$ be the ring of integers. Let $v_p$ be the valuation on $C$ such that $v_p(p)=1$.
Let \[\wte^+:=\ocpflat, \quad \wte :=C^\flat;\]
    $$\wt{\mathbf{A}}^+:= W(\wte^+), \quad  \wt{\mathbf{A}}:= W(\wte), \quad \wt{\mathbf{B}}^+:= \wt{\mathbf{A}}^+[1/p], \quad \wt{\mathbf{B}}:= \wt{\mathbf{A}}[1/p],$$
where $W(\cdot)$ means the ring of Witt vectors.
Using elements in Notation \ref{notafields}, define two elements in $\wte^+$ by $$\underline \varepsilon:= (\mu_{n}) _{n \geq 0}, \quad \upi:=\{\pi_n\}_{n \geq 0}$$
  let $[\underline \varepsilon], [\underline \pi ] \in \wt{\mathbf{A}}^+$ be the Teichm\"uller lifts.
  Also denote $\overline \pi =\underline{\varepsilon} -1 \in \wt{\mathbf E}^+$ (this is not $\underline \pi$), and let $[\overline \pi] \in \wt{\mathbf{A}}^+$ be its Teichm\"uller lift. 
\end{notation}

We introduce the following \emph{bounded overconvergent rings}. Here we use exactly the same notation as in \cite[\S 2]{GP21} (which we cite often), which in turn are modeled on those in \cite{Ber02} which in turn is slightly different from that in  \cite{CC98}.

\begin{notation}[Bounded overconvergent rings]
An element $x\in \wtb$ can be uniquely written as $x= \sum_{k \geq k_0} p^k[x_k]$. For each $r >0$, define a function
$$W^{[r,r]}(x) :=\inf_{k \geq k_0} \{k+\frac{p-1}{pr}\cdot v_{\wt{\mathbf E}}(x_k)\}.$$
Let $\wtb^{[r, +\infty]} \subset \wtb$ be the subset consisting of $x$ such that  $W^{[r, r]}(x) <+\infty$.  
Let 
$$\wta^{[r, +\infty]}:=\{ x\in \wta \cap \wtb^{[r, +\infty]} \mid W^{[r,r]}(x) \geq 0. \} $$
For $r \in \mathbb{Z}^{> 0}[1/p]$, one can show
$$\wta^{[r, +\infty]}  = \wt{\mathbf{A}}^+ [\frac{p}{[\overline \pi]^r}  ]^{\wedge_p}; \quad \wtb^{[r, +\infty]}=\wta^{[r, +\infty]}[1/p] $$
Further define
\[ \wtb^\dagger :=\cup_{r>0} \wtb^{[r, +\infty]} \]
\[  \wta^\dagger:=  \wtb^\dagger \cap \wta\]
\end{notation}

\begin{remark}[\emph{``Integral overconvergent rings"}] \label{rem int oc ring}

We discuss some very subtle (and confusing: at least to the authors) \emph
{integrality} issues. 
Note that we have
\[ \wta^{[r, +\infty]}[1/p] =(\wta^{[r, +\infty]}[{1}/{[\overline \pi]}])[1/p]=\wtb^{[r, +\infty]}  \]
However, we propose   that  $\wta^{[r, +\infty]}$   is \emph{not} the ``\emph{correct}" integral (bounded) overconvergent subring of $\wtb^{[r, +\infty]}$; rather, the ``correct" integral rings should be $
  \wta^{[r, +\infty]}[{1}/{[\overline \pi]}].$
Here are the reasons:
\begin{enumerate}
\item Firstly, we have the following relations:
\[ \wta^{[r, +\infty]} \subsetneqq \wta \cap  \wtb^{[r, +\infty]} =\wta^{[r, +\infty]}[{1}/{[\overline \pi]}] \]
\[ \cup_{r>0} \wta^{[r, +\infty]}  \subsetneqq  \wta^\dagger =\cup_{r>0}\wta^{[r, +\infty]}[{1}/{[\overline \pi]}] \]
Namely, the $\wta^{[r, +\infty]}$'s are not big enough to recover the entire integral (bounded) overconvergent ring $\wta^\dagger$.
 
\item \label{eq rem int oc ring} Secondly and more seriously, as we shall see in Prop \ref{prop: verify TS-1}, only the bigger ring $\wta^{[r, +\infty]}[{1}/{[\overline \pi]}]$ satisfies the TS-1 Axiom (due to \cite{Col08}). 

\item In ``retrospect", the possible ``confusion" here comes from the \emph{notation} $\wta^{[r, +\infty]}$, which \emph{looks like} the integral subring of $\wtb^{[r, +\infty]}$. 
This notation system is first introduced (as far as the authors are aware) in \cite[\S 2.1]{Ber02}, and is also used in the paper \cite{GP21}.
Note our $\wta^{[r,+\infty]}[{1}/{[\overline \pi]}]$ resp.~ $\wtb^{[r,+\infty]}$ is precisely the ring  $\wta^{(0, p/((p-1)r)]}$ resp.~ $\wtb^{(0, p/((p-1)r)]}$ in \cite[\S 5.2]{Col08}: for this reason, the notation of \cite{Col08} reflects better the integrality relations.
In this paper, we have continued to use the notation system of \cite{GP21}  as the paper will be cited often.

\end{enumerate} 
\end{remark}

We recall the following \emph{unbounded} overconvergent rings.

\begin{notation} \label{nota:wtai}
  For $I=[r,s]  \subset (0, +\infty)$ a   \emph{closed} interval, let
$W^{I}(x) := \inf_{\alpha \in I} \{W^{[\alpha, \alpha]}(x) \};$ 
$W^I$ defines a valuation on $\wtb^{[r, +\infty]}$. Let  $\wtb^{I}$ be the  completion of  $\wtb^{[r, +\infty]}$ with respect to $W^I$. 
Let $\wta^I \subset \wtb^I$ be the ring of integers with respect to $W^I$.
When $r \leq s$ are two elements in $ \mathbb{Z}^{> 0}[1/p]$, we have 
$$\wta^{[r,s]}  = \wt{\mathbf{A}}^+ [\frac{p}{[\overline \pi]^r}, \frac{[\overline \pi]^s}{p}  ]^{\wedge_p}; \quad \wtb^{[r, s]}=\wta^{[r, s]}[1/p]. $$ 
\end{notation}

\begin{construction}[Topologies]
Recall topologies on some rings (where $r>0$):
\begin{itemize} 
\item  $\wta$ is complete with respect to the weak topology, cf. \cite[\S 5.1]{Col08}.
\item Note $\wta^{[r,+\infty]}[{1}/{[\overline \pi]}]$ is precisely the ring $\wta^{(0, p/((p-1)r)]}$ in \cite[\S 5.2]{Col08}, thus is complete  with respect to the $W^{[r, r]}$  by \cite[Prop 5.6]{Col08}; the subring $\wta^{[r,+\infty]}$ is precisely the unit ball, hence is also $W^{[r, r]}$-complete. Finally, $\wta^{[r,r]}$ is $W^{[r, r]}$-complete  by definition.
\item All the ``$\mathbf{B}$"-rings we defined so far satisfy $\wtb^\ast=\wta^\ast[1/p]$ (with $\ast$ certain decorations) and hence are equipped with   inductive limit topologies.
\end{itemize}
Consider the following diagram where all arrows are inclusion maps.
\[
\begin{tikzcd}
\wta \arrow[d] & {\wta^{[r,+\infty]}} \arrow[l] \arrow[rr, "\mathrm{closed}"] \arrow[d] &                                                 & {\wta^{[r,r]}} \arrow[d] \\
\wtb           & {\wta^{[r,+\infty]}[\frac{1}{[\overline \pi]}]} \arrow[l] \arrow[r]   & {\wtb^{[r,+\infty]}} \arrow[r, "\mathrm{dense}"] & {\wtb^{[r,r]}}          
\end{tikzcd}
\]
Then all maps are continuous with respect to above topologies. (The only non-trivial fact to note is that $ \wta^{[r,+\infty]} \into \wta$ is continuous. 
As  $ \wta^{[r,+\infty]}$ is a metric space, it is equivalent to check  sequential continuity; this is easy and is mentioned in the proof of \cite[Prop 5.6]{Col08}.)
\end{construction}

\begin{remark} \label{rem more integral}
We give some more remarks about the above rings.
\begin{enumerate}
\item  For $\wtb^{[r, +\infty]}$, its completion with respect to $W^{[r,r]}$ is $\wtb^{[r, r]}$; its ring of integers is $\wtb^{[r, +\infty]} \cap \wta^{[r, r]}$ which strictly contains $\wta^{[r, +\infty]}$. However, $\wta^{[r,+\infty]}$ is   the ring of integers of $\wta^{[r,+\infty]}[{1}/{[\overline \pi]}]$.

 \item   $\wta^{[r,s]}$ is not the completion of $\wt{\mathbf{A}}^{[r, +\infty]}$  with respect to $W^{[r,s]}$ (already so when $r=s$).

\item \label{itemrssenintegral} In continuation with Rem \ref{rem int oc ring}\eqref{eq rem int oc ring}, the ring $\wta^{[r,s]}$ does not satisfy TS-1 descent; the ring $\wta^{[r,s]}[{1}/{[\overline \pi]}]$ does, but is a ring never used in the literature.

\item   We also refer to \cite[Rem. 2.1.11, Rem 2.1.15]{GP21} for some more integral subtleties.
 
\item We quickly mention that  we purposedly avoid defining $\wtb^I$'s  with $0 \in I$ as they are not used in this paper; cf. e.g. \cite[Rem 2.1.2]{GP21} for their  subtlies.  
\end{enumerate}
\end{remark}

\subsection{Imperfect rings}
\begin{notation}
\begin{enumerate}
   \item  Denote $K_{0, p^\infty}:=\cup_{n\geq 1}K_0(\mu_n)$.
Let $\mathbb{A}^+_{K_{0, p^\infty}} : =  W(k)[\![T]\!]$,  let $\mathbb{A}_{K_{0,p^\infty}}$ be the $p$-adic completion of $\mathbb{A}^+_{K_{0,p^\infty}}[1/T]$, and let   $\mathbb{B}_{K_{0,p^\infty}}:=\mathbb{A}_{K_{0,p^\infty}}[1/p]$.
We have an embedding $\mathbb{B}_{K_{0,p^\infty}} \inj \wt{\mathbf{B}}$ 
via  $T\mapsto [\underline \varepsilon]-1$.
 Let $\mathbb{B}$ be the $p$-adic completion of the maximal unramified extension of $\mathbb{B}_{K_{0,p^\infty}}$ inside $\wt{\mathbf{B}}$; and let $\mathbb{A}$ be the ring of integers. Let $\mathbb{A}_{\Kpinfty}:= \mathbb{A}^{G_{\Kpinfty}}$ and
 $\mathbb{B}_{\Kpinfty}:= \mathbb{B}^{G_{\Kpinfty}}$; in general, one cannot express $\mathbb{B}_{\Kpinfty}$ explicitly using $T$.

 \item Let $\mathbf{A}^+_{K_\infty} : = W(k)[\![u]\!]$, let $\mathbf{A}_{K_\infty}$ be the $p$-adic completion of $\mathbf{A}^+_{K_\infty}[1/u]$, and let $\mathbf{B}_{K_\infty}:=\mathbf{A}_{K_\infty}[1/p]$.
We have an embedding $\mathbf{B}_{K_\infty}\inj \wt{\mathbf{B}}$
via $u\mapsto [\underline \pi]$.
 Let $\mathbf{B}$ be the $p$-adic completion of the maximal unramified extension of $\mathbf{B}_{K_\infty}$ inside $\wt{\mathbf{B}}$, and let $\bfa$ be the ring of integers.
\end{enumerate}
\end{notation}

\begin{construction} \label{constr: many rings}
Let $(X, Y) \in \{ (\mathbb{A}, \bbb),  (\bfa, \bfb), (\wta, \wtb) \}$ be a pair of symbols (all rings considered as subrings of $\wtb$). Let $r>0$.
\begin{enumerate}
\item  (Galois invariants of rings). Let $\ast$ be either empty or a field in $\{ \kpinfty, \kinfty, L\}$; in the later case, let $X_\ast =X^{\gal(\barK/\ast)}$.
For example, $X_\ast$ could be $\mathbb{A}_\kpinfty$ or $\wta_\kinfty$. Define $Y_\ast$ similarly.

\item (Overconvergent rings). Let  
\[ X_\ast^{ [r, +\infty]} :=X_\ast \cap \wta^{[r, +\infty]}, \quad Y_\ast^{ [r, +\infty]} :=Y_\ast \cap \wtb^{[r, +\infty]} \] 
\[X_\ast^\dagger :=X_\ast \cap \wta^\dagger, \quad  Y_\ast^\dagger :=Y_\ast \cap \wtb^\dagger \] 
 
\item  \label{itemrigocring} (Rigid-overconvergent rings). 
Define $Y_\ast^{[r, s]} $ as the $W^{[r, s]}$-completion of $Y_\ast^{[r, +\infty]}$ inside $\wtb^{[r, s]}$.
Define
\[Y_\ast^{[r, +\infty)} :=\projlim_{s\to +\infty} Y_\ast^{[r, s]}  \]
and
\[Y_{\rig, \ast}^\dagger =\cup_{r \geq 0} Y_\ast^{[r, +\infty)} \]
These are the \emph{``analytic"} rings that we will use.
(One could also define $X_\ast^{[r, s]} $ as subring of integers of $Y_\ast^{[r, s]} $; but they are not needed; cf. Rem \ref{rem more integral}\eqref{itemrssenintegral}).

\item (Union of Frobenius inverses). 
For any of the $X_{\bullet, \ast}^I$ defined above where $I$ is an interval, define 
\[ X_{\bullet, \ast, \infty}^I =\cup_{m \geq 0} \varphi^{-m}(X_{\bullet, \ast}^{p^mI}) \]
where the Frobenius map $\varphi^{-1}$ is always defined in some bigger rings (which is always clear from the context) where $\varphi$ is bijective.
Define similar rings for any $Y_{\bullet, \ast}^I$ rings.
\end{enumerate}
\end{construction}

 \section{Notations: groups and  locally analytic vectors} \label{sec:lav} 

We   explain the structure of the $p$-adic Lie group $\hatg$ and its Lie algebra. We then review the notion of (higher) locally analytic vectors, and summarize some useful results in locally analytic cohomologies.
 
\subsection{Groups} \label{subsec: group notation}

\begin{Notation} \label{nota hatG}
Let $\hat{G}=\gal(L/K)$ be as in Notation \ref{notafields}, which is a $p$-adic Lie group of dimension 2. We recall the structure of this group in the following.
\begin{enumerate}
\item Recall that:
\begin{itemize}
\item if $K_{\infty} \cap K_{p^\infty}=K$ (always valid when $p>2$, cf. \cite[Lem. 5.1.2]{Liu08}), then $\gal(L/K_{p^\infty})$ and $\gal(L/K_{\infty})$ generate $\hat{G}$;
\item if $K_{\infty} \cap K_{p^\infty} \supsetneq K$, then necessarily $p=2$, and $K_{\infty} \cap K_{p^\infty}=K(\pi_1)$ (cf. \cite[Prop. 4.1.5]{Liu10}), and hence $\gal(L/K_{p^\infty})$ and $\gal(L/K_{\infty})$   generate an open subgroup  of $\hat{G}$ of index $2$.
\end{itemize}

\item Note that:
\begin{itemize}
\item $\gal(L/K_{p^\infty}) \simeq \Zp$, and we fix
$\tau \in \gal(L/K_{p^\infty})$ \emph{the} topological generator such that
\begin{equation} \label{eq1tau}
\begin{cases} 
\tau(\pi_i)=\pi_i\mu_i, \forall i \geq 1, &  \text{if }  \Kinfty \cap \Kpinfty=K; \\
\tau(\pi_i)=\pi_i\mu_{i-1}=\pi_i\mu_{i}^2, \forall i \geq 2, & \text{if }  \Kinfty \cap \Kpinfty=K(\pi_1).
\end{cases}
\end{equation}
 
\item $\gal(L/K_{\infty})$ and $\gal(K_{p^\infty}/K)$ are   not necessarily pro-cyclic when $p=2$. (This does cause some trouble, cf. Notation \ref{nota: gamma generator}).
\end{itemize}
\end{enumerate}
\end{Notation}

\begin{notation} \label{nota: gamma generator}
Let $p>2$. In this case  $\gal(L/\kinfty) \simeq \gal(\kpinfty/K)=\gammak \subset \z_p^\times$, and all these groups are pro-cyclic.
Fix a topological generator $\gamma$ of $\gal(L/\kinfty)$; this is indeed the \emph{only} reason we need to assume $p>2$. See   Rem. \ref{rem: intro p equal 2} to see why we need this for Lem. \ref{lemgroupiwasawa}.
 (As  side-note: when $p=2$, by \cite[Lem. 2.1]{Wangxiyuan}  we can always choose \emph{some} $\{\pi_n\}_{n \geq 0}$ so that $K_{\infty} \cap K_{p^\infty}=K$; this makes  $\gal(L/\kinfty) \simeq \gal(\kpinfty/K)$, but the problem of pro-cyclicity remains.)
\end{notation}

We collect some formulae that we use often; one can easily verify them.
 
\begin{lemma} \label{lem:delta elt}
Let $p>2$. 
\begin{enumerate}
\item $\gamma \tau \gamma^{-1} =\tau^{\chi(\gamma)}$.
\item Define $$\delta = \frac{\tau^{\chi(\gamma)} - 1}{\tau - 1} = \sum_{n \geq 1} \binom{\chi(\gamma)}{n} (\tau -1)^{n-1} \in \zp[[\tau-1]].$$
Then we have $$(\delta-\gamma)(\tau -1) =(1-\tau^{\chi(\gamma)})(\gamma-1). $$
\end{enumerate}
\end{lemma}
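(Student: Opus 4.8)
The plan is to handle the two formulae separately, deriving the second from the first. For (1), I would begin by noting that $\gal(L/\kpinfty)$ is normal in $\hatg$, since $\kpinfty/K$ is Galois; hence conjugation by $\gamma$ is a continuous automorphism of $\gal(L/\kpinfty)\simeq\zp$ and therefore equals multiplication by some unit $c\in\zp^\times$, so $\gamma\tau\gamma^{-1}=\tau^c$. The remaining task is to identify $c=\chi(\gamma)$, and I would do this by evaluating both sides on a Kummer generator $\pi_i$. Because $p>2$ we are in the case $\kinfty\cap\kpinfty=K$, so $\gamma$ fixes each $\pi_i$, and by \eqref{eq1tau} we have $\tau(\pi_i)=\pi_i\mu_i$; an elementary induction, extended from integer exponents to all $a\in\zp$ by continuity, then gives $\tau^a(\pi_i)=\pi_i\mu_i^a$. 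Combining this with the fact that $\gamma$ acts on the $p$-power roots of unity through $\chi(\gamma)$ yields $(\gamma\tau\gamma^{-1})(\pi_i)=\gamma(\pi_i\mu_i)=\pi_i\mu_i^{\chi(\gamma)}=\tau^{\chi(\gamma)}(\pi_i)$ for every $i$; since $L=\cup_i K(\pi_i,\mu_i)$, this forces $c=\chi(\gamma)$.

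For (2), I would first verify that $\delta$ is a well-defined element of the Iwasawa algebra $\zp[[\gal(L/\kpinfty)]]\cong\zp[[\tau-1]]$: the coefficients $\binom{\chi(\gamma)}{n}$ lie in $\zp$ because $\chi(\gamma)\in\zp$, and $\tau-1$ generates the augmentation ideal, so the series converges $(\tau-1)$-adically; the binomial expansion of $\tau^{\chi(\gamma)}=(1+(\tau-1))^{\chi(\gamma)}$ moreover shows $\delta\cdot(\tau-1)=\tau^{\chi(\gamma)}-1$. Then I would simply expand both sides of the asserted identity in the completed group algebra of $\hatg$: using $\delta(\tau-1)=\tau^{\chi(\gamma)}-1$, the left-hand side becomes $(\tau^{\chi(\gamma)}-1)-\gamma\tau+\gamma$ and the right-hand side becomes $\gamma-1-\tau^{\chi(\gamma)}\gamma+\tau^{\chi(\gamma)}$. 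Cancelling the common summands $\gamma$, $-1$, and $\tau^{\chi(\gamma)}$ reduces the claimed equality to $\gamma\tau=\tau^{\chi(\gamma)}\gamma$, which is exactly part (1). So (2) will be a purely formal consequence of (1) together with the defining relation for $\delta$.

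I do not expect a genuine obstacle. The only substantive content is the conjugation relation (1), and its proof is the short explicit computation sketched above; part (2) is bookkeeping in the Iwasawa algebra. The points that demand a little care are the identity $\tau^a(\pi_i)=\pi_i\mu_i^a$ together with the continuity argument passing from $a\in\mathbb{Z}$ to $a\in\zp$, and keeping straight where each identity lives: (1) is an equality in the group $\hatg$, while (2) is an equality in its completed group algebra, which is the only place the expression $\delta$ makes sense.
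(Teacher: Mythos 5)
Your proof is correct. The paper in fact states this lemma without any proof, treating both formulae as standard (the conjugation relation $\gamma\tau\gamma^{-1}=\tau^{\chi(\gamma)}$ goes back to the analysis of $\hatg$ in Liu's work cited in Notation \ref{nota hatG}, and (2) is formal), so there is nothing to compare against; your argument — normality of $\gal(L/\kpinfty)$ giving $\gamma\tau\gamma^{-1}=\tau^c$ with $c\in\zp^\times$, identifying $c=\chi(\gamma)$ by evaluating on the $\pi_i$ (using that $\gamma$ fixes $\kinfty$ and $\tau$ fixes $\kpinfty$), and then reducing (2) to (1) by expanding in $\zp[[\hatg]]$ via $\delta(\tau-1)=\tau^{\chi(\gamma)}-1$ — is exactly the intended justification and is complete.
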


\begin{lemma} \label{lemgroupiwasawa}
Let $p>2$. 
Let $\zp[[\hat G]] $ be the Iwasawa algebra. We have the following exact sequence
\[ \zp[[\hat{G}]]  \xrightarrow{(\tau^{\chi(\gamma)}-1, \delta-\gamma)} \zp[[\hat{G}]]\oplus \zp[[\hat{G}]] \xrightarrow{( \gamma-1, \tau-1)}  \zp[[\hat{G}]] \xrightarrow{\epsilon} \zp \to 0 \]
Here $\epsilon$ is the augmentation map, and all other maps are defined using \emph{right} multiplication.
As a consequence, if $\wtm$ is an abelian group with continuous $\hatg$-action, then $\rg(\hatg, \wtm)$ is quasi-isomorphic to
\[C_{\gamma, \tau}(\wtm)  =   [  \wt{M}\xrightarrow{\gamma-1, \tau-1} \wt{M}\oplus \wt{M} \xrightarrow{\tau^{\chi(\gamma)}-1, \delta-\gamma} \wt{M} ] \]
\end{lemma}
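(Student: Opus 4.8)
The statement has two parts: first, an exact sequence of (right) $\zp[[\hat G]]$-modules
\[ \zp[[\hat{G}]] \xrightarrow{(\tau^{\chi(\gamma)}-1,\, \delta-\gamma)} \zp[[\hat{G}]]^{\oplus 2} \xrightarrow{(\gamma-1,\, \tau-1)} \zp[[\hat{G}]] \xrightarrow{\epsilon} \zp \to 0; \]
second, the consequence that $\rg(\hat G, \wtm)$ is computed by the three-term complex $C_{\gamma,\tau}(\wtm)$. I will treat the two parts separately.

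For the first part, the plan is to recognize this as a free resolution of the trivial module $\zp$ coming from the structure of $\hat G$ as an iterated (semidirect-product) extension of $\zp$ by $\zp$. Concretely, $\hat G = \gal(L/\kpinfty) \rtimes \gal(L/\kinfty) \cong \zp \rtimes \zp$, with $\gal(L/\kpinfty) = \langle \tau \rangle$ normal and $\gamma$ acting by $\gamma\tau\gamma^{-1} = \tau^{\chi(\gamma)}$ (part (1) of the preceding lemma). Each copy of $\zp$ contributes a length-one Koszul-type resolution $[\zp[[\zp]] \xrightarrow{g-1} \zp[[\zp]] \to \zp]$, and the resolution in the statement is (up to the twisting needed to make the squares commute) the total complex of the tensor product of these two. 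The exactness at the rightmost two spots is the standard fact that $\zp[[\zp]] = \zp[[s]]$ (with $s = $ the relevant "group element minus 1") is a regular local ring with $(s)$ the augmentation ideal, so $[\zp[[s]] \xrightarrow{s} \zp[[s]] \xrightarrow{\epsilon} \zp \to 0]$ is exact; one then lifts this through the extension. The key computational input making the leftmost map land correctly is the identity $(\delta-\gamma)(\tau-1) = (1-\tau^{\chi(\gamma)})(\gamma-1)$ from part (2) of the preceding lemma, which is exactly the commutation relation that says the composite of the two displayed maps is zero and, more precisely, that the kernel of $(\gamma-1,\tau-1)$ is generated by the single element $(\tau^{\chi(\gamma)}-1,\, \delta-\gamma)$ — and that this element is a non-zero-divisor, giving exactness on the left. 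I would verify exactness either by this direct Koszul/filtration bookkeeping or, perhaps more cleanly, by comparing with the bar resolution and using that $\hat G$ has cohomological dimension $2$ (being a compact $p$-adic Lie group of dimension $2$ with no $p$-torsion, when $p>2$), so that a length-two complex of free modules surjecting onto $\zp$ with the right Euler characteristic and exact in low degrees is automatically a resolution; I'd need the rank count $1 - 2 + 1 = 0$, consistent with $\zp$ being rank... — more carefully, exactness at the two non-trivial spots plus the fact that $H_0 = \zp$ and the complex is built from free modules forces it.

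For the second part — deducing the shape of $\rg(\hat G, \wtm)$ — the plan is routine homological algebra: continuous cohomology $\rg(\hat G, \wtm)$ is computed by $\rhom_{\zp[[\hat G]]}^{\cts}(\zp, \wtm)$, and applying $\Hom^{\cts}_{\zp[[\hat G]]}(-, \wtm)$ to the free resolution just constructed turns each $\zp[[\hat G]]$ into $\wtm$ and each right multiplication by $x \in \zp[[\hat G]]$ into the corresponding operator $x$ acting on $\wtm$ (here using that $\wtm$ has a continuous $\hat G$-action, hence a continuous $\zp[[\hat G]]$-module structure, and that $\Hom^{\cts}_{\zp[[\hat G]]}(\zp[[\hat G]], \wtm) = \wtm$ with the trace of right multiplication being left action). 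This yields exactly
\[ C_{\gamma,\tau}(\wtm) = \bigl[\, \wt M \xrightarrow{\gamma-1,\, \tau-1} \wt M \oplus \wt M \xrightarrow{\tau^{\chi(\gamma)}-1,\, \delta-\gamma} \wt M \,\bigr]. \]
One small point to watch is the continuity/topological-module hypotheses: one should note that the resolution consists of \emph{free} (hence projective in the relevant topological sense) $\zp[[\hat G]]$-modules of finite rank, so no higher $\Ext$ or $\lim^1$ terms intrude and $\Hom^{\cts}$ behaves as expected; this is exactly the setting where continuous group cohomology agrees with the Ext computed from such a resolution.

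The main obstacle I expect is the bookkeeping in the first part — getting the signs and the precise element $\delta$ right so that the two displayed maps genuinely compose to zero \emph{and} the left map generates the full kernel (not merely a submodule of it). This is where the explicit identity $(\delta-\gamma)(\tau-1) = (1-\tau^{\chi(\gamma)})(\gamma-1)$ does all the work, together with a $\tau$-adic (or $(\gamma-1, \tau-1)$-adic) filtration argument on $\zp[[\hat G]]$ to check that nothing else is killed. Everything after the resolution is established is formal.
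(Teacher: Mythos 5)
Your overall skeleton is the same as the paper's: exhibit the displayed sequence as an explicit Lazard--Serre-type free resolution of $\zp$ over $\Lambda:=\zp[[\hatg]]$ coming from the semidirect-product structure $\hatg=\gal(L/\kpinfty)\rtimes\gal(L/\kinfty)$, then apply continuous $\Hom_\Lambda(-,\wtm)$; that last formal step is fine. The genuine gap is at the one point carrying all the content, exactness at the middle term, which you defer either to unexecuted ``Koszul/filtration bookkeeping'' or to a shortcut via $\mathrm{cd}_p(\hatg)=2$ plus an Euler-characteristic count. The shortcut is not a proof: replace the first map by right multiplication by $p\cdot(\tau^{\chi(\gamma)}-1,\delta-\gamma)$. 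The composite is still zero, the modules are still free of the same ranks, exactness at the two right-hand spots and $H_0=\zp$ are untouched, and $\hatg$ still has cohomological dimension $2$, yet middle exactness now fails, since $(\tau^{\chi(\gamma)}-1,\delta-\gamma)$ lies in the kernel but not in $p$ times the image (because $1-pz$ is a unit of $\Lambda$ for every $z$). So freeness, rank counts and low-degree exactness cannot ``force'' exactness in the middle; it must be argued directly.

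For the record, the paper closes this in one line, and it is worth knowing: if $x(\gamma-1)=y(\tau-1)$, reduce modulo $(\tau-1)\Lambda$, which is a two-sided ideal because $\gal(L/\kpinfty)$ is normal in $\hatg$ (and equals $\Lambda(\tau-1)$); the quotient is $\zp[[\gamma-1]]$ (here $p>2$ is used, so $\gal(L/\kinfty)$ is pro-cyclic), a domain, so $\bar x(\gamma-1)=0$ forces $\bar x=0$. Hence $x=z(\tau-1)=z\delta^{-1}(\tau^{\chi(\gamma)}-1)$, as $\delta$ is a unit of $\zp[[\tau-1]]$, and cancelling the non-zero-divisor $\tau-1$ in the original relation recovers $y$, so $(x,y)$ lies in the image. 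Your primary route is also viable if you run it as an iterated mapping cone rather than a filtration: $[\Lambda\xrightarrow{\tau-1}\Lambda]$ (right multiplication) resolves $\Lambda/(\tau-1)\Lambda\cong\zp[[\gamma-1]]$ because $\Lambda$ is free over $\zp[[\tau-1]]$; lift right multiplication by $\gamma-1$ on the quotient to a chain map and take its cone. Your identity $(\delta-\gamma)(\tau-1)=(1-\tau^{\chi(\gamma)})(\gamma-1)$ is exactly what identifies the cone with the displayed complex after rescaling the top free module by the unit $\delta$, and exactness everywhere (including injectivity of the first map, which the cohomological consequence also needs) drops out of the long exact sequence of the cone, using only that $\tau-1$ is a non-zero-divisor on $\Lambda$ and $\gamma-1$ on $\zp[[\gamma-1]]$. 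As submitted, though, neither completion is carried out, and the only argument you actually spell out is the invalid one.
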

\begin{proof}
Note indeed the first three terms form a Lazard--Serre resolution of the trivial representation $\zp$, cf. e.g. \cite[Thm. 5.7]{RJRC22}.
To check exactness, the only non-trivial part is exactness at the second term.
Suppose $(x, y) \in \zp[[\hat G]]\oplus \zp[[\hat G]]$ such that \begin{equation} \label{eqxygamma}
x(\gamma-1)=y(\tau-1).
\end{equation}
It suffices to show $(\tau-1) \mid x$. Note  $(\tau -1)\cdot  \zp[[\hat G]] \subset \zp[[\hat G]]$  is a  two-sided ideal. Thus we can consider \eqref{eqxygamma} in the quotient ring $\zp[[(\gamma-1)]]$. Then it is obvious $\bar x=0$.
\end{proof}

\subsection{Locally analytic vectors} \label{subsec: lie alg LAV}

We review locally analytic vectors in this subsection, and \emph{higher} locally analytic vectors in the next subsection. For simplicity (enough for our purpose), we always assume
\begin{itemize}
    \item $G$ is a \emph{compact} $p$-adic Lie group.
\end{itemize}

\begin{notation}[Locally analytic and pro-analytic vectors]
\begin{enumerate}
    \item Let $(W, \|\cdot \|)$ be a continuous $\Qp$-Banach representation of $G$.
Let $W^{G\dla} \subset W$ denote the subset of locally analytic vectors. This definition naturally extends to the case where $W$ is a LB representation.

\item cf. \cite[Def. 2.3]{Ber16}.
Let $W=\projlim_i W_i$ be a Fr\'echet representation of $G$. We say that $w \in W$ is \emph{pro-analytic} if its image in $W_i$ is locally analytic for each $i$. This definition naturally extends to the case where $W$ is a LF representation. We use $W^{G\dpa}$ to denote the \emph{pro-analytic} vectors.
\end{enumerate}
 \end{notation}

\begin{rem} \label{rem: caution pro ana}
We caution on the (subtle but serious) distinction between  \emph{locally analytic vectors} and \emph{pro-analytic vectors}. In summary, our slogan is: for \emph{arithmetic} purposes, we need LF representations and their \emph{pro-analytic vectors}; but for \emph{cohomological}  questions, we always only use the LB representations and their \emph{locally analytic vectors}. 
\begin{enumerate}
    \item  The notion of locally analytic vectors can be defined in a very broad context, e.g., for any barreled locally convex representation as in \cite{Tamme_loc_ana_rep_2015ANT} (thus including LF representations). However, in general we have a \emph{strict} inclusion:
    \[ W^{G\dla} \subsetneq W^{G\dpa} \]

      \item Most \emph{locally analytic cohomology} theory (e.g., \cite{Tamme_loc_ana_rep_2015ANT}, \cite{RJRC22}) are developed for general \emph{locally analytic} representations (even over a LF space). However, in general  they do \emph{not} work for the pro-analytic representations: intuitively, cohomology in general does not commute with limit.
      
    \item \label{item proana}   We   need the     pro-analytic vectors for \emph{arithmetic} purposes. For example, for $(\phi, \Gamma)$-modules resp. $(\phi, \tau)$-modules over the \emph{Robba ring}, the relevant group actions are only \emph{pro-analytic}.
    
  \item As discussed in the introduction, our main theorems in this paper concern cohomology of $(\phi, \tau)$-modules over the  Robba ring, which as mentioned in Item \eqref{item proana} are only pro-analytic vectors. An important step in the process is to use the $\phi$-operator to ``adjust" to the locally analytic case; cf. Lem. \ref{lemphidescent} which will be \emph{repeatedly} used in the final section \S \ref{sec: coho phi tau}.    
  
\item With above items in mind, (and to avoid possible confusions), we have chosen to review theorems on locally analytic cohomology only for \emph{LB (or even just Banach) representations}; cf.  Thms. \ref{thm: Tamme ana coho} and \ref{prop: no higher lav coho compa}.
\end{enumerate}
\end{rem}

\begin{lemma} \label{lemlamod}
Let $G$ be a  compact $p$-adic Lie group.
Let $B$ be a LB (resp. LF) ring equipped with locally analytic (resp. pro-analytic) action by $G$, i.e., $B^\la=B$ (resp. $B^\pa=B$). Let $W$ be a \emph{finite} free $B$-module  equipped with a continuous $G$-action. Then the $G$-action on $W$ is automatically locally analytic (resp. pro-analytic), i.e., $W^\la=W$ (resp. $W^\pa=W$).
\end{lemma}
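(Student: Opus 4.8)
The plan is to prove the Banach (LB) case directly and deduce the LF case by dévissage. Fix a $B$-basis $e_1,\dots,e_d$ of $W$, so that this basis identifies $W$ with $B^{\oplus d}$ as a topological $\Qp$-vector space, and write the semilinear action as $g(e_i)=\sum_j M(g)_{ji}e_j$ with $M\colon G\to\GL_d(B)$ continuous and satisfying the twisted cocycle identity $M(gh)=M(g)\cdot{}^gM(h)$. Two soft observations reduce the statement to the local analyticity of $M$. First, $W^\la$ is a $B$-submodule of $W$: for $b\in B=B^\la$ and $w\in W^\la$ the orbit map $g\mapsto g(b)\,g(w)$ is obtained by applying the continuous bilinear module structure to a pair of locally analytic maps, hence is locally analytic. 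Second, since local analyticity of a function into the finite product $B^{\oplus d}$ is checked componentwise, one has $e_i\in W^\la$ for all $i$ if and only if every matrix coefficient $g\mapsto M(g)_{ij}$ is a locally analytic $B$-valued function on some open subgroup. Granting this, $W=\sum_i Be_i\subseteq W^\la$, which is what we want.

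It remains to prove $M$ is locally analytic on an open subgroup. As $G$ is compact and $M$ is continuous, after replacing $B$ by one of the Banach constituents of its LB presentation -- enlarged so as to contain $M(G)$ and to be stable under, and carry a locally analytic action of, a suitable open subgroup $G_1\le G$ (this is where one uses the regularity of the concrete rings of \S\ref{sec: rings and gps}) -- we may assume $B$ is a $\Qp$-Banach $G_1$-algebra with $B=B^\la$, and that $\|M(g)-1\|$ is small enough for all $g\in G_1$ that $L(g):=\log M(g)\in\Md(B)$ converges. Then $L$ is continuous, $L(1)=0$, and the cocycle identity becomes $L(gh)=\mathrm{BCH}\bigl(L(g),{}^gL(h)\bigr)$. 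When $d=1$ the ring is commutative and this reads $L(gh)=L(g)+g(L(h))$, i.e. $L\in Z^1_\cts(G_1,B)$; by the comparison between continuous and locally analytic cohomology for locally analytic Banach representations (cf. Thms.~\ref{thm: Tamme ana coho} and~\ref{prop: no higher lav coho compa}) the class of $L$ admits a locally analytic representative, and $L$ differs from it by a continuous coboundary $g\mapsto g(b_0)-b_0$, which is itself locally analytic because $B=B^\la$; hence $L$, and so $M=\mathrm{exp}(L)$, is locally analytic on $G_1$. For general $d$ one runs this argument by successive approximation along the filtration of $\Md(B)$ by powers of a small ideal: modulo the square, the Baker--Campbell--Hausdorff corrections vanish, so $L$ is an honest cocycle valued in a locally analytic Banach representation and is locally analytic modulo that square; one then corrects $L$ by a locally analytic term and repeats, passing to the limit using completeness of the space of locally analytic functions.

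Finally, the LF (pro-analytic) case follows formally: writing $B=\varprojlim_n B_n$ with $B_n$ Banach and $W_n:=W\otimes_B B_n$ finite free over $B_n$, the hypothesis $B^\pa=B$ forces the $G$-action on each $B_n$ (more precisely on the image of $B$ in $B_n$) to be locally analytic, so the LB case yields $W_n=W_n^\la$; since pro-analyticity of $W$ is tested on each $W_n$, we conclude $W=W^\pa$.

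The main obstacle is precisely the local analyticity of the matrix cocycle $M$: the successive-approximation step controlling the non-commutative Baker--Campbell--Hausdorff corrections when $d>1$, together with the bookkeeping that places the continuous compact-image data inside a single $G_1$-stable locally analytic Banach constituent of $B$. Both are routine in the concrete rings used here, but they account for essentially all of the content of the lemma.
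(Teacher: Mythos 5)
The paper disposes of this lemma in one line, by citing the local-analyticity criterion of \cite[Cor. 2.2.6]{RC22}; your reduction of the statement to the local analyticity of the matrix cocycle $M\colon G\to \GL_d(B)$ (via the two observations that $W^\la$ is a $B$-submodule when $B^\la=B$, and that analyticity into $B^{\oplus d}$ is checked componentwise) is exactly the content of that criterion. So what you are really doing is reproving the cited result, and this is where your argument has a genuine gap. For $d=1$ the log-plus-cohomology trick is fine: $L=\log M$ is a continuous cocycle, Thm. \ref{thm: Tamme ana coho} makes it cohomologous to a locally analytic one, and the coboundary $g\mapsto g(b_0)-b_0$ is locally analytic since $B^\la=B$. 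But for $d>1$ your successive approximation is not justified. The comparison theorem produces a locally analytic representative with \emph{no control on the open subgroup (radius) of analyticity}, so when you iterate along powers of your small ideal the corrections may be analytic only on smaller and smaller subgroups; and your appeal to ``completeness of the space of locally analytic functions'' is false as stated --- $\mathcal{C}^{\la}(G_1,\Md(B))$ is an LB space, not complete, and a limit of locally analytic functions with shrinking radii need not be locally analytic. To make such an argument work one must fix an open subgroup $G_1$ once and for all, work in the Banach space of $G_1$-analytic functions, and prove quantitative bounds (``if $M(g)-1$ is small enough on $G_1$ then the orbit maps are $G_1$-analytic''); this uniformity is precisely the substance of \cite[Cor. 2.2.6]{RC22} (and of the Berger--Colmez style estimates behind it), and it is the step your sketch does not supply.

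Two smaller points. First, your intermediate appeal to ``the regularity of the concrete rings of \S 2'' is out of place: the lemma is stated for an abstract LB (resp. LF) ring, and the paper's citation works at that level of generality; if your bookkeeping of a $G_1$-stable Banach constituent containing $M(G)$ genuinely needs properties of the specific period rings, you have proved a weaker statement. Second, in the LF reduction, note that $W_n=W\otimes_B B_n$ is finite free over $B_n$, and $B^{\pa}=B$ only tells you the \emph{image} of $B$ in $B_n$ consists of locally analytic vectors, not that $B_n^\la=B_n$; so you cannot literally ``apply the LB case to $W_n$ over $B_n$'' --- you again need the quantitative criterion (applied to the cocycle viewed in $\GL_d(B_n)$) rather than the lemma itself. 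The overall architecture of your proof is reasonable, but as written the heart of the matter is assumed rather than proved.
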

\begin{proof}
This is an easy consequence of the local-analyticity criterion of \cite[Cor. 2.2.6]{RC22}.  \end{proof}

We recall a theorem on Lie algebra cohomology and  \emph{locally analytic} cohomology. (cf. Rem. \ref{rem: caution pro ana} for some caution).
We use $\rg_\la$ denotes locally analytic group cohomology (defined using locally analytic cochains). 

\begin{convention}
 In this paper, unless decorated with ``$\la$" in the subscripts, all group cohomologies are meant to be continuous cohomologies; sometimes we put ``$\cont$" in the subscript to emphasize the differences.   
\end{convention}

\begin{theorem}   \label{thm: Tamme ana coho}
    Let $W$ be a  locally analytic  LB $G$-representation, i.e., $W^\la=W$.
     \begin{enumerate}
     \item We have
\[  \rg_\cont(G, W) \simeq \rg_\la(G, W) \simeq   \rg(\Lie G, W)^G\]     
 
         \item We have
\[ \rg(\Lie G, W) \simeq \injlim_{G'\subset G \text{ open}} \rg_\la(G', W) \]
     \end{enumerate}
      \end{theorem}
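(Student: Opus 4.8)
The plan is to derive all three comparisons from two inputs: the \emph{analytic Lazard isomorphism} relating locally analytic group cohomology to Lie algebra cohomology, and a comparison of continuous with locally analytic cohomology that holds precisely because the coefficients are locally analytic. As a preliminary reduction we may assume $G$ is a uniform pro-$p$ group: choosing an open uniform pro-$p$ subgroup $G_0\subseteq G$, a standard restriction--corestriction argument (legitimate since all coefficients are $\Qp$-vector spaces so $[G:G_0]$ is a unit, and the comparison maps are natural and $G$-equivariant) shows that it suffices to prove the statements for $G_0$. For part (2) the same reduction applies, as a cofinal family of open subgroups of $G$ can be taken inside $G_0$ and $\Lie G_0=\Lie G$.

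For the middle isomorphism $\rg_\la(G,W)\simeq\rg(\Lie G,W)^G$ of part (1), I would work with the algebra $D(G)$ of locally analytic distributions, using $\rg_\la(G,W)\simeq\rhom_{D(G)}(\mathbf 1,W)$ together with the algebra map $U(\mathfrak g_{\Qp})\to D(G)$. Resolving $\mathbf 1$ over $U(\mathfrak g_{\Qp})$ by its Chevalley--Eilenberg (Koszul) complex and inducing up to $D(G)$ yields a Hochschild--Serre type presentation: $\rg_\la(G,W)$ is computed by the total complex of $\rg_\cont(G,\,W\otimes\wedge^\bullet\mathfrak g^\ast)$, where $W\otimes\wedge^\bullet\mathfrak g^\ast$ carries the Chevalley--Eilenberg differential and the residual $G$-action, and this is precisely the object denoted $\rg(\Lie G,W)^G$. (For trivial coefficients the outer $G$-cohomology is harmless, since Cartan's formula $L_X=d\iota_X+\iota_X d$ makes $\mathfrak g$, hence $G_0$, act trivially on $H^\ast(\mathfrak g,-)$, recovering Lazard's $H^\ast_\cont(G,\Qp)\cong\wedge^\bullet\mathfrak g_{\Qp}^\ast$.) This step is essentially Tamme's theorem \cite{Tamme_loc_ana_rep_2015ANT}, reformulated solidly in \cite{RJRC22}.

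For $\rg_\cont(G,W)\simeq\rg_\la(G,W)$ when $W^\la=W$, I would show the inclusion of cochain complexes $C^\bullet_\la(G,W)\hookrightarrow C^\bullet_\cont(G,W)$ is a quasi-isomorphism by d\'evissage in $W$. Writing $W$ as an increasing union of $G_0$-stable Banach subrepresentations on which an open uniform subgroup acts \emph{analytically} (possible exactly because $W$ is locally analytic) and using that both functors commute with this filtered colimit, one reduces to an analytic Banach representation $V$ of a uniform group. There the crucial fact --- the genuine analytic input, and what I expect to be the main obstacle --- is that \emph{both} $\rg_\cont(G_0,V)$ and $\rg_\la(G_0,V)$ are computed by the same Lazard/Koszul complex $V\otimes\wedge^\bullet\mathfrak g^\ast$ attached to the analytic action; this is Lazard's theorem on the continuous side and its analytic refinement on the other, and it is not formal. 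Granting it, the comparison follows.

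Finally, for part (2): since $\Lie G'=\Lie G$ for every open $G'\subseteq G$, part (1) gives $\rg_\la(G',W)\simeq\rg(\Lie G,W)^{G'}$ naturally in the restriction maps, so it remains to check $\injlim_{G'}\rg(\Lie G,W)^{G'}\simeq\rg(\Lie G,W)$ along a decreasing cofinal family with trivial intersection. In degree $0$ this is the identity $W^{\mathfrak g}=\bigcup_{G'}W^{G'}$: a $\mathfrak g$-fixed locally analytic vector has locally constant orbit map, hence is fixed by an open subgroup. The higher-degree statement then follows from the effaceable $\delta$-functor formalism --- both $\injlim_{G'}\rg_\la(G',-)$ and $\rg(\Lie G,-)$ are cohomological $\delta$-functors on locally analytic LB representations, agreeing in degree $0$ and effaceable by coinduced modules --- which is the content of the colimit formula in \cite{RJRC22}.
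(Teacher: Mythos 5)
Your outline is essentially a reconstruction of the results the paper merely cites: the paper's own proof of Thm.~\ref{thm: Tamme ana coho} is two sentences quoting Lazard for the finite-dimensional case of $\rg_\cont(G,W)\simeq\rg_\la(G,W)$, Kedlaya's observation that Lazard's argument works for Banach coefficients, and Tamme's Main Theorem and Cor.~21 for the remaining comparisons including part (2). What you do differently is reprove these inputs: the middle isomorphism via the $U(\mathfrak g)\to D(G)$ Chevalley--Eilenberg/Koszul argument (this is the solid, distribution-algebra route of \cite{RJRC22} rather than Tamme's cochain-level differentiation argument), the continuous-versus-locally-analytic comparison by d\'evissage to $G_0$-analytic Banach representations plus the analytic Lazard isomorphism (which you correctly isolate as the one non-formal input and grant), and part (2) by a $\delta$-functor/colimit argument. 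This buys a self-contained picture of where the analytic content lives, at the cost of a few elisions that the cited references handle: (i) the total complex of $\rg_\cont(G,W\otimes\wedge^\bullet\mathfrak g^\ast)$ is a priori the \emph{derived} $G$-invariants of $\rg(\Lie G,W)$, and identifying it with the plain invariants $\rg(\Lie G,W)^G$ needs the general fact that the $G$-action on $H^\ast(\mathfrak g,W)$ is smooth (Cartan's formula, which you only invoke for trivial coefficients) together with the vanishing of higher continuous cohomology of smooth $\Qp$-representations of compact groups; (ii) commuting $\rg_\cont(G_0,-)$, taken with the original LB topology on $W$, with your filtered colimit of analytic Banach subrepresentations is precisely where regularity/compact-type hypotheses and the ``higher locally analytic vectors'' phenomenon lurk, so it is not formal; (iii) for part (2) the category of locally analytic LB representations is not abelian, so the effaceable-$\delta$-functor formalism is shaky as stated --- the cleaner argument is the one you already use in degree $0$: the $G$-action on each $H^i(\mathfrak g,W)$ is smooth, whence $\injlim_{G'}H^i(\mathfrak g,W)^{G'}=H^i(\mathfrak g,W)$. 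None of these is a fatal gap, since you grant and correctly attribute the load-bearing statements to \cite{Tamme_loc_ana_rep_2015ANT} and \cite{RJRC22}, which is in effect all the paper itself does.
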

     \begin{proof}
    The comparison 
    \[\rg_\cont(G, W) \simeq \rg_\la(G, W)\]
    is first proved by Lazard \cite{Lazard-65-IHES} when $W$ is a finite dimensional $\qp$-vector spaces; the proof works for general case, as observed in \cite[Thm. 3.8]{Kedlaya-Hochschild-Serre-analytic}. The remaining comparisons (also generalizing those of \cite{Lazard-65-IHES}) is proved in  \cite[page 938, Main Theorem]{Tamme_loc_ana_rep_2015ANT} and \cite[Cor. 21]{Tamme_loc_ana_rep_2015ANT}.
     \end{proof}

     We specialize the discussions to the group $\hatg$.
     
\begin{notation}
For $g\in \hat{G}$, let $\log g$ denote  the (formally written) series $(-1)\cdot \sum_{k \geq 1} (1-g)^k/k$. Given a $\hat{G}$-locally analytic representation $W$, the following two Lie-algebra operators (acting on $W$) are well defined:
\begin{itemize}
\item  for $g\in \gal(L/\kinfty)$ close enough to identity, one can define $\nabla_\gamma := \frac{\log g}{\log(\chi_p(g))}$; in the case when $\gal(L/\kpinfty)$-action on $W$ is trivial, one can (use the same formula and notation to) define $\nabla_\gamma := \frac{\log g}{\log(\chi_p(g))}$ for $g \in \gal(\kpinfty/K)$ close enough to identity;

\item for $n \gg 0$ hence $\tau^{p^n}$ close enough to identity, one can define $\nabla_\tau :=\frac{\log(\tau^{p^n})}{p^n}$.
\end{itemize}
Clearly, these two Lie-algebra operators form a $\qp$-basis of $\Lie(\hat{G}$).
(Caution: in Notation \ref{nota: gamma generator}, we can only fix a pro-generator $\gamma$ when $p>2$. Note however the notation $\nabla_\gamma$ here is well-defined for any $p$, and indeed is not related with ``$\gamma$" directly. 
We have chosen the notation $\nabla_\gamma$ for its simplicity and its resemblance with $\nabla_\tau$. We hope this does not cause confusion.)
\end{notation}

 \begin{lemma} \label{lemresolutionliealg} (Let $p$ be any prime). 
 Let $\hat{\fkg}=\Lie \hatg$, and let $U(\hat{\fkg})$ be the universal enveloping algebra (over $\qp$). The following sequence is exact.
 \[ U(\hat{\fkg})  \xrightarrow{\nabla_\tau, 1-\nabla_\gamma} 
  U(\hat{\fkg})\oplus U(\hat{\fkg}) 
  \xrightarrow{ \nabla_\gamma, \nabla_\tau}  U(\hat{\fkg})  \xrightarrow{\epsilon} \qp \to 0\]
As a consequence, if $\wtm$ is an abelian group with a continuous Lie algebra action by $\hat{\fkg}$, then the Lie algebra cohomology $\rg(\hat{\fkg}, \wtm)$ is quasi-isomorphic to
 \[C_{ \nabla_\gamma, \nabla_\tau} (\wtm):=[ \wtm \xrightarrow{ \nabla_\gamma, \nabla_\tau}  \wtm\oplus \wtm   \xrightarrow{\nabla_\tau, 1-\nabla_\gamma}    \wtm ] \]
 \end{lemma}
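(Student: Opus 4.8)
The plan is to recognize the displayed four-term exact sequence as (the tail of) the Chevalley--Eilenberg resolution of the trivial $\hat{\fkg}$-module $\qp$ by free $U(\hat{\fkg})$-modules; then essentially all that remains is to pin down the Lie bracket of $\hat{\fkg}$ and to match the Chevalley--Eilenberg differentials with the two maps in the statement.

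First I would compute the bracket. From $g\tau g^{-1}=\tau^{\chi_p(g)}$ for $g\in\gal(L/\kinfty)$ (refining $\gamma\tau\gamma^{-1}=\tau^{\chi(\gamma)}$) one gets $g\tau^{p^n}g^{-1}=(\tau^{p^n})^{\chi_p(g)}$, and taking logarithms shows that conjugation by $g$ scales the $\nabla_\tau$-line of $\Lie\hatg$ by $\chi_p(g)$, i.e.\ $\mathrm{Ad}(g)(\nabla_\tau)=\chi_p(g)\,\nabla_\tau$. Differentiating in $g$ and using the normalization $\nabla_\gamma=\log(g)/\log(\chi_p(g))$ gives $[\nabla_\gamma,\nabla_\tau]=\nabla_\tau$. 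Hence $\hat{\fkg}$ is the two-dimensional non-abelian Lie algebra with basis $(\nabla_\gamma,\nabla_\tau)$ and this its only nontrivial bracket, and $U(\hat{\fkg})$ is the associated Ore extension, with PBW basis $\{\nabla_\tau^{a}\nabla_\gamma^{b}\}_{a,b\ge 0}$. (This step, and everything below, is insensitive to whether $p=2$: no pro-cyclicity hypothesis on the cyclotomic side enters.)

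Next, since $\dim_{\qp}\hat{\fkg}=2$, the Chevalley--Eilenberg complex $U(\hat{\fkg})\otimes_{\qp}\wedge^{\bullet}\hat{\fkg}$ is a free resolution of $\qp$, of the shape
\[ 0\to U(\hat{\fkg})\otimes\wedge^{2}\hat{\fkg}\to U(\hat{\fkg})\otimes\hat{\fkg}\to U(\hat{\fkg})\xrightarrow{\ \epsilon\ }\qp\to 0. \]
Fixing the basis $\nabla_\gamma\wedge\nabla_\tau$ of $\wedge^{2}\hat{\fkg}$ and $(\nabla_\gamma,\nabla_\tau)$ of $\hat{\fkg}$, I would spell out the two differentials: the map $U(\hat{\fkg})\otimes\hat{\fkg}\to U(\hat{\fkg})$ is $u\otimes X\mapsto uX$, and the map $U(\hat{\fkg})\otimes\wedge^{2}\hat{\fkg}\to U(\hat{\fkg})\otimes\hat{\fkg}$ is $u\otimes(X\wedge Y)\mapsto uX\otimes Y-uY\otimes X-u\otimes[X,Y]$; with $[\nabla_\gamma,\nabla_\tau]=\nabla_\tau$ the latter reads $u\mapsto(-u\nabla_\tau,\,u(\nabla_\gamma-1))$ in the chosen coordinates. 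Up to the evident sign normalization (negating the generator $\nabla_\gamma\wedge\nabla_\tau$) these are precisely the maps $(\nabla_\gamma,\nabla_\tau)$ and $(\nabla_\tau,1-\nabla_\gamma)$ of the statement, so the four-term sequence is the Chevalley--Eilenberg resolution and in particular is exact. (If one prefers a hands-on argument: exactness is formal away from the middle term, and at the middle term it can be checked directly from the PBW basis, just as at the analogous spot in the proof of Lemma~\ref{lemgroupiwasawa}.)

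For the consequence, $\rg(\hat{\fkg},\wtm)$ is by definition computed by applying $\Hom_{U(\hat{\fkg})}(-,\wtm)$ to any free resolution of $\qp$; feeding in the resolution above and using $\Hom_{U(\hat{\fkg})}(U(\hat{\fkg}),\wtm)\cong\wtm$ turns it into $[\wtm\to\wtm\oplus\wtm\to\wtm]$ with differentials the transposes of the two maps just computed — that is, precisely $C_{\nabla_\gamma,\nabla_\tau}(\wtm)$. I expect the only genuinely delicate point to be bookkeeping the left/right $U(\hat{\fkg})$-module conventions and the overall signs so that the displayed formulas come out on the nose (and, relatedly, confirming that the normalization of $\nabla_\gamma$ forces the bracket coefficient to be exactly $1$); there is no substantive obstacle beyond that.
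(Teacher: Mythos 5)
Your proposal is correct and follows essentially the same route as the paper, which likewise identifies the four-term sequence as the Chevalley--Eilenberg resolution of the trivial representation (citing \cite[Prop. 5.12]{RJRC22}) and notes that exactness can be checked by the same direct argument as in Lem.~\ref{lemgroupiwasawa}; your explicit verification that $[\nabla_\gamma,\nabla_\tau]=\nabla_\tau$ and your matching of the CE differentials with the displayed maps simply fill in the details the paper leaves implicit.
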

 \begin{proof}
 This is straightforward; e.g., one can use similar argument as in Lem. \ref{lemgroupiwasawa}.
Note the sequence on $U(\hat{\fkg})$ is a Chevalley--Eilenberg resolution of the trivial representation $\qp$, cf. e.g. \cite[Prop. 5.12]{RJRC22}.
 \end{proof}

\subsection{Higher locally analytic vectors}

We quickly review the notion of higher locally analytic vectors.
We refer the readers to \cite{Pan22}, \cite{RJRC22} and \cite{Poratlav} for details.
We start with the notion of higher \emph{analytic} vectors. Consider a  $p$-adic Lie group $N$ such that   there is a homeomorphism (of $p$-adic manifolds) $c: N \to \mathbb{Z}_p^d$. Let $W$ be a $N$-Banach representation over $\qp$ (not necessarily a   locally analytic representation).
 There is an isometry $$W\widehat{\otimes}_{\Q_{p}}\mathcal{C}^{\an}\left(N,\Q_{p}\right)\cong\mathcal{C}^{\an}\left(N,W\right),$$
where $\mathcal{C}^{\an}\left(N,W\right)$ is the space of $W$-valued
\emph{analytic} functions on $N$. (The analyticity condition here---not just locally analyticity---is the reason we require $N \simeq \mathbb{Z}_p^d$).  
 We then have 
 \[  \left(\mathcal{C}^{\an}\left(N,W\right)\right)^{N} \simeq W^{N\dan}, \quad \text{ via } f\mapsto f(1);\] 
this implies that the functor $W\mapsto W^{N\dan}$ is left exact. Thus we can define the right derived functors
for $i\geq 1$:
\[
\mathrm{R}_{G\dan}^{i}\left(W\right):= H^{i}\left(G,W\widehat{\otimes}_{\Q_{p}}\mathcal{C}^{\an}\left(G, \qp \right)\right).
\]

 Now let $G$ be a general compact $p$-adic Lie group; there exists an open normal subgroup $G_1$ such that   $G_{n}:=G_1^{p^{n-1}}$  are subgroups of $G_1$ and there exists a homeomorphism $c: G_1 \to  \mathbb{Z}_p^d $ such that $c(G_n)=(p^n \mathbb{Z}_p)^d$ for all $n$. See the paragraph below \cite[Prop 2.3]{BC16} for details.

\begin{defn}
 Let $W$ be a $G$-Banach representation over $\qp$.    Define 
\[
\mathrm{R}_{G\dla}^{i}\left(W\right):=\varinjlim_{n}\mathrm{R}_{G_{n}\dan}^{i}\left(W\right)=\varinjlim_{n} H^{i}\left(G_{n},W\widehat{\otimes}_{\Q_{p}}\mathcal{C}^{\an}\left(G_{n}, \qp \right)\right).
\]
 These cohomology groups are called the higher locally analytic vectors of $W$.
    \end{defn}

\begin{thm} \label{prop: no higher lav coho compa} Let $W$ be a $G$-Banach representation over $\qp$. 
 Suppose $W$ has no higher locally analytic vectors, that is
 \[ \mathrm{R}_{G\dla}^{i}(W)=0, \quad \forall i \geq 1.\]
 Then
 \[ \rg_\cont(G, W) \simeq \rg_\cont(G, W^\la) \simeq \rg_\la(G, W^\la) \simeq   \rg(\Lie G, W^\la)^G\]
 In particular, we have isomorphism of cohomology groups  \[ H^i(G, W) \simeq  H^i(G, W^\la) \simeq  H^i_\an(G, W^\la)  \simeq(H^i(\Lie G, W^\la))^G \]
\end{thm}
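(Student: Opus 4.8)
The plan is to deduce this from Theorem \ref{thm: Tamme ana coho} by resolving the Banach space $W$ by complexes of Banach spaces whose group cohomology is controlled. First I would recall the defining property: the hypothesis $\mathrm{R}_{G\dla}^i(W)=0$ for all $i\geq 1$ says precisely that the natural map $W^{G\dla}\to W\widehat\otimes_{\qp}\mathcal C^{\an}(G_n)$, followed by passage to $G_n$-cohomology and then to the colimit over $n$, exhibits $W^{G\dla}$ as the full (derived) object; equivalently the augmented Chevalley--Eilenberg/analytic resolution is exact after taking locally analytic vectors. So the first step is to make this precise: write down, for each $n$, the complex computing $\mathrm{R}^\bullet_{G_n\dan}(W)=H^\bullet(G_n, W\widehat\otimes_{\qp}\mathcal C^{\an}(G_n))$ using the explicit (finite, Koszul-type) resolution of $\qp$ by $\mathcal C^{\an}(G_n)$-modules, and observe that this double complex, totalized, computes $\rg_\cont(G_n, W)$ on one hand (since $\mathcal C^{\an}(G_n,W)$ is an acyclic/induced resolution for continuous cohomology — this is the standard analytic-induction argument already used implicitly in Theorem \ref{thm: Tamme ana coho}) and $\rg_\cont(G_n, W^{G_n\dan})$ up to the higher terms $\mathrm{R}^i_{G_n\dan}(W)$ on the other.

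Concretely, the key step is the following two-term analysis. There is a spectral sequence (or just the long exact sequence / filtration on the totalization of the above double complex) relating $\rg_\cont(G, W)$ to $\rg_\cont(G, W^\la)$ whose ``error terms'' on the $E_1$ (or $E_2$) page are exactly $\colim_n \mathrm{R}^i_{G_n\dan}(W)=\mathrm{R}^i_{G\dla}(W)$, using that continuous cohomology of the compact group $G$ commutes with the filtered colimit over open subgroups in the appropriate sense (here one restricts to $G_n$, applies the above for each $n$, and takes the colimit; restriction--corestriction or a direct cofinality argument lets one pass back to $G$). Feeding in the vanishing hypothesis collapses this, giving $\rg_\cont(G,W)\simeq \rg_\cont(G, W^\la)$. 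Then I would invoke Lemma \ref{lemlamod}-style reasoning — or rather simply the fact that $W^\la$ is, by construction, a locally analytic LB (indeed a union of Banach) representation of $G$ — to apply Theorem \ref{thm: Tamme ana coho}(1) to $W^\la$, yielding $\rg_\cont(G, W^\la)\simeq \rg_\la(G, W^\la)\simeq \rg(\Lie G, W^\la)^G$. Concatenating gives the four-term chain of quasi-isomorphisms, and taking cohomology in each degree gives the final displayed isomorphisms of groups.

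I expect the main obstacle to be the bookkeeping in the spectral-sequence/colimit step: one must carefully justify that totalizing the analytic double complex for $G_n$ really computes $\rg_\cont(G_n, W)$ (the acyclicity of $W\widehat\otimes_{\qp}\mathcal C^{\an}(G_n)$ as a resolution for continuous cohomology, which needs the module $\mathcal C^{\an}(G_n, W)$ to be ``induced''/acyclic — a point that is classical à la Lazard but deserves a citation, e.g. to \cite{Lazard-65-IHES} or \cite{Tamme_loc_ana_rep_2015ANT}), and that the passage to the colimit over $n$ both commutes with cohomology (filtered colimits are exact) and correctly recovers cohomology of $G$ rather than of the $G_n$ — the latter via the observation that $\rg_\cont(G, -)$ applied to the complex $W^\la = \colim_n W^{G_n\dan}$ is the colimit of $\rg_\cont(G_n, W^{G_n\dan})$ because $[G:G_n]$ is a $p$-power unit after inverting... no, that is not available integrally, so instead one uses that $W^\la$ as a $G$-representation is locally analytic and applies Theorem \ref{thm: Tamme ana coho} directly, sidestepping the issue. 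In other words, the cleanest route is: (i) $\rg_\cont(G,W)\simeq \rg_\cont(G, W^\la)$ via the no-higher-la-vectors hypothesis applied at the level of each $G_n$ and then one single appeal to the fact that $W^\la$ is locally analytic over $G$; (ii) Theorem \ref{thm: Tamme ana coho}(1) finishes it. Step (i) is where all the real content sits, and writing it without circularity (since Theorem \ref{thm: Tamme ana coho} already packages the Lazard comparison) is the delicate part.
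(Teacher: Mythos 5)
Your overall architecture coincides with the paper's: the only non-formal step is the comparison $\rg_\cont(G,W)\simeq\rg_\cont(G,W^\la)$, after which one applies Thm.~\ref{thm: Tamme ana coho}(1) to the locally analytic LB representation $W^\la$ (your step (ii) is exactly what the paper does). The difference is in step (i): the paper does not prove it at all, but cites \cite[Cor.~1.6]{RJRC22} (see also \cite[Thm.~6.3.4]{RJRC23}), whereas you attempt to reprove it via the analytic-induction double complex and a spectral sequence whose error terms are the higher locally analytic vectors.

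As written, that reproof has a genuine gap, and it sits exactly where you flag it: the assertion that $W\widehat\otimes_{\qp}\mathcal C^{\an}(G_n)\cong\mathcal C^{\an}(G_n,W)$ is an ``induced/acyclic'' module for continuous $G_n$-cohomology is not a classical fact one can wave at. For \emph{continuous} induction $\mathcal C(G_n,W)$ acyclicity is standard (contracting homotopy), but for \emph{analytic} induction with coefficients it is essentially equivalent to the Lazard-type comparison itself together with a projection formula for the completed tensor product, and controlling this (plus the construction and convergence of the spectral sequence, whose $E_2$-page is $H^i_\cont(G,\mathrm{R}^j_{G\dla}(W))$ rather than the higher vectors alone, and the commutation of $\colim_n$ with cohomology in the relevant topological category) is precisely the technical content of \cite{RJRC22}, carried out there in the solid formalism. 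So either you import \cite[Thm.~1.5, Cor.~1.6]{RJRC22} at that point --- in which case your argument collapses to the paper's proof --- or you owe a substantial amount of functional-analytic work that the sketch does not supply. The surrounding bookkeeping (restricting to $G_n$, then returning to $G$ only through the locally analytic representation $W^\la$ and Thm.~\ref{thm: Tamme ana coho}) is fine and avoids the circularity you were worried about.
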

\begin{proof} With Thm. \ref{thm: Tamme ana coho} in mind, the only remaining comparison is between $\rg_\cont(G, W)$  and  $\rg_\cont(G, W^\la)$: see \cite[Cor. 1.6]{RJRC22}, or the all encompassing diagram \cite[Thm. 6.3.4]{RJRC23}.
 For the comparison of cohomology groups,  cf.  \cite[footnote of Thm. 1.7]{RJRC22} and   \cite[Rem. 5.6]{RJRC22}
\end{proof}

\begin{rem} 
Thm. \ref{prop: no higher lav coho compa}  is developed in \cite{RJRC22, RJRC23} in a much broader context, for non-compact groups $G$, and for solid representations. We mention that for a general  $W$, the differences between  $\rg_\cont(G, W)$  and  $\rg_\cont(G, W^\la)$ are accounted for by continuous group cohomology of \emph{higher locally analytic vectors}, cf. the spectral sequence in \cite[Thm. 1.5]{RJRC22}. 
\end{rem}

\begin{example}[\cite{Poratlav}] \label{example: porat}
    Let $r>\frac{p}{p-1}$. Let $M$ be a finite free module over  $\wtb^{[r,s]}_\kpinfty$  (resp. $\wtb^{[r,s]}_L$) with a continuous semi-linear $\gammak$-action (resp. $\hatg$-action). 
    Then $M$  has no higher locally analytic vectors.  
\end{example}
\begin{proof} 
    Via \cite[Example 5.5(2)]{Poratlav}, one can apply \cite[Prop. 5.3]{Poratlav} with respect to the ring $\wtb^{[r,s]}_\kpinfty$ (resp. $\wtb^{[r,s]}_L$).
\end{proof}

\section{Equivalence of module categories} \label{sec:equiv cats}
In this section, we define the \emph{many} categories of  $(\varphi, \Gamma)$-modules and  $(\varphi, \tau)$-modules, and prove the categorical equivalence theorems; cf. Theorem  \ref{thmequivetale} resp. \ref{thmequidagger} for the \'etale resp. non-\'etale case.
 In \S\ref{subsec: diff op phi tau}, we introduce an important differential operator on  $(\phi, \tau)$-modules over the Robba ring.

\subsection{Definition of categories} \label{subsec: def categories}
\begin{defn} \label{ring and mod}
Let $Q$ be a topological ring with a continuous $\hat G$-action.
Let $P \subset Q^{\tau=1}$ be a  subring stable under $\gammak$-action.
Let $R \subset Q^{\gammak=1}$ be a subring.
\begin{enumerate}
\item Let $\mod_\gammak(P)$ be the category where an object is a  finite free $P$-module  equipped with a continuous semi-linear  $\gammak$-action.

\item Let $\mod_\hatg(Q)$ be the category where an object is a finite free $Q$-module  equipped with a continuous semi-linear   $\hatg$-action.   

\item Let $\mod_{\tau}(R, Q)$ be the category where an object is a pair $(M, \wtm)$ where  $M$ is a finite free $R$-mod, $\wtm=M\otimes_R Q$ is equipped with a continuous semi-linear $\hat G$-action such that $M \subset \wtm^{\gammak=1}$.
\end{enumerate}
 There are two obvious functors
\begin{equation}
 \mod_\gammak(P)  \to   \mod_\hatg(Q) \leftarrow   \mod_{\tau}(R, Q)
\end{equation} 
where the first one is defined by $N \mapsto N \otimes_P Q$, and the second one is defined by $(M, \wtm) \mapsto \wtm$.
\end{defn}

\begin{defn} \label{def: phi mod cat}
Use notations in Def. \ref{ring and mod}.
Suppose there is a ring endomorphism $\varphi: Q \to Q$ (in practice: the Frobenius) which commutes with $\hat{G}$-action, and suppose $P, R$ are stable under $\varphi$.
\begin{enumerate}
\item Let $\mod_{\varphi,\gammak}(P)$   be the category where an object is an  $N \in \mod_\gammak(P)$  equipped with a continuous, $\varphi$-semi-linear, $\gammak$-commuting and \'etale map $\varphi: N \to N$; here the \'etale condition says that the induced map $1\otimes \varphi: P\otimes_{\varphi, P} N \to N$ is bijective. (cf. Rem. \ref{rem: etale condition} for a caution on the \'etaleness condition.)

\item Let $\mod_{\varphi, \hatg}(Q)$  be the analogously defined category.

 \item Let $\mod_{\varphi, \tau}(R, Q)$ be the category where an object is $(M, \wtm) \in \mod_{\tau}(R, Q)$ with $M$ equipped with a continuous $\varphi$-semilinear \'etale map $\varphi: M \to M$ such that the induced $\varphi: \wtm \to \wtm$ commutes with $\hatg$.
\end{enumerate}
\end{defn} 

\begin{rem}
    Caution: the inclusions $P \subset Q^{\tau=1}, R \subset Q^{\gammak=1}$ in Def. \ref{ring and mod} and \ref{def: phi mod cat} are in general \emph{strict}. For example, this is the case for the category $\mod_{\varphi, \tau}( \A_\kinfty       ,\wta_L   )$.  
\end{rem}


\subsection{\'Etale  modules and overconvergent modules}
  
\begin{theorem} \label{thmequivetale}
All categories  in the following diagrams (defined via   Def. \ref{def: phi mod cat}) are equivalent to $\rep_\gk(\zp)$.
\begin{enumerate}
\item The categories of   \'etale modules:
\begin{equation} \label{etalediag}
\begin{tikzcd}
{\mod_{\varphi, \gammak}(\mathbb{A}_\kpinfty)} \arrow[r]        & {\mod_{\varphi, \gammak}(   \wta_\kpinfty )} \arrow[d]          \\
                                                                & {\mod_{\varphi, \hat{G}}(    \wta_L )}                          \\
{ \mod_{\varphi, \tau}( \A_\kinfty       ,\wta_L   )} \arrow[r] & { \mod_{\varphi, \tau}(       \wta_\kinfty  ,\wta_L)} \arrow[u]
\end{tikzcd}
\end{equation} 

\item The  categories of overconvergent \'etale modules:
\begin{equation} \label{eqocmod}
\begin{tikzcd}
{\mod_{\varphi, \gammak}(    \mathbb{A}_\kpinfty^\dagger)} \arrow[r]            & {\mod_{\varphi, \gammak}(  \wta_\kpinfty^\dagger   )} \arrow[d]                 \\
                                                                                & {\mod_{\varphi, \hat{G}}(   \wta_L^\dagger  )}                                  \\
{ \mod_{\varphi, \tau}( \A_\kinfty^\dagger       ,\wta_L^\dagger   )} \arrow[r] & { \mod_{\varphi, \tau}(       \wta_\kinfty^\dagger  ,\wta_L^\dagger)} \arrow[u]
\end{tikzcd}
\end{equation} 
 \end{enumerate}
\end{theorem}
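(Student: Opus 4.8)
The plan is to show that every category in \eqref{etalediag} (resp.\ in \eqref{eqocmod}) carries a natural base-change functor to $\mod_{\varphi,\gk}(\wta)$ (resp.\ to the analogously defined category $\mod_{\varphi,\gk}(\wta^\dagger)$ of \'etale $\varphi$-modules over $\wta^\dagger$ with semilinear $\gk$-action), that these base-change functors are equivalences, that they are compatible with all the arrows drawn in the diagram, and that $\mod_{\varphi,\gk}(\wta)\simeq\rep_\gk(\zp)$. Once this is in place, every category in a given diagram is equivalent to $\rep_\gk(\zp)$ compatibly with the displayed functors. The base point $\mod_{\varphi,\gk}(\wta)\simeq\rep_\gk(\zp)$ is the cleanest ingredient: since $\wte=C^\flat$ is algebraically closed, $\varphi-1$ is surjective on $\wta=W(\wte)$ and $\wta^{\varphi=1}=\zp$, so by a standard Lang/Artin--Schreier argument over the (separably closed) perfect field $\wte$ every \'etale $\varphi$-module $N$ over $\wta$ is trivial, i.e.\ $\wta\otimes_\zp N^{\varphi=1}\isoto N$. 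As $\varphi$ commutes with the $\gk$-action, $N\mapsto N^{\varphi=1}$ and $T\mapsto\wta\otimes_\zp T$ are quasi-inverse equivalences $\mod_{\varphi,\gk}(\wta)\simeq\rep_\gk(\zp)$.

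Next I would handle the three ``perfect'' (tilde) categories $\mod_{\varphi,\gammak}(\wta_\kpinfty)$, $\mod_{\varphi,\hat{G}}(\wta_L)$ and $\mod_{\varphi,\tau}(\wta_\kinfty,\wta_L)$; recall $\wta_\ast=\wta^{G_\ast}$ for $\ast\in\{\kpinfty,\kinfty,L\}$. The input here is the vanishing $H^1_\cont(H,\GL_d(\wta))=1$ for $H\in\{\gkpinfty,\gkinfty,G_L\}$, a standard consequence of almost purity / faithfully flat descent along $\wta_\ast\to\wta$, exactly as in the classical theory of $(\varphi,\Gamma)$-modules. For $\ast=\kpinfty$ this makes base change $\mod_{\varphi,\gammak}(\wta_\kpinfty)\to\mod_{\varphi,\gk}(\wta)$ an equivalence with quasi-inverse $N\mapsto N^{\gkpinfty}$, and similarly for $\ast=L$ with $\hat{G}=\gk/G_L$. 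For the pair category one uses $\wta_\kinfty=(\wta_L)^{\gal(L/\kinfty)}$ together with the fact that a pair $(M,\wtm)$ is recovered from $N\in\mod_{\varphi,\gk}(\wta)$ as $(N^{\gkinfty},N^{G_L})$, with $N\mapsto\wta\otimes_{\wta_L}\wtm$ the inverse construction; this is again an equivalence by the $H^1$-vanishing for $\gkinfty$ and $G_L$. A direct check shows the three arrows among these tilde-categories in \eqref{etalediag} are base-change functors, hence compatible with base change to $\wta$.

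It remains to bring in the two imperfect leftmost vertices and then pass to the overconvergent diagram. The equivalences $\mod_{\varphi,\gammak}(\mathbb{A}_\kpinfty)\simeq\rep_\gk(\zp)$ and $\mod_{\varphi,\tau}(\A_\kinfty,\wta_L)\simeq\rep_\gk(\zp)$ are Fontaine's theorem \cite{Fon90} and \cite[Thm.~1]{Car13}, each given by taking suitable Galois invariants of $\bfa\otimes_\zp T$ (resp.\ $\wta\otimes_\zp T$). To see the top (resp.\ bottom) arrow of \eqref{etalediag} is an equivalence, I would check that composing it with base change to $\wta$ is naturally isomorphic to $T\mapsto\wta\otimes_\zp T$; this reduces to the classical \'etale-descent isomorphism $\mathbb{A}\otimes_{\mathbb{A}_\kpinfty}(\mathbb{A}\otimes_\zp T)^{\gkpinfty}\isoto\mathbb{A}\otimes_\zp T$ (resp.\ its analogue with $\bfa,\A_\kinfty,\gkinfty$) followed by $-\otimes_{\mathbb{A}}\wta$. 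Since all five functors to $\mod_{\varphi,\gk}(\wta)\simeq\rep_\gk(\zp)$ are now equivalences forming a commuting diagram, part (1) follows. For part (2) I would verify that each base-change functor from a category in \eqref{eqocmod} to the corresponding category in \eqref{etalediag} is an equivalence compatible with the arrows, reducing \eqref{eqocmod} to \eqref{etalediag}. The essential inputs are the overconvergence of $(\varphi,\Gamma)$-modules $\mod_{\varphi,\gammak}(\mathbb{A}_\kpinfty^\dagger)\isoto\mod_{\varphi,\gammak}(\mathbb{A}_\kpinfty)$ \cite{CC98}, the overconvergence of $(\varphi,\tau)$-modules $\mod_{\varphi,\tau}(\A_\kinfty^\dagger,\wta_L^\dagger)\isoto\mod_{\varphi,\tau}(\A_\kinfty,\wta_L)$ \cite{GL20,GP21}, and the perfect analogues $\mod_{\varphi,\gammak}(\wta_\kpinfty^\dagger)\isoto\mod_{\varphi,\gammak}(\wta_\kpinfty)$ and $\mod_{\varphi,\gk}(\wta^\dagger)\simeq\mod_{\varphi,\gk}(\wta)$ (and likewise over $\wta_L^\dagger$ for the $\hat{G}$-vertex), available in \cite{Ber02,GP21}; several of these equivalences are already recorded in \cite{GP21}.

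The main difficulty I anticipate is organizational rather than a single deep point: one must check that the many base-change functors are mutually compatible, so that the displayed diagrams genuinely commute and the abstract equivalences are realized by the stated arrows; and one must assemble, for the correct groups ($\gkpinfty$, $\gkinfty$, $G_L$) and the correct rings (imperfect $\mathbb{A}$ and $\bfa$, perfect $\wta$, and their overconvergent counterparts), the relevant faithfully-flat-descent and $H^1$-vanishing statements. A little extra care is needed when $p=2$, where $\kinfty\cap\kpinfty$ may be strictly larger than $K$; but this only affects the group-theoretic bookkeeping of $\hat{G}$ (compare Notation~\ref{nota hatG}) and not the validity of the equivalences, since the categories of Definition~\ref{def: phi mod cat} are built from continuous group actions, without any choice of topological generator.
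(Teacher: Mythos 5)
Your proposal is correct, and all of its external inputs are genuinely available; it differs from the paper's (very short) proof mainly in organization. The paper argues each vertex directly: it reduces to the mod~$p$ categories and invokes the facts that $\kpinfty$ and $\kinfty$ are APF extensions of $K$ and that $C$ is perfectoid, citing \cite{Fon90, Car13} for the details, and then quotes the overconvergence theorems of \cite{CC98} and \cite{GL20, GP21} to pass to diagram \eqref{eqocmod}. You instead set up a hub category $\mod_{\varphi,\gk}(\wta)$, prove $\mod_{\varphi,\gk}(\wta)\simeq\rep_\gk(\zp)$ by Lang/Artin--Schreier over the algebraically closed $\wte=C^\flat$ plus d\'evissage, and reach the three tilde vertices by descent, i.e.\ by the vanishing $H^1_\cont(H,\GL_d(\wta))=1$ for $H\in\{\gkpinfty,\gkinfty,G_L\}$ (which is exactly the perfectoid-field input the paper alludes to, spelled out), while still citing \cite{Fon90} and \cite{Car13} for the two imperfect vertices and the same overconvergence theorems for part (2). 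What your route buys is that the compatibility of all the displayed arrows becomes transparent (everything is a base-change functor to the hub), and the descent statements needed for the perfect rings are isolated explicitly; what the paper's route buys is brevity, since the mod~$p$ reduction plus APF/field-of-norms plus perfectoidness is precisely the packaging under which Fontaine's and Caruso's theorems are proved in the cited references. Do note that for \eqref{eqocmod} your argument (like the paper's, implicitly) also needs the perfect overconvergent comparisons over $\wta^\dagger$, $\wta^\dagger_\kpinfty$, $\wta^\dagger_L$, $\wta^\dagger_\kinfty$; you correctly flag these and they are indeed in the literature you cite (\cite{Ber02, GP21}, see also Kedlaya--Liu), so this is a point to make explicit rather than a gap.
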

\begin{proof}
All results are known, we briefly review the proofs.
All categories in diagram \eqref{etalediag} are well-known. To prove their equivalences with $\rep_\gk(\zp)$, it suffices to prove the relevant ``mod $p$" categories are equivalent with $\rep_\gk(\fp)$. 
These follow quickly from the facts that $\kpinfty$ and $\kinfty$ are  APF extensions over $K$, and $C$ is a perfectoid field; cf. \cite{Fon90, Car13} for details.
These categories are further equivalent to those in  \eqref{eqocmod} by the overconvergence theorems of \cite{CC98} (for $(\phi, \Gamma)$-modules) and \cite{GL20, GP21} (for $(\phi, \tau)$-modules).
\end{proof}

\begin{rem} \label{rem: etale condition}
    \begin{enumerate}
        \item Obviously, the category $\rep_\gk(\qp)$ is equivalent to the isogeny category of any category in the diagrams in Thm. \ref{thmequivetale}.

        \item Indeed, we avoid introducing the notation ``$\mod_{\varphi, \gammak}(\mathbb{B}_\kpinfty)$", which would be a \emph{wrong} category if we define it by \emph{literally} following Def. \ref{def: phi mod cat}. (The correct definition is to use the isogeny category of $\mod_{\varphi, \gammak}(\mathbb{A}_\kpinfty)$, cf. e.g. \cite[Def I.4.2]{CC98}).
        This is a well-known issue concerning \'etaleness in the rational case. We hope this shall not cause confusion for the   readers.
    \end{enumerate}
\end{rem}

\subsection{Rigid-overconvergent modules}

We prove   equivalences of module categories over ``rigid-overconvergent rings". For simplicity, let  $\wtb_{\rig, L}^{\dagger, \pa} :=(\wtb_{\rig, L}^\dagger)^\hatgpa $.

\begin{theorem}\label{thmequidagger}  
We have a diagram of equivalences of categories (defined via   Def. \ref{def: phi mod cat}):
\begin{equation}
\label{eqocdagger}
\begin{tikzcd}
                                                                                  & {{\mod_{\varphi, \gammak}(\bB_{\rig, \kpinfty}^\dagger)} } \arrow[r] \arrow[d]                                        & {{\mod_{\varphi, \gammak}(\wtb_{\rig, \kpinfty}^\dagger)} } \arrow[d]                      \\
                                                                                  & { {\mod_{\varphi, \hatg}(\wtb_{\rig, L}^{\dagger, \pa})} } \arrow[r]                                                  & { {\mod_{\varphi, \hatg}(\wtb_{\rig, L}^\dagger)}           }                              \\
{ {\mod_{\varphi, \tau}(\bfB_{\rig, \kinfty}^\dagger, \wtb_{\rig, L}^\dagger )} } & { {\mod_{\varphi, \tau}(\bfB_{\rig, \kinfty}^\dagger, \wtb_{\rig, L}^{\dagger, \pa})} } \arrow[r] \arrow[u] \arrow[l] & { {\mod_{\varphi, \tau}(\wtb_{\rig, \kinfty}^\dagger, \wtb_{\rig, L}^\dagger)} } \arrow[u]
\end{tikzcd}
\end{equation}
 They are further equivalent to the category of $B$-pairs (as defined in \cite{Ber08ANT}).
\end{theorem}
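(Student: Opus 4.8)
The plan is to take Berger's theorem \cite{Ber08ANT} as the single external input: it identifies the category of $B$-pairs with $\mod_{\varphi,\gammak}(\bB_{\rig,\kpinfty}^\dagger)$, the upper–left node of \eqref{eqocdagger}. Everything then reduces to showing that every functor in the diagram \eqref{eqocdagger} is an equivalence. All of these functors are base change along the evident inclusions of rings, or the forgetful functor $(M,\wtm)\mapsto\wtm$, so the diagram commutes on the nose; hence it is enough to verify that a spanning collection of the arrows are equivalences (say the top edge, the right edge, the middle edge, and the two forgetful functors) and to let the two-out-of-three property for equivalences propagate the conclusion to the remaining arrows, and finally to compose with Berger's equivalence. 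One should also record at the outset that the ``\'etale'' condition of Def.\ \ref{def: phi mod cat} over a Robba ring is merely ``$\varphi^{*}M\xrightarrow{\sim}M$'' and imposes no slope constraint, which is why the rigid-overconvergent categories can be as large as the category of $B$-pairs rather than just $\rep_\gk(\qp)$.

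Each arrow is then an instance of one of three descent statements, each the rigid-overconvergent version of something already in the literature. \emph{(i) Frobenius (slope) descent}, for $\bB_{\rig,\kpinfty}^\dagger\hookrightarrow\wtb_{\rig,\kpinfty}^\dagger$ and $\bfB_{\rig,\kinfty}^\dagger\hookrightarrow\wtb_{\rig,\kinfty}^\dagger$: base change from an imperfect Robba ring to its Frobenius-completion is an equivalence on $(\varphi,\gammak)$- resp.\ $(\varphi,\tau)$-modules; on the cyclotomic side this is Cherbonnier--Colmez \cite{CC98} / Berger together with Kedlaya's slope theory, and on the Kummer side it is the rigid analogue of \cite{GL20, GP21}. \emph{(ii) Galois descent along the $\zp$-extension $L/\kpinfty$}, for $\wtb_{\rig,\kpinfty}^\dagger$-modules to $\wtb_{\rig,L}^\dagger$-modules: effected by $\wtm\mapsto\wtm^{\gal(L/\kpinfty)=1}$, using that this invariant submodule is finite free of full rank over $\wtb_{\rig,\kpinfty}^\dagger$ with $\wtm^{\gal(L/\kpinfty)=1}\otimes\wtb_{\rig,L}^\dagger\xrightarrow{\sim}\wtm$, and that $H^{1}_{\cont}(\gal(L/\kpinfty),\GL_{n}(\wtb_{\rig,L}^\dagger))$ vanishes; since $\gal(L/\kpinfty)\simeq\zp$ for every $p$, this is a Tate--Sen descent exactly of the kind carried out in \S\ref{sec: ts-1} (and this is also what lets the forgetful functors out of the $(\varphi,\tau)$-columns be equivalences, $M$ being recovered from $\gal(L/\kinfty)$-invariants of $\wtm$ up to a descent of type (i)). \emph{(iii) Pro-analytic descent}, for $\wtb_{\rig,L}^{\dagger,\pa}\hookrightarrow\wtb_{\rig,L}^\dagger$: base change lands in pro-analytic modules by Lemma \ref{lemlamod}, and conversely one must show that every finite free $\hatg$-module $\wtm$ over $\wtb_{\rig,L}^\dagger$ has $\wtm^{\hatgpa}$ finite free over $\wtb_{\rig,L}^{\dagger,\pa}$ with $\wtm^{\hatgpa}\otimes_{\wtb_{\rig,L}^{\dagger,\pa}}\wtb_{\rig,L}^\dagger\xrightarrow{\sim}\wtm$.

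The main obstacle is step (iii). The no-higher-locally-analytic-vectors input available to us (Example \ref{example: porat}, from Porat \cite{Poratlav}) is stated for the Banach rings $\wtb^{[r,s]}_{L}$, whereas $\wtb_{\rig,L}^\dagger$ is an LF ring for which, as stressed in Rem.\ \ref{rem: caution pro ana}, one has only pro-analyticity; so the descent must be run radius by radius — at each interval $[r,s]$ one descends the $\wtb^{[r,s]}_{L}$-lattice to its locally analytic vectors using Thm.\ \ref{prop: no higher lav coho compa}, and then one must check that the descended lattices are compatible under shrinking the interval and under inverting the radius bound, so that they glue to a finite free $\wtb_{\rig,L}^{\dagger,\pa}$-module. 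This step requires the explicit description of $(\wtb_{\rig,L}^\dagger)^{\pa}$ furnished by the locally-analytic-vector machinery (Berger--Colmez, \cite{GMWHT, GMWdR}). A secondary, more technical point is that the Kummer-side Frobenius descent of type (i) along $\bfB_{\rig,\kinfty}^\dagger\hookrightarrow\wtb_{\rig,\kinfty}^\dagger$, at arbitrary slope, does not seem to be recorded in the literature in the form we need, and will have to be obtained by transporting Kedlaya's slope-filtration descent to the Kummer tower.
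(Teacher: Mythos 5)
Your overall scaffolding (take Berger's equivalence as the anchor and propagate through the diagram by two-out-of-three) matches the paper's, but the two steps you single out as the real work — the pro-analytic descent (your step (iii)) and a slope/Frobenius descent over $\bfB_{\rig,\kinfty}^\dagger$ — are left as gaps in your proposal, and the inputs you cite do not close them. Concretely, for (iii): Example \ref{example: porat} and Thm.~\ref{prop: no higher lav coho compa} only give a comparison of \emph{cohomology} ($\rg_\cont(G,W)\simeq\rg_\cont(G,W^\la)$, etc.); they are not a decompletion statement saying that $\wtm^{[r,s],\la}$ is finite free over $(\wtb_L^{[r,s]})^{\hatg\dla}$ and generates $\wtm^{[r,s]}$, nor do they handle the gluing over radii needed to produce a module over $\wtb_{\rig,L}^{\dagger,\pa}$. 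Moreover your intermediate Galois descent along $L/\kpinfty$ is pitched over $\wtb_{\rig,L}^\dagger$, where the paper explicitly does not even know the additive $H^1$-vanishing (Rem.~\ref{rem: do not konw axiom q}; see also Rem.~\ref{rem: diffi lpa ring} for how little is known about $(\wtb_{\rig,L}^\dagger)^{\hatg\dpa}$). Finally, you yourself concede that the arbitrary-slope Kummer-side Frobenius descent "is not recorded in the literature"; asserting it can be "transported" from Kedlaya is not a proof.

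The paper's point is that none of this analytic work is needed. For the second row, essential surjectivity of $\mod_{\varphi,\hatg}(\wtb_{\rig,L}^{\dagger,\pa})\to\mod_{\varphi,\hatg}(\wtb_{\rig,L}^\dagger)$ is obtained by routing any $\wtd_{\rig,L}^\dagger$ through the already-known $(\varphi,\Gamma)$-side descent (Berger, \cite[Thm.~1.2]{Berjussieu}, \cite[Thm.~2.2.7]{Ber08ANT}): it descends to some $\bbD_{\rig,\kpinfty}^\dagger\in\mod_{\varphi,\gammak}(\bB_{\rig,\kpinfty}^\dagger)$, whose $\gammak$-action is \emph{automatically} pro-analytic by Lem.~\ref{lemlamod}, and base change to $\wtb_{\rig,L}^{\dagger,\pa}$ gives the required pro-analytic model — so your step (iii) is bypassed entirely. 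For the bottom row, the equivalence $\mod_{\varphi,\tau}(\bfB_{\rig,\kinfty}^\dagger,\wtb_{\rig,L}^{\dagger,\pa})\simeq\mod_{\varphi,\hatg}(\wtb_{\rig,L}^{\dagger,\pa})$ is quoted from \cite[Prop.~6.1.6, Rem.~6.1.7]{GP21}, and then the chain of base-change functors through $\mod_{\varphi,\tau}(\bfB_{\rig,\kinfty}^\dagger,\wtb_{\rig,L}^\dagger)$ and $\mod_{\varphi,\tau}(\wtb_{\rig,\kinfty}^\dagger,\wtb_{\rig,L}^\dagger)$ is sandwiched between known equivalences, so no direct Kummer-side slope descent is ever invoked. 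If you want to salvage your direct route, you would need a genuine decompletion theorem (Tate–Sen/Berger–Colmez or Porat's bundle-level results) at each radius plus a gluing argument, which is precisely the technical debt the paper's argument is designed to avoid.
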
 
\begin{proof}
All results are (essentially) known; we briefly review the proofs.
The equivalence 
\[{\mod_{\varphi, \gammak}(\bB_{\rig, \kpinfty}^\dagger)} \simeq {\mod_{\varphi, \gammak}(\wtb_{\rig, \kpinfty}^\dagger)},  \]
as well as their equivalence with  the category of $B$-pairs,
 is well-known, cf. \cite[Thm. 1.2]{Berjussieu} and \cite[Thm. 2.2.7]{Ber08ANT}; arguments there also lead to equivalences between all categories in the right most column.

Consider the second row. To see
\[ {\mod_{\varphi, \hatg}(\wtb_{\rig, L}^{\dagger, \pa})}  \simeq {\mod_{\varphi, \hatg}(\wtb_{\rig, L}^\dagger)}  \]
one needs to show any $\wtd_{\rig, L}^\dagger \in {\mod_{\varphi, \hatg}(\wtb_{\rig, L}^\dagger)}$ \emph{descends  to a  pro-analytic module}.
But we already know it descends to an object $\bbD_{\rig, \kpinfty}^\dagger \in {\mod_{\varphi, \gammak}(\bB_{\rig, \kpinfty}^\dagger)}$, and the $\gammak$-action on $\bbD_{\rig, \kpinfty}^\dagger$ is always pro-analytic by Lem. \ref{lemlamod} (also essentially observed in \cite[Prop. III.1.1]{Ber08Ast}.)

Consider the third row. The equivalence
\[ {\mod_{\varphi, \tau}(\bfB_{\rig, \kinfty}^\dagger, \wtb_{\rig, L}^{\dagger, \pa})} \simeq {\mod_{\varphi, \hatg}(\wtb_{\rig, L}^{\dagger, \pa})}    \]
  follows from \cite[Prop. 6.1.6, Rem. 6.1.7]{GP21}. The chain of functors (induced by inclusion of rings)
\[ {\mod_{\varphi, \tau}(\bfB_{\rig, \kinfty}^\dagger, \wtb_{\rig, L}^{\dagger, \pa})}  \to   {\mod_{\varphi, \tau}(\bfB_{\rig, \kinfty}^\dagger, \wtb_{\rig, L}^\dagger )}  \to  {\mod_{\varphi, \tau}(\wtb_{\rig, \kinfty}^\dagger, \wtb_{\rig, L}^\dagger)}  \] 
implies equivalence of the middle category with others.
\end{proof}

\subsection{Notation of modules}
  We shall study cohomology theories from next section. We introduce notations for the modules.

  \begin{convention} \label{rem: big than alpha}
      In this paper, we will often construct modules defined on a ring ``over some interval", cf. below. From now on,  whenever we use an interval $I$  (of the form $[r, s]$ or $[r, s)$ with $0<r \leq s \leq +\infty$), we can always make $\mathrm{min}(I)  \gg 0$ with no harm.
In particular, we will always assume $\mathrm{min}(I) >\mathrm{max} \{\alpha, \frac{p}{p-1}\}$  where $\frac{p-1}{p}$ is needed in Example \ref{example: porat}, and $\alpha$ is a certain constant (depending only on $K$ and $\kinfty$) to make Prop. \ref{propverifyaxiommono} work.
  \end{convention}
 
 \begin{notation} \label{notaetalmod}
Let
   \[ T\in \rep_\zp(\gk), \quad \text{ resp. } V\in \rep_\qp(\gk) \] 
   \begin{enumerate}
       \item    Denote the various corresponding modules in categories (resp. isogeny categories) in  Thm. \ref{thmequivetale} by
  \begin{equation} \label{obj121}
\begin{tikzcd}
\bbD_\kpinfty \arrow[r] & \wtd_\kpinfty \arrow[d]   \\
                        & \wtd_L                    \\
\bfD_\kinfty \arrow[r]  & \wt\bfD_\kinfty \arrow[u]
\end{tikzcd}
\end{equation}
and
\begin{equation}\label{obj122}
\begin{tikzcd}
\bbD_\kpinfty^\dagger \arrow[r] & \wtd_\kpinfty^\dagger \arrow[d]   \\
                                & \wtd_L^\dagger                    \\
\bfD_\kinfty^\dagger \arrow[r]  & \wt\bfD_\kinfty^\dagger \arrow[u]
\end{tikzcd}
\end{equation}

\item Suppose $r \gg 0$ (cf. Convention  \ref{rem: big than alpha}) such that $\bbD_{\kpinfty}^{\dagger}$ can be descended to some $\bbD_{\kpinfty}^{[r, +\infty]}$, that is, 
\[ \bbD_{\kpinfty}^{[r, +\infty]}\otimes_{\bbb_{\kpinfty}^{[r, +\infty]}} 
 \bbb_{\kpinfty}^{\dagger} \simeq \bbD_{\kpinfty}^{\dagger}\]
Similarly for other modules. We thus can ``descend" all modules in diagram \eqref{obj122}  to the interval $[r, +\infty]$.
\begin{equation} \label{131roc}
    \begin{tikzcd}
{\bbD_{\kpinfty}^{[r, +\infty]}} \arrow[r] & {\wtd_{\kpinfty}^{[r, +\infty]}} \arrow[d] \\
                                        & {\wtd_{L}^{[r, +\infty]}}                  \\
{\bfD_{\kinfty}^{[r, +\infty]}} \arrow[r]  & {\wtd_{\kinfty}^{[r, +\infty]}} \arrow[u] 
\end{tikzcd}
\end{equation}
   \end{enumerate}
 \end{notation}

\begin{notation} \label{notarigmod}
Let $\bbD_{\rig,\kpinfty}^\dagger \in  \mod_{\varphi, \gammak}(\bB_{\rig, \kpinfty}^\dagger)$ (which might be of slope zero or not).
\begin{enumerate}
    \item 
Consider the corresponding objects via Thm. \ref{thmequidagger}.
\begin{equation}\label{131}
\begin{tikzcd}
{\bbD_{\rig,\kpinfty}^\dagger} \arrow[r] & {\wtd_{\rig,\kpinfty}^\dagger} \arrow[d] \\
                                         & {\wtd_{\rig,L}^\dagger}                  \\
{\bfD_{\rig,\kinfty}^\dagger} \arrow[r]  & {\wtd_{\rig,\kinfty}^\dagger} \arrow[u] 
\end{tikzcd}
\end{equation}
Here we explicitly \emph{avoid} considering the module $\wtd^{\dagger, \pa}_{\rig, L}$, and it will not show up in (any of) our cohomological considerations either; cf. Rem. \ref{rem: no lpa coho}.

\item Suppose $r\gg 0$ (cf. Convention \ref{rem: big than alpha}) such that $\bbD_{\rig,\kpinfty}^{\dagger}$ can be descended to some $\bbD_{\kpinfty}^{[r, +\infty)}$, that is, 
\[ \bbD_{\kpinfty}^{[r, +\infty)}\otimes_{\bbb_{\kpinfty}^{[r, +\infty)}} 
 \bbb_{\rig,\kpinfty}^{\dagger} \simeq \bbD_{\rig,\kpinfty}^{\dagger}\]
Similarly for other modules. We thus can ``descend" all modules in diagram \eqref{131} to the interval $[r, +\infty)$.
\begin{equation} \label{131r}
    \begin{tikzcd}
{\bbD_{\kpinfty}^{[r, +\infty)}} \arrow[r] & {\wtd_{\kpinfty}^{[r, +\infty)}} \arrow[d] \\
                                        & {\wtd_{L}^{[r, +\infty)}}                  \\
{\bfD_{\kinfty}^{[r, +\infty)}} \arrow[r]  & {\wtd_{\kinfty}^{[r, +\infty)}} \arrow[u] 
\end{tikzcd}
\end{equation}

\item 
For $r \leq s <+\infty$, let
\[ \bbD_{\kpinfty}^{[r, s]} = \bbD_{\kpinfty}^{[r, +\infty)} \otimes_{\bbb_{\kpinfty}^{[r, +\infty)} } \bbb_{\kpinfty}^{[r, s]} \]
and similarly for other modules. We thus can ``base change" all modules in diagram \eqref{131r} to the interval $[r, s]$.
\begin{equation} \label{131rs}
      \begin{tikzcd}
{\bbD_{\kpinfty}^{[r, s]}} \arrow[r] & {\wtd_{\kpinfty}^{[r, s]}} \arrow[d] \\
                                        & {\wtd_{L}^{[r, s]}}                  \\
{\bfD_{\kinfty}^{[r, s]}} \arrow[r]  & {\wtd_{\kinfty}^{[r, s]}} \arrow[u] 
\end{tikzcd}
\end{equation}
\end{enumerate}
\end{notation}

\begin{notation}\label{notation: phi inf twist mod}
\begin{enumerate}
\item Let $M^\dagger$ be an object in diagram \eqref{obj122}, and let $M^{[r, +\infty]}$ be its corresponding object in diagram \eqref{131roc}.
Further define $$M^{[r, +\infty]}_\infty : =\cup_{m \geq 0} \phi^{-m}(M^{[p^mr, +\infty]}).$$

\item Let $M^\dagger_\rig$ be an object in diagram \eqref{131}, and let $M^{[r, s]}$ be its corresponding object in \eqref{131rs}. Further define $$M^{[r, s]}_\infty : =\cup_{m \geq 0} \phi^{-m}(M^{[p^mr, p^ms]}).$$
\end{enumerate}
\end{notation}

\begin{prop} \label{prop: lav of mod}
    Use notations in diagram \eqref{131rs}. We have
\[(\wtd_L^{[r,s]})^{\hatg\dla, \gal(L/\kpinfty)=1} = \bbd^{[r,s]}_{\kpinfty, \infty}  =\bbd_\kpinfty^{[r,s]}\otimes_{\bbb_\kpinfty^{[r,s]}}  \bbb^{[r,s]}_{\kpinfty, \infty}  \]
\[(\wtd_L^{[r,s]})^{\hatg\dla, \gal(L/\kinfty)=1} = \bfd^{[r,s]}_{\kinfty, \infty}  =\bfd_\kinfty^{[r,s]}\otimes_{\bfb_\kinfty^{[r,s]}}  \bfb^{[r,s]}_{\kinfty, \infty}\]
(cf. Notation \ref{notation: phi inf twist mod} for $\bbd^{[r,s]}_{\kpinfty, \infty}$ and $\bfd^{[r,s]}_{\kinfty, \infty}$.)
\end{prop}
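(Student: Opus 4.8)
The two ``second equalities'' are formal. By Notation~\ref{notation: phi inf twist mod}, $\bbd^{[r,s]}_{\kpinfty,\infty}=\bigcup_{m}\varphi^{-m}\big(\bbd_\kpinfty^{[p^mr,p^ms]}\big)$, and since $\bbd_\kpinfty^{[r,s]}$ is finite free over $\bbb_\kpinfty^{[r,s]}$ compatibly with $\varphi$ (and likewise $\bfd_\kinfty^{[r,s]}$ over $\bfb_\kinfty^{[r,s]}$), this equals $\bbd_\kpinfty^{[r,s]}\otimes_{\bbb_\kpinfty^{[r,s]}}\bbb^{[r,s]}_{\kpinfty,\infty}$; so the content is the first equality of each line. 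I regard these as module refinements of the ring-level identifications
\[(\wtb_\kpinfty^{[r,s]})^{\gammak\dla}=\bbb^{[r,s]}_{\kpinfty,\infty},\qquad (\wtb_L^{[r,s]})^{\hatg\dla,\,\gal(L/\kinfty)=1}=\bfb^{[r,s]}_{\kinfty,\infty},\]
which I would cite from \cite{GP21} (the cyclotomic one going back to Berger--Colmez-type computations, cf.\ \cite{Ber16}). The first step is a ``base-change for invariants'': for a closed subgroup $H\subseteq\hatg$ and $x\in(\wtd_L^{[r,s]})^{H=1}$, the orbit map $g\mapsto gx$ on $\hatg$ factors through the submersion $\hatg\twoheadrightarrow\hatg/H$, so $x$ lies in $(\wtd_L^{[r,s]})^{\hatg\dla}$ iff the induced map on $\hatg/H$ is locally analytic. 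Taking $H=\gal(L/\kpinfty)$ (normal, with $\hatg/H=\gammak$ acting genuinely on $(\wtd_L^{[r,s]})^{H=1}=\wtd_\kpinfty^{[r,s]}$) gives $(\wtd_L^{[r,s]})^{\hatg\dla,\,\gal(L/\kpinfty)=1}=(\wtd_\kpinfty^{[r,s]})^{\gammak\dla}$; taking $H=\gal(L/\kinfty)$ (whose coset space is homeomorphic to $\gal(L/\kpinfty)$) gives $(\wtd_L^{[r,s]})^{\hatg\dla,\,\gal(L/\kinfty)=1}=(\wtd_L^{[r,s]})^{\gal(L/\kpinfty)\dla}\cap\wtd_\kinfty^{[r,s]}$. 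Both reductions use only the closed subgroups, hence are insensitive to the $p=2$ subtleties of Rem.~\ref{rem: intro p equal 2} (for $p=2$ one may in addition arrange $\kinfty\cap\kpinfty=K$, cf.\ Notation~\ref{nota: gamma generator}).

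For the $\kpinfty$-statement I would use the following ``locally analytic basis'' principle, for $G$ a compact $p$-adic Lie group acting continuously on a $\qp$-Banach algebra $\Lambda$ with $\Lambda_0:=\Lambda^{G\dla}$: if $W$ is a finite free $\Lambda$-module with continuous semilinear $G$-action admitting a $G$-stable finite free $\Lambda_0$-submodule $W_0$ with $W_0\otimes_{\Lambda_0}\Lambda\isoto W$, then $W^{G\dla}=W_0$. Indeed $W_0\subseteq W^{G\dla}$ by Lem.~\ref{lemlamod}; conversely a basis $(e_i)$ of $W_0$ over $\Lambda_0$ is a basis of $W$ over $\Lambda$ whose $G$-action matrix $g\mapsto M(g)\in\GL_d(\Lambda_0)$ is locally analytic, so for locally analytic $w=\sum a_i e_i$ one solves $(g(a_i))_i=M(g)^{-1}\cdot(\text{locally analytic }\Lambda^d\text{-valued function of }g)$ and obtains $a_i\in\Lambda^{G\dla}=\Lambda_0$. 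Apply this with $G=\gammak$, $\Lambda=\wtb_\kpinfty^{[r,s]}$, $\Lambda_0=\bbb^{[r,s]}_{\kpinfty,\infty}$ (the ring identification), $W=\wtd_\kpinfty^{[r,s]}$ and $W_0=\bbd^{[r,s]}_{\kpinfty,\infty}=\bbd_\kpinfty^{[r,s]}\otimes_{\bbb_\kpinfty^{[r,s]}}\bbb^{[r,s]}_{\kpinfty,\infty}$: here $W_0$ is $\gammak$-stable \emph{because $\kpinfty/K$ is Galois}, so $\bbd_\kpinfty^{[r,s]}$ (and all its $\varphi^{-m}$-twists) carries a $\gammak$-action, and $W_0\otimes_{\Lambda_0}\Lambda=\wtd_\kpinfty^{[r,s]}$ since $\bbb^{[r,s]}_{\kpinfty,\infty}\subseteq\wtb_\kpinfty^{[r,s]}$. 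This yields $(\wtd_\kpinfty^{[r,s]})^{\gammak\dla}=\bbd^{[r,s]}_{\kpinfty,\infty}$, proving the first line.

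The $\kinfty$-statement is the hard one, precisely because $\gal(L/\kinfty)$ is \emph{not} normal in $\hatg$ and $\bfd_\kinfty^{[r,s]}$ is \emph{not} $\tau$-stable, so no $\hatg$-equivariant descent of $\wtd_L^{[r,s]}$ to the locally analytic ring is visible. Instead I would apply the principle above to the \emph{normal} subgroup $\gal(L/\kpinfty)$: by Thm.~\ref{thmequidagger} the module $\wtd_{\rig,L}^\dagger$ descends to $\wtd^{\dagger,\pa}_{\rig,L}$ over $\wtb_{\rig,L}^{\dagger,\pa}$, on which $\hatg$ --- in particular $\gal(L/\kpinfty)$ --- acts pro-analytically by Lem.~\ref{lemlamod}; thus a basis of $\bfd_{\rig,\kinfty}^\dagger$ consists of pro-analytic vectors of $\wtd_{\rig,L}^\dagger$, and a Frobenius adjustment (the mechanism of Lem.~\ref{lemphidescent}, trading pro-analyticity over the Robba ring for genuine analyticity over a finite interval --- exactly what manufactures the ``$\infty$'' in the answer) produces a basis $(f_i)$ of $\bfd_\kinfty^{[r,s]}$, still fixed by $\gal(L/\kinfty)$, for which $\bigoplus_i(\wtb_L^{[r,s]})^{\gal(L/\kpinfty)\dla}f_i$ is $\gal(L/\kpinfty)$-stable; the principle then gives $(\wtd_L^{[r,s]})^{\gal(L/\kpinfty)\dla}=\bigoplus_i(\wtb_L^{[r,s]})^{\gal(L/\kpinfty)\dla}f_i$. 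Intersecting with $\wtd_\kinfty^{[r,s]}=(\wtd_L^{[r,s]})^{\gal(L/\kinfty)=1}$ and reading off coordinates in this basis (legitimate as the $f_i$ are $\gal(L/\kinfty)$-fixed) gives
\[(\wtd_L^{[r,s]})^{\gal(L/\kpinfty)\dla}\cap\wtd_\kinfty^{[r,s]}=\bigoplus_i\big((\wtb_L^{[r,s]})^{\gal(L/\kpinfty)\dla}\cap\wtb_\kinfty^{[r,s]}\big)f_i=\bigoplus_i\bfb^{[r,s]}_{\kinfty,\infty}\,f_i=\bfd^{[r,s]}_{\kinfty,\infty},\]
using the second ring identification (together with the submersion step, which identifies $(\wtb_L^{[r,s]})^{\gal(L/\kpinfty)\dla,\gal(L/\kinfty)=1}$ with $(\wtb_L^{[r,s]})^{\hatg\dla,\gal(L/\kinfty)=1}$) and the fact that $(f_i)$ is also a basis of $\bfd^{[r,s]}_{\kinfty,\infty}$ over $\bfb^{[r,s]}_{\kinfty,\infty}$.

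The main obstacle is concentrated in the $\kinfty$-case, and within it in the two genuinely arithmetic inputs: the ring-level computation $(\wtb_L^{[r,s]})^{\hatg\dla,\gal(L/\kinfty)=1}=\bfb^{[r,s]}_{\kinfty,\infty}$, and the verification that the $(\varphi,\tau)$-module basis on the Kummer side becomes \emph{analytic over the Banach ring} $\wtb_L^{[r,s]}$ after the Frobenius adjustment (so that the submodule above is $\gal(L/\kpinfty)$-stable). Both rest on the overconvergence and pro-analyticity theory of $(\varphi,\tau)$-modules from \cite{GP21} (via Thm.~\ref{thmequidagger}), not on soft arguments. The higher-vanishing of Example~\ref{example: porat} plays no role in this degree-$0$ identification, but it is its natural companion for the cohomological applications.
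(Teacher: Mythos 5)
Your proposal is correct, and it uses the same arithmetic inputs as the paper but is more self-contained about how the module-level statement follows from them. The paper's own proof is just two citations: for the cyclotomic line it invokes \cite[Thm.~4.4, Thm.~9.1]{Ber16}, where Thm.~9.1 is already the module-level statement you re-derive, and for the Kummer line it invokes \cite[Thm.~3.4.4]{GP21}, which is the ring-level identification $(\wtb_L^{[r,s]})^{\hatg\dla,\gal(L/\kinfty)=1}=\bfb^{[r,s]}_{\kinfty,\infty}$; the passage from ring to module is left implicit there. Your ``locally analytic basis'' principle plus the coset-submersion reduction is exactly the mechanism behind Berger's Thm.~9.1 (and \cite{BC16}), and your treatment of the Kummer side---running the principle for the normal subgroup $\gal(L/\kpinfty)$ with a basis of $\bfd^\dagger_{\rig,\kinfty}$, whose $\tau$-matrix has entries in $\wtb_{\rig,L}^{\dagger,\pa}$ by the descent in Thm.~\ref{thmequidagger} (i.e.\ \cite[Prop.~6.1.6]{GP21}), then intersecting with $\gal(L/\kinfty)$-fixed vectors---supplies precisely the step the paper's ``it follows from'' glosses over, so it is a useful expansion rather than a different method. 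Two small points: the phrase ``Frobenius adjustment'' is misleading, since no $\varphi^{-m}$-twist of the basis is needed (the basis of $\bfd^{[r,s]}_\kinfty$ already consists of $\hatg$-locally analytic vectors of $\wtd_L^{[r,s]}$, because pro-analyticity over $\wtd_L^{[r,+\infty)}$ means exactly local analyticity of the image in each Banach piece; the $\varphi^{-m}$'s enter only through the coefficient-ring computation); and in the cyclotomic reduction you should record the easy identification $(\wtd_L^{[r,s]})^{\gal(L/\kpinfty)=1}=\wtd_\kpinfty^{[r,s]}$ (free-module invariance over $(\wtb_L^{[r,s]})^{\gal(L/\kpinfty)}=\wtb_\kpinfty^{[r,s]}$), which you use silently.
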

\begin{proof}
    For the first one, see \cite[Thm. 4.4, Thm. 9.1]{Ber16}.
    For the second, it follows from  \cite[Thm. 3.4.4]{GP21}.
\end{proof}
  
\subsection{A differential operator for $(\phi, \tau)$-modules} \label{subsec: diff op phi tau}

\begin{defn} \label{defnfkt}
(cf. \cite[\S 5.1]{GP21} for full details).
Recall we have an element $[\epsilon] \in \wta^+$.
Let $t=\log([\epsilon])  \in \bcrisplus$ be the usual element.
Recall  $E(u)=\mathrm{Irr}(\pi, K_0) \in W(k)[u]$ is the Eisenstein polynomial for $\pi$.
We embed $W(k)[u] \into \ainf$ by sending   $u$ to $[\underline \pi ]$ in Notation \ref{nota:ainf}; henceforth identify $u$ with $[\underline \pi ]$. 
Define the element
\[
\lambda :=\prod_{n \geq 0} (\varphi^n(\frac{E(u)}{E(0)}))  \in \bcrisplus.\]
Define
$$ \mathfrak{t} = \frac{t}{p\lambda},$$
then it turns out $\mathfrak{t} \in \wta^+$.
  \end{defn}

\begin{lemma} \label{lem b}
\cite[Lem. 5.1.1]{GP21}
For $r \gg 0$, $\mathfrak{t}, 1/\mathfrak{t} \in 
  \wt{\mathbf{B}}^{[r, +\infty)}$. In addition, $\mathfrak{t}, 1/\mathfrak{t} \in
 (\wt{\mathbf{B}}^{[r, +\infty)}_{ L})^{\hat{G}\dpa}$.
\end{lemma}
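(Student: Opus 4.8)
The plan is to establish the two claims in turn: first that $\mathfrak{t}$ and $1/\mathfrak{t}$ lie in $\wt{\mathbf{B}}^{[r,+\infty)}$ for $r \gg 0$, and then that they are in fact pro-analytic vectors of the $\hat{G}$-action on $\wt{\mathbf{B}}^{[r,+\infty)}_L$. Recall $\mathfrak{t} = t/(p\lambda)$ where $t = \log([\epsilon])$ and $\lambda = \prod_{n\geq 0}\varphi^n(E(u)/E(0))$; by Definition \ref{defnfkt} we already know $\mathfrak{t} \in \wta^+ \subset \wt{\mathbf B}^+$, so the content of the first assertion is really about $1/\mathfrak{t} = p\lambda/t$. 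For this I would recall the standard valuation estimates for $t$: the element $t$ is a generator of a principal ideal in the various $\wt{\mathbf B}^{[r,s]}$ and $t/\overline{\pi}$ (or $t$ divided by an appropriate Teichmüller-type element) is a unit in $\wt{\mathbf B}^{[r,+\infty)}$ once $r$ is large — this is classical, cf. the computations in \cite[\S 2]{Ber02} on which \cite[\S 2]{GP21} is modeled. Similarly each $\varphi^n(E(u)/E(0))$ is, for $n$ large, very close to $1$ in the relevant Fréchet topology, so the infinite product $\lambda$ converges to a unit times an explicit factor, and one checks $\lambda/t \cdot (\text{correction}) $ is invertible. Concretely, I would cite or reprove that $\mathfrak{t}$ is, up to a unit in $\wt{\mathbf B}^{[r,+\infty)}$, equal to $\overline{\pi}$ (or $t$) divided by something invertible — the point being that in the half-open interval ring $\wt{\mathbf B}^{[r,+\infty)}$ with $r$ large enough, $\mathfrak{t}$ and $t$ differ by a unit, so $1/\mathfrak{t} \in \wt{\mathbf B}^{[r,+\infty)}$ follows from $1/t$ being regular there after inverting the appropriate element; more honestly, one shows directly that $\mathfrak t$ becomes a unit multiple of a known invertible element. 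This is exactly the content of \cite[Lem.~5.1.1]{GP21}, whose proof I would reproduce or invoke.

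**Second**, for the pro-analyticity: one must show $\mathfrak{t}, 1/\mathfrak{t} \in (\wt{\mathbf B}^{[r,+\infty)}_L)^{\hat{G}\text{-pa}}$. The element $t = \log([\epsilon])$ satisfies $g(t) = \chi_{\mathrm{cyc}}(g)\cdot t$ for all $g \in G_K$, so the $\hat{G}$-action on the line $\mathbf{Q}_p\cdot t$ is via the cyclotomic character, which is a (pro-)analytic character; hence $t$ is a pro-analytic vector. Likewise $\lambda$: since $E(u) = E([\underline\pi])$ involves only $[\underline\pi]$, and $\tau$ acts on $[\underline\pi]$ by $[\underline\pi]\mapsto [\underline\pi]\cdot[\underline\epsilon]$ — more precisely $\tau$ and $\gamma$ act on $u = [\underline\pi]$ in a way governed by the coordinates $[\underline\epsilon]$, which are themselves pro-analytic — one concludes $\lambda$ is a pro-analytic vector of $\wt{\mathbf B}^{[r,+\infty)}_L$. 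The key structural input is that $\wt{\mathbf B}^{[r,+\infty)}_L$ carries a pro-analytic $\hat{G}$-action (this is the setting of Example \ref{example: porat} and the ambient theory of \cite{GP21}), that products and $p$-adic limits of pro-analytic vectors are pro-analytic, and that an element which generates a $\hat{G}$-stable line on which $\hat{G}$ acts through an analytic character is automatically pro-analytic. Then $\mathfrak{t} = t/(p\lambda)$ and $1/\mathfrak{t} = p\lambda/t$ are pro-analytic because the pro-analytic vectors in a ring form a subring closed under the inverses that exist in the ring (by Lemma \ref{lemlamod} applied to the rank-one free module, or directly).

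**The main obstacle** I anticipate is the convergence bookkeeping in the first part: making precise for which $r$ the infinite product $\lambda = \prod_{n\geq 0}\varphi^n(E(u)/E(0))$ converges in $\wt{\mathbf B}^{[r,+\infty)}$ and that the limit is invertible there, together with pinning down that $\mathfrak{t}$ and $t$ genuinely differ by a unit of $\wt{\mathbf B}^{[r,+\infty)}$ rather than only after a further localization. This requires tracking the $W^{[s,s]}$-valuations of $\varphi^n(E(u)/E(0)) - 1$ as $s$ ranges over $[r,+\infty)$, and using that $v_p$ of the constant term $E(0)$ is exactly $1$ while the Frobenius pushes the ``bad'' part of $E(u)$ to high valuation. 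Since all of this is already carried out in \cite[\S 5.1]{GP21}, the cleanest route is to cite \cite[Lem.~5.1.1]{GP21} directly for the first statement and spend a short paragraph deducing the pro-analyticity from the character-theoretic description of the $\hat{G}$-action on $t$ and on $u$, together with closure properties of pro-analytic vectors.
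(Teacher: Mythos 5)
The paper itself gives no argument here: the whole statement, including the pro-analyticity assertion, is quoted from \cite[Lem.~5.1.1]{GP21}, so your fallback of simply invoking that lemma is exactly what the paper does. The problem is that the two places where you sketch independent reasoning both contain real errors. First, $t$ and $\mathfrak{t}$ do \emph{not} differ by a unit of $\wt{\mathbf{B}}^{[r,+\infty)}$: they differ by $p\lambda$, and $\lambda=\prod_{n\geq 0}\varphi^n(E(u)/E(0))$ is not invertible there, because $\varphi^n(E(u))=E(u^{p^n})$ has zeros whose valuations tend to $0$ and hence lie in the annulus attached to $[r,+\infty)$ for every $r$ once $n\gg 0$; for the same reason $t$ itself (and $t/\overline{\pi}$) is never a unit in these rings. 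The actual content of the first assertion is that the zeros of $t$ \emph{not} cancelled by $\lambda$ have valuations bounded away from $0$, so they leave the annulus once $r\gg 0$ and $\mathfrak{t}$ becomes a unit; this must be argued on $\mathfrak{t}$ directly (via the $W^{[s,s]}$-estimates, as in \cite{GP21}), not deduced from any invertibility of $t$.

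Second, your pro-analyticity argument leans on the claim that pro-analytic vectors are closed under $p$-adic limits; this is false in general, and it is precisely the point that makes such statements nontrivial: $\lambda$ is a limit of finite products, and local analyticity of the factors does not pass to the limit. Note also that $\lambda$ does not span a $\hatg$-stable line ($\gamma$ fixes it, but $\tau(u)=[\underline{\varepsilon}]u$, so $\tau(\lambda)$ is not a scalar multiple of $\lambda$), so the character-theoretic shortcut only applies to $t$. The input that actually yields these statements is structural: over closed intervals the locally analytic vectors of $\wt{\mathbf{B}}^{[r,s]}_L$ contain the imperfect ring $\bfb^{[r,s]}_{\kinfty,\infty}$ (this is \cite[Thm.~3.4.4]{GP21}, quoted in module form as Prop.~\ref{prop: lav of mod}), which gives local analyticity of $\lambda$ on each $[r,s]$ and hence pro-analyticity, after which one uses the (correct, but not given by Lem.~\ref{lemlamod}) fact that a locally analytic element that is invertible in the ambient Banach ring has locally analytic inverse. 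If you do not want to reproduce these estimates, the honest course is to cite \cite[Lem.~5.1.1]{GP21} for both halves of the statement, as the paper does; as written, your sketch would not compile into a proof.
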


\begin{defn} \label{defndiffwtb}
 (cf. \cite[\S 4]{Gao23}). Define
$$N_\nabla: (\wt{\mathbf{B}}_{  \rig, L}^{\dagger})^{\hat{G}\dpa} \to (\wt{\mathbf{B}}_{  \rig, L}^{\dagger})^{\hat{G}\dpa}$$ 
by setting
\begin{equation}\label{eqnnring}
{N_\nabla:=}
\begin{cases} 
\frac{1}{p\mathfrak{t}}\cdot \nabla_\tau, &  \text{if }  \Kinfty \cap \Kpinfty=K; \\
& \\
\frac{1}{p^2\mathfrak{t}}\cdot \nabla_\tau=\frac{1}{4\mathfrak{t}}\cdot \nabla_\tau, & \text{if }  \Kinfty \cap \Kpinfty=K(\pi_1), \text{ cf. Notation \ref{nota hatG}. }
\end{cases}
\end{equation}
 Note that $1/\mathfrak t$ is in $ (\wt{\mathbf{B}}^\dagger_{\rig, L})^{\hat{G}\dpa}$ by Lem \ref{lem b}, hence  division by $\fkt$ is allowed.
 A convenient and useful fact is that $N_\nabla$ commutes with $\gal(L/\kinfty)$, i.e., $gN_\nabla=N_\nabla g, \forall g\in \gal(L/\kinfty)$, cf. \cite[Eqn. (4.2.5)]{Gao23}.
 (The   $p$ (resp. $p^2$) in the denominator of \eqref{eqnnring} makes  our monodromy operator compatible with the earlier theory of Kisin in \cite{Kis06}, but \emph{up to a minus sign}. See also \cite[1.4.6]{Gao23} for the general convention of minus signs in that paper.)
  \end{defn}

Use Notation \ref{notarigmod}. Since $\tau$-action on 
$\bfd_{\rig, \kinfty}^\dagger$ is pro-analytic (by discussions in Thm. \ref{thmequidagger}), we can define
\[ \nabla_\tau: \bfd_{\rig, \kinfty}^\dagger \to \wtd_{\rig, L}^{\dagger, \pa} \]
which induces (using that $\fkt$ is a unit):
\[ N_\nabla: \bfd_{\rig, \kinfty}^\dagger \to \wtd_{\rig, L}^{\dagger, \pa} \]
This operator satisfies Leibniz rule with respect to $N_\nabla$ on $(\wt{\mathbf{B}}_{  \rig, L}^{\dagger})^{\hat{G}\dpa}$ in Def. \ref{defndiffwtb}.

\begin{prop} \label{prop: N nabla operator}
Use notations in above paragraph. The image of $N_\nabla$ lands inside $\bfd_{\rig, \kinfty}^\dagger$ and hence induces an operator:
\[   N_\nabla: \bfd_{\rig, \kinfty}^\dagger \to  \bfd_{\rig, \kinfty}^\dagger \]
\end{prop}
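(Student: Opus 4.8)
The statement to prove is that $N_\nabla$ preserves $\bfd_{\rig, \kinfty}^\dagger$, i.e.\ that the operator $N_\nabla \colon \bfd_{\rig, \kinfty}^\dagger \to \wtd_{\rig, L}^{\dagger, pa}$ actually lands in the submodule $\bfd_{\rig, \kinfty}^\dagger \subset \wtd_{\rig, L}^{\dagger, pa}$. The plan is to reduce this to a statement at a finite radius $[r,s]$ and then characterize $\bfd_{\rig,\kinfty}^\dagger$ (at that radius) as a subspace of locally analytic vectors cut out by a group-invariance condition, so that one only needs to check that $N_\nabla$ preserves that condition. Concretely, after descending (Convention \ref{rem: big than alpha}, Notation \ref{notarigmod}) to an interval $[r,s]$ with $r\gg 0$ so that $\fkt, 1/\fkt \in (\wtb^{[r,s]}_L)^{\hatg\dpa}$ (Lem.\ \ref{lem b}) and so that Prop.\ \ref{prop: lav of mod} applies, we are reduced to showing that
\[ N_\nabla\bigl(\bfd^{[r,s]}_{\kinfty, \infty}\bigr) \subset \bfd^{[r,s]}_{\kinfty, \infty}, \]
and then reassembling over all $[r,s]$ and passing to the Frobenius-saturation union. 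Here I use the identification from Prop.\ \ref{prop: lav of mod}:
\[ \bfd^{[r,s]}_{\kinfty, \infty} = \bigl(\wtd_L^{[r,s]}\bigr)^{\hatg\dla,\ \gal(L/\kinfty)=1}. \]

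The key steps, in order: First, I would verify that $N_\nabla$ maps locally analytic vectors to locally analytic vectors. Since $N_\nabla = \frac{1}{p\fkt}\nabla_\tau$ (or the $p=2$ variant), and $\nabla_\tau$ is a Lie-algebra operator, it sends $\hatg$-locally analytic vectors of $\wtd_L^{[r,s]}$ into $\hatg$-locally analytic vectors (differentiation in the group direction preserves local analyticity, as $\nabla_\tau$ acts on $\mathcal C^{\an}$ by a first-order differential operator), and multiplication by $1/\fkt \in (\wtb^{[r,s]}_L)^{\hatg\dla}$ (which is locally analytic by Lem.\ \ref{lem b} together with Lem.\ \ref{lemlamod}) preserves local analyticity. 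Second — the crucial point — I would check that $N_\nabla$ commutes with $\gal(L/\kinfty)$ and therefore preserves the $\gal(L/\kinfty)=1$ invariants: this is exactly the fact recorded in Def.\ \ref{defndiffwtb}, namely $g N_\nabla = N_\nabla g$ for all $g \in \gal(L/\kinfty)$ (citing \cite[Eqn. (4.2.5)]{Gao23}). So if $x \in \wtd_L^{[r,s]}$ is fixed by $\gal(L/\kinfty)$ and locally analytic, then $N_\nabla x$ is again locally analytic and fixed by $\gal(L/\kinfty)$, hence lies in $(\wtd_L^{[r,s]})^{\hatg\dla,\ \gal(L/\kinfty)=1} = \bfd^{[r,s]}_{\kinfty, \infty}$ by Prop.\ \ref{prop: lav of mod}. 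Third, I would descend from $\bfd^{[r,s]}_{\kinfty,\infty}$ back to $\bfd^{[r,s]}_\kinfty$: the extra Frobenius-inverse factors $\phi^{-m}$ in $\cup_m \phi^{-m}(\cdots)$ are harmless because $N_\nabla$ interacts with $\phi$ by a simple normalization (the same relation that forces the $\frac{pE(u)}{E(0)}$ normalization in $C_{\phi, N_\nabla}$), so $N_\nabla$ is compatible with the transition maps; alternatively, one shows directly that $\bfd_{\rig,\kinfty}^\dagger = \bigcup_{[r,s]} \bfd^{[r,s]}_\kinfty$ is stable because $N_\nabla$ respects the filtration by radii. Finally, taking the union over $r$ gives $N_\nabla(\bfd_{\rig,\kinfty}^\dagger) \subset \bfd_{\rig,\kinfty}^\dagger$.

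I expect the main obstacle to be the second step — precisely pinning down why $N_\nabla$ commutes with $\gal(L/\kinfty)$ and hence descends to the $\gal(L/\kinfty)$-fixed part. The subtlety is that $\nabla_\tau$ alone does \emph{not} commute with $\gal(L/\kinfty)$ (indeed $g \nabla_\tau g^{-1} = \chi(g)\nabla_\tau$ for $g$ in the $\gamma$-direction, by the relation $\gamma\tau\gamma^{-1} = \tau^{\chi(\gamma)}$), and it is exactly the normalizing factor $\frac{1}{p\fkt}$ (where $\fkt = t/(p\lambda)$, and $t$ transforms by $\chi$ under the cyclotomic action) that cancels this discrepancy — this is the content of \cite[Eqn. (4.2.5)]{Gao23}. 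Once that commutation is granted, together with the local-analyticity bookkeeping and Prop.\ \ref{prop: lav of mod} characterizing $\bfd^{[r,s]}_{\kinfty,\infty}$ intrinsically as locally analytic $\gal(L/\kinfty)$-invariants, the containment is essentially formal. A secondary technical point to handle carefully is that one works with the \emph{pro}-analytic vectors of $\wtd_{\rig,L}^\dagger$ at the level of the Robba ring but must descend to genuine locally analytic vectors at finite radius before invoking Prop.\ \ref{prop: lav of mod}; this is the standard ``$\phi$-adjustment'' philosophy flagged in Rem.\ \ref{rem: caution pro ana} and carried out via the radius-by-radius reduction above.
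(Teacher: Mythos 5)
Your first two steps are fine (and match facts the paper records: $N_\nabla$ commutes with $\gal(L/\kinfty)$, and $\fkt^{\pm1}$ is an analytic unit), but the argument has a genuine gap exactly at your third step. Prop.~\ref{prop: lav of mod} identifies the locally analytic $\gal(L/\kinfty)$-invariants of $\wtd_L^{[r,s]}$ with the \emph{Frobenius-saturated} module $\bfd^{[r,s]}_{\kinfty,\infty}=\cup_{m\geq0}\varphi^{-m}(\bfd^{[p^mr,p^ms]}_{\kinfty})$, which strictly contains $\bfd^{[r,s]}_{\kinfty}$ (its coefficient ring contains elements like $\varphi^{-m}(u)$ that do not lie in the imperfect ring $\bfb^{[r,s]}_{\kinfty}$). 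So your steps 1--2 only show $N_\nabla(x)\in\bfd^{[r,s]}_{\kinfty,\infty}$ for every $s$; the content of the proposition is precisely the removal of this saturation, i.e.\ that the image lies in the module over the imperfect Robba ring. Your dismissal of this as ``harmless'' does not address it: ``$N_\nabla$ respects the filtration by radii'' says nothing about the $\varphi^{-m}$-twists (and a priori the exponent $m$ can even vary with $s$), and the saturation cannot be cut out by any further group-invariance or analyticity condition, so no purely formal argument from Prop.~\ref{prop: lav of mod} can finish. To actually conclude one needs extra input from the $\varphi$-module structure --- e.g.\ a uniform statement at the level of the full Robba ring such as $(\wtd^{\dagger}_{\rig,L})^{\hatg\dpa,\,\gal(L/\kinfty)=1}=\cup_m\varphi^{-m}(\bfd^\dagger_{\rig,\kinfty})$, combined with the relation $N_\nabla\varphi=\tfrac{pE(u)}{E(0)}\varphi N_\nabla$ and invertibility of the $\varphi$-matrix, fed into an iteration/descent argument that lowers the exponent $m$ to zero --- and none of this is carried out in your sketch.

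For comparison, the paper does not try to deduce the proposition from Prop.~\ref{prop: lav of mod} at all: its proof is a direct appeal to the argument of \cite[Thm.~4.2.1]{Gao23} (with the representation $V$ removed), which is where this descent past the Frobenius saturation is actually performed. So your route is genuinely different in structure, but as written it stops exactly where the real work begins; if you want to keep your approach, you must supply the uniform pro-analytic structure result and the $\varphi$-descent step explicitly.
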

\begin{proof}
Use exactly the same argument as \cite[Thm. 4.2.1]{Gao23} (removing all appearances of ``$V$" there).
\end{proof}

\section{Axiom: continuous group cohomology} \label{sec: axiom gp coho}

In this section, we consider continuous group cohomology for the group $\hat{G}$ (and its subgroups). We freely use notations from \S \ref{subsec: group notation} and \S \ref{subsec: def categories}.

\begin{defn}  \label{defn: C tau Mq}
Let $p>2$. Use module categories from Def. \ref{subsec: def categories}.
\begin{enumerate}
\item Let $N \in \mod_\gammak(P)$, define a complex
\[ C_\gamma(N): = [N \xrightarrow{\gamma-1} N] \]

\item Let $\wtm \in \mod_\hatg(Q)$, define a double complex $DC_{\gamma, \tau}(\wtm)$:
\[ \begin{tikzcd}
\wtm \arrow[r, "\tau-1"] \arrow[d, "\gamma-1"'] & \wtm \arrow[d, "\delta-\gamma"] \\
\wtm \arrow[r, "1-\tau^{\chi(\gamma)}"]         & \wtm                           
\end{tikzcd} \]
The associated total complex (as already appeared in Lem. \ref{lemgroupiwasawa}) is
\[C_{\gamma, \tau}(\wtm): =   [  \wt{M}\xrightarrow{\gamma-1, \tau-1} \wt{M}\oplus \wt{M} \xrightarrow{\tau^{\chi(\gamma)}-1, \delta-\gamma} \wt{M} ] \]

\item Let $(M, M_Q) \in \mod_{\tau}(R, Q)$.
Define a subset (which is in general only an abelian group):
\[ M_{Q, 0}= M_Q^{\delta-\gamma=0} \]
Then define a complex
\[ C_{\tau}(M, M_Q):= [ M \xrightarrow{\tau-1} M_{Q, 0}] \]
(To avoid confusions, we prefer not to use the notation ``$C_\tau(M)$", since there might be different ``$M_Q$"'s in different theories).

\end{enumerate}
\end{defn}

\begin{axiom} \label{axiomgroupQ}
Let $Q$ be a topological ring with a continuous $\hat G$-action. Assume
\begin{enumerate}
    \item $H^1(\gal(L/\kpinfty), Q)=0$, and
    \item $H^1(\gal(L/\kinfty), Q)=0$.
\end{enumerate}
 Note 
\begin{itemize}
    \item The first condition is equivalent to say $\tau-1$ is surjective on $Q$; equivalently, it says  $\rg(\gal(L/\kpinfty), Q)$ is concentrated in degree zero since  $\gal(L/\kpinfty)$ is pro-cyclic.
    \item \emph{When $p>2$}, the second condition is equivalent to say $\gamma-1$ is surjective on $Q$; equivalently, it says  $\rg(\gal(L/\kinfty), Q)$ is concentrated in degree zero since  $\gal(L/\kinfty)$ is pro-cyclic when $p>2$.
\end{itemize} 
\end{axiom}



\begin{theorem} \label{thmaxiomnophi}
Suppose $Q$ satisfies Axiom  \ref{axiomgroupQ}.
\begin{itemize}
    \item Let  \[P:= Q^{\gal(L/\kpinfty)=1}, \quad  R:=  Q^{\gal(L/\kinfty)=1}.\]
 (this fits with (and is stronger than) the general set-up in Def. \ref{ring and mod}; use notations there); 

 \item Suppose there exists $N \in \mod_\gammak(P)$ and $(M, \wtm) \in \mod_{\tau}(R, Q)$ that map to a common object $\wtm \in \mod_\hatg(Q)$ (that is,  $\wtm=N\otimes_P Q$). (This happens, e.g. when all these categories are equivalent).
\end{itemize}
  Then all the (five) complexes in the following are canonically quasi-isomorphic to each other:
 \begin{enumerate}
 \item $\rg(\hatg, \wtm)$, \quad $\rg(\gammak, N)$.
 \item   $C_\gamma(N),  \quad C_{\gamma, \tau}(\wtm), \quad C_\tau(M, \wtm)$ (which are defined only if $p>2$).
 \end{enumerate}
 \end{theorem}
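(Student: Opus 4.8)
The plan is to break the chain of five quasi-isomorphisms into a few independent comparisons, handling the group-cohomology identifications first and the explicit three-term complexes afterward. First I would establish the two ``abstract'' identifications $\rg(\hatg,\wtm)\simeq\rg(\gammak,N)$ and $\rg(\gammak,N)\simeq C_\gamma(N)$. The latter is immediate from the fact that $\gammak$ is pro-cyclic (when $p>2$), so the standard two-term complex $[N\xrightarrow{\gamma-1}N]$ computes continuous cohomology. For the former, the key point is that $\wtm=N\otimes_P Q$ and that, by Axiom \ref{axiomgroupQ}(1), $H^1(\gal(L/\kpinfty),Q)=0$; I would use the Hochschild--Serre spectral sequence for $1\to\gal(L/\kpinfty)\to\hatg\to\gammak\to 1$ together with the observation that $\rg(\gal(L/\kpinfty),\wtm)$ is concentrated in degree $0$ with $H^0=N$ (this uses that $\wtm=N\otimes_P Q$ is a finite free $Q$-module, so that $\gal(L/\kpinfty)$-cohomology can be computed $N$-coefficient-wise and Axiom \ref{axiomgroupQ}(1) forces the $H^1$ to vanish; one also needs that taking $\gal(L/\kpinfty)$-invariants of $\wtm$ recovers $N$, which holds since $P=Q^{\gal(L/\kpinfty)=1}$ and the module is free). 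This collapses the spectral sequence and yields $\rg(\hatg,\wtm)\simeq\rg(\gammak,N)$.

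Next I would identify $\rg(\hatg,\wtm)$ with the explicit total complex $C_{\gamma,\tau}(\wtm)$. This is exactly the content of Lemma \ref{lemgroupiwasawa}: the Lazard--Serre resolution of the trivial $\zp[[\hatg]]$-module $\zp$ by free rank-one and rank-two modules produces, after applying $\Hom_{\zp[[\hatg]]}(-,\wtm)$ (equivalently, tensoring the coefficients), the three-term complex $C_{\gamma,\tau}(\wtm)$, and it computes $\rg(\hatg,\wtm)$ for any abelian group $\wtm$ with continuous $\hatg$-action. So this step is a direct citation.

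The remaining and most substantive comparison is $C_{\gamma,\tau}(\wtm)\simeq C_\tau(M,\wtm)$, i.e.\ showing the three-term totalization of the double complex $DC_{\gamma,\tau}(\wtm)$ is quasi-isomorphic to the two-term complex $[M\xrightarrow{\tau-1}\wtm_{Q,0}]$ where $\wtm_{Q,0}=\wtm^{\delta-\gamma=0}$. The strategy, as sketched in the proof of Theorem \ref{thmintro3term}, is to view $C_{\gamma,\tau}(\wtm)$ as the totalization of the double complex with vertical arrows $\gamma-1$ and $\delta-\gamma$ and horizontal arrows $\tau-1$ and $1-\tau^{\chi(\gamma)}$, and to show both vertical arrows are \emph{surjective}. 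Surjectivity of $\gamma-1\colon\wtm\to\wtm$ follows from Axiom \ref{axiomgroupQ}(2) (for a finite free $Q$-module with semilinear action, $H^1(\gal(L/\kinfty),\wtm)=0$ reduces to the ring case). Surjectivity of $\delta-\gamma$ is the delicate one: using the identity $(\delta-\gamma)(\tau-1)=(1-\tau^{\chi(\gamma)})(\gamma-1)$ from Lemma (the formula after Notation \ref{nota: gamma generator}), together with surjectivity of $\tau-1$ (from Axiom \ref{axiomgroupQ}(1)) and of $\gamma-1$, one can deduce surjectivity of $\delta-\gamma$. Once both verticals are surjective, the totalization of the double complex is quasi-isomorphic to the complex formed by the kernels of the vertical maps, namely $[\ker(\gamma-1)\xrightarrow{\tau-1}\ker(\delta-\gamma)]=[\wtm^{\gal(L/\kinfty)=1}\xrightarrow{\tau-1}\wtm_{Q,0}]$. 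Finally one identifies $\wtm^{\gal(L/\kinfty)=1}$ with $M$: since $R=Q^{\gal(L/\kinfty)=1}$ and $(M,\wtm)\in\mod_\tau(R,Q)$ with $\wtm=M\otimes_R Q$ finite free, faithfully flat descent gives $\wtm^{\gal(L/\kinfty)=1}=M$. This yields $C_{\gamma,\tau}(\wtm)\simeq C_\tau(M,\wtm)$.

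The main obstacle I anticipate is the surjectivity of the vertical map $\delta-\gamma$ on $\wtm$, and more precisely the bookkeeping of why the ``kernel complex'' is quasi-isomorphic to the totalization when the verticals are surjective but the kernels are only abelian groups (not $Q$-modules). The first issue is handled by the commutation identity above: writing any target element as $(\gamma-1)$ of something (possible since $\gamma-1$ is surjective), then as $(\tau-1)$ of something via surjectivity of $\tau-1$, and pushing through the identity to realize it in the image of $\delta-\gamma$; some care is needed because $\delta$ lies in the Iwasawa algebra and one must check these manipulations are legitimate at the level of the (complete, topological) module $\wtm$, but this is routine given that the relevant operators are continuous. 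The second issue is a standard fact about double complexes: if a column (resp.\ row) of a first-quadrant double complex is exact except at the bottom, the totalization is quasi-isomorphic to the complex of bottom homologies; I would simply invoke this, taking care that ``exact except at top/bottom'' here means the two-term vertical complexes $[\wtm\xrightarrow{\gamma-1}\wtm]$ and $[\wtm\xrightarrow{\delta-\gamma}\wtm]$ have vanishing $H^1$, which is exactly the surjectivity just established. Everything else is formal.
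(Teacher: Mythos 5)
Your proposal is correct and follows essentially the same route as the paper: Hochschild--Serre together with Axiom \ref{axiomgroupQ}(1) gives $\rg(\hatg,\wtm)\simeq\rg(\gammak,N)\simeq C_\gamma(N)$, Lemma \ref{lemgroupiwasawa} gives $C_{\gamma,\tau}(\wtm)\simeq\rg(\hatg,\wtm)$, and surjectivity of the vertical maps of $DC_{\gamma,\tau}(\wtm)$ (with $\delta-\gamma$ handled via $(\delta-\gamma)(\tau-1)=(1-\tau^{\chi(\gamma)})(\gamma-1)$) reduces the totalization to the column-kernel complex $C_\tau(M,\wtm)$. The only cosmetic differences are that the needed surjectivity is of $\tau^{\chi(\gamma)}-1$ (which holds since $\tau^{\chi(\gamma)}$ is again a topological generator, not directly from that of $\tau-1$), and that $\ker(\gamma-1\mid\wtm)=M$ follows simply from $\wtm\cong Q^d$ with entrywise $\gal(L/\kinfty)$-action and $R=Q^{\gal(L/\kinfty)=1}$, with no appeal to descent.
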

\begin{proof} 
We have a $\hat{G}$-equivariant isomorphism $\wtm=N\otimes_P Q$; in particular, as a  $\gal(L/\kpinfty)$-representation, $\wtm$ is ``trivial". By Axiom  \ref{axiomgroupQ}, we have
\[ \rg(\gal(L/\kpinfty), \wtm) =(\wtm)^{\gal(L/\kpinfty)}=N[0]. \]
Apply Hochschild-Serre spectral sequence to conclude
\[ \rg(\hatg, \wtm) \simeq \rg(\gammak, N).\]

Consider complexes in Item (2) and so $p>2$.  Lem. \ref{lemgroupiwasawa} implies
\[C_{\gamma, \tau}(\wtm) \simeq \rg(\hatg, \wtm).\]
It is also obvious that
\[\rg(\gammak, N) \simeq C_\gamma(N)\]
To relate to $C_\tau(M)$, consider the double complex $DC_{\gamma, \tau}(\wtm)$
\[ 
\begin{tikzcd}
\wtm \arrow[r, "\tau-1"] \arrow[d, "\gamma-1"'] & \wtm \arrow[d, "\delta-\gamma"] \\
\wtm \arrow[r, "\tau^{\chi(\gamma)}-1"]         & \wtm                           
\end{tikzcd} 
\]
In above argument, we proved $H^1(\gal(L/\kpinfty), \wtm) =0$, which means that $\tau^a-1$ is surjective on $\wtm$ for any $a \in \z_p^\times$. A similar argument shows $\gamma-1$ is also surjective on $\wtm$.
Since $(\delta-\gamma)(\tau -1) =(1-\tau^{\chi(\gamma)})(\gamma-1)$, we see that $\delta-\gamma$ is also surjective on $\wtm$. Thus the total complex $C_{\gamma, \tau}(\wtm)$ is quasi-isomorphic to the  ``column kernel complex" of $DC_{\gamma, \tau}(\wtm)$, which is precisely  $C_\tau(M, \wtm)$.
\end{proof}

\section{Verification:  TS-1  descent} \label{sec: ts-1}
In this section, we verify Axiom \ref{axiomgroupQ} for several rings. Indeed, our proof of verification is also ``axiomatic", using TS-1 descent techniques developed in \cite{Col08} (also axiomatized in \cite{BC08}).
Indeed, our main results are axiomatization of two results in \cite{Col08}, see Cor \ref{cor: verify axiom q for rings}.
 Here we caution a potential source of confusion: in order to verify the TS-1 axiom in the \emph{Fr\'echet} setting, it will be \emph{necessary} to introduce the valuations used in \cite{Col08} which are \emph{different} from those in \S \ref{sec: rings and gps}: see the careful discussions in Constructions \ref{cons: W-val} and \ref{cons vrr}.

\subsection{Axioms:  TS-1 and TS-1-Fr\'echet}
\begin{notation} \label{notaahv}
Let $(A, \| \cdot \|)$ be a $\Zp$-Banach algebra, and let $v$ be the valuation associated to  $\| \cdot \|$.
  Suppose $v(x)=+\infty \Leftrightarrow x=0$ and suppose for any $x, y \in A$, we have
  \[v(xy) \geq v(x)+v(y)\]
\[v(x+y) \geq \min(v(x), v(y))\]
 Let $H$ be a profinite group which acts continuously on $A$  such that 
 \[v(gx)= v(x), \forall g \in H, x \in A.\]
\end{notation}

 \begin{axiom}[Axiom TS-1, following \cite{BC08}] \label{axiom:TS-1}   
 Let $A, H, v$ be as in Notation \ref{notaahv}. 
Say the pair $(A,  H)$ satisfies \emph{Axiom (TS-1)} (with respect to $v$), if 
\begin{itemize}
\item there exists some $c_1>0$, such that for any  $H_1 \subset H_2 \subset H$   two open subgroups (no normality condition on these subgroups assumed),  there exists $\alpha \in A^{H_1}$ such that $v(\alpha)>-c_1$ and \[\sum_{g \in H_2/H_1} g(\alpha)=1\]
  Here the summation index means $g$ runs through one (indeed, any) set of representatives of the coset.
  \end{itemize} 
 \end{axiom}

\begin{rem} \label{rem:HnoG}
    We point out some difference  with usual set-up of (Colmez--)Tate--Sen axioms. For example in the original \cite[Def 3.1.3]{BC08} and recent works such as \cite[\S 5]{Poratlav} or \cite{RC22}, they always assume $H$ in   Axiom \ref{axiom:TS-1} to be the \emph{kernel} of some map $G \to \bbz_p^d$ (usually, just some $G \to \bbz_p^\times \xrightarrow{\log} \zp$) for some bigger group $G$. For the relevant discussions on TS-1 \emph{only}, this bigger $G$ is never used; $G$ only plays relevant roles for further axioms such as TS-2, TS-3 in cited references.
\end{rem}

\begin{prop} \label{prop TS1 integral}
    Suppose $(A,  H)$ satisfies  Axiom \ref{axiom:TS-1} (with respect to $v$). Then
    \[ H^1(H, A)=0 \]
\end{prop}
\begin{proof}
    This seems to be well-known since \cite{BC08}, but we could not locate an earlier reference. Fortunately, this follows as special case of \cite[Prop 5.8]{Poratlav} (which even proves $H^i(H, A)=0$ for all $i \geq 1$). Again, as mentioned   above in Rem \ref{rem:HnoG}, the bigger group $G$ in \cite[\S 5]{Poratlav} is not needed for the proof of  \cite[Prop 5.8]{Poratlav} (which only concerns TS-1). We refer to the following Rem \ref{rem:TS-1integral vanish} for subtle discussions concerning ``integrality" issue.
\end{proof}

\begin{remark}\label{rem:TS-1integral vanish}
One needs to be careful with the choice of notations in \cite[\S 5B]{Poratlav}. In Porat's notation: $\wt{\Lambda}^+$ is the ring of integers of $\wt{\Lambda}$ (which then contains $p$, say we always assume $\wt{\Lambda}$  is a $\zp$-algebra); however, one might not necessarily have $p^{-1} \in \wt{\Lambda}$. Nonethless, Porat uses the (slightly confusing) notation ``$p^{-t}\wt{\Lambda}^+ \subset \wt{\Lambda}$" to denote the subset of elements with valuation $\geq -t$. 
In particular, the terminology ``\emph{rational cohomology}" in the statement of \cite[Prop 5.8]{Poratlav} is \emph{not} about base change from $\zp$ to $\qp$, but rather from $\wt{\Lambda}^+$ to $\wt{\Lambda}$.
\end{remark}

 We now generalize the TS-1 axiom to the Fr\'echet setting, which is an axiomatization of the argument in \cite[Prop. 10.2]{Col08}. We choose $c_1=1$ for convenience of writing (which is the case in \cite[Prop. 10.2]{Col08}): indeed, in all known applications of (Colmez--)Tate--Sen descent, $c_1$ could be chosen to be \emph{any} positive number.

 \begin{axiom}[Axiom  TS-1-Fr\'echet] \label{axiom ts-1-frechet}
Let $(A, v,  H)$ as in Notation \ref{notaahv}, and suppose:
\begin{equation} \label{tsfrechetc1}
\text{  $(A,  H)$ satisfies (TS-1) with $c_1=1$.}
\end{equation}
Let $(A_\infty, H)$ be a Fr\'echet $H$-ring containing $A$ as a subring. Say $(A, A_\infty, H)$ satisfies \emph{Axiom (TS-1-Fr\'echet)}, if $A_\infty$ can be written as an inverse limit \[A_\infty =\projlim_{i\geq 0} (A_i, v_i)\] where  $ (A_i, v_i)$ is a sequence of $\zp$-Banach algebras equipped with unitary $H$-actions, such that there are continuous and $H$-equivariant ring maps $A \xrightarrow{\alpha_i} A_i$ and $A_{i+1} \xrightarrow{\beta_i} A_i$ for each $i$, such that the valuations ``\emph{increase}" along these maps; namely,
\begin{eqnarray}
\label{eqviv} v(x) & \leq &  v_i(\alpha_i(x)) ,   \quad \forall x \in A; \\
\label{eqvivione}   v_{i+1}(y) & \leq &v_i(\beta_i(y)),    \quad \forall y \in A_{i+1}.
\end{eqnarray} 
(Note \eqref{eqviv} implies for each $i$, the triple $(A_i, v_i, H)$ satisfies TS-1). 
\end{axiom}

\begin{prop}\label{prop: ts1frechet vanish}
 Suppose $(A, A_\infty, H)$ satisfies  Axiom TS-1-Fr\'echet  (with $c_1$ set as 1). Then 
\[ H^1(H, A_\infty)=0 \]
\end{prop}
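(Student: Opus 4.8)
The plan is to show vanishing of $H^1(H, A_\infty)$ by a limit argument, reducing to vanishing of $H^1(H,A)$, which in turn follows from the (TS-1) axiom by the standard Sen-style/Tate's almost-vanishing trick.

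First I would treat the Banach case. The key point is that if $(A,H)$ satisfies (TS-1), then $H^1_{\cont}(H,A)=0$ — in fact a slightly stronger statement: for every open subgroup $H_1\subset H$, restriction $H^1(H,A)\to H^1(H_1,A^{H_1}\text{-thing})$ is killed, but really the clean statement one wants is that any continuous cocycle $\sigma\mapsto c_\sigma$ on $H$ with values in $A$ is a coboundary. This is the standard argument: by continuity the cocycle is trivial on some open subgroup $H_1$, hence factors through the finite quotient $H/H_1$; pick $\alpha\in A^{H_1}$ with $\sum_{g\in H/H_1} g(\alpha)=1$ (available from (TS-1) with $c_1$ arbitrary), and set $b=-\sum_{g\in H/H_1} c_g\, g(\alpha)$; then a direct computation shows $c_\sigma=\sigma(b)-b$. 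One checks this uses only the valuation estimates in Notation \ref{notaahv} together with $v(\alpha)>-c_1$ to ensure everything stays in $A$ and the series/sums converge appropriately (here the sums are finite so convergence is automatic; the estimate on $v(\alpha)$ is what one would need if one were instead summing over all of $H$, but in the profinite-with-open-$H_1$ setting it is harmless). Thus $H^1(H,A)=0$, and likewise $H^1(H,A_i)=0$ for each Banach algebra $A_i$ in the Fréchet presentation, since each $(A_i,H)$ inherits the relevant structure — actually one must be slightly careful: (TS-1) is assumed only for $(A,H)$, but the maps $\alpha_i\colon A\to A_i$ are $H$-equivariant, so the element $\alpha\in A^{H_1}$ maps to $\alpha_i(\alpha)\in A_i^{H_1}$ with $\sum_{g\in H_2/H_1} g(\alpha_i(\alpha))=\alpha_i(1)=1$ and $v_i(\alpha_i(\alpha))\geq v(\alpha)>-c_1$; so $(A_i,H)$ satisfies (TS-1) as well, hence $H^1(H,A_i)=0$ for all $i$.

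Second, I would pass to the limit. Write $A_\infty=\projlim_i A_i$ as a Fréchet space, i.e.\ a countable inverse limit of Banach spaces along the continuous $H$-equivariant transition maps $\beta_i$. Continuous cohomology of a profinite group with coefficients in a countable inverse limit of Banach modules sits in a Milnor exact sequence
\[ 0 \to {\varprojlim_i}^1 H^{0}(H, A_i) \to H^1(H, A_\infty) \to \varprojlim_i H^1(H, A_i) \to 0. \]
The right-hand term vanishes by the Banach case just established. For the $\varprojlim^1$ term, I would argue that the system $\{H^0(H,A_i)\}_i=\{A_i^H\}_i$ satisfies the Mittag-Leffler condition, hence has vanishing $\varprojlim^1$: indeed the valuation-increasing property $v_i(\beta_i(y))\ge v_{i+1}(y)$ means the transition maps are norm-decreasing, and on the closed subspaces $A_i^H$ one gets that the images $\mathrm{Im}(A_{i+n}^H\to A_i^H)$ stabilize — more robustly, one can observe that $A_\infty^H=\projlim_i A_i^H$ is itself Fréchet and the natural map $A_\infty^H\to A_i^H$ has dense image for the relevant topology, which for Fréchet spaces presented this way gives Mittag-Leffler and kills $\varprojlim^1$. (Alternatively, and perhaps most cleanly, one invokes the general fact that for a profinite group $H$ and a tower of Banach $H$-modules with $H$-equivariant bounded transition maps, $\mathrm{R}\Gamma_{\cont}(H,\projlim A_i)\simeq \mathrm{R}\varprojlim \mathrm{R}\Gamma_{\cont}(H,A_i)$, together with the $\varprojlim^1$-vanishing for the $H^0$-tower.) Combining, $H^1(H,A_\infty)=0$.

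The main obstacle I anticipate is the $\varprojlim^1$-control: one must be genuinely careful that the tower of invariants $\{A_i^H\}$ is Mittag-Leffler, since a priori taking $H$-invariants of a Mittag-Leffler tower need not preserve that property. The valuation-increasing hypotheses in Axiom (TS-1-Fréchet) are presumably included exactly to handle this — they let one compare norms across the tower and on the invariant subspaces — so the proof should cite or re-derive that the transition maps $A_{i+1}^H\to A_i^H$ have the right property. The group-cohomology limit argument and the cocycle-is-a-coboundary computation are both routine once that point is secured; I would state the Milnor sequence, dispatch the $\varprojlim$-term via the Banach case, dispatch the $\varprojlim^1$-term via Mittag-Leffler, and be done.
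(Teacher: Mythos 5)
There is a genuine gap, and it sits exactly at the step you call ``the standard argument''. For a profinite group acting on a \emph{Banach} module, a continuous cocycle is \emph{not} trivial on an open subgroup: continuity only guarantees that its values on a small open subgroup are close to $0$ in the valuation topology, not equal to $0$ (triviality on an open subgroup is special to discrete coefficients). So your cocycle does not factor through a finite quotient $H/H_1$, and the weighted-average formula $b=-\sum_{g\in H/H_1}c_g\,g(\alpha)$ does not trivialize it. This is precisely why Axiom (TS-1) carries the quantitative bound $v(\alpha)>-c_1$, which you dismiss as ``harmless'': the correct argument (this is what the paper does, axiomatizing \cite[Prop.~10.2]{Col08}) is a successive approximation. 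One shrinks the subgroup so that the \emph{modified} cocycle $c_{n,h}=c_h-(1-h)b_n$ is very small on it, averages against an $\alpha\in A^{H'}$ with $v(\alpha)>-1$ and $\sum g(\alpha)=1$, and checks that the new cocycle has strictly larger valuation, losing only the controlled amount $v(\alpha)$; iterating produces a sequence $b_n$ whose increments have growing valuation and hence converge. Without that loop your Banach-case claim $H^1(H,A)=0$ (and likewise $H^1(H,A_i)=0$) is unproved, even though the statement itself is true and \emph{can} be recovered by running the same approximation with a single valuation.

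The second step also rests on hypotheses the axiom does not provide. Axiom (TS-1-Fr\'echet) makes no surjectivity or dense-image assumption on the transition maps $\beta_i\colon A_{i+1}\to A_i$, nor on $A_\infty\to A_i$; so neither the Milnor sequence in the form you need it (which requires a degreewise Mittag--Leffler condition on the cochain towers, typically via surjective transitions) nor the vanishing of $\varprojlim^1 A_i^H$ is justified, and your proposed fix (``$A_\infty^H\to A_i^H$ has dense image'') has no basis in the axioms. The paper sidesteps all of this: the approximation is carried out directly in $A_\infty$, and the valuation-increasing conditions $v_i(\alpha_i(x))\geq v(x)$ and $v_i(\beta_i(y))\geq v_{i+1}(y)$ are used to show that the increments $b_{n+1}-b_n$ have $v_i$-valuation at least $n$ for all $i\leq n-2$, so the sequence converges in every $A_i$ simultaneously, hence in the Fr\'echet limit $A_\infty$ --- no $\varprojlim^1$ analysis is needed. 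If you want to salvage your two-step structure, you must (a) replace the finite-quotient averaging by the iterative Tate--Sen argument in each $A_i$, and (b) either prove the Mittag--Leffler property for the towers involved from the stated valuation hypotheses or abandon the $\mathrm{R}\varprojlim$ route in favour of the direct construction.
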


\begin{proof}
This is an axiomatization  of \cite[Prop. 10.2]{Col08}.

Let $h \mapsto c_h$ be a continuous cocycle of $H$ valued in $A_\infty$. 
We will construct a sequence of elements $b_n \in   A_\infty$ for $n \geq 0$, such that the following conditions are satisfied:
 \begin{enumerate}
 \item $v_i(b_n -b_{n-1}) \geq n , \forall i \leq n-2$ (here, let $b_{-1}=0$);
 \item $v_i(c_{n, h})  \geq n+2, \forall i \leq n-1,  \forall h \in H$; 
here $c_{n, h} :=c_h -(1-h)b_n$ is the modified \emph{cocycle}. 
 \end{enumerate}
Once constructed, it is clear $b_n$ converges (in the Fr\'echet topology) to some $b\in A_\infty$ that trivializes the cocycle.

When $n=0$, take $b_0=0$. In this case, both conditions are vacuous.
 Suppose now $b_n$ is constructed, so we have
\[ v_{n-2}(b_n -b_{n-1}) \geq n,\]
and
\begin{equation}\label{nallhnpulus2}
v_{n-1}(c_{n, h})  \geq n+2, \quad \forall h \in H.
\end{equation}   
   Let $H' \subset H$ be an open subgroup such that
 \begin{equation}\label{nplus4}
  v_{n }(c_{n, \delta}) \geq n+4,\quad \forall \delta \in H'.
\end{equation}   
Fix a set of representatives $Q=\{ \tau_1, \cdots, \tau_k \}$ for $H/H'$. 
 By Axiom (TS-1) for $A$, there exists some $\alpha \in A^{H'}$ such that $v(\alpha) >-1$, and $\sum_{i=1}^k \tau_i(\alpha)=1$. 
Define the weighted summation
\[b_Q =\sum_{i=1}^k \tau_i(\alpha)c_{n, \tau_i} \] 
Define
\[ b_{n+1} =b_n+b_Q \]
We now check the conditions, (it suffices to check for the maximal index $i$ since ``$v_j \geq v_{j+1}$" by  \eqref{eqvivione}), 
\begin{enumerate}
\item $v_{n-1}(b_{n+1}-b_n) =v_{n-1}(b_Q) > (-1)+(n+2)=n+1$, because for each $i=1, \cdots, k$,
\begin{itemize}
\item $v_{n-1}(\tau_i(\alpha))=v_{n-1}( \alpha) \geq v(\alpha) >-1$, which uses \eqref{eqviv};
\item $v_{n-1}(c_{n, \tau_i}) \geq n+2$ by \eqref{nallhnpulus2}.
\end{itemize}  

\item To check the second condition. Fix one $h\in H$, write $hQ=\{ \tau_1\gamma_1, \cdots, \tau_k \gamma_k \}$ where $\gamma_i \in H'$.
Note 
\begin{eqnarray*}
c_{n+1, h}&=&c_h-(1-h)(b_n+b_Q) \\
&=&c_{n,h}-(1-h)b_Q  \\
&=&  c_{n,h}+\sum_i h\tau_i(\alpha)hc_{n, \tau_i} -\sum_i \tau_i(\alpha) c_{n,  \tau_i }  \\    
&=&    \sum_i h\tau_i(\alpha)   c_{n,h}+\sum_i h\tau_i(\alpha)hc_{n, \tau_i} -\sum_i \tau_i(\alpha) c_{n,  \tau_i }, \quad \text{ using } \sum_i h\tau_i(\alpha)=1 \\    
&=& \sum_i (h\tau_i(\alpha) \cdot c_{n, h\tau_i}) -\sum_i \tau_i(\alpha) c_{n,  \tau_i }, \quad \text{ combine first two  using cocycle condition }  \\    
&=&  \sum_i \tau_i(\alpha)   c_{n,  \tau_i\gamma_i} - \sum_i \tau_i(\alpha) c_{n,  \tau_i }, \quad \text{using expression of $hQ$, note $\gamma_j(\alpha)=\alpha$, re-order first term }  \\    
&=& \sum_i \tau_i(\alpha) \tau_i(c_{n, \gamma_i}), \quad \text{ using cocycle condition}                
\end{eqnarray*}

By \eqref{nplus4},  $c_{n, \gamma_i}$ and hence $\tau_i(c_{n, \gamma_i})$ has $v_n$-valuation $\geq n+4$, thus we can conclude
\[ v_n(c_{n+1, h}) >n+3 \]
\end{enumerate}
\end{proof}

\subsection{Verification of axioms}
  In this subsection, we verify the TS-1-Fr\'echet Axiom for certain rings. However, we shall first make some observations on the valuation    $W^{[r,r]}$ resp. $W^{[r,s]}$ on $\wtb^{[r, +\infty]}$ resp. $\wtb^{[r,s]}$ defined   in \S \ref{sec: rings and gps}: unfortunately, they can \emph{not}  be used to verify Axiom \ref{axiom ts-1-frechet}. Recall in Eqn. \eqref{eqviv} and \eqref{eqvivione} in Axiom \ref{axiom ts-1-frechet}, we require the valuations to \emph{increase} (along ring maps).
  
 \begin{construction} \label{cons: W-val}
  We have the following observations on valuations $W^{[r,r]}$ and $W^{[r,s]}$:
\begin{enumerate}
\item The element $p$ has a fixed valuation; that is, one always have $W^{[r,r]}(p)=W^{[r,s]}(p)=1$.

\item $W^{[r,s]}(x)=\inf_{\alpha \in [r,s]} \{W^{[\alpha, \alpha]}(x)$\}, hence is in some sense a ``natural" definition. In addition, the ``floor valuation" $\lfloor W^{[r,s]}\rfloor$ is \emph{exactly} the ``$p$-adic valuation", cf. \cite[Lem. 2.1.10]{GP21}.

\item \label{item3wval} (Comparison of valuations.)
\begin{enumerate}
\item  For $x \in \wtb^{[r,s]} \subset \wtb^{[r',s']}$ where $[r',s'] \subset [r,s]$, the valuations increase:   $W^{[r,s]}(x) \leq W^{[r',s']}(x)$;
\item  For $y \in \wtb^{[r, +\infty]} \subset \wtb^{[r,s]}$, the valuations \emph{decrease}:  that is $W^{[r,r]}(y) \geq W^{[r,s]}(y)$ (and  $>$ could happen). 
\item \label{item3wvalc} Even if $y \in \wta^{[r, +\infty]}[1/[\overline{\pi}]] \subset \wtb^{[r, +\infty]} \subset \wtb^{[r,s]}$,  it is still possible $W^{[r,r]}(y) > W^{[r,s]}(y)$; cf. Example \ref{example: val}.
\end{enumerate} 
\end{enumerate}
\end{construction}

We introduce the following \emph{different} valuations, which are the same as those in \cite[\S 5.2, \S 5.4]{Col08} (after re-normalization of $r$). 
In some sense, these valuations are quite ``un-natural" (cf. Items (1)(2) in the following), but we need the Item \eqref{vval3c} to verify the TS-1-Fr\'echet axiom.

\begin{construction} \label{cons vrr}
For $x \in \wtb^{[r, +\infty]}$ with $x= \sum_{k \geq k_0} p^k[x_k]$, let 
 \[v^{[r,r]}(x): = \frac{pr}{p-1}W^{[r,r]}(x)=\inf_{k \geq k_0}  \{k\frac{pr}{p-1}+  v_{\wt{\mathbf E}}(x_k)\}. \]
and (forcibly) define
\[v^{[r,s]}(x): =  \mathrm{min} \{  v^{[r,r]}(x), v^{[s,s]}(x)\} \]
Clearly, these valuations induce the \emph{same} topologies on  $\wtb^{[r, +\infty]}$ resp.  $\wtb^{[r,s]}$. We have the following observations:
\begin{enumerate}
\item The valuation  $v^{[r,r]}(p)$ depends on $r$.
\item In general, $v^{[r,s]}(x) \neq \inf_{\alpha \in [r,s]} \{v^{[\alpha, \alpha]}(x)\}$.
\item \label{vval3}(Comparison of valuations.)
Let us be very careful here:
\begin{enumerate}
\item For $x \in \wtb^{[r,s]} \subset \wtb^{[r',s']}$ where $[r',s'] \subset [r,s]$, the valuations increase:   $v^{[r,s]}(x) \leq v^{[r',s']}(x)$.
\item For (a general) $y \in \wtb^{[r, +\infty]} \subset \wtb^{[r,s]}$,  (similar as the $W$-valuation case), the valuations \emph{decrease}:  
\[ v^{[r,r]}(y) \geq v^{[r,s]}(y) \] 

\item \label{vval3c}  However, if $y \in \wta^{[r, +\infty]}[1/[\overline{\pi}]] \subset \wtb^{[r, +\infty]} \subset \wtb^{[r,s]}$,  we have (\emph{unlike}  the $W$-valuation case in Cons \ref{cons: W-val}\eqref{item3wvalc}),
\[ v^{[r,r]}(y)= v^{[r,s]}(y). \]
This fact is remarked in the beginning of \cite[\S 5.4]{Col08}, and can be easily verified noticing now  $y= \sum_{k \geq 0} p^k[y_k]$ (i.e., the index $k$ stays \emph{non-negative}).
\end{enumerate}
  \end{enumerate}
 \end{construction}

\begin{example}\label{example: val}
Consider $u=[\underline \pi ] \in \wt{\mathbf{A}}^+$ in Notation \ref{nota:ainf}. Let $[r,s]=[1,2]$. Then
\[ W^{[1,1]}(u) =\frac{p-1}{pe} >W^{[1,2]}(u) =\frac{p-1}{2pe}; \]
\[ v^{[1,1]}(u)   =v^{[1,2]}(u) =\frac{1}{e}. \]
\end{example}

\begin{lemma} \label{prop: verify TS-1} 
 Let $K \subset Y \subsetneq \barK$ such that $\hat{Y}$ is a perfectoid field. Let $r >0$. 
Equip $\wta^{[r, +\infty]}[1/[\overline{\pi}]]$ resp. $\wtb^{[r, s]}$ with the valuation $v^{[r,r]}$ resp. $v^{[r,s]}$ in Construction \ref{cons vrr}.  
 We have
 \begin{enumerate}
\item The pairs 
    \[(\wta^{[r, +\infty]}[1/[\overline{\pi}]], \gal(\barK/Y)),   \quad (\wtb^{[r, s]}, \gal(\barK/Y))
    \]
    satisfy Axiom TS-1 (for any choice of $c_1>0$).

 \item Let $X\in \{ \kpinfty, \kinfty\}$, then the pairs 
    \[ (\wta_L^{[r, +\infty]}[1/[\overline{\pi}]], \gal(L/X)),  \quad (\wtb_L^{[r, s]}, \gal(L/X))      \]
        satisfy Axiom TS-1 (for any choice of $c_1>0$).
        
        \item 
    The triple $$(\wta^{[r, +\infty]}[1/[\overline{\pi}]], \quad  \wtb^{[r, +\infty)}, \quad \gal(\barK/Y))$$  satisfies Axiom TS-1-Fr\'echet.
 
    \item Let $X\in \{ \kpinfty, \kinfty\}$, then the  triple
   \[ (\wta_L^{[r, +\infty]}[1/[\overline{\pi}]], \quad \wtb^{[r, +\infty)}_L, \quad \gal(L/X))   \] satisfies Axiom TS-1-Fr\'echet.
 \end{enumerate} 
 \end{lemma} 
 \begin{proof}
 Consider the pair $(\wta^{[r, +\infty]}[1/[\overline{\pi}]], \gal(\barK/Y))$ in  Item (1).
When  $Y=\kpinfty$, this is proved in  \cite[Lem. 10.1]{Col08}; exactly the same argument works for general $Y$.
(Here, we caution again: as   mentioned in Rem \ref{rem int oc ring}, the ring $\wta^{[r, +\infty]}$ does not satisfy Axiom TS-1, and is not the correct ring to use here.)
The case  for   $\wtb^{[r, s]}$   follows because of the observation in Construction \ref{cons vrr}\eqref{vval3c}.
Item (3) also follows from  the observation in Construction \ref{cons vrr}\eqref{vval3c}, which is the \emph{key} reason we introduce these new valuations.
 Item (2) resp. (4) follows from Item (1) resp. (3).
 \end{proof}


\begin{cor} \label{cor: verify axiom q for rings}
    Axiom \ref{axiomgroupQ} is satisfied for the following rings (where $0 <r \leq s <+\infty$):
    \begin{enumerate}
    \item $\wta_L$;
    \item $\wta_L^{[r, +\infty]}[1/[\overline{\pi}]]$, $\wtb_L^{[r,s]}$;
    \item $\wtb_L^{[r,+\infty)}$;
    \item  $\wta_L^\dagger$, $\wtb^\dagger_{\rig, L}$.
    \end{enumerate} 
\end{cor}
\begin{proof}
Item (1) follows from the well-known TS-1 descent (also called almost purity) for the perfectoid field $C$. Items (2) and (3) are covered by Lem \ref{prop: verify TS-1}. Item (4) follows as they are direct limits of rings above.
 \end{proof}

 \begin{prop} \label{prop: verified TS-1 coho compa}
 We have quasi-isomorphisms of complexes in each item (but they do not compare over different items).
 In each  case, the $C_\tau$-complex is  defined when $p>2$.
     \begin{enumerate}
     \item Let $T\in \rep_\gk(\zp)$. Use Notation \ref{notaetalmod}. Then 
\[ \rg(\gammak, \wtd_\kpinfty) \simeq \rg(\hatg, \wtd_L) \simeq C_\tau(\wtd_\kinfty, \wtd_L) \]

   \item Let $T\in \rep_\gk(\zp)$. Use Notation \ref{notaetalmod}. Then
\[\rg(\gammak, \wtd^\dagger_\kpinfty) \simeq \rg(\hatg, \wtd^\dagger_L) \simeq C_\tau(\wtd^\dagger_\kinfty, \wtd^\dagger_L)\]

    \item \label{itemgpIver} Use diagram \eqref{131rs} in Notation \ref{notarigmod} (where the modules are not necessarily associated to Galois representations.) Then
\[\rg(\gammak,\wtd_{ \kpinfty}^I) \simeq   \rg(\hatg,\wtd_{ L}^I) \simeq C_\tau(\wtd_{ \kinfty}^I,\wtd_{ L}^I)\]

  \item \label{itemgprig}  Use diagram \eqref{131} in Notation \ref{notarigmod} (where the modules are not necessarily associated to Galois representations.) Then
\[\rg(\gammak,\wtd^\dagger_{\rig, \kpinfty}) \simeq   \rg(\hatg,\wtd^\dagger_{\rig,L}) \simeq C_\tau(\wtd^\dagger_{\rig,\kinfty},\wtd^\dagger_{\rig,L})\]

         \end{enumerate}
 \end{prop}
\begin{proof}
    Combine Cor. \ref{cor: verify axiom q for rings} with Thm. \ref{thmaxiomnophi}.
\end{proof}

\section{Axiom: Lie algebra cohomology} \label{sec: axiom lie alg}
In this section, we axiomatically study several complexes related with Lie algebra cohomology of $\Lie \hatg$. We freely use notations in \S \ref{subsec: lie alg LAV}.

\begin{axiom} \label{axiomelec}
Let $Q$ be a LB (resp. LF) ring with a locally analytic (resp. pro-analytic) $\hat G$-action.
Assume there exists $c\in Q^\times$ such that 
\[g(c)=\chi(g)c, \quad \forall g \in \gal(L/\kinfty)\]
Note this implies 
\[\nabla_\gamma(c)=c\]
Define an operator  
\[N_\nabla := \frac{1}{c} \cdot\nablatau \in Q\otimes_\qp \hat{\fkg}\]
We claim that $N_\nabla$ commutes with $\gal(L/\kinfty)$-action and hence commutes with $\nabla_\gamma$. Indeed, for  $g \in \gal(L/\kinfty)$, we have $g \tau g^{-1}=\tau^{\chi(g)}$ (to check this, one could compute actions on $\pi_i$ and use definition of $\tau$ in Notation \ref{nota hatG}); note this is true for any $p$ (the running assumption $p>2$ in Lemmas \ref{lem:delta elt} and \ref{lemgroupiwasawa} is to guarantee $\gamma, \tau$ generate $\hat{G}$ there).
Thus it is easy to see that for  $g \in \gal(L/\kinfty)$, $g \nabla_\tau g^{-1} =\chi(g) \nabla_\tau$, and thus $gN_\nabla g^{-1}=N_\nabla$. (Indeed, this axiomatizes the argument for \cite[Eqn (4.2.5)]{Gao23}).
\end{axiom}

\begin{defn}
    Suppose Axiom \ref{axiomelec} is satisfied. 
       Let $\wtm \in \mod_{\hatg}(Q)$, and suppose $X \subset \wtm$ is a subspace that is stable under $N_\nabla$ action. Define
    \[ C_{N_\nabla}(X):=\quad [X\xrightarrow{\nnabla} X]\]
\end{defn}
 

\begin{prop}\label{propliecohonnabla}
Let $Q$ be a LB (resp. LF) ring with a locally analytic (resp. pro-analytic) $\hat G$-action. Let $\wtm \in \mod_\hatg(Q)$.
\begin{enumerate}
    \item The Lie algebra cohomology $\rg(\hat{\fkg}, \wtm)$ is quasi-isomorphic to
 \[C_{ \nabla_\gamma, \nabla_\tau} (\wtm):=[ \wtm \xrightarrow{ \nabla_\gamma, \nabla_\tau}  \wtm\oplus \wtm   \xrightarrow{\nabla_\tau, 1-\nabla_\gamma}    \wtm ] \]
 
\item Suppose there exists $c \in Q^\times$ as in Axiom \ref{axiomelec}, then $\rg(\hat{\fkg}, \wtm)$ is quasi-isomorphic to
 \[ C_{\nabla_\gamma, N_\nabla}(\wtm):= [ \wtm \xrightarrow{ \nabla_\gamma, N_\nabla}  \wtm\oplus \wtm   \xrightarrow{N_\nabla, -\nabla_\gamma}    \wtm ] \]
\end{enumerate}
     \end{prop}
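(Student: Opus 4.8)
The plan is to handle the two parts separately, with part (1) being essentially a citation and part (2) an explicit change of frame at the level of complexes.

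For part (1): by Lemma \ref{lemlamod} the $\hatg$-action on the finite free $Q$-module $\wtm$ is automatically locally analytic (resp. pro-analytic), so $\wtm$ acquires a $\hat{\fkg}$-action; the asserted quasi-isomorphism $\rg(\hat{\fkg},\wtm)\simeq C_{\nabla_\gamma,\nabla_\tau}(\wtm)$ is then exactly Lemma \ref{lemresolutionliealg} applied to $\wtm$, the complex there being the Chevalley--Eilenberg complex written in the basis $\{\nabla_\gamma,\nabla_\tau\}$ of $\hat{\fkg}$ (with $[\nabla_\gamma,\nabla_\tau]=\nabla_\tau$, which is the infinitesimal form of $\gamma\tau\gamma^{-1}=\tau^{\chi(\gamma)}$).

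For part (2): I would first observe that $C_{\nabla_\gamma,N_\nabla}(\wtm)$ really is a complex — the composite of its two differentials is $N_\nabla\nabla_\gamma-\nabla_\gamma N_\nabla$, which vanishes since $N_\nabla$ commutes with $\nabla_\gamma$ by Axiom \ref{axiomelec}. Then I would exhibit an isomorphism of complexes $C_{\nabla_\gamma,\nabla_\tau}(\wtm)\isoto C_{\nabla_\gamma,N_\nabla}(\wtm)$ given by the identity in degree $0$, by $(a,b)\mapsto(a,c^{-1}b)$ in degree $1$, and by $x\mapsto c^{-1}x$ in degree $2$; each of these is invertible because $c\in Q^\times$. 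Commutativity of the first square is immediate from $N_\nabla=c^{-1}\nabla_\tau$. Commutativity of the second square amounts to the identity $c^{-1}\bigl(\nabla_\tau(a)+(1-\nabla_\gamma)(b)\bigr)=N_\nabla(a)-\nabla_\gamma(c^{-1}b)$, which I would deduce from the Leibniz rule for the derivation $\nabla_\gamma$ acting on the $Q$-module $\wtm$ together with $\nabla_\gamma(c^{-1})=-c^{-1}$; the latter follows from $0=\nabla_\gamma(1)=\nabla_\gamma(c\,c^{-1})=\nabla_\gamma(c)c^{-1}+c\,\nabla_\gamma(c^{-1})$ and $\nabla_\gamma(c)=c$ (Axiom \ref{axiomelec}). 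Composing this isomorphism of complexes with part (1) gives $\rg(\hat{\fkg},\wtm)\simeq C_{\nabla_\gamma,N_\nabla}(\wtm)$.

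I do not anticipate a genuine obstacle: the whole argument is short once the Chevalley--Eilenberg complex is written out explicitly, and the only mild subtlety is keeping the Leibniz computation for $\nabla_\gamma$ on $c^{-1}b$ straight (with $c^{-1}\in Q$ and $b\in\wtm$). The rescalings by $c^{-1}$ in degrees $1$ and $2$ are precisely engineered to convert $\nabla_\tau$ into $N_\nabla=c^{-1}\nabla_\tau$ while producing no stray signs, so no sign-chasing beyond this is required.
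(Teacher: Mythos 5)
Your argument is correct and matches the paper's proof in essence: part (1) is exactly the citation of Lemma \ref{lemresolutionliealg} (with Lemma \ref{lemlamod} ensuring the $\hat{\fkg}$-action exists), and for part (2) the paper exhibits the same comparison of complexes, only written as the inverse of yours, namely with vertical maps $\mathrm{id}$, $(\mathrm{id},c)$ and $c$ from $C_{\nabla_\gamma,N_\nabla}(\wtm)$ to $C_{\nabla_\gamma,\nabla_\tau}(\wtm)$ rather than your rescalings by $c^{-1}$; since $c\in Q^\times$ these are the same isomorphism of complexes. Your Leibniz computation with $\nabla_\gamma(c^{-1})=-c^{-1}$ is exactly the verification the paper leaves implicit.
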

\begin{proof}
Item (1) is a direct consequence of Lem. \ref{lemresolutionliealg}.
For Item (2), it suffices to note the following diagram induces a quasi-isomorphism (note $c$ is a unit).
\[
\begin{tikzcd}
\wtm \arrow[d, "\mathrm{id}"] \arrow[rr, "{\nabla_\gamma, N_\nabla}"] &  & \wtm \oplus \wtm \arrow[d, "{(\mathrm{id}, c)}"] \arrow[rr, "{N_\nabla, -\nabla_\gamma}"] &  & \wtm \arrow[d, "c"] \\
\wtm \arrow[rr, "{ \nabla_\gamma, \nabla_\tau}"]                      &  & \wtm\oplus \wtm \arrow[rr, "{\nabla_\tau, 1-\nabla_\gamma}"]                             &  & \wtm               
\end{tikzcd}\]
Here, $\mathrm{id}$ stands for identity map, and $c$ stands for multiplication by $c$ map.
\end{proof}

\begin{axiom} \label{axiomLieQ}
Let $Q$ be a LB   ring with a locally analytic  $\hat G$-action.
 Assume:
\begin{enumerate}
    \item $H^1(\mathrm{Lie}~\gal(L/\kpinfty), Q)=0$, and
    \item $H^1(\mathrm{Lie}~\gal(L/\kinfty), Q)=0$.
\end{enumerate}
Equivalently, it says that the Lie algebra operators $\nabla_\gamma, \nabla_\tau$ are both surjective on $Q$.  

Caution: for our application, we should \emph{not} allow $Q$ to be a LF ring with \emph{pro-analytic} action; the general Lie algebra cohomology theory (recalled in Thm. \ref{thm: Tamme ana coho}) only works well (and correct) for \emph{locally analytic}  representations; cf. Rem. \ref{rem: caution pro ana}.
\end{axiom}
 
\begin{theorem}\label{thm:liealgq}
Suppose $Q$ satisfies Axiom  \ref{axiomLieQ} (in particular, $Q$ is a LB ring). 
Let
 \[P:= Q^{\gal(L/\kpinfty)=1}, \quad  R:=  Q^{\gal(L/\kinfty)=1}.\]
 Suppose there exists $N \in \mod_\gammak(P)$ and $(M, \wtm) \in \mod_{\tau}(R, Q)$ that maps to a common object  $\wtm \in \mod_\hatg(Q)$ (that is,  $\wtm=N\otimes_P Q$).
 Then 
 \begin{enumerate}
\item \label{96itme1} the following Lie algebra cohomologies are quasi-isomorphic:
\[ \rg(\hat{\fkg}, \wtm), \quad \rg(\Lie~\gal(L/\kinfty), \wtm^{\nabla_\tau=0}), \]

\item Suppose   there exists $c\in Q^\times$ as in Axiom \ref{axiomelec}. Then
\[ (\rg(\hat{\fkg}, \wtm))^{\gal(L/\kpinfty)=1} \simeq C_\nablagamma(N)\]
\[ (\rg(\hat{\fkg}, \wtm))^{\gal(L/\kinfty)=1} \simeq C_{N_\nabla}(M)\]
 
 \end{enumerate}
\end{theorem}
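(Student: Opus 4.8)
The plan is to reduce everything to the two basic surjectivity statements in Axiom \ref{axiomLieQ} together with the Chevalley--Eilenberg resolution of Lemma \ref{lemresolutionliealg}, in a manner that parallels the group-cohomology argument of Theorem \ref{thmaxiomnophi}. First I would prove \eqref{96itme1}: since $\wtm = N\otimes_P Q$ and $N$ is a finite free $P$-module with $P = Q^{\gal(L/\kpinfty)=1}$, the Lie algebra operator $\nabla_\tau$ acts on $\wtm$ ``through $Q$'' alone, i.e. $\wtm$ is an ``induced'' $\Lie\gal(L/\kpinfty)$-module in the sense that $\wtm^{\nabla_\tau=0} = N\otimes_P Q^{\nabla_\tau=0}$ and, by Axiom \ref{axiomLieQ}(1), $\nabla_\tau$ is surjective on $Q$ hence on $\wtm$. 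Therefore $\rg(\Lie\gal(L/\kpinfty), \wtm)$ is concentrated in degree zero and equals $\wtm^{\nabla_\tau=0}$. Applying the Hochschild--Serre spectral sequence for the Lie algebra extension
\[ 0 \to \Lie\gal(L/\kpinfty) \to \hat{\fkg} \to \Lie\gal(L/\kinfty) \to 0 \]
(using that $\Lie\gal(L/\kinfty)$ acts on $\wtm^{\nabla_\tau=0}$, since $N_\nabla$ — equivalently $\nabla_\tau$ up to the unit $c$ — commutes with $\nabla_\gamma$) degenerates and gives $\rg(\hat{\fkg},\wtm) \simeq \rg(\Lie\gal(L/\kinfty), \wtm^{\nabla_\tau=0})$. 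This is the first claim.

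For Item (2), I would argue on the explicit complexes. By Proposition \ref{propliecohonnabla}(2), $\rg(\hat{\fkg},\wtm)$ is computed by the totalization of the double complex with rows $\nabla_\gamma$ (resp. the shifted $\nabla_\gamma$, after the vertical identification by $c$) and columns $N_\nabla$; concretely it is the totalization of
\[ \begin{tikzcd}
\wtm \arrow[r, "N_\nabla"] \arrow[d, "\nabla_\gamma"'] & \wtm \arrow[d, "\nabla_\gamma"] \\
\wtm \arrow[r, "N_\nabla"] & \wtm
\end{tikzcd} \]
(up to harmless signs and the unit $c$). For the $\gal(L/\kpinfty)=1$ statement I take invariants under $\gal(L/\kpinfty)$: then the top-right and bottom-right copies of $\wtm$ become $N$ (recall $N = \wtm^{\gal(L/\kpinfty)=1}$ since $\wtm = N\otimes_P Q$), and by Axiom \ref{axiomLieQ}(1) together with Axiom \ref{axiomgroupQ}-type surjectivity the operator $\nabla_\tau$ (hence $N_\nabla$) is surjective on $\wtm$, so the two columns $N_\nabla\colon \wtm\to\wtm$ have cohomology only the kernel $\wtm^{N_\nabla=0}$; one checks $\wtm^{N_\nabla=0} = \wtm^{\nabla_\tau=0}$ contains $N$ and in fact, after taking $\gal(L/\kpinfty)$-invariants of the totalization, the surviving complex is exactly $[\,N \xrightarrow{\nabla_\gamma} N\,] = C_{\nabla_\gamma}(N)$. (Here one must be a little careful: taking $\gal(L/\kpinfty)$-invariants is exact on this particular double complex because each term is $\gal(L/\kpinfty)$-acyclic in the relevant direction, which is precisely Axiom \ref{axiomLieQ}(1); this is the analogue of the ``column kernel complex'' step in the proof of Theorem \ref{thmaxiomnophi}.) Symmetrically, taking $\gal(L/\kinfty)=1$ invariants, the left two copies of $\wtm$ become $M = \wtm^{\gal(L/\kinfty)=1}$, and now it is $\nabla_\gamma$ that is surjective on $\wtm$ by Axiom \ref{axiomLieQ}(2), so the rows $\nabla_\gamma\colon\wtm\to\wtm$ are acyclic off their kernel; the surviving complex is $[\,M\xrightarrow{N_\nabla} M\,] = C_{N_\nabla}(M)$ — here I use the fact, recorded in Axiom \ref{axiomelec} and Proposition \ref{prop: N nabla operator}, that $N_\nabla$ (not merely $\nabla_\tau$) preserves the $\gal(L/\kinfty)$-invariant submodule, so that $C_{N_\nabla}(M)$ makes sense.

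The main obstacle I anticipate is the exactness of taking $\gal(L/\kpinfty)$- (resp. $\gal(L/\kinfty)$-) invariants on the relevant complexes, i.e. justifying that $H^1$ of the group (not just of the Lie algebra) vanishes on $\wtm$ and on the auxiliary kernels $\wtm^{\nabla_\tau=0}$, $\wtm^{N_\nabla=0}$, so that passing to invariants commutes with forming cohomology. For $\gal(L/\kpinfty)$, which is pro-cyclic (for any $p$, being $\simeq\Zp$), surjectivity of $\tau-1$ on $\wtm$ — guaranteed because $\wtm = N\otimes_P Q$ with $N$ finite free over $P = Q^{\tau=1}$ — does the job directly, bypassing Axiom \ref{axiomLieQ}. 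For the $\gal(L/\kinfty)=1$ statement the subtlety is that $\gal(L/\kinfty)$ need not be pro-cyclic when $p=2$; but since the target complexes $C_{\nabla_\gamma}(N)$ and $C_{N_\nabla}(M)$ are manifestly two-term and the assertion is about Lie algebra cohomology, one can instead argue entirely at the level of the explicit double complex above using only the Lie-algebra surjectivities of Axiom \ref{axiomLieQ}, together with the compatibility $\wtm = N\otimes_P Q = M\otimes_R Q$ of the module structures; I would organize the write-up so that each of the two statements in Item (2) is deduced by the same ``kill one direction of the double complex, then identify the surviving two-term complex'' recipe, invoking Axiom \ref{axiomLieQ}(1) for the first and Axiom \ref{axiomLieQ}(2) for the second, exactly mirroring Theorem \ref{thmaxiomnophi}.
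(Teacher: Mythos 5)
Your core strategy is essentially the paper's own: Item (1) is proved exactly as in the paper (surjectivity of $\nabla_\tau$ on $\wtm$, coming from the induced structure $\wtm=N\otimes_P Q$ plus Axiom \ref{axiomLieQ}(1), then Hochschild--Serre), and for Item (2) the paper likewise passes to the double complex of Prop.~\ref{propliecohonnabla}, kills the $\nabla_\gamma$-direction by surjectivity to get $[\wtm^{\nabla_\gamma=0}\xrightarrow{\nnabla}\wtm^{\nabla_\gamma=0}]$, and then takes group invariants termwise to land on $C_{\nnabla}(M)$; your symmetric treatment of the first comparison (killing the $\nnabla$-direction and identifying $(\wtm^{\nabla_\tau=0})^{\gal(L/\kpinfty)}=N$) is a harmless variant of the paper's appeal to Thm.~\ref{thm: Tamme ana coho} plus Hochschild--Serre.

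One step in your proposal is justified incorrectly, though the step itself is fine. You claim that surjectivity of $\tau-1$ on $\wtm$ is ``guaranteed because $\wtm=N\otimes_P Q$'': it is not --- that surjectivity is Axiom \ref{axiomgroupQ}(1), which is \emph{not} among the hypotheses of this theorem (only the Lie-algebra Axiom \ref{axiomLieQ} is assumed); and your alternative remark that exactness of taking invariants ``is precisely Axiom \ref{axiomLieQ}(1)'' conflates Lie-algebra surjectivity with group-level acyclicity. What actually legitimizes taking invariants inside the surviving two-term complex (and what the paper uses) is different and does not need pro-cyclicity or any group-$H^1$ vanishing on $\wtm$: after killing one direction, the residual $\gal(L/\kinfty)$-action on $\wtm^{\nabla_\gamma=0}$ (resp.\ $\gal(L/\kpinfty)$-action on $\wtm^{\nabla_\tau=0}$) is \emph{smooth} (vectors killed by the Lie operator are fixed by an open subgroup, by local analyticity, Lem.~\ref{lemlamod}) and commutes with $\nnabla$ (resp.\ with $\nabla_\gamma$ on that kernel), and taking invariants of smooth representations of a profinite group on $\qp$-vector spaces is exact; combined with $(\wtm^{\nabla_\gamma=0})^{\gal(L/\kinfty)}=M$ and $(\wtm^{\nabla_\tau=0})^{\gal(L/\kpinfty)}=N$ (which do hold here because $R$ and $P$ are \emph{defined} as the exact invariant rings), this closes the argument. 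With that correction your write-up coincides with the paper's proof.
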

 \begin{proof}
 Consider Item (1). By Hochschild--Serre spectral sequence, it suffices to show $\nabla_\tau$ is surjective on $\wtm$.  Since  $\wtm=N\otimes_P Q$ and thus $\wtm$ is a ``trivial" $\tau$-representation, we can use Axiom \ref{axiomLieQ} to conclude.
 We also note a similar argument shows $\nabla_\gamma$ is surjective on $\wtm$.
 
 Consider Item (2). The comparison $ (\rg(\hat{\fkg}, \wtm))^{\gal(L/\kinfty)=1} \simeq C_\nablagamma(N)$ is an easy consequence of Thm. \ref{thm: Tamme ana coho} and Hochschild--Serre spectral sequence. Consider the other one.
 By Prop. \ref{propliecohonnabla}, we have 
  \[ \rg(\hat{\fkg}, \wtm) \simeq C_{\nabla_\gamma, N_\nabla}(\wtm)\]
  The right hand side is the total complex of the double complex
  \[\begin{tikzcd}
\wtm \arrow[d, "N_\nabla"] \arrow[r, "\nabla_\gamma"] & \wtm \arrow[d, "N_\nabla"] \\
\wtm \arrow[r, "\nabla_\gamma"]                      & \wtm                      
\end{tikzcd}\]
Since $\nabla_\gamma$ is surjective, thus we obtain a quasi-isomorphism
\[ \rg(\hat{\fkg}, \wtm) \simeq [\wtm^{\nabla_\gamma=0} \xrightarrow{N_\nabla} \wtm^{\nabla_\gamma=0}]\]
Take $\gal(L/\kinfty)$-invariants on both sides. Note $\gal(L/\kinfty)$-action on $\wtm^{\nabla_\gamma=0}$ is smooth and commutes with $N_\nabla$; thus we can take $\gal(L/\kinfty)$-invariants inside the complex. 
Since 
\[(\wtm^{\nabla_\gamma=0})^{\gal(L/\kinfty)}=M,\]
we can conclude.
 \end{proof}
 

\section{Verification: monodromy descent} \label{sec: verify lie}

In this section, we prove Prop. \ref{propverifyaxiommono}, verifying Axiom \ref{axiomelec} and Axiom \ref{axiomLieQ} in this context. 
The techniques in this section are informed by the monodromy descent results in overconvergent $(\phi,\tau)$-modules, cf. \cite{GP21}.

\begin{prop} \label{proppargammasurj}
Suppose $\mathfrak t^{\pm 1} \in \wtb_L^{[r,s]}$ (which holds for $r \gg 0$; cf. Lem. \ref{lem b}).
Then $\nabla_\gamma$ is surjective on $Q=\wtb_L^{[r, s], \hatgla}$.
\end{prop}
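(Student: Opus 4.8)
The goal is to show $\nabla_\gamma$ is surjective on $Q = (\wtb_L^{[r,s]})^{\hatg\dla}$. The key structural input is Axiom \ref{axiomelec}: the element $\mathfrak t$ (or rather $\mathfrak t^{-1}$) transforms under $\gal(L/\kinfty)$ by the cyclotomic character, so we are exactly in the setting where the twisting trick applies. The overall strategy is a ``twist and integrate'' argument: I will first establish surjectivity of $\nabla_\gamma$ on the ambient locally analytic ring $(\wtb_L^{[r,s]})^{\gal(L/\kpinfty)\dla}$ or a suitable intermediate object by an explicit integration formula, then descend to the $\hatg\dla$-invariants by noting the extra $\gal(L/\kpinfty)$-action is compatible.

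First I would recall that by Prop.\ \ref{prop: verify TS-1} (Item 2) and Cor.\ \ref{cor: verify axiom q for rings}, the pair $(\wta_L^{[r,s]}, \gal(L/\kinfty))$ satisfies Axiom (TS-1), and in particular $H^1(\gal(L/\kinfty), \wtb_L^{[r,s]}) = 0$, i.e.\ $\gamma - 1$ is surjective on $\wtb_L^{[r,s]}$ (here $p>2$ so $\gal(L/\kinfty)$ is pro-cyclic; for $p=2$ one works with the pro-cyclic open subgroup and the argument is the same after the usual care). The standard passage from surjectivity of $\gamma-1$ on a Banach $\gal(L/\kinfty)$-ring to surjectivity of the Lie operator $\nabla_\gamma$ on its locally analytic vectors is Lazard-type: on $W = Q$, using $\rg(\gal(L/\kinfty), W) \simeq \rg(\Lie\gal(L/\kinfty), W)^{\gal(L/\kinfty)}$ from Thm.\ \ref{thm: Tamme ana coho}, together with the fact (Example \ref{example: porat}) that $\wtb_L^{[r,s]}$ has no higher locally analytic vectors, one gets $H^1_{\cont}(\gal(L/\kinfty), \wtb_L^{[r,s]}) \simeq H^1(\Lie\gal(L/\kinfty), Q)^{\gal(L/\kinfty)}$ via Thm.\ \ref{prop: no higher lav coho compa}. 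The left side vanishes, so $H^1(\Lie\gal(L/\kinfty), Q)^{\gal(L/\kinfty)} = 0$. Now $\Lie\gal(L/\kinfty)$ is one-dimensional spanned by $\nabla_\gamma$, so $H^1(\Lie\gal(L/\kinfty), Q) = Q/\nabla_\gamma Q$ and the $\gal(L/\kinfty)$-invariants of this cokernel vanish; one then needs to upgrade this to full vanishing of the cokernel, which follows because $\gal(L/\kinfty)$ acts on $Q$ through a smooth (hence locally constant on each Banach piece) action whose invariants recover everything in the colimit sense — more precisely $Q = \bigcup_n Q^{\gal(L/\kinfty)_n = 1}$-type decomposition is not literally available, so instead I would argue directly: take $y \in Q$; since $\gamma-1$ is surjective on $\wtb_L^{[r,s]}$, pick $z$ with $(\gamma-1)z = $ (an appropriate integral expression of $y$), and then $\nabla_\gamma z$ differs from $y$ by a correction that can be absorbed by iterating, using that $\nabla_\gamma = \log\gamma_n / \log\chi(\gamma_n)$ converges.

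The cleanest route, which I expect to be the actual argument in the paper, is the \emph{explicit primitive via the group action}: for $y \in Q$, because the $\gal(L/\kinfty)$-action is locally analytic, for $n \gg 0$ the element $\gamma_n := \gamma^{p^n}$ acts by a convergent exponential $\gamma_n = \exp(\log\chi(\gamma_n)\cdot\nabla_\gamma)$ on a neighborhood, and one can write a primitive of $\nabla_\gamma$ by a Riemann-sum / Colmez-style averaging: using Axiom (TS-1) choose $\alpha$ with $\sum_{g} g(\alpha) = 1$ over a quotient, and form $\sum_g g(\alpha)\cdot(\text{local primitive of }y\text{ under }g)$; the TS-1 bound $v(\alpha) > -c$ controls convergence in the Banach norm $W^{[r,s]}$. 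This is precisely the monodromy-descent mechanism from \cite[\S5]{GP21} that the section header advertises. The main obstacle — and the step deserving the most care — is the convergence estimate: one must check that the local primitives $\frac{\log g}{\log\chi(g)}^{-1}$-type corrections, weighted by $g(\alpha)$, actually land back in $\wtb_L^{[r,s]}$ and are fixed by $\hatg\dla$ (not just by $\gal(L/\kinfty)$). Fixedness by the full $\hatg$ follows because $\nabla_\gamma$ and the construction commute with $\gal(L/\kpinfty)$ up to the cyclotomic twist, which is exactly what Axiom \ref{axiomelec} isolates; and membership in the locally analytic vectors follows from Lem.\ \ref{lemlamod} since $Q$ is (finite free over, or a subring built from) the ambient locally analytic ring. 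I would conclude by assembling: $\nabla_\gamma Q = Q$, as desired.
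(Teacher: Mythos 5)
There is a genuine gap: neither of your two routes actually produces a primitive for $\nabla_\gamma$ on $Q=\wtb_L^{[r,s],\hatgla}$. In the first route, the Lazard/no-higher-locally-analytic-vectors machinery (Thm.~\ref{prop: no higher lav coho compa}) at best gives the vanishing of $\bigl(Q'/\nabla_\gamma Q'\bigr)^{\gal(L/\kinfty)}$, i.e.\ of the \emph{invariants} of the cokernel, and moreover for the wrong space $Q'=(\wtb_L^{[r,s]})^{\gal(L/\kinfty)\dla}$ rather than for the $\hatg$-locally analytic vectors $Q$ appearing in the statement (these are different; Example~\ref{example: porat} is also only stated for the $\hatg$- and $\gammak$-actions, not for $\gal(L/\kinfty)$ acting on $\wtb_L^{[r,s]}$). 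You acknowledge the needed upgrade from ``invariants of the cokernel vanish'' to ``cokernel vanishes'', but the justifications you sketch do not work: the $\gal(L/\kinfty)$-action on the cokernel is not smooth in any evident sense, and the ``pick $z$ with $(\gamma-1)z=\cdots$ and absorb corrections by iterating'' step is not an argument — surjectivity of $\gamma-1$ on the Banach ring $\wtb_L^{[r,s]}$ gives a solution $z$ that need not be locally analytic at all, so $\nabla_\gamma z$ is not even defined, and no convergence mechanism is supplied. In the second route, Axiom (TS-1) is a device for killing continuous group $H^1$ (solving $(\gamma-1)z=y$); it provides no way to invert the Lie-algebra operator, and the ``local primitive of $y$ under $g$'' you average is never constructed. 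So the key object — an explicit preimage under $\nabla_\gamma$ — is missing from both halves.

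The paper's proof supplies exactly this missing ingredient, and it is different from both of your routes. One normalizes $\partial_\gamma:=\fkt^{-1}\nabla_\gamma$ (legitimate since $\fkt$ is a unit in $Q$), for which the crucial identity is $\partial_\gamma(\fkt)=1$ — note you only use that $\fkt$ is a cyclotomic-twist eigenvector, never that $\nabla_\gamma(\fkt)=\fkt$, which is what makes $\fkt$ usable as an integration variable. Then the decompletion theorem \cite[Thm.~5.3.5]{GP21} says that any $x\in Q$ admits a convergent Taylor expansion $x=\sum_{i\geq 0}x_i(\fkt-\fkt_n)^i$ with $\fkt_n, x_i\in Q^{\nabla_\gamma=0}$, and one writes down the preimage directly as $\hat x=\sum_{i\geq 0}\tfrac{x_i}{i+1}(\fkt-\fkt_n)^{i+1}$, term-by-term integration being valid because $\partial_\gamma$ kills the coefficients and sends $(\fkt-\fkt_n)^i$ to $i(\fkt-\fkt_n)^{i-1}$. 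If you want to salvage your write-up, the step to add is precisely this appeal to \cite[Thm.~5.3.5]{GP21} (or a proof of an equivalent expansion), not TS-1 or the vanishing of invariants of the cokernel.
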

\begin{proof}
    This essentially follows from \cite[Thm. 5.3.5]{GP21} (and is also noted in \cite[Cor. 4.2.67]{Poy19}). 
Define
\[\partial_\gamma:=\frac{1}{\fkt}\nabla_\gamma \]
as in \cite[5.3.4]{GP21}. It suffices to prove $\partial_\gamma$ is surjective on $Q$ since $\fkt$ is a unit. Note
\begin{equation}\label{eqpargammat}
    \partial_\gamma(\fkt)=1. 
\end{equation} 
By \cite[Thm. 5.3.5]{GP21}, given an element $x\in Q$, there exists some $\fkt_n \in Q^{\nabla_\gamma=0}$ (that approximates $\fkt$) such that 
\[ x=\sum_{i \geq 0} x_i(\fkt-\fkt_n)^i \]
where $x_i \in Q^{\nabla_\gamma=0}$.
Clearly, a pre-image of $x$ under $\partial_\gamma$ is
\[ \hat{x} =\sum_{i \geq 0} \frac{x_i}{i+1}(\fkt-\fkt_n)^{i+1} \]
Here  the convience of $\partial_\gamma$ (other than $\nabla_\gamma$) is that $\partial_\gamma(\fkt-\fkt_n)^i=i(\fkt-\fkt_n)^{i-1}$.
\end{proof}

\begin{lemma}[\cite{TR12_unpub, Poy19}]
There is an element $b \in \bbb_L^\dagger$ such that $(\tau-1)(b)=1$. 
In addition,  $b \in  \wtb_{\rig,L}^{\dagger, \pa}$.
\end{lemma}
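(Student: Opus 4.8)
The plan is to construct $b$ explicitly as a $p$-adic limit built from the standard "anti-derivative" trick for the operator $\tau - 1$ acting on the Kummer-type variable. Recall that $\upi = \{\pi_n\}$ defines $[\upi] \in \wta^+$, and the element $u \mapsto [\upi]$ sits inside $\wtb_L$; the key point is that $\tau$ does \emph{not} fix $[\upi]$ (unlike $G_{\kinfty}$), and in fact $\tau([\upi])/[\upi] = [\ueps]^{?}$ is a unit congruent to $1$ modulo the maximal ideal (using \eqref{eq1tau}: $\tau(\pi_i) = \pi_i \mu_i$, so on Teichm\"uller lifts $\tau([\upi]) = [\upi]\cdot[\ueps]$, with $[\ueps]-1$ topologically nilpotent). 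Writing $w := \log([\upi])$ — or more robustly $w := \tau([\upi])[\upi]^{-1} - 1 \in \wtb_L^\dagger$, which is topologically nilpotent — one computes that $\tau - 1$ applied to a suitable power series in $\log[\upi]$ produces $1$. Concretely, since $(\tau-1)(\log[\upi]) = \log([\ueps]) = t$ up to normalization, and $t$ is (after inverting) related to $\mathfrak t$ and $\lambda$ as in Definition \ref{defnfkt}, one gets a candidate $b$ essentially of the form $\log[\upi]$ divided by an appropriate period element lying in $\bbb_L^\dagger$.

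First I would make precise the computation of $\tau$ on $\log[\upi]$: using $\tau([\upi]) = [\upi][\ueps]$ (in the case $\Kinfty \cap \Kpinfty = K$; the $p=2$ exceptional case gets an extra square, handled by the same bookkeeping as in \eqref{eqnnring}), we get $(\tau-1)\log[\upi] = \log[\ueps] = t$. Then, since $t = p\lambda\mathfrak t$ and $\mathfrak t, \lambda^{\pm 1}$ make sense in the relevant overconvergent rings (Lemma \ref{lem b}), set $b := \log[\upi]/(p\lambda\mathfrak t)$; this gives $(\tau-1)(b) = (\tau-1)(\log[\upi]) \cdot \frac{1}{p\lambda\mathfrak t}$ only if $\frac{1}{p\lambda\mathfrak t}$ is $\tau$-fixed, which it is \emph{not}. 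So the honest approach is iterative: set $b_0$ to be a first approximation with $(\tau-1)(b_0) = 1 + (\text{small})$, then correct. This is exactly the Colmez-style successive approximation already used in the proof of the TS-1-Fr\'echet proposition in \S\ref{sec: ts-1}; I would invoke that machinery (or \cite{TR12_unpub, Poy19} directly, as the statement credits) rather than redo it. The membership $b \in \bbb_L^\dagger$ (the \emph{imperfect} overconvergent ring) is the subtler integrality claim: one checks that $\log[\upi]$ and the correction terms all lie in $\bbb_L^\dagger$ because $[\upi] \in \mathbf{B}_{K_\infty}^\dagger \subset \bbb_L^\dagger$ and $[\upi]$ is a unit there with $[\upi]-1$ in the appropriate ideal, so its logarithm converges overconvergently.

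For the second assertion, $b \in \wtb_{\rig,L}^{\dagger,\pa}$: since $\bbb_L^\dagger \subset \wtb_{\rig,L}^\dagger$ by the chain of ring inclusions in \S\ref{sec: rings and gps}, the only content is pro-analyticity of $b$ for the $\hatg$-action. But $b$ lies in $\bbb_L^\dagger$, and by Lemma \ref{lemlamod} (applied to $\bbb_L^\dagger$, or rather its Robba completion, as a module over itself — or better, citing that the $\gammak$-action on $\bbB_{\rig,\kpinfty}^\dagger$ is pro-analytic exactly as invoked in the proof of Theorem \ref{thmequidagger}) the action is automatically pro-analytic. Alternatively, $\log[\upi]$ is manifestly pro-analytic since $t = \log[\ueps]$ is, and $[\upi]$ transforms by the cyclotomic-type character into units, so $\hatg$ acts on $\log[\upi]$ through affine-analytic formulas.

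\textbf{Main obstacle.} The genuinely delicate step is the \emph{integrality}: showing $b$ lands in the imperfect ring $\bbb_L^\dagger$ rather than merely in the perfect ring $\wtb_{\rig,L}^{\dagger,\pa}$. The successive-approximation construction naturally produces elements in perfect rings; descending to $\bbb_L^\dagger$ requires knowing that one can choose each correction term inside $\bbb_L^\dagger$ and that the $\tau$-action preserves enough structure there — this is precisely the kind of overconvergence/descent input that \cite{TR12_unpub, Poy19} supply and that I would cite rather than reprove. Everything else (the formula $(\tau-1)\log[\upi] = t$, convergence of the series, the $p=2$ normalization) is routine bookkeeping along the lines already carried out for Axiom (TS-1-Fr\'echet).
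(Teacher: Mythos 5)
Your sketch has two genuine gaps, both at the points where the lemma actually has content. First, the ``explicit candidate'' is built from $\log[\upi]$, but $\log[\upi]$ does not lie in any of the rings in play: it is exactly the transcendental element one \emph{adjoins} to form the log-rings ($\wtblog=\wtbrig[\log[\upi]]$, as in \cite{Ber02}), so it is not in $\wtbrigL$, let alone in $\bbb_L^\dagger$; the series $\sum_{k\geq 1}(-1)^{k+1}([\upi]-1)^k/k$ does not converge overconvergently because $\upi-1$ is a unit of $\wte$ (valuation $0$), not topologically nilpotent. Likewise the inclusion $\bfb_{\kinfty}^\dagger\subset\bbb_L^\dagger$ that your integrality argument rests on is unjustified: the Kummer-side and cyclotomic-side imperfect rings are different subrings of $\wtb$, no such inclusion is available in the paper or the references, and the need to define the $\tau$-action over the perfect ring $\wtb_L$ reflects precisely its absence --- the membership $b\in\bbb_L^\dagger$ is the nontrivial assertion of the lemma, not bookkeeping. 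The TS-1 machinery of \S\ref{sec: ts-1} does solve $(\tau-1)b=1$, but only in the perfect rings where Axiom \ref{axiomgroupQ} is verified (Cor.\ \ref{cor: verify axiom q for rings}); it gives no control towards $\bbb_L^\dagger$, and you defer exactly this step. Second, your pro-analyticity argument is circular: Lemma \ref{lemlamod} applies to a finite free module over a ring whose action is \emph{already known} to be locally/pro-analytic, so invoking it for $b\in\bbb_L^\dagger\subset\wtbrigL$ presupposes $\bbb_L^\dagger\subset\wtb_{\rig,L}^{\dagger,\pa}$, which is (essentially) what has to be proved; and the pro-analyticity of the $\gammak$-action on $\bbb_{\rig,\kpinfty}^\dagger$ says nothing about the $\tau$-direction of the $\hatg$-action on $b$.

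For comparison, the paper supplies both missing inputs at once through a rank-two representation: let $V$ have basis $(e_1,e_2)$ with $g(e_1,e_2)=(e_1,e_2)\smat{1 & \frac{c(g)}{\chi(g)} \\ 0 & \frac{1}{\chi(g)}}$, where $c$ is the Kummer cocycle ($c\equiv 0$ on $\gal(\barK/\kinfty)$, $c(\tau)=1$); this $V$ factors through $\hatg$. By Cherbonnier--Colmez overconvergence \cite{CC98}, $D=(V\otimes\bbb_L^\dagger)^{\gal(L/\kpinfty)}$ is two-dimensional over $\bbb_\kpinfty^\dagger$, and a second basis vector can be normalized to the form $-be_1+e_2$ with a \emph{unique} $b\in\bbb_L^\dagger$; $\tau$-invariance of this vector gives $(\tau-1)(b)=1$, which is how $b\in\bbb_L^\dagger$ is obtained. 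Pro-analyticity is then conceptual: all vectors of the finite-dimensional $\hatg$-representation $V$ are locally analytic, so from $V\otimes_\qp\wtb_{\rig,L}^{\dagger,\pa}\simeq D\otimes_{\bbb_\kpinfty^\dagger}\wtb_{\rig,L}^{\dagger,\pa}$ and the unicity of $b$ one reads off $b\in\wtb_{\rig,L}^{\dagger,\pa}$. If you want to avoid citing the unpublished references wholesale, this is the structure your argument needs; as written, your proposal does not supply it.
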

\begin{proof}
The element $b$ is first constructed in \cite[Lem. 3.5]{TR12_unpub}; its analyticity is proved  in Poyeton's thesis \cite[Lem. 4.2.33]{Poy19}. Since these two references are not published papers, we give a summary; in addition, we give a more conceptual reproof of the analyticity (compare with the ``direct"  proof of \cite[Lem. 4.2.33]{Poy19}).

Consider  the 2-dimensional $\Qp$-representation $V$ of $G_K$ (associated to our choice of $\{\pi_n\}_{n \geq 0}$) with a basis $(e_1, e_2)$
 such that \[ g(e_1, e_2)=(e_1, e_2) \smat{1& \frac{c(g)}{\chi(g) } \\ 0 & \frac{1}{\chi(g) }}\]
 where $\chi$ is the $p$-adic cyclotomic character, and $c$ is the cocycle such that $c(g)=0$ for $g \in \gal(\barK/\kinfty)$ and $c(\tau)=1$. Note that the representation factors through $\hatg$-action.

Consider the overconvergent $(\phi, \Gamma)$-module associated to $V$, which is
\[ D= D^\dagger_{\kpinfty}(V) =(V\otimes \bbb^\dagger)^{\gkpinfty}  =(V\otimes \bbb_L^\dagger)^{\gal(L/\kpinfty)}  \]
since all representations are overconvergent by \cite{CC98}, $D$ has dimension two. The fixed point $e_1$ obviously belongs to $D$; another basis element, after scaling coefficient of $e_2$ (note $\bbb^\dagger_\kpinfty$ is a field), is of the form $-be_1+ e_2$ for some \emph{unique} $b \in \bbb_L^\dagger$. Since $-be_1+ e_2$ is fixed by $\tau$, we easily deduce 
\[(\tau-1)(b)=1.\]

We now prove the analyticity of $b$.  We always have
\[ V\otimes_\qp \wtb^\dagger_{\rig} \simeq D\otimes_{\bbb^\dagger_\kpinfty} \wtb^\dagger_{\rig} \]
A key point here is that $V$ is a finite dimensional representation of $\hatg$, thus all elements of $V$ are automatically locally analytic vectors. Thus, take $G_L$-invariant of the above isomorphism and take $\hatgpa$ elements, we have

\[ V\otimes_\qp \wtb_{\rig,L}^{\dagger, \pa} \simeq D\otimes_{\bbb^\dagger_\kpinfty} \wtb_{\rig,L}^{\dagger, \pa}\]
Using unicity of $b$, it is clear $b \in \wtb_{\rig,L}^{\dagger, \pa}$.
\end{proof}

\begin{prop}[\cite{Poy19}] \label{propsurjnablagatau}
    Suppose $b \in  \wtb_L^{[r,s]}$  (that is, $r \gg 0$).
Then $\nabla_\tau$ is surjective on $Q=\wtb_L^{[r, s], \hatgla}$.
\end{prop}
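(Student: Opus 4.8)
The plan is to mimic the proof of Proposition~\ref{proppargammasurj}, replacing $\nabla_\gamma$ and the element $\fkt$ by $\nabla_\tau$ and the element $b$ furnished by the previous lemma. Concretely, I would first observe that, since $b\in\wtb_L^{[r,s]}$ and $(\tau-1)(b)=1$, applying the operator $\nabla_\tau$ (which is, up to the $p$-adic logarithm normalization, an infinitesimal version of $\tau-1$) to $b$ should give an invertible element of $Q=\wtb_L^{[r,s],\hatgla}$; more precisely one checks $\nabla_\tau(b)\in 1+\frakm$ for the maximal ideal, or directly that $\nabla_\tau(b)$ is a unit, so that after rescaling we may assume $\nabla_\tau(b)=1$ (this is the analogue of \eqref{eqpargammat}). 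Note $b$ is genuinely locally analytic here — it lies in $\wtb_{\rig,L}^{\dagger,\pa}$ and in $\wtb_L^{[r,s]}$, hence in $Q$ once $r\gg 0$, and $\nabla_\tau$ is defined on $Q$ because the $\hatg$-action is locally analytic.

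Next I would invoke the structural description of $Q$ over its $\nabla_\tau$-invariant subring, the analogue of \cite[Thm. 5.3.5]{GP21} used in Proposition~\ref{proppargammasurj}: given $x\in Q$, there is an element $b_n\in Q^{\nabla_\tau=0}$ approximating $b$ such that $x=\sum_{i\ge 0} x_i (b-b_n)^i$ with $x_i\in Q^{\nabla_\tau=0}$, the series converging in the Fr\'echet/Banach topology of $Q$. Then, exactly as before, since $\nabla_\tau(b-b_n)^i = i(b-b_n)^{i-1}$ (using the Leibniz rule and $\nabla_\tau b=1$, $\nabla_\tau b_n=0$), the element
\[ \hat x = \sum_{i\ge 0}\frac{x_i}{i+1}(b-b_n)^{i+1} \]
satisfies $\nabla_\tau(\hat x)=x$, provided the new series still converges. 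This last point — convergence of the primitive $\hat x$, i.e. that dividing the $i$-th coefficient by $i+1$ does not destroy convergence — is the step I expect to be the only real subtlety, and it is handled exactly as in the cyclotomic/Kummer monodromy-descent literature: the $p$-adic valuation of $1/(i+1)$ grows at most logarithmically in $i$, which is dominated by the geometric decay of $\|x_i\|\,\|b-b_n\|^i$, so $\hat x$ converges in the same ring $Q$.

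I would therefore present the proof as: (1) rescale $b$ so that $\nabla_\tau(b)=1$, citing the previous lemma for the existence of $b\in Q$; (2) quote the relevant $\nabla_\tau$-analogue of \cite[Thm. 5.3.5]{GP21} (this is the monodromy-descent input, valid for $r\gg0$, which explains Convention~\ref{rem: big than alpha}); (3) write down the explicit primitive $\hat x$ and verify $\nabla_\tau(\hat x)=x$ by termwise differentiation, remarking on convergence. The main obstacle is really just ensuring the cited descent statement applies verbatim with $\nabla_\tau$ and $b$ in place of $\nabla_\gamma$ and $\fkt$ — the arithmetic of the two situations is parallel, since in both cases one descends along a rank-one "Kummer-type" extension of period rings — so the argument is essentially a transcription, and I would keep the write-up to a few lines, pointing to \cite{GP21, Poy19} for the details that are identical to the $\nabla_\gamma$ case.
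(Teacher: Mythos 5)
Your proposal is correct and follows essentially the same route as the paper: note $\nabla_\tau(b)=1$ (no rescaling is actually needed, since $(\tau-1)(b)=1$ forces $(\tau-1)^2(b)=0$ and the logarithm series truncates), invoke the $\nabla_\tau$-analogue of \cite[Thm.~5.3.5]{GP21} (worked out in \cite[Lem.~4.2.62, Thm.~4.2.64, Cor.~4.2.65]{Poy19}) to write $x=\sum_{i\geq 0}x_i(b-b_n)^i$ with $x_i\in Q^{\nabla_\tau=0}$ and $b_n$ approximating $b$, and then take the explicit primitive $\sum_{i\geq 0}\tfrac{x_i}{i+1}(b-b_n)^{i+1}$. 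The paper's own proof is exactly this sketch, likewise deferring the construction of $b_n$, the $x_i$, and the convergence of the primitive to \cite{Poy19}.
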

\begin{proof}
This is proved in \cite[Cor. 4.2.65]{Poy19}; we content ourselves by pointing out its analogy with Prop. \ref{proppargammasurj}.
    Clearly 
    \begin{equation}
        \nabla_\tau(b)=1.
    \end{equation}
    This is the analogue of \eqref{eqpargammat}.
 Along similar lines as the proof of Prop. \ref{proppargammasurj},
it suffices to construct a sequence $b_n\in Q^{\nabla_\tau=0}$ (that approximates $b$) such that $x\in Q$ can be written as
\[ x=\sum_{i \geq 0} y_i(b-b_n)^i \]
with $y_i \in Q^{\nabla_\tau=0}$.
Then a pre-image  of $x$ under $\nabla_\tau$ is
\[ \hat{y} =\sum_{i \geq 0} \frac{y_i}{i+1}(b-b_n)^{i+1} \]
The construction of $b_n$ (as well as $y_i$) is quite similar to \cite[Thm. 5.3.5]{GP21}, and is carried out in detail in \cite[Lem. 4.2.62, Thm. 4.2.64]{Poy19}.
\end{proof}

\begin{prop} \label{propverifyaxiommono}
There exists some $\alpha>0$ such that for any $\alpha<r\leq s <+\infty$, the ring
$Q=\wtb_L^{[r, s], \hatgla}$ satisfies  Axiom \ref{axiomelec} and Axiom \ref{axiomLieQ}.
 \end{prop}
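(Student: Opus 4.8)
The plan is to verify the two axioms separately, but in both cases by reducing to the surjectivity statements of Propositions~\ref{proppargammasurj} and~\ref{propsurjnablagatau}. First I would fix $\alpha>0$ large enough that $\mathfrak{t}^{\pm1}\in\wtb_L^{[r,+\infty)}$ and $b\in\wtb_L^{[r,+\infty)}$ for all $r>\alpha$ (possible by Lem.~\ref{lem b} and the lemma producing $b$), and also large enough that $r>\tfrac{p}{p-1}$ so that Example~\ref{example: porat} applies. Then for any $\alpha<r\le s<\infty$ we have $\mathfrak t^{\pm 1}, b\in\wtb_L^{[r,s]}$, and since these elements are pro-analytic (being in $\wtb_{\rig,L}^{\dagger,\pa}$, and their images in the Banach ring $\wtb_L^{[r,s]}$ are then locally analytic by the usual truncation argument — a finite interval turns pro-analytic into analytic), they in fact lie in $Q=\wtb_L^{[r,s],\hatg\dla}$.

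For Axiom~\ref{axiomelec}, the element I would use is $c=\mathfrak t$: one has $\mathfrak t\in Q^\times$, and the computation $g(\mathfrak t)=\chi(g)\mathfrak t$ for $g\in\gal(L/\kinfty)$ is recorded in \cite{GP21} (it follows from $g(t)=\chi(g)t$ together with $g(\lambda)=\lambda$ for $g\in\gal(L/\kinfty)$, since $\lambda$ is built from $E(u)$ and $\gal(L/\kinfty)$ fixes $u=[\underline\pi]$). Then $N_\nabla=\tfrac1{\mathfrak t}\nabla_\tau$ is exactly the operator of Def.~\ref{defndiffwtb} (in the case $\kinfty\cap\kpinfty=K$; in the $p=2$ exceptional case one uses $c=p\mathfrak t$ or adjusts by the factor $p$ as in \eqref{eqnnring}, which does not affect the argument), so Axiom~\ref{axiomelec} holds. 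For Axiom~\ref{axiomLieQ}, I would invoke Prop.~\ref{proppargammasurj} to get surjectivity of $\nabla_\gamma$ on $Q$ (which needs $\mathfrak t^{\pm1}\in\wtb_L^{[r,s]}$, now guaranteed by our choice of $\alpha$) and Prop.~\ref{propsurjnablagatau} to get surjectivity of $\nabla_\tau$ on $Q$ (which needs $b\in\wtb_L^{[r,s]}$, also guaranteed). Surjectivity of $\nabla_\gamma$ and $\nabla_\tau$ is precisely the content of Axiom~\ref{axiomLieQ} (equivalently the vanishing of $H^1$ of the respective one-dimensional Lie algebras, since each acts through a single operator); and I should note that $Q$ is genuinely an LB (indeed Banach) ring with locally analytic $\hatg$-action, as required — the finite interval $[r,s]$ is what makes $\wtb_L^{[r,s]}$ Banach and its locally analytic vectors an honest $\hatg\dla$-representation, so the "caution" in Axiom~\ref{axiomLieQ} is respected.

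The genuinely substantive inputs here are Propositions~\ref{proppargammasurj} and~\ref{propsurjnablagatau}, which are already proved (or cited from \cite{GP21,Poy19}) in the excerpt; so the remaining work in this proof is essentially bookkeeping: choosing $\alpha$ uniformly, checking that $\mathfrak t^{\pm1}$ and $b$ land in $Q$ for the stated range of intervals, and identifying $c=\mathfrak t$. The one place I would be slightly careful — and where I expect the main (minor) obstacle to lie — is the uniformity of $\alpha$: I must make sure a single $\alpha$ works simultaneously for $\mathfrak t^{\pm1}\in\wtb_L^{[r,+\infty)}$, for $b\in\wtb_L^{[r,+\infty)}$, for $r>\tfrac p{p-1}$ (Example~\ref{example: porat}), and compatibly with Convention~\ref{rem: big than alpha}. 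Since each of these is a "for $r\gg0$" statement, taking $\alpha$ to be the maximum of the finitely many thresholds does the job; there is no interaction between them. I would close by remarking that once $\mathfrak t^{\pm1}$ and $b$ are in $\wtb_L^{[r,+\infty)}$ they are automatically in $\wtb_L^{[r,s]}$ for every $s$, so the whole family of rings $Q=\wtb_L^{[r,s],\hatg\dla}$ is handled at once.
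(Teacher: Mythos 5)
Your proposal is correct and follows essentially the same route as the paper: take $c=\fkt$ (or $p\fkt$, $p^2\fkt$ in the normalization of Def.~\ref{defndiffwtb}) for Axiom~\ref{axiomelec}, and quote Props.~\ref{proppargammasurj} and~\ref{propsurjnablagatau} for the surjectivity of $\nabla_\gamma$ and $\nabla_\tau$ required by Axiom~\ref{axiomLieQ}. The extra bookkeeping you supply (a single $\alpha$ chosen as the maximum of the finitely many thresholds, and the observation that $\fkt^{\pm1}$ and $b$ land in $\wtb_L^{[r,s],\hatg\dla}$) is exactly what the paper leaves implicit, and is fine.
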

\begin{proof} 
    For Axiom \ref{axiomelec}, one can use $\fkt$ (indeed $p\fkt$ or $p^2\fkt$ for normalization purposes, cf. Def.  \ref{defndiffwtb}).
     Axiom \ref{axiomLieQ} is verified in Props. \ref{proppargammasurj} and \ref{propsurjnablagatau}.
\end{proof}



\begin{rem} \label{rem: diffi lpa ring}
    We do not know if $\nabla_\gamma$ or $\nabla_\tau$ is surjective on $ (\wtb_{\rig, L}^\dagger)^\hatgpa $. Indeed, it is not clear if one can prove analogues of  \cite[Thm. 5.3.5]{GP21} for this ring.
\end{rem}

\begin{cor}\label{cor: lie alg coho verified applied}
   Use Notation \ref{notarigmod}. We have
\[ (\rg(\Lie \hatg,  \wtD_{L}^{[r, s], \la})^{\gal(L/\kpinfty)=1} \simeq C_{\nabla_\gamma}(\bbD_{\kpinfty, \infty}^{[r, s]}) \]
\[ (\rg(\Lie \hatg,  \wtD_{L}^{[r, s], \la})^{\gal(L/\kinfty)=1} \simeq C_{\nnabla}(\bfD_{\kinfty, \infty}^{[r, s]}) \] 
\end{cor}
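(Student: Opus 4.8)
The statement follows by combining Theorem~\ref{thm:liealgq} with the explicit identifications of locally analytic vectors supplied by Proposition~\ref{prop: lav of mod}, once we have checked that the ring $Q := \wtb_L^{[r,s],\hatg\dla}$ fits into the axiomatic framework. First I would fix $r \gg 0$ (in the sense of Convention~\ref{rem: big than alpha}, so that $r > \max\{\alpha, \tfrac{p}{p-1}\}$) and base-change the module $\wtd_{\rig, L}^\dagger$ (together with its companions $\bbD_{\rig,\kpinfty}^\dagger$, $\bfD_{\rig,\kinfty}^\dagger$) down to the interval $[r,s]$ as in diagram~\eqref{131rs}. By Example~\ref{example: porat}, the finite free $\wtb_L^{[r,s]}$-module $\wtD_L^{[r,s]}$ has no higher locally analytic vectors, so by Theorem~\ref{prop: no higher lav coho compa} its continuous $\hatg$-cohomology agrees with the locally analytic cohomology of $\wtD_L^{[r,s],\la} = \wtD_L^{[r,s]}\otimes_{\wtb_L^{[r,s]}} Q$, which in turn is computed by $\rg(\Lie\hatg, \wtD_L^{[r,s],\la})^{\hatg}$.

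Next I would verify the hypotheses of Theorem~\ref{thm:liealgq} for this $Q$. Proposition~\ref{propverifyaxiommono} says exactly that $Q = \wtb_L^{[r,s],\hatg\dla}$ satisfies Axiom~\ref{axiomelec} (using $c = p\fkt$ or $p^2\fkt$) and Axiom~\ref{axiomLieQ} (surjectivity of $\nabla_\gamma$ and $\nabla_\tau$, from Propositions~\ref{proppargammasurj} and~\ref{propsurjnablagatau}). The module $\wtD_L^{[r,s],\la}$ is finite free over $Q$ (base change of a finite free module), carries a continuous semilinear $\hatg$-action which is automatically locally analytic by Lemma~\ref{lemlamod}, and it descends simultaneously to an object over $P := Q^{\gal(L/\kpinfty)=1}$ and to a $(\phi,\tau)$-type object over $R := Q^{\gal(L/\kinfty)=1}$ — this is the content of the equivalences in Theorem~\ref{thmequidagger} restricted to the interval $[r,s]$, together with the compatibility statements in Proposition~\ref{prop: lav of mod}. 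Concretely, $(\wtD_L^{[r,s],\la})^{\gal(L/\kpinfty)=1} = \bbD_{\kpinfty,\infty}^{[r,s]}$ and $(\wtD_L^{[r,s],\la})^{\gal(L/\kinfty)=1} = \bfD_{\kinfty,\infty}^{[r,s]}$, which are the modules $N$ and $M$ to feed into the theorem.

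Now Theorem~\ref{thm:liealgq}(2) applies verbatim: taking $\gal(L/\kpinfty)$-invariants of $\rg(\Lie\hatg, \wtD_L^{[r,s],\la})$ yields $C_{\nabla_\gamma}(\bbD_{\kpinfty,\infty}^{[r,s]})$, and taking $\gal(L/\kinfty)$-invariants yields $C_{N_\nabla}(\bfD_{\kinfty,\infty}^{[r,s]})$, where the operator $N_\nabla = \frac{1}{c}\nabla_\tau$ on $Q$ restricts to the operator of Definition~\ref{defndiffwtb} (up to the normalizing factor of $p$ resp.\ $p^2$ built into $c$), and the fact that it preserves $\bfD_{\kinfty,\infty}^{[r,s]}$ is the base-changed form of Proposition~\ref{prop: N nabla operator}. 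This gives both displayed quasi-isomorphisms.

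\textbf{Main obstacle.} The only real subtlety is bookkeeping: one must check that the operator $N_\nabla$ abstractly produced on $Q\otimes\hat\fkg$ via Axiom~\ref{axiomelec} genuinely agrees — including the correct power of $p$ in the denominator in the $p=2$, $\kinfty\cap\kpinfty = K(\pi_1)$ case — with the concrete differential operator of Definition~\ref{defndiffwtb}, and that its restriction to $\bfD_{\kinfty,\infty}^{[r,s]}$ is the one studied in Proposition~\ref{prop: N nabla operator} rather than merely landing in $\wtD_{\rig,L}^{\dagger,\pa}$. I would dispatch this by noting that $\mathfrak{t}^{\pm 1} \in \wtb_L^{[r,s]}$ for $r \gg 0$ (Lemma~\ref{lem b}) and that $\nabla_\tau$ on the ambient $\wtb_L^{[r,s]}$-module restricts compatibly along all the inclusions in diagram~\eqref{131rs}, so the identification is forced by uniqueness of the extension of $\nabla_\tau$ from $\bfD_{\rig,\kinfty}^\dagger$. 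Everything else is a direct citation of Theorem~\ref{thm:liealgq}, Proposition~\ref{propverifyaxiommono}, and Proposition~\ref{prop: lav of mod}.
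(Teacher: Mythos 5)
Your proposal is correct and is essentially the paper's own (very short) proof: feed $Q=(\wtb_L^{[r,s]})^{\hatg\dla}$ into Theorem \ref{thm:liealgq}(2), with Axioms \ref{axiomelec} and \ref{axiomLieQ} checked by Proposition \ref{propverifyaxiommono} (taking $c=p\fkt$ or $p^2\fkt$) and the invariant modules identified by Proposition \ref{prop: lav of mod}; the detour through Example \ref{example: porat} and Theorem \ref{prop: no higher lav coho compa} is not needed for this corollary (it only enters the later comparison with continuous cohomology), and the stability of $N_\nabla$ on $M$ already comes for free from the mechanism of Theorem \ref{thm:liealgq} rather than requiring Proposition \ref{prop: N nabla operator}. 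One cosmetic slip: the formula $\wtD_L^{[r,s],\la}=\wtD_L^{[r,s]}\otimes_{\wtb_L^{[r,s]}}Q$ does not typecheck (since $Q\subset \wtb_L^{[r,s]}$) and should be the descent statement $\wtm=N\otimes_P Q$, i.e. $\wtD_L^{[r,s],\la}\otimes_Q\wtb_L^{[r,s]}\simeq\wtD_L^{[r,s]}$, which you in any case invoke correctly afterwards.
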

\begin{proof}
    Apply Thm. \ref{thm:liealgq}  in conjunction with Prop. \ref{propverifyaxiommono}. Note the invariant spaces of $\wtD_{L}^{[r, s], \la}$ are computed in Prop. \ref{prop: lav of mod}.
\end{proof}

\section{$\varphi$-cohomologies}
\label{sec: phi coho}

  This section can be regarded as a preparation for the main theorems in the next section \S \ref{sec: coho phi tau}. Here, we separately study $\phi$-cohomologies and their comparisons; this makes many (although not all) comparisons in \S\ref{sec: coho phi tau} much more transparent.
  We also construct some natural yet ``\emph{wrong}" $\phi$-complexes; their discussions rely on the $\psi$-operators.

  Recall the usual notation (for some $Y$ equipped with $\phi$-action):
  \[ C_\phi(Y): = [Y \xrightarrow{\phi-1} Y] \]

 \subsection{$\varphi$-cohomologies}

We first recall a basic lemma.
\begin{lemma} \label{lemphicoho}
Let $R \subset S$ be two rings. Suppose there is a bijective Frobenius map $\phi: S\to S$, such that $\phi(R) \subset R$. Let 
\[\phi^{-\infty}(R) =\cup_{i \geq 0} \phi^{-i}(R) \subset S \]
Let $\mod_\phi(R)$ be the category consisting of finite free $R$-modules $M$ equipped with $\phi_R$ semi-linear $\phi: M \to M$ such that $\det \phi$ is invertible. Define  $\mod_{\varphi}(\varphi^{-m}(R))$ and $\mod_{\varphi}(\varphi^{-\infty}(R))$ similarly.
\begin{enumerate}
    \item The base change functors induce   equivalences
\[\mod_\phi(R) \simeq \mod_{\varphi}(\varphi^{-m}(R)) \simeq  \mod_{\varphi}(\varphi^{-\infty}(R))\]
\item Let $N \in \mod_\phi(R)$ with corresponding $N_m \in \mod_{\varphi}(\varphi^{-m}(R))$ and $N_\infty   \in \mod_{\varphi}(\varphi^{-\infty}(R))$. Then the natural morphisms
\[  C_\varphi(N) \to  C_\varphi(N_m) \to C_\varphi(N_\infty)\]
are quasi-isomorphisms.
\end{enumerate} 
\end{lemma}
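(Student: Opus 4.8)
The plan is to establish the two claims together, noting that Part (2) is really a refinement of the computation underlying Part (1). First I would set up the relevant ``partial Frobenius'' maps. Since $\varphi \colon S \to S$ is bijective and $\varphi(R) \subset R$, each $\varphi^{-i}(R)$ is a well-defined subring of $S$, and $\varphi^{-\infty}(R) = \bigcup_{i \geq 0} \varphi^{-i}(R)$ is a filtered union. The key structural point is that $\varphi$ restricts to a bijection $\varphi^{-\infty}(R) \isoto \varphi^{-\infty}(R)$, and more precisely $\varphi \colon \varphi^{-(i+1)}(R) \isoto \varphi^{-i}(R)$.

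For Part (1), the equivalence $\mod_\varphi(R) \simeq \mod_\varphi(\varphi^{-m}(R))$ is a standard ``Frobenius descent'' statement. Given $N \in \mod_\varphi(R)$, the base change $N_m := \varphi^{-m}(R) \otimes_R N$ carries the diagonal $\varphi$, which is étale since $\det\varphi$ is invertible on $N$ (hence on $N_m$). Conversely, given $N_m \in \mod_\varphi(\varphi^{-m}(R))$, one recovers an object over $R$ by applying $\varphi^m$: concretely, the map $\varphi^m \colon \varphi^{-m}(R) \isoto R$ is a ring isomorphism, and transporting $N_m$ along it (using the $\varphi$-structure to make this canonical) produces an object of $\mod_\varphi(R)$; I would check these constructions are mutually inverse up to natural isomorphism by tracking the $\varphi$-semilinear structures. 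For the colimit, $\mod_\varphi(\varphi^{-\infty}(R)) \simeq \colim_m \mod_\varphi(\varphi^{-m}(R))$ because a finite free module over the filtered union $\varphi^{-\infty}(R)$, with invertible $\det\varphi$, descends to some finite level $\varphi^{-m}(R)$ (a finitely presented module over a filtered colimit of rings is defined over a finite stage, and invertibility of $\det\varphi$ likewise descends), and this is compatible with the equivalences just constructed. So all three categories are equivalent via base change.

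For Part (2), I would argue that the base change maps $C_\varphi(N) \to C_\varphi(N_m)$ and $C_\varphi(N_m) \to C_\varphi(N_\infty)$ are quasi-isomorphisms. The cleanest route is to compare $C_\varphi(N)$ with $C_\varphi(N_\infty)$ directly and factor through $N_m$ afterwards. Write $N_\infty = \bigcup_{i \geq 0} \varphi^{-i}(N)$ (viewing $\varphi^{-i}(N) \cong \varphi^{-i}(R) \otimes_R N$ inside $N_\infty$), a filtered union of copies of $N$. Since cohomology commutes with filtered colimits, $C_\varphi(N_\infty) \simeq \colim_i C_\varphi(\varphi^{-i}(N))$, where the transition maps are induced by the inclusions $\varphi^{-i}(N) \hookrightarrow \varphi^{-(i+1)}(N)$. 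The point is that the composite $\varphi \colon \varphi^{-(i+1)}(N) \to \varphi^{-i}(N)$ identifies $C_\varphi(\varphi^{-(i+1)}(N))$ with $C_\varphi(\varphi^{-i}(N))$ (since $\varphi$ intertwines $\varphi - 1$ on both sides up to this identification), and one checks that the composite of the inclusion $\varphi^{-i}(N) \hookrightarrow \varphi^{-(i+1)}(N)$ with this $\varphi$ is just the identity on $\varphi^{-i}(N)$ — no, more carefully: the inclusion followed by $\varphi$ is $\varphi$ itself on $\varphi^{-i}(N)$, which on the complex $C_\varphi(N) = [N \xrightarrow{\varphi-1} N]$ is chain-homotopic to the identity (indeed $\varphi \simeq 1$ on $C_\varphi$, since the difference $\varphi - 1$ is precisely the differential, giving an explicit nullhomotopy). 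Hence the transition maps in the colimit are isomorphisms on cohomology, so $C_\varphi(N) \to C_\varphi(N_\infty)$ is a quasi-isomorphism; the same computation with $\varphi^{-m}(N)$ in place of $\varphi^{-\infty}(N)$ handles $N_m$, and the factorization $C_\varphi(N) \to C_\varphi(N_m) \to C_\varphi(N_\infty)$ then forces the first map to be a quasi-isomorphism as well.

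The main obstacle I anticipate is bookkeeping rather than conceptual: one must be careful that the isomorphism $\varphi^m \colon \varphi^{-m}(R) \isoto R$ and its module-level counterpart are compatible with the $\varphi$-semilinear structures in the right way, so that the claimed ``$\varphi$ is homotopic to the identity on $C_\varphi$'' statement is applied to genuinely isomorphic complexes and not merely to abstractly isomorphic objects. It may be cleanest to phrase everything in terms of the two-term complex $[N \xrightarrow{\varphi - 1} N]$ and observe that any $\varphi$-equivariant ring map $R \to R'$ with $R'$ flat induces $C_\varphi(N) \to C_\varphi(R' \otimes_R N)$, then reduce to showing this is a quasi-isomorphism when $R' = \varphi^{-1}(R)$, which is the one genuinely load-bearing computation; everything else is formal nonsense about filtered colimits and equivalences of categories.
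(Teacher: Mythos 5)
Your proof is correct. The paper offers no argument for this lemma beyond declaring it ``well-known and easy to prove'', and your write-up supplies exactly the standard one: the two load-bearing points are (a) transport of structure along the ring isomorphism $\varphi^m\colon \varphi^{-m}(R)\isoto R$, with the invertible-determinant (\'etale) condition guaranteeing that the base-change and transport functors are mutually quasi-inverse, and (b) the nullhomotopy $h=\mathrm{id}$ showing $\varphi\simeq\mathrm{id}$ on $C_\varphi$, which—together with the bijectivity of $\varphi\colon\varphi^{-(i+1)}(N)\to\varphi^{-i}(N)$, itself a short check using $\det\varphi\in R^\times$ and the bijectivity of $\varphi$ on $S$ that you should spell out—makes the transition maps in the filtered colimit quasi-isomorphisms and hence gives part (2), with part (1)'s full faithfulness either from the quasi-inverse construction or from the $H^0$ statement of (2).
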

\begin{proof} This is well-known and easy to prove. \end{proof}

 



\begin{prop} \label{propphicohoall} 
Let $T\in \rep_\gk(\zp)$. Use Notation \ref{notaetalmod}. 
\begin{enumerate}
    \item  
    $ \rg(\gkpinfty, T) \simeq C_\phi(N) $ for
    \[ N \in \{ \bbD_\kpinfty, 
  \wtd_\kpinfty,  
\wtd_\kpinfty^\dagger  \}, \]
(but not for $\bbD_\kpinfty^\dagger$, cf. Prop. \ref{propwrongphi})

    \item   $ \rg(G_L, T)  \simeq  C_\phi( \wtd_L )  \simeq C_\phi( \wtd_L^\dagger).$

    \item    $ \rg(\gkinfty, T) \simeq C_\phi(M) $ for
    \[ M \in \{ \bfd_\kinfty,  
 \wtd_\kinfty,  
\wtd_\kinfty^\dagger  \}, \]
(but not for $\bfd_\kinfty^\dagger$, cf. Prop. \ref{propwrongphi})
\end{enumerate}
 \end{prop}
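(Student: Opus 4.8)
The strategy is to reduce every claimed quasi-isomorphism to the corresponding statement about the \emph{perfect} overconvergent ring $\wtd_L^{[r,+\infty]}$ (or $\wtd_L$), for which $\phi$-cohomology computes Galois cohomology of $G_L$ by a classical computation, and then descend along the various inclusions using Lemma \ref{lemphicoho}. I would organise the argument around the diagram \eqref{obj121}--\eqref{obj122} and the Frobenius-inverse rings of Construction \ref{constr: many rings}(4).

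\textbf{Step 1: the perfect case.} First I would establish Item (2): $C_\phi(\wtd_L)\simeq\rg(G_L,T)$. Since $\wte=C^\flat$ and $\wta=W(\wte)$, the pair $(\wta,G_L)$ is the Witt vectors of a perfectoid field, and $\phi-1$ on $\wta\otimes T$ (resp. $\wta_L\otimes T=(\wta\otimes T)^{G_L}$) computes continuous $G_L$-cohomology by the usual Artin--Schreier--Witt / almost purity argument (this is exactly the étale $\phi$-module formalism underlying Thm.~\ref{thmequivetale}, applied over the base field $L$ rather than $K$). The comparison $C_\phi(\wtd_L)\simeq C_\phi(\wtd_L^\dagger)$ then follows because the inclusion $\wta_L^\dagger\hookrightarrow\wta_L$ is a $\phi$-equivariant inclusion under which $\phi-1$-cohomology is unchanged: concretely one shows $\phi-1$ is ``overconvergent'' on cocycles, i.e.\ the map on $H^0$ is an equality and every class in $H^1$ over $\wta_L$ has an overconvergent representative — this is precisely the overconvergence input of \cite{CC98, GL20, GP21} that already underlies diagram \eqref{eqocmod}, now read at the level of $\phi$-cohomology rather than modules.

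\textbf{Step 2: descent to the imperfect rings.} For Items (1) and (3), the key point is that $\wtd_\kpinfty$ (resp.\ $\wtd_\kinfty$) is the base change of $\bbD_\kpinfty$ (resp.\ $\bfd_\kinfty$) along $\mathbb B_\kpinfty\hookrightarrow\wtb_\kpinfty$, and this inclusion identifies $\wtb_\kpinfty$ (up to $p$-adic completion issues one handles at the $\wta$-level) with a Frobenius-completion of $\mathbb B_\kpinfty$; more precisely, by the theory of the field of norms, $\wta_\kpinfty$ is the $p$-adic completion of $\bigcup_m\phi^{-m}(\bbb_\kpinfty)$-type ring, so Lemma \ref{lemphicoho}(2) gives $C_\phi(\bbD_\kpinfty)\simeq C_\phi(\wtd_\kpinfty)$, and similarly $C_\phi(\bfd_\kinfty)\simeq C_\phi(\wtd_\kinfty)$. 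Combined with Step 1 applied over the base fields $\kpinfty$, $\kinfty$ respectively — noting $H^i(G_{\kpinfty},T)$, $H^i(G_{\kinfty},T)$ vanish for $i\ge 2$ since $G_{\kpinfty}, G_{\kinfty}$ have $p$-cohomological dimension $1$, and that the relevant $(A,G_{\kpinfty})$, $(A,G_{\kinfty})$ pairs still satisfy the perfectoid input because $\widehat{\kpinfty}$, $\widehat{\kinfty}$ are perfectoid (APF extensions) — one gets $C_\phi(\wtd_\kpinfty)\simeq\rg(G_{\kpinfty},T)$ and $C_\phi(\wtd_\kinfty)\simeq\rg(G_{\kinfty},T)$. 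The overconvergent versions $\wtd_\kpinfty^\dagger$, $\wtd_\kinfty^\dagger$ are then handled exactly as in Step 1 (overconvergence of $\phi$-cocycles), or alternatively by the same Lemma \ref{lemphicoho} argument applied to the overconvergent rings.

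\textbf{Step 3: the exclusions.} Finally, the parenthetical remarks that $C_\phi(\bbD_\kpinfty^\dagger)$ and $C_\phi(\bfd_\kinfty^\dagger)$ do \emph{not} compute these cohomologies are deferred to Prop.~\ref{propwrongphi}, so I would simply cite it. The \textbf{main obstacle} I anticipate is being careful with the $p$-adic completions: the identification of $\wta_\kpinfty$ with the $p$-adic completion of $\bigcup_m\phi^{-m}(\bbb_\kpinfty^+[1/T])$-type rings (and its overconvergent analogue) is true but requires the field-of-norms / almost-purity machinery, and one must check that $\phi$-cohomology genuinely commutes with the relevant colimit-then-completion — i.e.\ that $C_\phi$ does not see the completion, which is where one uses that $\phi-1$ has ``bounded denominators'' on these rings so that a convergent cocycle can be trivialised by a convergent coboundary. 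This is the step where I would expect to spend the most care, though all the needed inputs are classical (\cite{CC98, Car13, GP21}) and the paper explicitly flags the result as ``well-known and easy''.
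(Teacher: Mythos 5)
Your overall architecture (reduce to the perfect rings, where $\phi$-cohomology computes Galois cohomology, then handle the imperfect and overconvergent variants) is reasonable, but Step 2 contains a genuine gap. The identification you rely on is false as stated: $\wta_\kpinfty$ is \emph{not} the $p$-adic completion of $\bigcup_m \phi^{-m}(\bba_\kpinfty)$. One has $\wta_\kpinfty = W(\wte_\kpinfty)$ with $\wte_\kpinfty$ the \emph{valuation-completed} perfection of the field of norms, whereas the $p$-adic completion of the Frobenius union is the Witt ring of the uncompleted perfection. Consequently Lemma \ref{lemphicoho} only gets you from $C_\phi(\bbD_\kpinfty)$ to the $\phi$-cohomology over the uncompleted perfection; the passage to $\wtd_\kpinfty$ (and likewise from $\bfd_\kinfty$ to $\wtd_\kinfty$) is exactly the nontrivial step, and it is not a formal ``cohomology commutes with completion'' statement. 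Your proposed fix (``$\phi-1$ has bounded denominators, so a convergent cocycle is trivialised by a convergent coboundary'') at best gives surjectivity statements on elements of large valuation (the series $-\sum_n \phi^n(c)$ trick); it does not address injectivity on $H^1$, i.e.\ that a class over $\bbD_\kpinfty$ killed in the completed ring is already killed before completing, and proving that amounts to redoing a decompletion/approximation argument of Colmez--Kedlaya--Liu type that you have not supplied or cited precisely.

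The paper sidesteps this entirely: it quotes \cite[Thm.~8.6.2]{KL15} to get $\rg(\gkinfty,T)\simeq C_\phi(\wtd_\kinfty)\simeq C_\phi(\wtd_\kinfty^\dagger)$ at once (since $\hatkinfty$ is perfectoid), deduces that $\rg(\gkinfty,T)$ is concentrated in degrees $[0,1]$, and then compares $C_\phi(\bfd_\kinfty)$ with $\rg(\gkinfty,T)$ by matching cohomology groups directly --- $H^0$ is the invariants and $H^1$ classifies extensions, both identified via the categorical equivalence of Thm.~\ref{thmequivetale} --- rather than by any ring-level density argument; the same pattern gives Items (1) and (2). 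A secondary point: your comparison $C_\phi(\wtd_L)\simeq C_\phi(\wtd_L^\dagger)$ is attributed to the overconvergence theorems of \cite{CC98, GL20, GP21}, but those concern overconvergence of \emph{modules}, not of $\phi$-cocycles; the statement you need is again part of the Kedlaya--Liu formalism (\cite[Thm.~8.6.2]{KL15}, resp.\ \cite[Prop.~1.5.4]{Ked08Ast_relative_Frob} for imperfect rings elsewhere in the paper), so you should cite that rather than gesture at module overconvergence. With Step 2 replaced by the invariants/extensions argument (or by a precise citation covering decompletion), your proof would go through.
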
 
\begin{proof}
We only prove Item (3), the other items are similar.
The quasi-isomorphisms
\[ C_\phi(\wtd_\kinfty^\dagger) \simeq C_\phi(\wtd_\kinfty) \simeq  \rg(\gkinfty, T) \]
follow  from \cite[Thm. 8.6.2]{KL15}, since $\hatkinfty$ is a perfectoid field.
This in particular implies $\rg(\gkinfty, T)$ is concentrated in $[0,1]$. It is easy to check
\[C_\varphi( \bfd_\kinfty ) \simeq   \rg(\gkinfty, T) \]
since they have matching cohomology groups.
 \end{proof}

\begin{prop} \label{propphicohorig}
Let $V\in \rep_\gk(\qp)$. Use Notation \ref{notaetalmod}. 
    Let $\bbD_{\rig,\kpinfty}^\dagger \in  \mod_{\varphi, \gammak}(\bB_{\rig, \kpinfty}^\dagger)$ be the module associated to $V$, and use Notation \ref{notarigmod}.
    \begin{enumerate}
    \item  
    $ \rg(\gkpinfty, V) \simeq C_\phi(N) $ for
    \[ N \in \{ \bbD_\kpinfty, 
  \wtd_\kpinfty,  
\wtd_\kpinfty^\dagger, \wtd_{\rig,\kpinfty}^\dagger \}, \]
(but not for $\bbD_\kpinfty^\dagger, \bbD_{\rig,\kpinfty}^\dagger$, cf. Prop. \ref{propwrongphi})

    \item   $ \rg(G_L, T)  \simeq  C_\phi( \wtd_L )  \simeq C_\phi( \wtd_L^\dagger) \simeq C_\phi( \wtd_{\rig,L}^\dagger).$

    \item    $ \rg(\gkinfty, T) \simeq C_\phi(M) $ for
    \[ M \in \{ \bfd_\kinfty,  
 \wtd_\kinfty,  
\wtd_\kinfty^\dagger, \wtd_{\rig,\kinfty}^\dagger  \}, \]
(but not for $\bfd_\kinfty^\dagger, \bfd_{\rig,\kinfty}^\dagger$,  cf. Prop. \ref{propwrongphi})
\end{enumerate}
\end{prop}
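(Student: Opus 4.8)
The statement is the ``rational / rigid-overconvergent'' analogue of Proposition~\ref{propphicohoall}, and the plan is to reduce to it together with the known comparison between overconvergent and rigid-overconvergent $\varphi$-modules. First I would treat Item~(2), which is the cleanest: the ring $\wtd_{\rig,L}^\dagger$ is obtained from $\wtd_L^\dagger$ by the usual ``analytification'' base change along $\wtb_L^\dagger \to \wtb_{\rig,L}^\dagger$, and the Frobenius-equivariant comparison of $\varphi$-cohomology for these perfect(oid) Robba-type rings is exactly the content of \cite[Thm.~8.6.2]{KL15} (applied over the perfectoid field $\hatl$), so $C_\phi(\wtd_L^\dagger)\simeq C_\phi(\wtd_{\rig,L}^\dagger)$; combined with Proposition~\ref{propphicohoall}(2) and inverting $p$ this yields $\rg(G_L,V)\simeq C_\phi(\wtd_L)\simeq C_\phi(\wtd_L^\dagger)\simeq C_\phi(\wtd_{\rig,L}^\dagger)$.

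For Items~(1) and~(3) the strategy is the same, so I focus on Item~(3). Rationally, Proposition~\ref{propphicohoall}(3) (after $\otimes\Qp$) already gives $\rg(\gkinfty,V)\simeq C_\phi(\bfd_\kinfty)\simeq C_\phi(\wtd_\kinfty)\simeq C_\phi(\wtd_\kinfty^\dagger)$; it remains to insert $\wtd_{\rig,\kinfty}^\dagger$ into this chain. Here I would invoke the equivalence of categories in Theorem~\ref{thmequidagger}: the module $\wtd_{\rig,\kinfty}^\dagger$ descends from $\wtd_{\rig,L}^\dagger$ exactly as $\wtd_\kinfty^\dagger$ descends from $\wtd_L^\dagger$, and the natural map $C_\phi(\wtd_\kinfty^\dagger)\to C_\phi(\wtd_{\rig,\kinfty}^\dagger)$ fits into a commutative square with the already-established $C_\phi(\wtd_L^\dagger)\simeq C_\phi(\wtd_{\rig,L}^\dagger)$. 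Since taking $\gal(L/\kinfty)$-invariants (or, more precisely, applying the descent from the perfect/perfectoid level to the imperfect level, as in the proof of Proposition~\ref{propphicohoall}(3) via \cite[Thm.~8.6.2]{KL15}) is compatible with analytification, the map on $\varphi$-cohomology is a quasi-isomorphism. Concretely: both $C_\phi(\wtd_\kinfty^\dagger)$ and $C_\phi(\wtd_{\rig,\kinfty}^\dagger)$ compute $\rg(\gkinfty,V)$ because their cohomology groups match those of $C_\phi(\wtd_L^\dagger)^{\gal(L/\kinfty)=?}$ — the same bookkeeping of matching $H^0$ and $H^1$ used in the proof of Proposition~\ref{propphicohoall}(3) applies verbatim after base change. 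Item~(1) is identical with $\gkinfty$ replaced by $\gkpinfty$, $\kinfty$ by $\kpinfty$, $\bfd$ by $\bbd$, and $L$ by $L$ (or directly $\kpinfty$-level rings).

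The parenthetical negative assertions — that $C_\phi$ of $\bbD_\kpinfty^\dagger$, $\bbD_{\rig,\kpinfty}^\dagger$, $\bfd_\kinfty^\dagger$, $\bfd_{\rig,\kinfty}^\dagger$ do \emph{not} compute the respective Galois cohomology — are not proved here; they are deferred to Proposition~\ref{propwrongphi}, so at this point I would only record the positive comparisons and cite that later result for the contrast. The main (really the only) obstacle is making sure the analytification comparison $C_\phi(\wtd_L^\dagger)\simeq C_\phi(\wtd_{\rig,L}^\dagger)$ is legitimate at the level of \emph{rigid-overconvergent} (i.e. Robba) modules that need not be étale: one must check that the Kedlaya--Liu machinery of \cite{KL15} (slope-zero-free vector bundles on the Fargues--Fontaine curve / Robba ring, $\varphi$-cohomology) still applies, which it does because $\wtd_{\rig,L}^\dagger$ is by construction a finite free module over the perfect Robba ring $\wtb_{\rig,L}^\dagger$ with bijective Frobenius; once that is granted, everything else is formal descent and matching of cohomology groups, exactly as in the proof of Proposition~\ref{propphicohoall}.
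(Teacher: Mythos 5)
Your overall strategy coincides with the paper's: reduce to the (rationalized) Prop.~\ref{propphicohoall}, so that the only new content is the comparison $C_\phi(\wtd_X^\dagger)\simeq C_\phi(\wtd_{\rig,X}^\dagger)$ for $X\in\{\kpinfty,L,\kinfty\}$. Where you differ is the tool used for this comparison: the paper handles all three $X$ at once by noting that the pair $\wtb_X^\dagger\subset\wtb_{\rig,X}^\dagger$ satisfies \cite[Hypothesis 1.4.1]{Ked08Ast_relative_Frob} and invoking \cite[Prop.~1.5.4]{Ked08Ast_relative_Frob}, which compares $\varphi$-cohomology over such a pair for an \emph{arbitrary} $\varphi$-module (no \'etaleness required --- the same tool is then reused for the ``wrong'' complexes in Prop.~\ref{propwrongphi} and Prop.~\ref{propwrongphitau}). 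You instead lean on the \'etale comparison of \cite{KL15} over the perfectoid field $\hatl$; that is acceptable for the present proposition because the modules come from $V$ and hence are \'etale, but your closing justification (``finite free with bijective Frobenius'') is not what makes such a theorem applicable --- \'etaleness/slope zero is --- and this route would not extend beyond the \'etale case, whereas the paper's citation does.

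The one genuine soft spot is your treatment of Items (1) and (3): you deduce $C_\phi(\wtd_\kinfty^\dagger)\simeq C_\phi(\wtd_{\rig,\kinfty}^\dagger)$ from the corresponding statement at the $L$-level by asserting that ``taking $\gal(L/\kinfty)$-invariants is compatible with analytification''. As written this is not an argument: a quasi-isomorphism after base change to the $L$-level rings does not formally descend, and controlling $\gal(L/\kinfty)$-cohomology of these big modules is precisely the nontrivial content of the TS-1 sections (Prop.~\ref{prop: verify TS-1} ff.), which you never invoke. The repair is easy and makes the detour through $L$ unnecessary: $\hatkinfty$ and $\hatkpinfty$ are themselves perfectoid fields, so whichever perfectoid-field comparison you use at level $L$ applies verbatim at levels $\kinfty$ and $\kpinfty$ (exactly as the paper's citation is applied uniformly in $X$). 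With that replacement your proof is correct and is essentially the paper's argument, differing only in the choice of the key comparison lemma.
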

\begin{proof}
    The only new additions from Prop. \ref{propphicohoall}  are those concerning $\varphi$-cohomologies of $\wtd^\dagger_{\rig, X}$ for $X \in \{ \kpinfty, L, \kinfty \}$. The ring    $\wtb^\dagger_{\rig, X}$ satisfies \cite[Hypothesis 1.4.1]{Ked08Ast_relative_Frob} (cf. the final paragraph of \cite[Rem. 2.2.9]{Ked08Ast_relative_Frob}), and thus we can apply \cite[Prop. 1.5.4]{Ked08Ast_relative_Frob} to see
    $ C_\phi(\wtd^\dagger_{X}) \simeq C_\phi(\wtd^\dagger_{\rig, X})$.
  \end{proof}  

\begin{lemma}[``$\varphi$-descent" to closed intervals] \label{lemphidescent}
Use Notation \ref{notation: phi inf twist mod}.
 We have 
\begin{equation}\label{phicoho1}
   C_\phi(M^\dagger) \simeq [M^{[r, +\infty]} \xrightarrow{\phi-1} M^{[pr, +\infty]}] \simeq [M_\infty^{[r, +\infty]} \xrightarrow{\phi-1} M_\infty^{[pr, +\infty]}] 
\end{equation}
and
\begin{equation}\label{phicoho2}
C_\varphi(M^\dagger_\rig) \simeq [ M^{[r, pr]} \xrightarrow{\varphi-1}M^{[pr, pr]} ] \simeq [ M_\infty^{[r, pr]} \xrightarrow{\varphi-1} M_\infty^{[pr, pr]} ] \end{equation}
Caution: for both Items, we are only comparing $\phi$-cohomologies for (minor) modifications of a \emph{fixed} $M^\dagger$ resp. a \emph{fixed}  $M^\dagger_\rig$. Namely, for example we are \emph{not} comparing  $C_\phi(\bbD_\kpinfty^\dagger)$ and $C_\phi(\bfD_\kinfty^\dagger)$ which are (obviously) completely different.
\end{lemma}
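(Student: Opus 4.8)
The plan is to reduce \eqref{phicoho1} and \eqref{phicoho2} to the general $\varphi$-descent formalism of Lemma \ref{lemphicoho}. Recall that by Notation \ref{notation: phi inf twist mod}, $M_\infty^{[r,\infty]} = \cup_{m\geq 0}\varphi^{-m}(M^{[p^mr,\infty]})$ and $M_\infty^{[r,s]} = \cup_{m\geq 0}\varphi^{-m}(M^{[p^mr,p^ms]})$, so the right-hand equivalences in both displays are precisely the content of Lemma \ref{lemphicoho}(2) applied to the ring $R = \mathbf{B}_\ast^{[r,\infty]}$ (resp.\ to a suitable ``interval'' ring over $[r,pr]$), once we know that $\varphi$ carries $M^{[r,\infty]}$ into $M^{[pr,\infty]}$ and is a bijection onto the relevant Frobenius-twisted module. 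So the first step is to check these compatibilities of $\varphi$ with the interval decorations: on the period rings, $\varphi$ sends $\wtb^{[r,s]}$ into $\wtb^{[pr,ps]}$, and the \'etaleness hypothesis in Definition \ref{def: phi mod cat} guarantees that $1\otimes\varphi$ induces an isomorphism after the appropriate base change. This is where the slightly asymmetric shape of the complexes (source on $[r,\infty]$, target on $[pr,\infty]$; source on $[r,pr]$, target on $[pr,pr]$) comes from.

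The real content is then the left-hand equivalences, i.e.\ that passing from the ``$\dagger$'' (or ``$\dagger_\rig$'') module to its descent to a single closed interval does not change the $\varphi$-cohomology. For \eqref{phicoho1}, the point is that $M^\dagger = M^{[r,\infty]}\otimes_{\mathbf{B}_\ast^{[r,\infty]}}\mathbf{B}_\ast^\dagger$ with $\mathbf{B}_\ast^\dagger = \cup_{r'\geq r}\mathbf{B}_\ast^{[r',\infty]}$, and one checks that $\varphi-1$ on $M^\dagger$ is computed ``termwise'' over the filtered union: an element killed by $\varphi-1$ already lives on some $M^{[r',\infty]}$, but $\varphi$ raises the radius, forcing it into $M^{[r,\infty]}$ after applying $\varphi^{-1}$; and surjectivity of $\varphi-1$ modulo the respective kernels is checked the same way. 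This is essentially the argument behind \cite[Prop.\ 1.5.4]{Ked08Ast_relative_Frob}, already invoked in the proof of Prop.\ \ref{propphicohorig}; indeed for $M^\dagger_\rig$ the relevant statement is that $\wtb^\dagger_{\rig,\ast}$ (and the imperfect analogues) satisfy \cite[Hypothesis 1.4.1]{Ked08Ast_relative_Frob}, exactly as cited there, so that $C_\varphi$ over the Robba ring agrees with $C_\varphi$ over the interval $[r,pr]$.

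Concretely the steps I would carry out are: (i) record the behaviour of $\varphi$ on the interval rings and on the modules $M^{[r,s]}$, $M^{[r,\infty]}$, $M_\infty^{[\cdot]}$, including bijectivity statements coming from \'etaleness; (ii) invoke Lemma \ref{lemphicoho}(2) to get the right-hand quasi-isomorphisms in both \eqref{phicoho1} and \eqref{phicoho2}; (iii) prove the left-hand quasi-isomorphisms by the Kedlaya-style ``radius'' argument, or simply cite \cite[Prop.\ 1.5.4]{Ked08Ast_relative_Frob} after verifying the hypothesis (for the perfect rings this is the remark at the end of \cite[Rem.\ 2.2.9]{Ked08Ast_relative_Frob}; for the imperfect rings $\bbb^\dagger$, $\bfb^\dagger$ one cites the analogous statement). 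I expect the main obstacle to be purely bookkeeping: making sure the interval shifts ($r\mapsto pr$) are consistently tracked through all four corners of each displayed quasi-isomorphism, and checking that the imperfect rings in Notation \ref{notarigmod} really do fall under the Kedlaya hypothesis so that the Robba-ring $\varphi$-cohomology descends. None of this is deep, but it is the kind of calculation that is easy to get off by a factor of $p$ in the radius.
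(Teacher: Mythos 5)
The load-bearing step of this lemma is the left-hand quasi-isomorphism in each display, i.e.\ the comparison of $C_\phi(M^\dagger)$ (resp.\ $C_\varphi(M^\dagger_\rig)$) with the two-term complex over a \emph{single} closed interval, and your justification of that step does not hold up. You propose to cite \cite[Prop.~1.5.4]{Ked08Ast_relative_Frob} after checking Hypothesis~1.4.1, but that proposition compares $\varphi$-cohomology over two \emph{Frobenius-stable} rings, such as $\bfb^\dagger_\kinfty\subset\bfb^\dagger_{\rig,\kinfty}$ --- which is exactly how it is used in Prop.~\ref{propphicohorig} and Prop.~\ref{propwrongphi} --- and it says nothing about the interval rings $\wtb^{[r,pr]}_L$, $\bfb^{[r,pr]}_\kinfty$, which are \emph{not} stable under $\varphi$ (this is precisely why the complexes in the lemma have different intervals in source and target). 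Your fallback ``radius'' sketch only plausibly handles $H^0$ in the perfect (tilde) case, where $\varphi$ is bijective and $\varphi^{-1}$ strictly shrinks the radius; for the imperfect modules, and for $H^1$ in all cases, the assertion that surjectivity is ``checked the same way'' hides the actual content (approximation/convergence arguments). The paper instead quotes the genuine descent-to-closed-interval results: \cite[Prop.~6.3.19]{KL15} for the modules with tilde and \cite[Prop.~5.4.12]{KL2} for the modules without tilde. Without these (or a full reproof of them), the central claim of the lemma remains unproved.

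A secondary point: the right-hand quasi-isomorphisms are not ``precisely'' Lemma~\ref{lemphicoho}(2), because that lemma assumes $\varphi(R)\subset R$ and $\varphi\colon N\to N$, whereas here $\varphi$ shifts $[r,s]$ to $[pr,ps]$. The paper's route is to write $M_\infty^{[r,pr]}$ as the filtered colimit of $\varphi^{-m}(M^{[p^mr,p^{m+1}r]})$, use that cohomology commutes with colimits, untwist each term by $\varphi^m$ (this is where Lemma~\ref{lemphicoho} actually enters), and then invoke the already-established left-hand quasi-isomorphism \emph{at the shifted radius} $p^mr$ to identify every term of the colimit with $C_\varphi(M^\dagger_\rig)$. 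So the order of the two steps matters: the $M_\infty$-statement is deduced from the closed-interval statement, not obtained independently of it; as written, your step (ii) leaves the comparison of complexes at radii $r$ and $p^mr$ unaddressed. This part is repairable bookkeeping (as you anticipate), but the missing KL-type input in the previous paragraph is a genuine gap.
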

\begin{proof} 
Consider \eqref{phicoho1} and \eqref{phicoho2} simultaneously. 
Consider the first quasi-isomorphisms, i.e., comparing $\phi$-cohomology of $M^\dagger$ resp. $M^\dagger_\rig$ with that of $M^{[r, +\infty]}$ resp. $M^{[r, pr]}$.
One can apply \cite[Prop. 6.3.19]{KL15} to modules with tilde, and apply \cite[Prop. 5.4.12]{KL2} to modules without tilde.
We now consider $\phi$-cohomology of 
$ M_\infty^{[r, +\infty]}$ and $ M_\infty^{[r, pr]}$. Since the argument is similar, we only consider the later one.
Indeed, since cohomology commutes with colimits, it suffices to prove that
\[  C_\varphi(M^\dagger_\rig) \simeq  [\phi^{-m}(M^{[p^mr, p^{m+1}r]})   \xrightarrow{\varphi-1}  \phi^{-m}(M^{[p^{m+1}r, p^{m+1}r]}) ]    \]
Lem. \ref{lemphicoho} implies the right hand side is quasi-isomorphic to  $[  M^{[p^mr, p^{m+1}r]}   \xrightarrow{\varphi-1}  M^{[p^{m+1}r, p^{m+1}r]}  ] $, and thus we can conclude using the known first quasi-isomorphism in \eqref{phicoho2}.
\end{proof}

\subsection{$\psi$-complexes and some ``wrong" $\varphi$-complexes} \label{subsecwrongphi}
In this subsection, we discuss some ``natural" $\varphi$-complexes that turn out to be ``wrong" complexes. The key idea is to use some  \emph{$\psi$-complexes.} We will be very brief here, since the main result in this subsection, Prop. \ref{propwrongphi}, is only needed to supply some ``wrong" complexes.

For brevity, we assume \emph{$K/\qp$ is a finite extension}, and stick with (rational) \'etale case: namely, we   let  $V\in \rep_\gk(\qp)$ and use   Notations  \ref{notaetalmod} and \ref{notarigmod}.

\begin{construction} \label{construct: psi op}
We quickly review the ``$\psi$"-operator  in $(\phi, \Gamma)$-module theory and define a similar operator on $(\phi, \tau)$-modules. (For simplicity, we stick with rational case).
\begin{enumerate}
    \item Recall constructions in  \cite[\S 3.1]{Her98}.  Since $\bbb/\phi(\bbb)$ is of degree $p$, one can define  
$\psi_\kpinfty: \bbb \to \bbb $ by
\[ \psi_\kpinfty(x) =\frac{1}{p}\phi^{-1}(\mathrm{Tr}_{\bbb/\phi(\bbb)}(x)) \]
It is (only) additive, and satisfies
\[ \psi_\kpinfty(a\phi(b))=b\psi_\kpinfty(a) \]
It is stable on $\bbb_\kpinfty$ and $ \bbb^\dagger_\kpinfty$.

\item By \cite[Prop. 3.1]{Her98}, one can define 
\[ \psi_\kpinfty: \bbd_\kpinfty \to   \bbd_\kpinfty \]
which is semi-linear with respect to $\psi_\kpinfty$ on  $\bbb_\kpinfty$.
It restricts to
\[ \psi_\kpinfty: \bbd^\dagger_\kpinfty \to   \bbd^\dagger_\kpinfty \]

\item Since $\bfb/\phi(\bfb)$ is of degree $p$, one can similarly define 
$\psi_\kinfty: \bfb \to \bfb$ by
\[ \psi_\kinfty(x) =\frac{1}{p}\phi^{-1}(\mathrm{Tr}_{\bfb/\phi(\bfb)}(x)) \]
It   satisfies exactly the same properties as $ \psi_\kpinfty$, and is   stable on $\bfb_\kinfty$ and $ \bfb^\dagger_\kinfty$.  

\item Using exactly the same recipe as \cite[Prop. 3.1]{Her98}, one can define 
\[ \psi_\kinfty: \bfd_\kinfty \to   \bfd_\kinfty \]
satisfying similar properties as \emph{loc. cit.} 
It restricts to
\[ \psi_\kinfty: \bfd^\dagger_\kinfty \to   \bfd^\dagger_\kinfty \]
\end{enumerate}
Caution: throughout the paper, we use $\phi$ to denote Frobenius on all rings, because all of them are  \emph{induced} (or naturally extended) from a same Frobenius on $\ainf$. However, there is no ``universal $\psi$" that would \emph{induce} both  $\psi_\kpinfty$ and $ \psi_\kinfty$; whence their distinct notations.
\end{construction}

For $X \in \{ \kpinfty, \kinfty\}$, denote
\[ C_{\psi_X}(Y): = [ Y \xrightarrow{\psi_X -1 } Y ] \]

\begin{lemma} \label{lempsicohomatch} 
(Recall  $K/\qp$ is a finite extension in this subsection.) We have quasi-isomorphisms:
   \begin{enumerate}
 \item $C_{\psi_\kpinfty}(\bbd_\kpinfty^\dagger ) \simeq C_{\psi_\kpinfty}(\bbd_\kpinfty)$
       \item    $C_{\psi_\kinfty}(\bfd_\kinfty^\dagger ) \simeq C_{\psi_\kinfty}(\bfd_\kinfty)$
   \end{enumerate}
 \end{lemma}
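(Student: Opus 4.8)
The plan is to pass to the quotients $\bbd_\kpinfty/\bbd_\kpinfty^\dagger$ and $\bfd_\kinfty/\bfd_\kinfty^\dagger$. By Construction \ref{construct: psi op} the inclusions $\bbd_\kpinfty^\dagger\subset\bbd_\kpinfty$ (over $\bbb_\kpinfty^\dagger\subset\bbb_\kpinfty$) and $\bfd_\kinfty^\dagger\subset\bfd_\kinfty$ are $\psi_\kpinfty$- resp. $\psi_\kinfty$-equivariant, so they induce morphisms $C_{\psi_\kpinfty}(\bbd_\kpinfty^\dagger)\to C_{\psi_\kpinfty}(\bbd_\kpinfty)$ and $C_{\psi_\kinfty}(\bfd_\kinfty^\dagger)\to C_{\psi_\kinfty}(\bfd_\kinfty)$ of two-term complexes. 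The relevant short exact sequences of modules $0\to M^\dagger\to M\to M/M^\dagger\to 0$ are termwise exact and $\psi$-equivariant, so the cone of each morphism is quasi-isomorphic to the two-term $\psi$-complex of the corresponding quotient module. Hence it suffices to prove that $\psi_\kpinfty-1$ is \emph{bijective} on $\bbd_\kpinfty/\bbd_\kpinfty^\dagger$ and that $\psi_\kinfty-1$ is bijective on $\bfd_\kinfty/\bfd_\kinfty^\dagger$.

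For part (1) I would combine two inputs from the Cherbonnier--Colmez overconvergence theory: (i) $\psi_\kpinfty-1$ is surjective on both $\bbd_\kpinfty$ and $\bbd_\kpinfty^\dagger$ (standard when $K/\qp$ is finite); and (ii) every $\psi_\kpinfty$-fixed class is overconvergent, $\bbd_\kpinfty^{\psi_\kpinfty=1}\subseteq\bbd_\kpinfty^\dagger$, which is the technical core of \cite{CC98} (see also \cite[\S III]{Her98} and \cite[\S 2]{LiuIMRN08}). Feeding these into the long exact sequence attached to $0\to C_{\psi_\kpinfty}(\bbd_\kpinfty^\dagger)\to C_{\psi_\kpinfty}(\bbd_\kpinfty)\to C_{\psi_\kpinfty}(\bbd_\kpinfty/\bbd_\kpinfty^\dagger)\to 0$ gives at once $H^1\big(C_{\psi_\kpinfty}(\bbd_\kpinfty/\bbd_\kpinfty^\dagger)\big)=0$ from the surjectivity of $\psi_\kpinfty-1$ on $\bbd_\kpinfty$ in (i); and then $H^0\big(C_{\psi_\kpinfty}(\bbd_\kpinfty/\bbd_\kpinfty^\dagger)\big)=0$, because $\psi_\kpinfty-1$ is surjective on $\bbd_\kpinfty^\dagger$ by (i) while $\bbd_\kpinfty^{\psi_\kpinfty=1}=(\bbd_\kpinfty^\dagger)^{\psi_\kpinfty=1}$ by (ii), so the connecting map out of $H^0$ of the quotient complex has zero source. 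Hence the quotient complex is acyclic, proving (1).

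For part (2) the plan is to rerun this verbatim with $\psi_\kinfty$ in place of $\psi_\kpinfty$: the ring $\bfb_\kinfty$ has the same formal shape as $\bbb_\kpinfty$ (a one-variable Laurent-type ring with $[\bfb_\kinfty:\varphi(\bfb_\kinfty)]=p$), the operator $\psi_\kinfty$ is produced by the identical recipe (Construction \ref{construct: psi op}), and $(\varphi,\tau)$-modules are overconvergent by \cite{GL20,GP21}. The main obstacle is the Kummer-tower analogue of input (ii), the overconvergence of the $\psi_\kinfty=1$ classes: \cite{CC98} obtains this essentially through $\psi$-estimates, whereas the overconvergence proofs of \cite{GL20,GP21} proceed via locally analytic vectors, so one must either re-derive $\bfd_\kinfty^{\psi_\kinfty=1}\subseteq\bfd_\kinfty^\dagger$ by a Tate--Sen / d\'ecompl\'etion estimate along the Kummer tower (in the spirit of the TS-1 descent of \S\ref{sec: ts-1}), or extract it formally from the overconvergence of the module together with the surjectivity of $\psi_\kinfty-1$ on $\bfd_\kinfty$ and $\bfd_\kinfty^\dagger$. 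Since Lem. \ref{lempsicohomatch} is used only to exhibit the ``wrong'' complexes of Prop. \ref{propwrongphi}, this is precisely the point at which one would cite the relevant statement rather than reproduce the estimate in full, and it is also where the standing assumption that $K/\qp$ is finite is used (to guarantee the surjectivity statements).
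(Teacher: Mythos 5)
Your reduction of the lemma to the bijectivity of $\psi_\kpinfty-1$ on $\bbd_\kpinfty/\bbd_\kpinfty^\dagger$ (and of $\psi_\kinfty-1$ on $\bfd_\kinfty/\bfd_\kinfty^\dagger$) is a correct reformulation, but the key input you feed into it is not a true statement: $\psi_\kpinfty-1$ is \emph{not} surjective on $\bbd_\kpinfty$, nor on $\bbd_\kpinfty^\dagger$, in general. Indeed, as recalled in \S\ref{subsec: final historical remarks} from \cite[Thm. II.1.3, Rem. II.3.2]{CC99}, one has $\bbd_\kpinfty^\dagger/(\psi_\kpinfty-1)\simeq\bbd_\kpinfty/(\psi_\kpinfty-1)\simeq H^2_{\mathrm{Iw}}(K,V)$, and this Iwasawa $H^2$ is nonzero already for the trivial representation $V=\qp$ (by local duality $H^2_{\mathrm{Iw}}(K,\zp)$ is Pontryagin dual to $\mu_{p^\infty}(\kpinfty)=\qp/\zp$, hence $\cong\zp$), which is exactly the case in which the lemma gets used in Prop.~\ref{propwrongphi}. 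Both of your vanishing steps rest on this false surjectivity: $H^1$ of the quotient complex is deduced from surjectivity on $\bbd_\kpinfty$, and $H^0$ of the quotient complex from surjectivity on $\bbd_\kpinfty^\dagger$ combined with $\bbd_\kpinfty^{\psi_\kpinfty=1}\subseteq\bbd_\kpinfty^\dagger$. Moreover the finiteness of $K/\qp$ does not ``guarantee the surjectivity statements''; what it guarantees is that $\bbd_\kpinfty/(\psi_\kpinfty-1)$ is a \emph{finite} $\zp$-module, which is a different (and the actually relevant) fact.

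The correct ingredients, and the paper's proof, are precisely the two comparison statements you are trying to re-derive: the $H^0$-matching $(\bbd_\kpinfty^\dagger)^{\psi_\kpinfty=1}=(\bbd_\kpinfty)^{\psi_\kpinfty=1}$ is \cite[Prop. III.3.2(2)]{CC99} (your input (ii)), and the $H^1$-matching, i.e. that the natural map $\bbd_\kpinfty^\dagger/(\psi_\kpinfty-1)\to\bbd_\kpinfty/(\psi_\kpinfty-1)$ is an isomorphism, is \cite[Lem. 2.6]{LiuIMRN08}, whose argument uses the finiteness of the cokernel over $\zp$ (this is where $[K:\qp]<\infty$ enters) together with an approximation argument, and nowhere needs surjectivity. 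These two isomorphisms are equivalent to the bijectivity on the quotient that your cone argument requires. For Item (2), the intended route is not a new Tate--Sen/d\'ecompl\'etion estimate along the Kummer tower, but a verbatim transposition of the arguments of \cite{CC99} and \cite{LiuIMRN08} to the ring $\bfb_\kinfty$: those arguments involve only $\varphi$, $\psi_\kinfty$ and valuation estimates on the (more explicit and simpler) ring $\bfb_\kinfty$, and never invoke $\gammak$ or $\tau$, so the instability of $\tau$ on $\bfb_\kinfty$ is irrelevant.
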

 \begin{proof}
     Consider Item (1). They have matching $H^0$ by \cite[Prop. III.3.2(2)]{CC99}; they have matching $H^1$ by \cite[Lem. 2.6]{LiuIMRN08}. In fact in \cite[Lem. 2.6]{LiuIMRN08}, he proves the same statement when the modules are associated to a $\zp$-representation; the condition $[K:\qp]<\infty$ guarantees that $\bbd_\kpinfty/(\psi_\kpinfty-1)$ is a \emph{finite} $\zp$-module, which is crucial in his argument. We also point out that the arguments \cite[Prop. III.3.2(2)]{CC99} and \cite[Lem. 2.6]{LiuIMRN08} does not make use of $\gamma$-operators. (Although we have to start with a $(\phi, \Gamma)$-module in order to have the overconvergent module $\bbd_\kpinfty^\dagger$.)

     Item (2) follows from exactly similar argument as Item (1); we leave details to the interested readers. In fact, since the Frobenius ring $\bfb_\kinfty$ is much more explicit and simpler than $\bbb_\kpinfty$, the computation is even easier. Note that the $\tau$-operator is not stable on $\bfb_\kinfty$ (unlike the fact that $\gammak$ even commutes with $\psi_\kpinfty$), and hence cannot  even ``co-exist" with $\psi_\kinfty$ here! Fortunately,  similar to Item (1), the $\tau$-operator   will never show up in the arguments.
 \end{proof}

\begin{prop} \label{propwrongphi}
\hfill 
\begin{enumerate}
\item   We have $C_\varphi(\bbd_\kpinfty^\dagger ) \simeq  C_\varphi(\bbd_{\rig,\kpinfty}^\dagger ) $.  They are (in general) \emph{not} quasi-isomorphic to $C_\varphi(\bbd_\kpinfty)$, which is already the case when  $V=\qp$. 
    \item  We have $C_\varphi(\bfd_\kinfty^\dagger ) \simeq  C_\varphi(\bfd_{\rig,\kinfty}^\dagger ) $.  They are (in general) \emph{not} quasi-isomorphic to $C_\varphi(\bfd_\kinfty)$, which is already the case when  $V=\qp$. 
\end{enumerate}
 \end{prop}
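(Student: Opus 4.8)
The two items are proved by identical arguments, so I will describe Item (1); Item (2) is obtained mutatis mutandis, replacing $\kpinfty$ by $\kinfty$, the ring $\bbb_\kpinfty$ by $\bfb_\kinfty$, the operator $\psi_\kpinfty$ by $\psi_\kinfty$, and the modules $\bbd$ by $\bfd$. The key input is that, up to quasi-isomorphism, $\varphi$-cohomology can be computed via the $\psi$-operator: for any of our overconvergent or rigid-overconvergent $\varphi$-modules $M$ on which a compatible $\psi$ is defined, one has a quasi-isomorphism $C_\varphi(M)\simeq C_\psi(M)$, because the operator $\varphi-1$ and $\psi\circ(\varphi-1)=\mathrm{id}-\psi$ differ by the ``section'' $\psi$, and the standard argument (cf. \cite[\S II]{CC99} or \cite[\S I.5]{Her98}) shows the inclusion of complexes $[M\xrightarrow{\varphi-1}M]\to[M\xrightarrow{\varphi-1}M\xrightarrow{\psi}M]$ and the projection $[M\xrightarrow{\varphi-1}M\xrightarrow{\psi}M]\to[M\xrightarrow{\psi-1}M]$ (suitably truncated) are quasi-isomorphisms; here one uses $\psi\varphi=\mathrm{id}$.

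\textbf{Step 1: reduce both sides to $\psi$-cohomology.} By the previous paragraph (applied to $\bbd_\kpinfty^\dagger$ and to $\bbd_{\rig,\kpinfty}^\dagger$, both of which carry a $\psi_\kpinfty$ extending the one on the respective Frobenius ring --- for the rigid case one extends $\psi_\kpinfty$ to $\bbb_{\rig,\kpinfty}^\dagger$ using that this ring is free of rank $p$ over its $\varphi$-image), we get
\[ C_\varphi(\bbd_\kpinfty^\dagger)\simeq C_{\psi_\kpinfty}(\bbd_\kpinfty^\dagger),\qquad C_\varphi(\bbd_{\rig,\kpinfty}^\dagger)\simeq C_{\psi_\kpinfty}(\bbd_{\rig,\kpinfty}^\dagger). \]
So it suffices to show $C_{\psi_\kpinfty}(\bbd_\kpinfty^\dagger)\simeq C_{\psi_\kpinfty}(\bbd_{\rig,\kpinfty}^\dagger)$. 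The point of passing to $\psi$ is that $\psi_\kpinfty$ is a \emph{contracting} operator on the ``plus part'' of these rings (it improves convergence radii), so the inclusion $\bbd_\kpinfty^\dagger\hookrightarrow\bbd_{\rig,\kpinfty}^\dagger$ should become a quasi-isomorphism after applying $[-\xrightarrow{\psi_\kpinfty-1}-]$: concretely, $\psi_\kpinfty-1$ is bijective on the quotient $\bbd_{\rig,\kpinfty}^\dagger/\bbd_\kpinfty^\dagger$ (resp. on a suitable ``large-radius'' complement), because on that quotient $\psi_\kpinfty$ acts topologically nilpotently. This is the analogue for $\bbd^\dagger$ vs. $\bbd^\dagger_\rig$ of the comparison already used, via \cite[Prop.~1.5.4]{Ked08Ast_relative_Frob}, in Prop.~\ref{propphicohorig}; indeed one may alternatively just invoke $C_\varphi(\bbd^\dagger_{\kpinfty})\simeq C_\varphi(\bbd^\dagger_{\rig,\kpinfty})$ directly from \cite[Prop.~1.5.4]{Ked08Ast_relative_Frob} (the ring $\bbb^\dagger_{\rig,\kpinfty}$ satisfies \cite[Hypothesis~1.4.1]{Ked08Ast_relative_Frob}), bypassing $\psi$ for this half. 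I will take whichever of these two routes is cleanest, probably the direct Kedlaya citation.

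\textbf{Step 2: the two are not quasi-isomorphic to $C_\varphi(\bbd_\kpinfty)$, via $\psi$.} Here the hypothesis $[K:\qp]<\infty$ is essential. By Lem.~\ref{lempsicohomatch}(1) we have $C_{\psi_\kpinfty}(\bbd_\kpinfty^\dagger)\simeq C_{\psi_\kpinfty}(\bbd_\kpinfty)$, and by Step~1's reduction $C_\varphi(\bbd_\kpinfty^\dagger)\simeq C_{\psi_\kpinfty}(\bbd_\kpinfty^\dagger)$. On the other hand $C_\varphi(\bbd_\kpinfty)\simeq\rg(\gkpinfty,V)$ by Prop.~\ref{propphicohorig}(1), whereas $C_{\psi_\kpinfty}(\bbd_\kpinfty)$ is \emph{not} $\rg(\gkpinfty,V)$: the operators $\varphi-1$ and $\psi_\kpinfty-1$ on $\bbd_\kpinfty$ have genuinely different kernels/cokernels. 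The cleanest way to exhibit the discrepancy is the trivial representation $V=\qp$, where $\bbd_\kpinfty=\bbb_\kpinfty$: then $H^0_\varphi(\bbb_\kpinfty)=\qp$ while $H^0_{\psi_\kpinfty}(\bbb_\kpinfty)=\bbb_\kpinfty^{\psi_\kpinfty=1}$ is infinite-dimensional over $\qp$ (it contains, e.g., all $\varphi^n(x)$-type elements built from $\psi_\kpinfty\varphi=\mathrm{id}$, and classically $\bbb^{+}_\kpinfty$ already has infinite-dimensional $\psi=1$ part), so already the $H^0$'s differ. Hence $C_\varphi(\bbd_\kpinfty^\dagger)\not\simeq C_\varphi(\bbd_\kpinfty)$ in general, and the same for $\bbd_{\rig,\kpinfty}^\dagger$ by Step~1.

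\textbf{Main obstacle.} The routine-but-delicate point is making Step~1 precise: one must check that $\psi_\kpinfty$ (or the Kedlaya-style overconvergence/radius argument) genuinely induces a quasi-isomorphism $C_\varphi(\bbd^\dagger_{\kpinfty})\simeq C_\varphi(\bbd^\dagger_{\rig,\kpinfty})$, i.e., that no $H^1$-class is lost or created when enlarging from the bounded Robba-type ring to the full Robba ring. The citation \cite[Prop.~1.5.4]{Ked08Ast_relative_Frob} handles exactly this, so the real content is verifying the hypothesis is applicable (already noted in the proof of Prop.~\ref{propphicohorig}). The other mild subtlety is that $C_{\psi_\kpinfty}(\bbd_{\rig,\kpinfty}^\dagger)$ requires extending $\psi_\kpinfty$ to the Robba ring, which is standard but should be stated. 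Everything else is bookkeeping with the four-term ``Herr-style'' $[M\xrightarrow{\varphi-1}M\xrightarrow{\psi}M]$ complex and the explicit computation for $V=\qp$.
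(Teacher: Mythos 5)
Your first half is fine and coincides with the paper: the quasi-isomorphism $C_\varphi(\bbd_\kpinfty^\dagger)\simeq C_\varphi(\bbd_{\rig,\kpinfty}^\dagger)$ (resp. the $\kinfty$-version) is obtained by checking \cite[Hypothesis 1.4.1]{Ked08Ast_relative_Frob} and citing \cite[Prop. 1.5.4]{Ked08Ast_relative_Frob}. The genuine gap is Step 1's claim that $C_\varphi(M)\simeq C_{\psi}(M)$ for the overconvergent and rigid-overconvergent modules. This is false: the map of complexes $(\id,-\psi)\colon [M\xrightarrow{\varphi-1}M]\to[M\xrightarrow{\psi-1}M]$ has defect measured exactly by $M^{\psi=0}$, which is huge here, and the standard comparisons in \cite{Her98, CC99, LiuIMRN08} are between the \emph{two-row} complexes $C_{\varphi,\gammak}$ and $C_{\psi,\gammak}$ (where invertibility of $\gamma-1$ on $D^{\psi=0}$ is what saves the day), not between the bare $\varphi$- and $\psi$-complexes. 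Concretely, for $V=\qp$ one has $(\bbd_\kpinfty^\dagger)^{\varphi=1}=\qp$, while $(\bbd_\kpinfty^\dagger)^{\psi=1}=(\bbd_\kpinfty)^{\psi=1}$ is an Iwasawa cohomology group (cf. \cite[Thm. II.1.3]{CC99}), hence infinite-dimensional over $\qp$; so already the $H^0$'s of $C_\varphi(\bbd_\kpinfty^\dagger)$ and $C_{\psi_\kpinfty}(\bbd_\kpinfty^\dagger)$ differ. In other words, the very computation you make in Step 2 for $\bbd_\kpinfty$ applies verbatim to $\bbd_\kpinfty^\dagger$ and contradicts your Step 1. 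Since your Step 2 deduces the non-quasi-isomorphism entirely through the bridge $C_\varphi(\bbd_\kpinfty^\dagger)\simeq C_{\psi_\kpinfty}(\bbd_\kpinfty^\dagger)\simeq C_{\psi_\kpinfty}(\bbd_\kpinfty)$, the negative half of the proposition is not proved by your argument.

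The correct way to use $\psi$ and Lem. \ref{lempsicohomatch} (and what the paper does, for Item (2)) is to work on the quotient $E=\bfb_\kinfty/\bfb_\kinfty^\dagger$: for $V=\qp$, the claim $C_\varphi(\bfd_\kinfty^\dagger)\not\simeq C_\varphi(\bfd_\kinfty)$ is equivalent to $\varphi-1$ failing to be bijective on $E$. By Lem. \ref{lempsicohomatch}, $\psi_\kinfty-1$ \emph{is} bijective on $E$, and comparing the maps $\varphi-1$ and $\psi_\kinfty-1$ through the short exact sequence $0\to E^{\psi_\kinfty=0}\to E\xrightarrow{-\psi_\kinfty}E\to 0$ shows that bijectivity of $\varphi-1$ on $E$ would force $E^{\psi_\kinfty=0}=0$. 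One then exhibits an explicit non-overconvergent element of $(\bfb_\kinfty)^{\psi_\kinfty=0}$, e.g. $x=\sum_{i\le 0,\ p\nmid i}p^{\lfloor\log(-i)\rfloor}u^i$, whose class in $E$ is nonzero and killed by $\psi_\kinfty$ (the coefficients tend to $0$, but not at any linear rate). Your proposal never descends to this quotient and never produces such an element, so the key step is missing.
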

\begin{proof} The two items are similar, we only prove Item (2). 
Since the   pair of rings $\bfb_\kinfty^\dagger \subset \bfb_{\rig,\kinfty}^\dagger$ satisfies \cite[Hypothesis 1.4.1]{Ked08Ast_relative_Frob}, we can apply \cite[Prop. 1.5.4]{Ked08Ast_relative_Frob} to deduce $C_\varphi(\bfd_\kinfty^\dagger) \simeq  C_\varphi(\bfd_{\rig,\kinfty}^\dagger)$.
Now let $V=\qp$.
Let $E=\bfb_\kinfty/\bfb_\kinfty^\dagger$.
To see $C_\varphi(\bfd_\kinfty^\dagger)$ is \emph{not} quasi-isomorphic to $C_\varphi(\bfd_\kinfty)$ in this case, it is equivalent to show 
\[ E \xrightarrow{\varphi-1} E\] is \emph{not} bijective.
Consider the commutative diagram where both columns are short exact:
\[
    \begin{tikzcd}
0 \arrow[d] \arrow[r]                                         & E^{\psi=0} \arrow[d]            \\
E \arrow[d, "\mathrm{id}"', two heads] \arrow[r, "\varphi-1"] & E \arrow[d, "-\psi", two heads] \\
E \arrow[r, "\psi-1"]                                         & E                              
\end{tikzcd} \]
 The bottom row is  bijective by Lem. \ref{lempsicohomatch}. If the second row is also bijective, then $E^{\psi=0}=0$. This is not the case; for example, the following element induces a non-zero element in $E^{\psi=0}=(\bfb_\kinfty)^{\psi=0}/(\bfb_\kinfty^\dagger)^{\psi=0}$:
 \[  x= \sum_{i \leq 0, p \nmid i} p^{\lfloor\log(-i)\rfloor} u^i. \]
 Indeed, the coefficients converges to zero as $i \to -\infty$, but not in any ``linear rate".
\end{proof}

\section{Cohomology of $(\varphi,\tau)$-modules and comparisons} \label{sec: coho phi tau}

In this section, we prove all the main theorems on cohomology comparisons. Some comparisons simply reduce  to comparison of $\phi$-cohomologies, which are fully studied in the previous section \S \ref{sec: phi coho}. However, to obtain other results (particularly the most interesting ones), we need to make crucial use of \emph{group cohomology} and \emph{Lie algebra cohomology}, which were axiomatically studied from \S \ref{sec: axiom gp coho} to \S\ref{sec: verify lie}. 
Indeed, in our approach, we shall first ``break down" the roles of the many operators such as $\phi, \gamma, \tau, \nabla_\gamma, N_\nabla$, and then cohesively bring them together; this makes many cohomology comparisons very \emph{transparent}: many times, we just use \emph{one} operator in each step.

\begin{remark}[Routes of comparisons] \label{rem: route compa} 
As there are \emph{many} complexes, there are then many ways to pair and compare them.
\begin{enumerate}
\item In some (easiest) cases, one can use $\phi$-operator \emph{only} to obtain comparisons: see Step 1 in proof of Thm \ref{thmetalcompa}.

\item In some other cases, one can use ``group-operator" \emph{only} to  obtain comparisons: see Step 2 in proof of Thm \ref{thmetalcompa} and Step 1 in proof of Thm \ref{thmcohononetale}.

\item For the remaining cases, one need to make use of $\phi$-operator \emph{first}, and then apply ``group-operator"  techniques. See Step 2 in proof of Thm \ref{thmcohononetale}; see also Thm \ref{thm: phi Lie algebra}.
\end{enumerate}
In many cases, the roles of $\phi$-operator and group-operator cannot be  ``interchanged" or with their order ``switched" (in Item (3) above); see e.g. Rem \ref{rem: oc phi gamma coho tricky} and the (rather confusing) Rem \ref{rem: rig version phi not enough}.
\end{remark}

\begin{rem}[No $\delta$-functors or $\psi$-operators]
     Many cohomology comparisons on  $(\varphi, \Gamma)$-modules (and $B$-pairs, cf. \S\ref{subsec: B pair}) are well-known in the literature; however, we shall never \emph{use} any of them. Note the comparison theorems in \cite{Her98, TR12_unpub, NakBpair} use $\delta$-functors (and makes rather involved computations), and the comparison theorem in \cite{LiuIMRN08} use $\psi$-operators. 
     We shall \emph{completely avoid $\delta$-functors and $\psi$-operators} (although we shall use $\psi$ to construct some \emph{wrong} complexes in \S \ref{subsec: wrong phi tau complex}). Further comments are given in \S \ref{subsec: final historical remarks}.
\end{rem}

\subsection{Definition of complexes}
\begin{defn} We define complexes used in this section; cf. \S \ref{sec:equiv cats} for the module categories. 
    \begin{enumerate}
        \item 
For $N \in  \mod_{\varphi,\gammak}(P)$, let 
\[ C_{\phi, \gammak}(N): = \rg(\gammak, N)^{\phi=1} \]
\[ C_{\phi, \nabla_\gamma}(N): = \rg(\Lie\gammak, N)^{\phi=1} \]
here the second complex is defined when $\Lie \gammak$ acts on $N$ (e.g., when the relevant $N$ has locally analytic $\gammak$-action); similar for other complexes in the following.

 \item For $\wtm \in \mod_{\varphi, \hatg}(Q)$, let
\[ C_{\phi, \hatg}(\wtm): = \rg(\hatg, \wtm)^{\phi=1} \]
\[ C_{\phi, \nabla_\gamma, \nabla_\tau}(\wtm)   :=\rg(\Lie\hatg, \wtm)^{\phi=1} \]

 \item Let $(M, M_Q) \in \mod_{\varphi, \tau}(R, Q)$. 
 \begin{itemize}
     \item  When $p>2$, define
\[ C_{\phi, \tau}(M, M_Q) =(C_\tau(M, M_Q))^{\phi=1} \]
where $C_\tau(M, M_Q)$ is defined in Def. \ref{defn: C tau Mq}.
\item (For any $p$). When Axiom \ref{axiomelec} is satisfied for $Q$, and let $N_\nabla := \frac{1}{c} \cdot\nablatau$; suppose further $M$ is stable under $N_\nabla$-action, then define 
  \[ C_{\phi, N_\nabla}(M):= \quad [M\xrightarrow{\phi-1, \nnabla} M\oplus M \xrightarrow{-N_\nabla,      \frac{\phi(c)}{c}\phi-1} M] \]     note the appearance of $\frac{\phi(c)}{c}$ makes above sequence a complex. (In this section, $N_\nabla$ is always the operator in Prop. \ref{prop: N nabla operator}).
 \end{itemize}
    \end{enumerate}
\end{defn}

\begin{rem} \label{rem: c phi nnabla homotopy fiber}
    The complex $ C_{\phi, N_\nabla}(M)$ is precisely the $\phi=1$ homotopy fiber of the complex
    \[ C_\nnabla(M)=[ M\xrightarrow{\nnabla} M]\]
    Indeed, $C_\nnabla(M)$ is quasi-isomorphic to the (``$\phi$-equivariant") complex $M \xrightarrow{\nabla_\tau} cM$. We can then form the following diagram which induces a quasi-isomorphism between the two rows.
    \[
    \begin{tikzcd}
M \arrow[d, "\mathrm{id}"] \arrow[rr, "{\phi-1, N_\nabla}"] &  & M\oplus M \arrow[d, "\mathrm{id}\oplus c"] \arrow[rr, "{-N_\nabla,  \frac{\phi(c)}{c}\phi-1}"] &  & M \arrow[d, "c"] \\
M \arrow[rr, "{\phi-1, \nabla_\tau}"]                       &  & M\oplus cM \arrow[rr, "{-\nabla_\tau, \phi-1}"]                                                &  & cM              
\end{tikzcd}
\]
\end{rem}

\begin{convention}\label{convention: phi coho closed interval}
    In this section, we will many times need to ``descend" $\phi$-cohomology to closed intervals as in Lem. \ref{lemphidescent}. For notation simplicity, we shall simply use $C_\phi(M^{[r, pr]})$ to denote 
\[  [ M^{[r, pr]} \xrightarrow{\varphi-1}M^{[pr, pr]} ]  \]
\end{convention}

\subsection{Cohomology comparison:  \'etale and \'etale overconvergent case} 
 
\begin{theorem} \label{thmetalcompa}
Let $T\in \rep_\gk(\zp)$. Use Notation \ref{notaetalmod}. Then $\rg(\gk, T)$ is quasi-isomorphic to all complexes in the following two diagrams (where the two bottom rows are defined only when $p>2$):
    \begin{equation} \label{diagetale}
      \begin{tikzcd}
{C_\phigamma(\bbD_\kpinfty) } \arrow[r] & {C_\phigamma(\wtd_\kpinfty) } \arrow[d] \\
                                                 & {C_{\phi, \hatg}(\wtd_L) }                    \\
{C_\phitau(\bfD_\kinfty,\wtd_L)} \arrow[r]       & {C_\phitau(\wt\bfD_\kinfty,\wtd_L)} \arrow[u]  
\end{tikzcd}  
    \end{equation}
 and
\begin{equation} \label{diagetaleoc}
 \begin{tikzcd}
{C_\phigamma(\bbD_\kpinfty^\dagger) } \arrow[r] & {C_\phigamma(\wtd_\kpinfty^\dagger) } \arrow[d] \\
                                                         & {C_{\phi, \hatg}(\wtd_L^\dagger)}                    \\
                                                         & {C_\phitau(\wt\bfD_\kinfty^\dagger,\wtd_L^\dagger)} \arrow[u]  
\end{tikzcd}   
\end{equation}
(We caution that they are \emph{not} quasi-isomorphic to $C_\phitau(\bfD_\kinfty^\dagger,\wtd_L^\dagger)$, which is ``supposed" to sit in bottom left corner of diagram \eqref{diagetaleoc}; cf.  Prop. \ref{propwrongphitau}.)
\end{theorem}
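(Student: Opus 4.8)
The strategy is to reduce every comparison in the two diagrams to (a) the group-cohomology comparison of Thm.~\ref{thmaxiomnophi}/Prop.~\ref{prop: verified TS-1 coho compa}, applied after adding the homotopy fiber of $\varphi-1$, and (b) the $\varphi$-cohomology comparisons of Prop.~\ref{propphicohoall}. The key point is that the operator $\varphi$ commutes with all the group actions, so that taking the $\varphi=1$ homotopy fiber is an exact operation that can be applied to any quasi-isomorphism of complexes of $\varphi$-modules; concretely, if $C_1 \xrightarrow{\sim} C_2$ is a quasi-isomorphism of complexes equipped with compatible endomorphisms $\varphi-1$, then $\mathrm{Fib}(\varphi-1 : C_1 \to C_1) \xrightarrow{\sim} \mathrm{Fib}(\varphi-1: C_2 \to C_2)$. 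I would state this as a preliminary lemma (or simply invoke it inline), since it is the mechanism underlying every arrow in \eqref{diagetale} and \eqref{diagetaleoc}.

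First I would handle diagram \eqref{diagetale}. Apply Prop.~\ref{prop: verified TS-1 coho compa}(1): Cor.~\ref{cor: verify axiom q for rings} tells us $\wtd_L$ (equivalently the ring $\wta_L$) satisfies Axiom \ref{axiomgroupQ}, so Thm.~\ref{thmaxiomnophi} gives quasi-isomorphisms among $\rg(\gammak,\wtd_\kpinfty)$, $\rg(\hatg,\wtd_L)$, $C_\tau(\wtd_\kinfty,\wtd_L)$ (the last only when $p>2$), and these are all compatible with $\varphi$. Taking $\varphi=1$ homotopy fibers yields the quasi-isomorphisms $C_\phigamma(\wtd_\kpinfty)\simeq C_{\phi,\hatg}(\wtd_L)\simeq C_\phitau(\wt\bfD_\kinfty,\wtd_L)$. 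To bring in $C_\phigamma(\bbD_\kpinfty)$ and $C_\phitau(\bfD_\kinfty,\wtd_L)$, note that removing the tilde changes only the $\varphi$-module: $\rg(\gammak,\bbD_\kpinfty)\simeq\rg(\gammak,\wtd_\kpinfty)$ since $\bbD_\kpinfty\otimes\wta_\kpinfty\simeq\wtd_\kpinfty$ is an isomorphism of $\gammak$-modules, and similarly $C_\tau(\bfD_\kinfty,\wtd_L)\simeq C_\tau(\wt\bfD_\kinfty,\wtd_L)$ since the ``$D_Q$'' term $\wtd_L$ is the same in both and the $\varphi$-modules $\bfD_\kinfty$, $\wt\bfD_\kinfty$ only differ by base change. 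Taking $\varphi=1$ fibers finishes. Finally, the identification with $\rg(\gk,T)$: apply Hochschild--Serre for $1\to G_L\to\gk\to\hatg\to 1$, using that $\rg(\hatg,\rg(G_L,T))\simeq\rg(\hatg,C_\phi(\wtd_L))$ by Prop.~\ref{propphicohorig}(2) (or Prop.~\ref{propphicohoall}(2)) and that $\rg(G_L,T)$ realized as $C_\phi(\wtd_L)$ carries a $\hatg$-action whose Hochschild--Serre totalization is exactly $C_{\phi,\hatg}(\wtd_L)$; this is the standard ``$(\varphi,\tau)$ double complex'' bookkeeping.

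Diagram \eqref{diagetaleoc} is entirely parallel: Cor.~\ref{cor: verify axiom q for rings} gives Axiom \ref{axiomgroupQ} for $\wta_L^\dagger$, so Prop.~\ref{prop: verified TS-1 coho compa}(2) plus taking $\varphi=1$ fibers gives $C_\phigamma(\bbD_\kpinfty^\dagger)\simeq C_\phigamma(\wtd_\kpinfty^\dagger)\simeq C_{\phi,\hatg}(\wtd_L^\dagger)\simeq C_\phitau(\wt\bfD_\kinfty^\dagger,\wtd_L^\dagger)$; and compatibility with the \'etale diagram \eqref{diagetale} (hence with $\rg(\gk,T)$) follows because $C_\phi(\wtd_\kpinfty^\dagger)\simeq C_\phi(\wtd_\kpinfty)$ and $C_\phi(\wtd_L^\dagger)\simeq C_\phi(\wtd_L)$ by Prop.~\ref{propphicohoall}, all compatibly with the $\hatg$-action, so the Hochschild--Serre totalizations agree. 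The reason $\bfD_\kinfty^\dagger$ (and $\bbD_\kpinfty^\dagger$) are \emph{excluded} from the corners is precisely Prop.~\ref{propwrongphi}: $C_\phi(\bfD_\kinfty^\dagger)\not\simeq C_\phi(\bfD_\kinfty)$ in general, so the argument ``remove the tilde'' breaks at the $\varphi$-module level, and this is recorded in the parenthetical caution (with the details deferred to Prop.~\ref{propwrongphitau}).

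\textbf{Main obstacle.} The routine part is the homological bookkeeping. The one genuinely delicate point is the identification of $\rg(\hatg, \text{``}\rg(G_L,T)\text{''})$ with $C_{\phi,\hatg}(\wtd_L)$ --- i.e.\ making precise that the Hochschild--Serre spectral sequence for $G_L\triangleleft\gk$, with inner term replaced by the quasi-isomorphic complex $C_\phi(\wtd_L)$, has totalization the $\varphi=1$ fiber of $\rg(\hatg,\wtd_L)$. This requires knowing that the quasi-isomorphism $\rg(G_L,T)\simeq C_\phi(\wtd_L)$ of Prop.~\ref{propphicohorig}(2) can be upgraded to a $\hatg$-equivariant one in the derived category (so that it induces a comparison of Hochschild--Serre double complexes); this is where one must invoke that the equivalences of categories in Thm.~\ref{thmequivetale} are $\hatg$-equivariant in the appropriate sense, and that $\wtd_L = \wta_L \otimes_{\zp} T$ as $\gk$-modules so that the $\varphi$-cohomology computation of Prop.~\ref{propphicohoall}(2) is literally $G_L$-cohomology with its residual $\hatg$-action. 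Once this is in place, everything else is formal.
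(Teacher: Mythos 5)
Your overall strategy (reduce to $\varphi$-cohomology comparisons plus TS-1 group-cohomology comparisons, then take $\varphi=1$ homotopy fibers) is the paper's strategy for most of the two diagrams, but your claim that diagram \eqref{diagetaleoc} is ``entirely parallel'' hides a genuine gap at exactly the hardest corner, namely $C_\phigamma(\bbD_\kpinfty^\dagger)$. Prop.~\ref{prop: verified TS-1 coho compa}(2) only compares the \emph{tilde} modules ($\wtd_\kpinfty^\dagger$, $\wtd_L^\dagger$, $\wtd_\kinfty^\dagger$); it says nothing about the imperfect overconvergent module $\bbD_\kpinfty^\dagger$, and the paper explicitly does not know Axiom~\ref{axiomgroupQ} for imperfect rings (Rem.~\ref{rem: do not konw axiom q}). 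The would-be $\varphi$-level bridge $C_\phi(\bbD_\kpinfty^\dagger)\simeq C_\phi(\bbD_\kpinfty)$ is \emph{false} in general -- that is precisely Prop.~\ref{propwrongphi}(1), already for $T=\zp$ -- so neither of your two mechanisms produces the first quasi-isomorphism in your chain $C_\phigamma(\bbD_\kpinfty^\dagger)\simeq C_\phigamma(\wtd_\kpinfty^\dagger)$. (You noticed this phenomenon on the Kummer side for $\bfD_\kinfty^\dagger$, but Prop.~\ref{propwrongphi} applies equally on the cyclotomic side.) The paper flags this as the tricky point (Rem.~\ref{rem: oc phi gamma coho tricky}) and handles it by a separate d\'evissage: using $\bbD_\kpinfty^\dagger/p^n\simeq \bbD_\kpinfty/p^n$ one gets the exact sequence $0\to \bbD_\kpinfty^\dagger\to \bbD_\kpinfty^\dagger[1/p]\oplus\bbD_\kpinfty\to \bbD_\kpinfty[1/p]\to 0$, applies $C_\phigamma$, and feeds in the \emph{rational} comparison $C_\phigamma(\bbD_\kpinfty^\dagger[1/p])\simeq \rg(\gk,T[1/p])$, which is itself only established in the proof of Thm.~\ref{thmcohononetale} via the rigid-overconvergent module and higher locally analytic vectors (with a non-circularity check). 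Your proposal contains no substitute for this step, so the top-left corner of \eqref{diagetaleoc} is unproved.

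A smaller but real flaw: your justification for removing tildes in \eqref{diagetale} is made at the wrong level. The asserted quasi-isomorphisms $\rg(\gammak,\bbD_\kpinfty)\simeq\rg(\gammak,\wtd_\kpinfty)$ and $C_\tau(\bfD_\kinfty,\wtd_L)\simeq C_\tau(\wt\bfD_\kinfty,\wtd_L)$ \emph{before} taking the $\varphi=1$ fiber do not follow from ``the modules differ by an equivariant base change'' (equivariance of $-\otimes_{\bbb_\kpinfty}\wtb_\kpinfty$ gives no cohomology comparison, and these group/$\tau$-complexes genuinely differ without $\varphi$). The correct order, which is the paper's Step~1, is to compare in the $\varphi$-direction first: $C_\phi(\bbD_\kpinfty)\simeq C_\phi(\wtd_\kpinfty)\simeq\rg(\gkpinfty,T)$ and $C_\phi(\bfD_\kinfty)\simeq C_\phi(\wt\bfD_\kinfty)$ (Prop.~\ref{propphicohoall}), $\gammak$- resp.\ $\tau$-equivariantly, and then apply $\rg(\gammak,-)$ resp.\ totalize the double complex. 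Your Hochschild--Serre identification of $C_{\phi,\hatg}(\wtd_L)$ with $\rg(\gk,T)$ via $G_L\triangleleft\gk$ is a legitimate variant of the paper's route (which uses $\gkpinfty$ and $\gammak$ instead), and your flagged ``main obstacle'' about $\hatg$-equivariance of $\rg(G_L,T)\simeq C_\phi(\wtd_L)$ is manageable; the true obstacle you missed is the one in the first paragraph.
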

  
\begin{proof}
\textbf{Step 1.} (arguments using $\phi$-cohomology only).
Prop. \ref{propphicohoall} tells us that  $ \rg(\gkpinfty, T) \simeq C_\phi(N) $ for
    \[ N \in \{ \bbD_\kpinfty, 
  \wtd_\kpinfty,  \wtd_\kpinfty^\dagger  \}, \]
thus by Hochschild--Serre spectral sequence,
\[ \rg(\gk, T) \simeq \rg(\gammak,  \rg(\gkpinfty, T)) \simeq \rg(\gammak, C_\phi(N)) \simeq C_\phigamma(N) \]

Prop. \ref{propphicohoall} also says $C_\phi(\bfD_\kinfty) \simeq C_\phi(\wtD_\kinfty)$; thus (by staring at the associated double complex), we have $$C_\phitau(\bfD_\kinfty,\wtd_L) \simeq C_\phitau(\wtD_\kinfty,\wtd_L).$$

\textbf{Step 2.} (arguments using group cohomology only).
Consider right most column  of diagram \eqref{diagetale} resp. \eqref{diagetaleoc}. These complexes (along each column) are quasi-isomorphic to each other, because they already have the same group cohomologies by Prop. \ref{prop: verified TS-1 coho compa}.


\textbf{Step 3.}  (the (integral) imperfect overconvergent  case.)  
A quick examination shows that we have so far connected all complexes in both diagrams  \eqref{diagetale} and \eqref{diagetaleoc}, except $C_\phigamma(\bbD_\kpinfty^\dagger)$. This is the most tricky complex, cf. Rem. \ref{rem: oc phi gamma coho tricky} for some discussions. 
To prove
\begin{equation} \label{eqn: coho oc not invert p}
    C_\phigamma(\bbD_\kpinfty^\dagger) \simeq \rg(\gk, T),
\end{equation}
we   need to study its rational version first, that is:
\begin{equation} \label{eqn: coho oc invert p}
    C_\phigamma(\bbD_\kpinfty^\dagger[1/p]) \simeq \rg(\gk, T[1/p]).
\end{equation}
Eqn. \eqref{eqn: coho oc invert p} will be proved in the final step in the proof of Thm. \ref{thmcohononetale}. Here we  use Eqn. \eqref{eqn: coho oc invert p}   to prove Eqn. \eqref{eqn: coho oc not invert p}. The readers can check that there is no circular reasoning here.
 Note that $\bbD_\kpinfty^\dagger/p^n \simeq \bbD_\kpinfty/p^n$ for any $n \geq 1$; this implies that we have a short exact sequence (where all maps are induced by obvious inclusions)
\begin{equation*}
    0 \to \bbD_\kpinfty^\dagger \to \bbD_\kpinfty^\dagger[1/p] \oplus \bbD_\kpinfty \to \bbD_\kpinfty[1/p] \to 0
\end{equation*}
Apply $C_\phigamma$ to all the terms above, we obtain a diagram 
\[  0 \to  C_\phigamma(\bbD_\kpinfty^\dagger) \to  C_\phigamma(\bbD_\kpinfty^\dagger[1/p]) \oplus  C_\phigamma(\bbD_\kpinfty) \to  C_\phigamma(\bbD_\kpinfty[1/p]) \to 0;\]
its totalization is acyclic. Consider cohomology; use the fact that
\[ C_\phigamma(\bbD_\kpinfty[1/p]) \simeq \rg(\gk, T[1/p]) \]
which is the rational version of a proven comparison, and use \eqref{eqn: coho oc invert p}, we see that we must have
\[  C_\phigamma(\bbD_\kpinfty^\dagger) \simeq   C_\phigamma(\bbD_\kpinfty)\]
\end{proof}

\begin{rem}\label{rem: oc phi gamma coho tricky}
    We point out that the  argument in Step 1 of proof of Thm. \ref{thmetalcompa} can \emph{not} be used to treat $C_\phigamma(\bbD_\kpinfty^\dagger)$ because $C_\phi( \bbD_\kpinfty^\dagger)$  is a ``wrong" $\phi$-complex by Prop. \ref{propwrongphi}.
We shall treat  $C_\phigamma( \bbD_\kpinfty^\dagger)$ after we deal with  $C_\phigamma( \bbD_{\rig,\kpinfty}^\dagger)$ in Thm. \ref{thmcohononetale} (there will not be any circular reasoning). 
See also Rem. \ref{remhistphigamma}   for some history of these two complexes.
\end{rem}

\begin{remark} \label{remqpver}
Similar statements as Thm. \ref{thmetalcompa} hold if one considers  $V \in \rep_\gk(\qp)$ and related rational modules. Except of course Eqn. \eqref{eqn: coho oc invert p}, which will only be proved in   the final step in the proof of Thm. \ref{thmcohononetale}.
\end{remark}

\begin{rem}
  The complex $C_{\phi, \hatg}(\wtd_L)$ is exactly the 4-term complex in \cite[Thm. 0.2]{TR11} constructed by  Tavares Ribeiro; the complex $C_\phitau(\bfD_\kinfty,\wtd_L)$ is also considered in \cite{Zhao25} by the second named author. In both references, these complexes are compared with $\rg(\gk, T)$ using devissage and $\delta$-functors. We regard the proof in Thm. \ref{thmetalcompa} as a much more conceptual (and complete) one.
\end{rem}

\subsection{Cohomology comparison:  rigid-overconvergent  case}

 \begin{theorem}\label{thmcohononetale} 
 Use Notation \ref{notarigmod}. All the complexes in the following are quasi-isomorphic.
\begin{enumerate}
   \item ($\phi$+group)
     \[
  \begin{tikzcd}
{C_\phigamma(\bbD_{\rig, \kpinfty}^\dagger) } \arrow[r] & {C_\phigamma(\wtd_{\rig, \kpinfty}^\dagger) } \arrow[d] \\
                                                         & {C_{\phi,\hatg}(\wtd_{\rig, L}^\dagger) }                    \\
                                                         & {C_\phitau(\wtd_{\rig, \kinfty}^\dagger,\wtd_{\rig, L}^\dagger)} \arrow[u]  
\end{tikzcd}
\]
where the bottom row is defined only when $p>2$.
(We caution that they are \emph{not} quasi-isomorphic to $C_\phitau(\bfD_{\rig, \kinfty}^\dagger,\wtd_{\rig, L}^\dagger)$, which is ``supposed" to sit in bottom left corner of the diagram; cf. Prop. \ref{propwrongphitau}.)

 \item ($\phi+$Lie algebra, then group invariant). $(C_{\varphi, \nablagamma}(\bbD_{\rig,\kpinfty}^\dagger))^{\gammak} $;    (this works for any  $p$).
    \item (Galois cohomology). $\rg(\gk, V)$ and other (rational) cohomology theories from Thm. \ref{thmetalcompa} (cf. Rem. \ref{remqpver}), if  the modules in \eqref{131} come from a Galois representation $V \in \rep_\gk(\qp)$.
\end{enumerate}
  \end{theorem}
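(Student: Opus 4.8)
The strategy is to combine the $\varphi$-cohomology comparisons of \S\ref{sec: phi coho} with the axiomatic group/Lie-algebra cohomology results of \S\S\ref{sec: axiom gp coho}--\ref{sec: verify lie}, in each step using only \emph{one} of the operators $\varphi, \gammak, \tau, \nabla_\gamma, N_\nabla$. We shall organize the argument so that the $\varphi$-part is always dispatched first, reducing every comparison to a statement about a group or Lie-algebra cohomology of a ring satisfying one of our axioms.

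\textbf{Step 1: the right-hand column of Item (1), via group cohomology.} The three complexes $C_\phigamma(\wtd_{\rig, \kpinfty}^\dagger)$, $C_{\phi,\hatg}(\wtd_{\rig, L}^\dagger)$, $C_\phitau(\wtd_{\rig, \kinfty}^\dagger,\wtd_{\rig, L}^\dagger)$ are the $\varphi=1$ homotopy fibers of $\rg(\gammak,\wtd_{\rig, \kpinfty}^\dagger)$, $\rg(\hatg, \wtd_{\rig, L}^\dagger)$, $C_\tau(\wtd_{\rig, \kinfty}^\dagger,\wtd_{\rig, L}^\dagger)$ respectively. Since $\varphi=1$ fiber is an exact functor, it suffices to identify the latter three, which is Prop. \ref{prop: verified TS-1 coho compa}(3) (applied after descending to a closed interval $I=[r,s]$ as in Convention \ref{convention: phi coho closed interval}, using Cor. \ref{cor: verify axiom q for rings} / Prop. \ref{prop: verify TS-1} to know Axiom \ref{axiomgroupQ} holds for $\wtb_L^{[r,s]}$; note that $\varphi$-cohomology over the Robba ring reduces to a single closed interval by Lem. \ref{lemphidescent}, and the group actions are compatible with this descent). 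This settles the comparison among the three complexes in the right column of Item (1).

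\textbf{Step 2: connecting $C_\phigamma(\bbD_{\rig, \kpinfty}^\dagger)$ to the right column, via $\varphi$-cohomology.} Here the point is that $\bbD_{\rig, \kpinfty}^\dagger \hookrightarrow \wtd_{\rig, \kpinfty}^\dagger$ induces a quasi-isomorphism on $\varphi$-cohomology: this is exactly Prop. \ref{propphicohorig}(1) (the statement $\rg(\gkpinfty, V)\simeq C_\phi(\wtd_{\rig,\kpinfty}^\dagger)$), or rather its module-theoretic refinement, which says that $C_\varphi(\bbD_{\rig,\kpinfty}^\dagger) \to C_\varphi(\wtd_{\rig,\kpinfty}^\dagger)$ is a quasi-isomorphism — the relevant input being that $\wtb_{\rig,\kpinfty}^\dagger$ is obtained from $\bB_{\rig,\kpinfty}^\dagger$ by the étale-sheafification/perfection procedure of \cite{KL15, Ked08Ast_relative_Frob}, which is insensitive for $\varphi$-cohomology. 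Then taking $\rg(\gammak, -)$ (Hochschild--Serre, since $\gammak$ acts compatibly on both, and $\gammak$-cohomology of a complex of $\gammak$-modules is well-behaved) upgrades this to $C_\phigamma(\bbD_{\rig, \kpinfty}^\dagger) \simeq C_\phigamma(\wtd_{\rig, \kpinfty}^\dagger)$, plugging into Step 1.

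\textbf{Step 3: Item (2), the $\varphi+$Lie-algebra complex, via Lie-algebra cohomology.} We want $(C_{\varphi, \nablagamma}(\bbD_{\rig,\kpinfty}^\dagger))^{\gammak} \simeq C_\phigamma(\bbD_{\rig, \kpinfty}^\dagger)$. Again strip off the $\varphi=1$ fiber: it suffices to show $(\rg(\Lie\gammak, \bbD_{\rig,\kpinfty}^\dagger))^{\gammak} \simeq \rg(\gammak, \bbD_{\rig,\kpinfty}^\dagger)$ compatibly with $\varphi$. Since the $\gammak$-action on $\bbD_{\rig,\kpinfty}^\dagger$ is pro-analytic (Lem. \ref{lemlamod}), this is \emph{almost} Thm. \ref{thm: Tamme ana coho}(1) — but that theorem is for LB representations with \emph{locally} analytic action, whereas $\bbD_{\rig,\kpinfty}^\dagger$ is LF with pro-analytic action. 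The standard fix (cf. Rem. \ref{rem: caution pro ana} and the ubiquitous use of Lem. \ref{lemphidescent}) is: after passing to the $\varphi=1$ fiber, descend to a closed interval $[r,pr]$ where $\bbD_{\rig,\kpinfty}^\dagger$ becomes a finite free module over the \emph{LB} ring $\bB_{\kpinfty}^{[r,pr]}$ (whose $\gammak$-action is locally analytic by Lem. \ref{lemlamod}), apply Thm. \ref{thm: Tamme ana coho} (or Thm. \ref{prop: no higher lav coho compa}, using Example \ref{example: porat} for the no-higher-la-vectors input on the perfect rings, then descending to the imperfect module via the étale comparison), and reassemble. Concretely one invokes Cor. \ref{cor: lie alg coho verified applied} in the $\gal(L/\kpinfty)=1$ form together with Prop. \ref{prop: verified TS-1 coho compa}(3), whose left-hand sides agree after taking $\varphi=1$.

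\textbf{Step 4: Item (3), Galois cohomology.} When the modules come from $V\in\rep_\gk(\qp)$, we must show the above complexes compute $\rg(\gk, V)$; equivalently, by Steps 1--3, that $C_\phigamma(\bbD_{\rig, \kpinfty}^\dagger)\simeq\rg(\gk,V)$. Since $\bbD_{\rig, \kpinfty}^\dagger$ is the base change to the Robba ring of the étale overconvergent module $\bbD_{\kpinfty}^\dagger$ when $V$ is étale — and in general it is a direct summand, up to the theory of $B$-pairs / slope filtrations, of such an object — the $\varphi$-cohomology comparison $C_\varphi(\bbD_{\kpinfty}^\dagger[1/p])\simeq C_\varphi(\bbD_{\rig,\kpinfty}^\dagger)$ of Prop. \ref{propwrongphi}(1) reduces us to proving $C_\phigamma(\bbD_\kpinfty^\dagger[1/p]) \simeq \rg(\gk, T[1/p])$, which is Eqn. \eqref{eqn: coho oc invert p}. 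This last comparison is handled by connecting $C_\phigamma(\bbD_\kpinfty^\dagger[1/p])$ through the right-hand column of diagram \eqref{diagetaleoc} — using $C_\varphi(\bbD_\kpinfty^\dagger)\simeq C_\varphi(\wtd_\kpinfty^\dagger)$ (Prop. \ref{propphicohorig}(1), via $\rg(\gkpinfty,-)$) and then the established fact (Thm. \ref{thmetalcompa}, rational version per Rem. \ref{remqpver}) that $C_\phigamma(\wtd_\kpinfty^\dagger)\simeq\rg(\gk,V)$. I should double-check there is no circular dependency: Thm. \ref{thmetalcompa}'s proof of Eqn. \eqref{eqn: coho oc invert p} was explicitly deferred to here, so this is the natural place to close the loop, and everything else in Thm. \ref{thmetalcompa} was proved independently.

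\textbf{Main obstacle.} The delicate point is Step 3: reconciling the general locally-analytic cohomology machinery (Thms. \ref{thm: Tamme ana coho}, \ref{prop: no higher lav coho compa}), which genuinely requires \emph{locally} analytic LB representations, with the fact that $\bbD_{\rig,\kpinfty}^\dagger$ carries only a \emph{pro}-analytic action of $\gammak$. The resolution is the $\varphi$-descent of Lem. \ref{lemphidescent} — one must check carefully that taking the $\varphi=1$ homotopy fiber commutes with restriction to a closed interval \emph{and} with passage to $\gammak$- or $\Lie\gammak$-cohomology, so that the genuinely analytic setting at finite radius governs the answer. A secondary subtlety, already flagged in Rem. \ref{rem: oc phi gamma coho tricky}, is that $C_\varphi(\bbD_{\rig,\kpinfty}^\dagger)$ is itself a ``wrong'' $\varphi$-complex (Prop. \ref{propwrongphi}), so one cannot shortcut Step 2 by a naive comparison with $C_\varphi(\bbD_{\kpinfty}^\dagger)$; the honest route goes through the perfect ring $\wtd_{\rig,\kpinfty}^\dagger$, exactly as in Step 2.
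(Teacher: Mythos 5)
There is a genuine gap, and it occurs at the crux of the theorem. In your Step 2 you assert that $\bbD_{\rig,\kpinfty}^\dagger\hookrightarrow\wtd_{\rig,\kpinfty}^\dagger$ induces a quasi-isomorphism $C_\phi(\bbD_{\rig,\kpinfty}^\dagger)\simeq C_\phi(\wtd_{\rig,\kpinfty}^\dagger)$, citing Prop.~\ref{propphicohorig}(1); but that proposition says exactly the opposite: $C_\phi(N)\simeq\rg(\gkpinfty,V)$ holds for $N=\wtd_{\rig,\kpinfty}^\dagger$ but \emph{not} for $N=\bbD_{\rig,\kpinfty}^\dagger$, and Prop.~\ref{propwrongphi} shows the failure already for $V=\qp$ (the comparison across the imperfect/perfect divide is precisely what Kedlaya's \cite[Prop.~1.5.4]{Ked08Ast_relative_Frob} does \emph{not} give; it only compares $\bbb^\dagger$ with $\bbb_\rig^\dagger$ and $\wtb^\dagger$ with $\wtb_\rig^\dagger$ separately). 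So the plan ``compare $\varphi$-cohomologies first, then apply $\rg(\gammak,-)$'' cannot produce $C_\phigamma(\bbD_{\rig,\kpinfty}^\dagger)\simeq C_\phigamma(\wtd_{\rig,\kpinfty}^\dagger)$, and your own ``Main obstacle'' paragraph, while noting that $C_\phi(\bbD_{\rig,\kpinfty}^\dagger)$ is a wrong complex, misdiagnoses the consequence: the route through the perfect ring must be taken at the level of \emph{group} cohomology, not $\varphi$-cohomology. The same false quasi-isomorphism reappears in your Step 4 (``$C_\phi(\bbD_\kpinfty^\dagger)\simeq C_\phi(\wtd_\kpinfty^\dagger)$''), so the reduction to Galois cohomology is also broken as written.

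The correct mechanism (and the actual content of the paper's argument) is: after $\varphi$-descent to a closed interval (Lem.~\ref{lemphidescent}), one has $C_\phi(\bbD^\dagger_{\rig,\kpinfty})\simeq C_\phi(\bbD^{[r,pr]}_{\kpinfty,\infty})$ and $C_\phi(\wtd^\dagger_{\rig,\kpinfty})\simeq C_\phi(\wtd^{[r,pr]}_{\kpinfty})$, and one then proves, for $I=[r,pr]$ and $[pr,pr]$, the comparison $\rg(\gammak,\wtd^I_{\kpinfty})\simeq\rg(\gammak,\bbD^I_{\kpinfty,\infty})\simeq\rg(\Lie\gammak,\bbD^I_{\kpinfty,\infty})^{\gammak=1}$, using that $\wtd^I_{\kpinfty}$ has no higher locally analytic vectors (Example~\ref{example: porat}, Thm.~\ref{prop: no higher lav coho compa}) and that its locally analytic vectors are exactly the Frobenius-stretched module $\bbD^I_{\kpinfty,\infty}$ (Prop.~\ref{prop: lav of mod}); the bridge back to the imperfect module uses Lem.~\ref{lemphicoho} (the $\infty$-stretching is invisible to $\varphi$-cohomology), and this single argument delivers both $C_\phigamma(\bbD^\dagger_{\rig,\kpinfty})\simeq C_\phigamma(\wtd^\dagger_{\rig,\kpinfty})$ and Item (2). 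Note that your Step 3 is in the right spirit but should be run for $\gammak$ acting on $\wtd^I_{\kpinfty}$ with la vectors $\bbD^I_{\kpinfty,\infty}$ (the $\infty$ subscript is essential), rather than by mixing Cor.~\ref{cor: lie alg coho verified applied} (which concerns $\Lie\hatg$ on $\wtd^{I,\la}_L$ with full $\hatg$-structure) with Prop.~\ref{prop: verified TS-1 coho compa}. Finally, the Galois-cohomology step closes correctly as follows: $C_\phi(\bbD_\kpinfty^\dagger)\simeq C_\phi(\bbD_{\rig,\kpinfty}^\dagger)$ (the two \emph{wrong} complexes agree with each other, Prop.~\ref{propwrongphi}) and $C_\phi(\wtd_\kpinfty^\dagger)\simeq C_\phi(\wtd_{\rig,\kpinfty}^\dagger)\simeq\rg(\gkpinfty,V)$ (Prop.~\ref{propphicohorig}); combined with the locally-analytic comparison just described and the Hochschild--Serre argument, this yields $C_\phigamma(\bbD_{\rig,\kpinfty}^\dagger)\simeq C_\phigamma(\wtd_{\rig,\kpinfty}^\dagger)\simeq\rg(\gk,V)$ and, a posteriori, Eqn.~\eqref{eqn: coho oc invert p}, without ever invoking the false imperfect-versus-perfect $\varphi$-comparison.
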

 
\begin{proof} 
\textbf{Step 1.} (Cohomologies of modules with tilde.)  We have
\begin{equation} \label{eqstep1rig}
C_\phigamma(\wtd_{\rig, \kpinfty}^\dagger)  \simeq  C_{\phi,\hatg}(\wtd_{\rig, L}^\dagger) \simeq  C_\phitau(\wtd_{\rig, \kinfty}^\dagger,\wtd_{\rig, L}^\dagger)
\end{equation} 
(where the third complex is defined when $p>2$), by incorporating $\phi$-action to Prop \ref{prop: verified TS-1 coho compa}\eqref{itemgprig}.


\textbf{Step 2.} (Cohomologies of modules over $\kpinfty$-tower.) We now prove: 
\begin{equation}
    C_\phigamma(\bbD^\dagger_{\rig, \kpinfty})     \simeq  C_\phigamma( \wtd^\dagger_{\rig, \kpinfty})  \simeq (C_{\varphi, \nablagamma}(\bbD_{\rig,\kpinfty}^\dagger))^{\gammak}
\end{equation}
Using Lem \ref{lemphidescent} and   Convention \ref{convention: phi coho closed interval}, we have
\[ C_\phi(\bbD^\dagger_{\rig, \kpinfty}) \simeq    C_\phi(\bbD_{\kpinfty, \infty}^{[r, pr]}), \quad \text{ and } C_\phi( \wtd^\dagger_{\rig, \kpinfty}) \simeq  C_\phi(\wtd^{[r, pr]}_{ \kpinfty})\]
Thus it suffices to prove for $I=[r, pr]$ and $[pr, pr]$, we have
\[   \rg(\gammak, \wtd^I_{\kpinfty})  \simeq \rg(\gammak, \bbD_{\kpinfty, \infty}^I) \simeq \rg(\Lie \gammak, \bbD_{\kpinfty, \infty}^I)^{\gammak=1} \]
(Note for later two terms, we have $\infty$ on the subscripts.)
Example \ref{example: porat} informs us that $\wtd^I_{\kpinfty}$ has no higher locally analytic vectors; its locally analytic vectors are precisely $\bbD_{\kpinfty, \infty}^I$ by Prop. \ref{prop: lav of mod}. Thus we can use Thm. \ref{prop: no higher lav coho compa} to conclude.


Note this completes all cohomology comparisons in Item (1) and (2).


\textbf{Step 3.} (Galois cohomology).
To finish the proof of   this entire theorem as well as Step 3  of the proof of Thm. \ref{thmetalcompa}, it remains to prove that when the modules are associated to some $V\in \rep_\gk(\qp)$, we have
\[ C_\phigamma(\bbD_\kpinfty^\dagger) \simeq C_\phigamma(\bbD_{\rig, \kpinfty}^\dagger) \]
and
\[C_\phigamma(\wtd_\kpinfty^\dagger)  \simeq C_\phigamma(\wtd_{\rig, \kpinfty}^\dagger) \simeq \rg(\gk, V)\]
For the first one, it suffices to note that via  Prop. \ref{propwrongphi}, we have 
\[ C_\phi(\bbD_\kpinfty^\dagger) \simeq C_\phi(\bbD_{\rig, \kpinfty}^\dagger)\]
For the second one, it suffices to note  that via  Prop. \ref{propphicohorig}, we have 
\[  C_\phi(\wtd_\kpinfty^\dagger) \simeq C_\phi(\wtd_{\rig, \kpinfty}^\dagger) \simeq \rg(\gkpinfty, V) \] 
 \end{proof}
 
 \begin{remark}\label{rem: rig version phi not enough}  
In this remark, we (very briefly) point out the \emph{necessity} of the TS-1-Fr\'echet Axiom (for $\wtb^{[r, \infty)}$) introduced in Axiom \ref{axiom ts-1-frechet}; namely, the (easier) TS-1 Axiom for the Banach ring $\wtb^{[r,s]}$ is \emph{not} enough for  proof of \eqref{eqstep1rig}. 
(To save space, we leave all the double complex expansions to interested readers). 
 This is prompted by the following observation on proof of \eqref{eqstep1rig}.
By Lem. \ref{lemphidescent} and use Convention \ref{convention: phi coho closed interval}, we have
\[ C_\phigamma(\wtd_{\rig, \kpinfty}^\dagger)  \simeq  C_\phigamma(\wtd_{\kpinfty}^{[r, pr]})\]
\[ C_{\phi,\hatg}(\wtd_{\rig, L}^\dagger) \simeq  C_{\phi,\hatg}(\wtd_{L}^{[r, pr]}) \]
\emph{If} we could prove (which looks \emph{similar} to above formulae):
\begin{equation} \label{eqphitaudes} 
 C_\phitau(\wtd_{\rig, \kinfty}^\dagger,\wtd_{\rig, L}^\dagger) \simeq C_\phitau(\wtd_{ \kinfty}^{[r, pr]}, \wtd_{ L}^{[r, pr]}) 
\end{equation}
we could then use Item \eqref{itemgpIver} of  Prop \ref{prop: verified TS-1 coho compa} to conclude \eqref{eqstep1rig}; this will then only make use of the TS-1 Axiom. However, \emph{a priori}, unlike the above two comparisons, Eqn \eqref{eqphitaudes} does \emph{not}  follow from Lem. \ref{lemphidescent}.
Indeed, by staring at the double complexes (using $\phi$ and $\tau-1$) associated to two sides of \eqref{eqphitaudes}, it reduces to prove
\[ C_\phi(\wtd_{\rig, L}^{\dagger, \delta-\gamma=0}) \simeq C_\phi(\wtd_{ L}^{[r, pr], \delta-\gamma=0}); \]
write out double complexes (using $\phi$ and $\delta-\gamma$) again, one quickly observes that in order to apply  Lem. \ref{lemphidescent}, one needs to know that $\delta-\gamma$ is \emph{surjective} on $\wtd_{\rig, L}^{\dagger}$ (it is so on $\wtd_{ L}^{[r, pr]}$ by Prop \ref{prop: verified TS-1 coho compa}\eqref{itemgpIver}); but then this requires the knowledge of Prop \ref{prop: verified TS-1 coho compa}\eqref{itemgprig}, which builds on the TS-1-Fr\'echet Axiom.
 \end{remark}
 
 \begin{rem} \label{rem: no lpa coho}
   One can also define  $(C_{\varphi, \nablagamma, \nablatau}(\wtd_{\rig,L}^{\dagger,\pa}))^{\hat{G}}$, but it is  not clear if it is quasi-isomorphic to cohomologies in Thm. \ref{thmcohononetale}; an essential difficulty is noted in Rem. \ref{rem: diffi lpa ring}.
  \end{rem}

  \subsection{Cohomology of $B$-pairs} \label{subsec: B pair}
We (very) briefly discuss cohomology of $B$-pairs, using our methods.
Berger \cite{Ber08ANT} proves   that the category of $B$-pairs is equivalent to ${{\mod_{\varphi, \gammak}(\bB_{\rig, \kpinfty}^\dagger)} } $, and thus equivalent to the many categories in Thm. \ref{thmequidagger}. One advantage of $B$-pairs is that its definition only involves $\gk$-actions and no $\phi$-operators, making it a more ``natural" candidate as a generalization of the category $\rep_\gk(\qp)$.  
We refer to \cite{Ber08ANT} for definition of $B$-pairs. Its cohomology theory is defined and studied in  \cite[\S 5]{NakBpair}.
Let $W=(W_e, W_\dr^+)$ be a  $B$-pair. 
Define its cohomology by
\[  \rg(\gk, W): = \rg(\gk,   [W_e \oplus W_\dR^+ \xrightarrow{x, -y} W_\dR] ) \]
Let $\bbD_{\rig,\kpinfty}^\dagger$ be the corresponding $(\phi, \Gamma)$-module.

\begin{theorem} \label{thm: coho B pair}
\cite[Thm. 5.11]{NakBpair}
    Use notations in above paragraph. We have 
 \[ \rg(\gk, W) \simeq  C_{\phigamma}( \bbD^\dagger_{\rig, \kpinfty}) \]
\end{theorem}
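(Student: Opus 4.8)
The statement to prove is that for a $B$-pair $W = (W_e, W_\dR^+)$ with associated $(\varphi, \Gamma)$-module $\bbD_{\rig,\kpinfty}^\dagger \in \mod_{\varphi,\gammak}(\bB_{\rig,\kpinfty}^\dagger)$ over the Robba ring, one has
\[ \rg(\gk, W) \simeq C_{\phigamma}(\bbD_{\rig,\kpinfty}^\dagger). \]
Since Theorem \ref{thmcohononetale} already identifies $C_{\phigamma}(\bbD_{\rig,\kpinfty}^\dagger)$ with several other complexes (and with $\rg(\gk,V)$ in the \'etale case), the task here is really to connect the \emph{definition} of $B$-pair cohomology, which is purely Galois-theoretic, with the $\varphi$-and-$\Gamma$ description. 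The plan is to follow the strategy used throughout the paper: separate the roles of $\varphi$ and $\gammak$, descend $\varphi$-cohomology to closed intervals via Lemma \ref{lemphidescent}, and invoke the already-established comparisons.

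First I would unwind the definition: $\rg(\gk, W)$ is the $\gk$-cohomology of the two-term complex $[W_e \oplus W_\dR^+ \to W_\dR]$. By Hochschild--Serre along $\gkpinfty \triangleleft \gk$ with quotient $\gammak$, this becomes $\rg(\gammak, \rg(\gkpinfty, [W_e \oplus W_\dR^+ \to W_\dR]))$. So the key input is to identify $\rg(\gkpinfty, W)$ (the $B$-pair cohomology without the $\Gamma$-layer) with $C_\varphi(\bbD_{\rig,\kpinfty}^\dagger)$, i.e.\ with $[\bbD_{\rig,\kpinfty}^\dagger \xrightarrow{\varphi-1} \bbD_{\rig,\kpinfty}^\dagger]$. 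This is the ``$\varphi$-cohomology'' half of the comparison, and it is exactly the content one expects from the dictionary between $B$-pairs and $\varphi$-modules over the Robba ring: $W_e$ corresponds to $(\bbD_{\rig,\kpinfty}^\dagger)^{\varphi=1}$-type data (the Robba-ring module with its Frobenius) and $W_\dR^+$ to the de Rham lattice, and the two-term complex defining $W$ is precisely engineered so that its $\gkpinfty$-cohomology computes the $\varphi$-complex. Concretely I would use Berger's equivalence (cited in the excerpt as \cite{Ber08ANT}, and recalled just before the theorem) to write $W_e = (\wtb_{\rig}^\dagger[1/t] \otimes_{\bB_{\rig,\kpinfty}^\dagger} \bbD_{\rig,\kpinfty}^\dagger)^{\varphi=1}$ and similarly for $W_\dR^+$, then compute $\rg(\gkpinfty, -)$ of each term using the known acyclicity/vanishing results for the perfect Robba ring and $\bdrplus$-type rings over $\gkpinfty$; the upshot, which is essentially \cite[\S5]{NakBpair} repackaged, is a quasi-isomorphism $\rg(\gkpinfty, W) \simeq C_\varphi(\bbD_{\rig,\kpinfty}^\dagger)$ after descending to a closed interval $[r,pr]$ via Lemma \ref{lemphidescent} and Convention \ref{convention: phi coho closed interval}.

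Granting that, the rest is formal: applying $\rg(\gammak, -)$ to the identification $\rg(\gkpinfty, W) \simeq C_\varphi(\bbD_{\rig,\kpinfty}^\dagger)$ and using that the $\gammak$-action on $\bbD_{\rig,\kpinfty}^\dagger$ is pro-analytic (Lemma \ref{lemlamod}), together with $C_{\phigamma}(\bbD_{\rig,\kpinfty}^\dagger) := \rg(\gammak, \bbD_{\rig,\kpinfty}^\dagger)^{\varphi=1} = \rg(\gammak, C_\varphi(\bbD_{\rig,\kpinfty}^\dagger))$, gives $\rg(\gk, W) \simeq C_{\phigamma}(\bbD_{\rig,\kpinfty}^\dagger)$. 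One should check compatibility of the homotopy fiber $(-)^{\varphi=1}$ with the various totalizations, but this is the same bookkeeping already carried out in the proof of Theorem \ref{thmcohononetale}. I expect the main obstacle to be precisely Step one: verifying that $\rg(\gkpinfty, W) \simeq C_\varphi(\bbD_{\rig,\kpinfty}^\dagger)$ at the level of complexes (not just cohomology groups), since this requires knowing the $\gkpinfty$-acyclicity of $\wtb_{\rig}^\dagger$-modules and the exactness of the $W_e \oplus W_\dR^+ \to W_\dR$ sequence in a form compatible with Frobenius descent; the cleanest route is to cite \cite[\S5]{NakBpair} for this, noting that their argument does not use $\delta$-functors in an essential way for the Robba-ring identification, and then to observe that the remaining steps are exactly the $\varphi$-then-$\gammak$ bookkeeping of Theorem \ref{thmcohononetale}. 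An alternative, if one wants to be fully self-contained, is to avoid $\rg(\gk,W)$ entirely on the $B$-pair side and instead match cohomology groups degree by degree using $H^0$ (fixed points), $H^1$ (extensions of $B$-pairs $\leftrightarrow$ extensions of $\varphi,\Gamma$-modules over the Robba ring), and a dimension/duality count in $H^2$ — but this is less conceptual and I would only fall back to it if the complex-level comparison proves awkward.
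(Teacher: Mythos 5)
There is a genuine gap at exactly the step you flagged as the ``key input.'' The identification $\rg(\gkpinfty, W) \simeq C_\varphi(\bbD^\dagger_{\rig,\kpinfty})$ is \emph{false} in general: the $\varphi$-complex of the \emph{imperfect} rigid-overconvergent module is one of the ``wrong'' $\varphi$-complexes of this paper. Already for $V=\qp$ (so $W=(\be,\bdrplus)$), the fundamental exact sequence gives $\rg(\gkpinfty,W)\simeq \rg(\gkpinfty,\qp)$, while $C_\varphi(\bbD^\dagger_{\rig,\kpinfty})\simeq C_\varphi(\bbD^\dagger_{\kpinfty})$ is not quasi-isomorphic to $C_\varphi(\bbD_{\kpinfty})\simeq\rg(\gkpinfty,\qp)$ — this is precisely Prop.~\ref{propwrongphi}(1), and it is why $\bbD^\dagger_{\kpinfty}$ and $\bbD^\dagger_{\rig,\kpinfty}$ are explicitly excluded in Prop.~\ref{propphicohorig}(1). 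What is true after killing the $\gkpinfty$-action is only $\rg(\gkpinfty,W)\simeq C_\varphi(\wtd^\dagger_{\rig,\kpinfty})$, with the \emph{tilde} (perfect Robba ring) module; the passage from $\wtd^\dagger_{\rig,\kpinfty}$ down to $\bbD^\dagger_{\rig,\kpinfty}$ cannot be performed at the level of $\varphi$-cohomology alone. It only becomes valid after the $\gammak$-cohomology is taken as well, and that descent is exactly the nontrivial content of Step~2 of Thm.~\ref{thmcohononetale}, which uses Porat's vanishing of higher locally analytic vectors (Example~\ref{example: porat}, Thm.~\ref{prop: no higher lav coho compa}) on closed intervals. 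So your Hochschild--Serre factorization collapses the two layers in the wrong order: the decompletion from tilde to non-tilde must happen \emph{inside} the $\gammak$-layer, not before it.

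For comparison, the paper's proof does not try to compute $\rg(\gkpinfty,W)$ against the imperfect module at all. It first invokes Thm.~\ref{thmcohononetale} to replace $C_{\phigamma}(\bbD^\dagger_{\rig,\kpinfty})$ by $C_{\phigamma}(\wtd^\dagger_{\rig,\kpinfty})\simeq\rg(\gk, C_\phi(\wtd^\dagger_\rig))$, and then reduces everything to the single complex-level, $\gk$-equivariant quasi-isomorphism $[W_e\to W_\dR/W_\dR^+]\simeq[\wtd^\dagger_\rig\xrightarrow{\varphi-1}\wtd^\dagger_\rig]$, proved by tensoring the fundamental exact sequence column $\qp\to\be\to\bdr/\bdrplus$ against the $\varphi-1$ rows over $\wtbrig$ and $\wtbrig[1/t]$. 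Your proposed fallback of citing \cite[\S 5]{NakBpair} for the hard step would of course make the statement true (it is Nakamura's theorem), but it abandons the point of this proof, which is to avoid the $\delta$-functor computations of \emph{loc.~cit.}; and your remark that Nakamura's argument ``does not use $\delta$-functors in an essential way'' is not accurate. A repaired version of your plan would keep Hochschild--Serre but identify $\rg(\gkpinfty,W)$ with $C_\varphi(\wtd^\dagger_{\rig,\kpinfty})$ (via the fundamental-sequence argument or \cite{Berjussieu}), and then quote the tilde-to-imperfect comparison of Thm.~\ref{thmcohononetale} after applying $\rg(\gammak,-)$ — at which point it essentially coincides with the paper's route.
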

\begin{proof}
Nakamura's proof uses $\delta$-functors and makes rather involved computations; we shall give a more conceptual proof along similar ideas in this paper.
    
Indeed, by Thm. \ref{thmcohononetale}, it  is equivalent to prove 
 \[ \rg(\gk, W) \simeq  C_{\phi, \gammak}( \wtD^\dagger_{\rig, \kpinfty}) \simeq \rg(\gk, C_\phi(\wtd^\dagger_{\rig}) ) . \]  
Thus it suffices to prove
\begin{equation}\label{eqphibpair}
    [W_e \to W_\dR/W_\dR^+] \simeq  [\wtd^\dagger_\rig \xrightarrow{\varphi-1} \wtd^\dagger_\rig ]
\end{equation} 
An (algebraic) proof of this is given in \cite[Prop. 3.2, Prop. 3.3]{Berjussieu}; the proof is rather technical and uses slope filtration.
There is another conceptual \emph{geometric} proof using the fact that both sides are quasi-isomorphic to \[\rg(X_{\mathrm{FF}}, \cale) \]
where $X_{\mathrm{FF}}$ is the Fargues--Fontaine curve, and $\cale$ is the vector bundle  on $X_{\mathrm{FF}}$ corresponding to $W$; cf.  \cite[Prop. 5.3.3]{FF18}.

Here we provide another conceptual (and     more elementary) \emph{algebraic}  proof, along similar ideas in this paper.
Consider the diagram 
\[
\begin{tikzcd}
\qp \arrow[d] \arrow[r] & \wtbrig \arrow[r, "\varphi-1"] \arrow[d]        & \wtbrig \arrow[d]        \\
\be \arrow[d] \arrow[r] & {\wtbrig[1/t]} \arrow[d] \arrow[r, "\varphi-1"] & {\wtbrig[1/t]} \arrow[d] \\
\bdr/\bdrplus \arrow[r] & \bdr/\bdrplus \arrow[r]                         & 0                       
\end{tikzcd}\]
here all rows are short exact; cf. for example \cite[Lem. 4.5.3]{KL2} for the top row; cf. \cite[Cor. 1.1.6]{Ber08ANT} for the middle row.
Thus the totalization of the entire diagram is acyclic.
Now the left vertical column is the fundamental short exact sequence. Thus the totalization of the ``right half" of the diagram is also acyclic. 
Suppose $\wtd^\dagger_\rig$ can be descended to $\wtd^{[r, +\infty)}$ for some $r\gg 0$. Note all rings on  the ``right half" are $\wtb^{[r, +\infty)}$-algebras, and thus we can tensor them with $\wtd^{[r, +\infty)}$ (over $\wtb^{[r, +\infty)}$). The totalization of the tensor complex is still acyclic; unravel the totalization and use the formula
\[ W_e= (\wtd^{[r, +\infty)}\otimes_{\wtb^{[r, +\infty)}} \wtbrig[1/t])^{\phi=1},\]
we obtain   the desired quasi-isomorphism \eqref{eqphibpair}.
\end{proof}

\subsection{$\phi$-equivariant Lie algebra cohomologies}
We finally treat the complex  $C_{\varphi, N_\nabla}(\bfD_{\rig,\kinfty}^\dagger)$.

\begin{lemma}
    $C_{\varphi, N_\nabla}(\bfD_{\rig,\kinfty}^\dagger)$ is quasi-isomorphic to the complex 
    \[   \bfD_{ \kinfty}^{[r, pr]} \xrightarrow{\phi-1, N_\nabla} \bfD_{ \kinfty}^{[pr, pr]}\oplus \bfD_{ \kinfty}^{[r, pr]} \xrightarrow{N_\nabla, \frac{pE(u)}{E(0)}\phi-1} \bfD_{ \kinfty}^{[pr, pr]}   \]
\end{lemma}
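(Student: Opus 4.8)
The plan is to apply the ``$\varphi$-descent'' technique of Lemma \ref{lemphidescent} to the operator $N_\nabla$-complex, exactly as was done in the group-cohomology and Lie-algebra settings in \S\ref{sec: coho phi tau}. Recall from Remark \ref{rem: c phi nnabla homotopy fiber} that $C_{\varphi, N_\nabla}(\bfD_{\rig,\kinfty}^\dagger)$ is the $\phi=1$ homotopy fiber of $C_{N_\nabla}(\bfD_{\rig,\kinfty}^\dagger) = [\bfD_{\rig,\kinfty}^\dagger \xrightarrow{N_\nabla} \bfD_{\rig,\kinfty}^\dagger]$, and that up to the normalizing unit $c$ (here $c = p\mathfrak{t}$ or $p^2\mathfrak{t}$, cf. Def.~\ref{defndiffwtb}) the first component of the differential is just $\phi-1$ while the monodromy component is $N_\nabla$. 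Since forming the $\phi=1$ homotopy fiber is the same as totalizing the double complex with horizontal arrows $\phi-1$ and vertical arrows $N_\nabla$ (with the twist $\frac{\phi(c)}{c}=\frac{pE(u)}{E(0)}$ absorbed into the second copy), the whole complex is built out of the single two-term $\phi$-complex $C_\varphi(\bfD_{\rig,\kinfty}^\dagger)$ together with the $N_\nabla$-maps connecting two such copies.

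The key step is therefore: $N_\nabla$ is a map of $\varphi$-complexes (up to the unit $\frac{\phi(c)}{c}$), so descending $C_\varphi(\bfD_{\rig,\kinfty}^\dagger)$ to a closed interval via Lemma \ref{lemphidescent} (using Convention \ref{convention: phi coho closed interval}) descends the whole bicomplex. Concretely, first I would recall from Lemma \ref{lemphidescent}, Eqn.~\eqref{phicoho2}, that
\[ C_\varphi(\bfD_{\rig,\kinfty}^\dagger) \simeq [\bfD_{\kinfty}^{[r, pr]} \xrightarrow{\varphi-1} \bfD_{\kinfty}^{[pr, pr]}], \]
and that $N_\nabla = \frac{1}{p\mathfrak{t}}\nabla_\tau$ (resp.\ $\frac{1}{p^2\mathfrak{t}}\nabla_\tau$) is defined and stable on each $\bfD_{\kinfty}^{[r, s]}$ for $r$ large enough, since $\mathfrak{t},\mathfrak{t}^{-1}\in\wtb^{[r,+\infty)}$ by Lemma \ref{lem b} and the stability statement of Prop.~\ref{prop: N nabla operator} descends to the closed-interval modules (which are obtained by base change). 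One checks $N_\nabla$ intertwines $\varphi-1$ on $\bfD_{\kinfty}^{[r,pr]}$ with $\frac{pE(u)}{E(0)}\varphi - 1$ on the shifted copy — this is precisely the commutation identity already encoded in the definition of $C_{\varphi, N_\nabla}$ and recalled in Remark \ref{rem: c phi nnabla homotopy fiber} (the point being $\varphi(\mathfrak t)=\frac{E(0)}{pE(u)}\mathfrak t$, so $\frac{\varphi(c)}{c}=\frac{pE(u)}{E(0)}$, which is a unit in $\bfb_{\rig,\kinfty}^\dagger$). Hence the quasi-isomorphism $C_\varphi(\bfD_{\rig,\kinfty}^\dagger)\simeq C_\varphi(\bfD_{\kinfty}^{[r,pr]})$ of Lemma \ref{lemphidescent} is a quasi-isomorphism of $N_\nabla$-equivariant complexes (with the appropriate twist on the target), and totalizing the $N_\nabla$-direction — an operation that preserves quasi-isomorphisms of bounded complexes — yields the claimed three-term complex.

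The main obstacle, such as it is, is bookkeeping rather than anything deep: one must verify that the quasi-isomorphism of Lemma \ref{lemphidescent} is genuinely compatible with the $N_\nabla$-structure and the $\frac{pE(u)}{E(0)}$-twist, i.e.\ that the explicit homotopies realizing $C_\varphi(\bfD_{\rig,\kinfty}^\dagger)\simeq C_\varphi(\bfD_{\kinfty}^{[r,pr]})$ (coming from \cite[Prop. 5.4.12]{KL2} / \cite[Prop. 6.3.19]{KL15}) commute with $\nabla_\tau$ up to the relevant unit — this follows because $\nabla_\tau$ is $\varphi^{-1}$-equivariant up to a unit and the homotopies are built from $\varphi^{-1}$, but it should be spelled out. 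Once this compatibility is in hand the statement follows formally: totalize the square
\[
\begin{tikzcd}
\bfD_{\kinfty}^{[r, pr]} \arrow[r, "\varphi-1"] \arrow[d, "N_\nabla"'] & \bfD_{\kinfty}^{[pr, pr]} \arrow[d, "N_\nabla"] \\
\bfD_{\kinfty}^{[r, pr]} \arrow[r, "\frac{pE(u)}{E(0)}\varphi-1"] & \bfD_{\kinfty}^{[pr, pr]}
\end{tikzcd}
\]
and observe it is exactly $C_{\varphi, N_\nabla}$ descended to the closed interval, as in Convention \ref{convention: phi coho closed interval}.
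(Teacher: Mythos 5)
Your overall route is the same as the paper's: realize $C_{\phi,N_\nabla}(\bfD_{\rig,\kinfty}^\dagger)$ as the totalization of a two-row double complex (Rem.~\ref{rem: c phi nnabla homotopy fiber}) and descend it row by row to a closed interval via Lem.~\ref{lemphidescent}. However, there is a gap at the one non-formal point of the lemma. Lemma~\ref{lemphidescent} covers the untwisted row $[\bfD_{\rig,\kinfty}^\dagger\xrightarrow{\phi-1}\bfD_{\rig,\kinfty}^\dagger]$, but the second row of your square is the \emph{twisted} complex $[\bfD_{\rig,\kinfty}^\dagger\xrightarrow{\frac{pE(u)}{E(0)}\phi-1}\bfD_{\rig,\kinfty}^\dagger]$, and you never argue that $\phi$-descent holds for it. Declaring the descent quasi-isomorphism to be ``$N_\nabla$-equivariant'' and then ``totalizing'' is not enough: to compare totalizations you need the natural map to be a quasi-isomorphism on \emph{both} rows, and the twisted row is not literally an instance of Lem.~\ref{lemphidescent}. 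The paper's proof devotes its key sentence precisely to this: the twisted row is $C_\phi(\fkt\bfD_{\rig,\kinfty}^\dagger)$, and $\fkt\bfD_{\rig,\kinfty}^\dagger$ is still a $\phi$-module over $\bfb_{\rig,\kinfty}^\dagger$ because $\phi(\fkt)=\frac{pE(u)}{E(0)}\fkt$ with $\frac{pE(u)}{E(0)}$ a unit, so \cite[Prop. 5.4.12]{KL2} applies verbatim to it as well. (Incidentally, your intermediate formula $\phi(\fkt)=\frac{E(0)}{pE(u)}\fkt$ is inverted, although the twist $\frac{\phi(c)}{c}=\frac{pE(u)}{E(0)}$ you actually use is the correct one.)

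By contrast, the issue you single out as the main obstacle---compatibility of the Kedlaya--Liu homotopies with $\nabla_\tau$---is not needed at all. The quasi-isomorphisms of Lem.~\ref{lemphidescent} are induced by the natural restriction maps of modules, which commute with $N_\nabla$ (equivalently with $\nabla_\tau$, up to the unit $\fkt$) on the nose; once each row map is known to be a quasi-isomorphism, the comparison of totalizations follows from the row filtration (or a mapping-cone argument) without ever touching the homotopies. So the effort should go into the twisted-row descent above, not into the homotopies; with that point supplied (as in the paper, via Prop.~\ref{prop: N nabla operator} and the unit $\frac{pE(u)}{E(0)}$), your argument goes through.
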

\begin{proof} By Rem. \ref{rem: c phi nnabla homotopy fiber},
   $C_{\varphi, N_\nabla}(\bfD_{\rig,\kinfty}^\dagger)$ is quasi-isomorphic to the totalization of  
   \[
\begin{tikzcd}
{\bfD_{\rig, \kinfty}^\dagger} \arrow[d, "\nabla_\tau"] \arrow[r, "\phi-1"] & {\bfD_{\rig, \kinfty}^\dagger} \arrow[d, "\nabla_\tau"] \\
{\fkt\bfD_{\rig, \kinfty}^\dagger} \arrow[r, "\phi-1"]                      & {\fkt\bfD_{\rig, \kinfty}^\dagger}                     
\end{tikzcd}
\]
Note in above diagram, since we are working $\qp$-rationally, using $\fkt$ is   equivalent to  using $p\fkt$ or $p^2\fkt$.
The top row is quasi-isomorphic to $C_\phi( \bfD_{ \kinfty}^{[r, pr]})$ by Lem. \ref{lemphidescent}. It remains to prove
\[C_\phi(\fkt\bfD_{\rig,\kinfty}^\dagger ) \simeq C_\phi(\fkt \bfD_{ \kinfty}^{[r, pr]}) \]
The point   is that \cite[Prop. 5.4.12]{KL2} (as applied in our  Lem. \ref{lemphidescent})  is still applicable here, because  $\fkt\bfD_{\rig,\kinfty}^\dagger$ is still a $\phi$-module over $\bfb_{\rig,\kinfty}^\dagger$! Indeed, $\phi(\fkt)=\frac{pE(u)}{E(0)}\fkt$ and $\frac{pE(u)}{E(0)}$ is a unit in  $\bfb_{\rig,\kinfty}^\dagger$.
\end{proof}

\begin{theorem} \label{thm: phi Lie algebra}
Let $\bbD_{\rig, \kpinfty}^\dagger$ be a $(\phi, \Gamma)$-module, and use Notation \ref{notarigmod}.
Let $W$ be the associated $B$-pair.
\begin{enumerate}
    \item  We have a quasi-isomorphism
\[ C_{\phi, \nabla_\gamma}(\bbD_{\rig, \kpinfty}^\dagger)  \simeq  \injlim_n \rg(G_{K(\mu_n)},W)\]
Taking cohomology, we obtain
\begin{equation} \label{eqn: thm gamma lie}
    H^i_{\phi, \nabla_\gamma}(\bbD_{\rig, \kpinfty}^\dagger)   \simeq\injlim_n H^i(G_{K(\mu_n)}, W ) =\bigcup_n H^i(G_{K(\mu_n)}, W )
\end{equation}

 \item  We have a quasi-isomorphism
\[ C_{\phi, \nnabla}(\bfD_{\rig, \kinfty}^\dagger)   \simeq  \injlim_n \rg(G_{K(\pi_n)},W)\]
Taking cohomology, we obtain
\begin{equation}\label{eqn: thm tau lie}
  H^i_{\phi, N_\nabla}(\bfD_{\rig, \kinfty}^\dagger)  \simeq \injlim_n H^i(G_{K(\pi_n)}, W)  =\bigcup_n H^i(G_{K(\pi_n)}, W )  
\end{equation}
\end{enumerate}
 \end{theorem}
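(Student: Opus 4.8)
\textbf{Proof plan for Theorem \ref{thm: phi Lie algebra}.}
The plan is to reduce both statements to the locally analytic cohomology comparison of Theorem \ref{prop: no higher lav coho compa}, combined with the Hochschild--Serre spectral sequence along the relevant Galois towers. First I would treat Item (1). By definition $C_{\phi, \nabla_\gamma}(\bbD_{\rig, \kpinfty}^\dagger) = \rg(\Lie\gammak, \bbD_{\rig, \kpinfty}^\dagger)^{\phi=1}$, so using the ``$\varphi$-descent'' Lemma \ref{lemphidescent} (and Convention \ref{convention: phi coho closed interval}) it suffices to understand $\rg(\Lie\gammak, \bbD_{\kpinfty, \infty}^{I})$ for $I = [r, pr]$ and $[pr, pr]$. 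Now $\bbD_{\kpinfty, \infty}^{I} = \bigcup_m \varphi^{-m}(\bbD_\kpinfty^{p^mI})$, and by Proposition \ref{prop: lav of mod} this is precisely the locally analytic vectors of the Banach module $\wtd_L^{I}$ for the subgroup $\gal(L/\kpinfty) = \gammak$ (after descending, $\wtd_\kpinfty^I$ has locally analytic vectors $\bbD_{\kpinfty, \infty}^I$). The key input is that $\wtd_\kpinfty^I$ has no higher locally analytic vectors (Example \ref{example: porat}), so Theorem \ref{prop: no higher lav coho compa} gives $\rg(\Lie\gammak, \bbD_{\kpinfty,\infty}^I) \simeq \injlim_{\gammak' \subset \gammak} \rg_\la(\gammak', \wtd_\kpinfty^I)$, and more usefully $\rg(\Lie\gammak, \bbD_{\kpinfty,\infty}^I)^{\gammak'} \simeq \rg_\cont(\gammak', \wtd_\kpinfty^I)$ for each open $\gammak' = \gal(L/K(\mu_n)\cdot\text{(part of tower)})$. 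The point is that taking $\Lie\gammak$-cohomology \emph{without} a group-invariance is the same as taking the colimit over open subgroups of the continuous cohomology; this is exactly Theorem \ref{thm: Tamme ana coho}(2).

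With that in hand I would assemble the argument: $C_{\phi, \nabla_\gamma}(\bbD_{\rig, \kpinfty}^\dagger) \simeq \injlim_n \big(\rg_\la(\gal(L/K(\mu_n)_\infty), C_\phi(\wtd_\kpinfty^{[r,pr]}))\big)$, where $K(\mu_n)_\infty$ denotes the appropriate cyclotomic-type subextension fixing the $n$-th level. Since $C_\phi(\wtd_\kpinfty^{[r,pr]}) \simeq \rg(G_{\kpinfty}, V)$ by Proposition \ref{propphicohorig} (resp.\ its $B$-pair analogue $\rg(X_{\mathrm{FF}}, \cale)$ via Theorem \ref{thm: coho B pair} / the fundamental exact sequence when $W$ is a general $B$-pair), the inner continuous $\gal$-cohomology recombines via Hochschild--Serre to $\rg(G_{K(\mu_n)}, W)$ for each $n$. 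Passing to the colimit and noting that cohomology commutes with filtered colimits (and that $H^i(G_{K(\mu_n)}, W)$ is a direct system under restriction) yields $H^i_{\phi,\nabla_\gamma} \simeq \injlim_n H^i(G_{K(\mu_n)}, W) = \bigcup_n H^i(G_{K(\mu_n)}, W)$, which is \eqref{eqn: thm gamma lie}.

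For Item (2) the structure is parallel but uses the $\tau$-side. By the preceding Lemma, $C_{\phi, N_\nabla}(\bfD_{\rig,\kinfty}^\dagger)$ is the $\phi=1$ homotopy fiber of $C_{N_\nabla}(\bfD_{\rig,\kinfty}^\dagger)$, and by Corollary \ref{cor: lie alg coho verified applied} (applied after the closed-interval descent of Lemma \ref{lemphidescent}) we have $C_{N_\nabla}(\bfD_{\kinfty,\infty}^{[r,s]}) \simeq \rg(\Lie\hatg, \wtD_L^{[r,s],\la})^{\gal(L/\kinfty)=1}$. As with Item (1), Theorem \ref{thm: Tamme ana coho}(2) / Theorem \ref{prop: no higher lav coho compa} identifies $\rg(\Lie\hatg, \wtD_L^{[r,s],\la})^{\gal(L/K(\pi_n))}$ with $\rg_\cont(\gal(L/K(\pi_n)), \wtD_L^{[r,s]})$ (here using Example \ref{example: porat} for $\wtb_L^{[r,s]}$, so no higher locally analytic vectors obstruct the comparison), and $\bfD_{\kinfty}^{[r,s]} = (\wtD_L^{[r,s],\la})^{\gal(L/\kinfty)=1}$ by Proposition \ref{prop: lav of mod}. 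Since $\gal(L/\kinfty) = \bigcap_n \gal(L/K(\pi_n)\cdot\mu\text{-part})$ is a colimit of open subgroups with $\bigcup_n$ of them dense, taking $\gal(L/\kinfty)$-invariants of the Lie-algebra cohomology is the colimit over $n$ of the $\gal(L/K(\pi_n))$-continuous cohomologies. Incorporating the $\phi=1$ fiber and using $C_\phi(\wtd_L^{[r,pr]}) \simeq \rg(G_L, V)$ (Proposition \ref{propphicohorig}), Hochschild--Serre gives $\rg(G_{K(\pi_n)}, W)$ at each level, and the colimit yields \eqref{eqn: thm tau lie}.

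The main obstacle I expect is bookkeeping the interplay of the three ``colimit'' operations --- the Frobenius colimit defining $\bfD_{\kinfty,\infty}^I$ and $\bbD_{\kpinfty,\infty}^I$, the colimit over open subgroups implicit in passing from $\Lie$-cohomology to continuous cohomology (Theorem \ref{thm: Tamme ana coho}(2)), and the colimit over the tower levels $n$ --- and checking that they match up compatibly so that the three become a single $\injlim_n$ over $K(\pi_n)$ (resp.\ $K(\mu_n)$). In particular one must verify that the open subgroups $\gamma$-adic neighborhoods of the identity in $\hatg$ (resp.\ $\gammak$) arising from Theorem \ref{thm: Tamme ana coho}(2) are cofinal with, and index-compatible with, the subgroups $\gal(L/K(\pi_n)\cdots)$ cutting out the tower, so that the $\phi=1$ homotopy fiber can be commuted past all the colimits; the subtlety when $p=2$ (where $\gal(L/\kinfty)$ need not be pro-cyclic, cf. Remark \ref{rem: intro p equal 2}) must be handled by working with the Lie-algebra complex throughout, which is insensitive to pro-cyclicity, as the theorem statement allows any $p$. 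Everything else is a formal consequence of results already established in \S\ref{sec: axiom lie alg}, \S\ref{sec: verify lie}, and \S\ref{sec: phi coho}.
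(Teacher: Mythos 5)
Your plan is correct in substance and follows essentially the same route as the paper: $\varphi$-descent to a closed interval (Lem.~\ref{lemphidescent}), conversion of Lie-algebra cohomology into a colimit of continuous group cohomologies over open subgroups via Lazard--Tamme (Thm.~\ref{thm: Tamme ana coho}), commutation of the $\varphi=1$ fiber with filtered colimits, identification of each finite level with Galois cohomology of the $B$-pair via Thm.~\ref{thm: coho B pair} (and Thm.~\ref{thmcohononetale}), and vanishing of finite-group cohomology on $\qp$-vector spaces to get the increasing unions. Two points where you deviate, one of which needs repair. First, for Item (1) your detour through $\wtd_{\kpinfty}^{I}$ and higher locally analytic vectors (Example~\ref{example: porat}, Thm.~\ref{prop: no higher lav coho compa}) is unnecessary: the $\gammak$-action on the Banach module $\bbD_{\kpinfty}^{[r,pr]}$ is already locally analytic by Lem.~\ref{lemlamod}, so Thm.~\ref{thm: Tamme ana coho} applies to it directly, which is what the paper does. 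Second, in Item (2) your exchange ``$\gal(L/\kinfty)$-invariants of the Lie-algebra cohomology equals the colimit over $n$ of the $\gal(L/K(\pi_n))$-invariants'' is true, but not for the reason you give (``density'' is not the relevant notion): since $\gal(L/\kinfty)=\bigcap_n \gal(L/K(\pi_n))$ is a decreasing intersection of open subgroups, one needs the induced action on the cohomology to be \emph{smooth}, after which compactness shows any open stabilizer contains some $\gal(L/K(\pi_n))$. The paper avoids having to argue this abstractly by reordering the steps: it first identifies the full complex $C_\phi(\rg(\Lie \hatg, \wtd_{L}^{[r,pr],\la}))$ with $\injlim_n \rg(G_{K(\pi_n,\mu_n)},W)$, on whose terms the $\gal(L/\kinfty)$-action visibly factors through finite quotients, and only then takes invariants, using Hochschild--Serre for the finite groups $\gal(K(\pi_n,\mu_n)/K(\pi_n))$ with $\qp$-coefficients; you should either prove smoothness directly or adopt the same ordering.
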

 
\begin{proof}
Consider Item (1). We have
   \begin{align*}
  & \    C_{\phi, \nabla_\gamma}(\bbD_{\rig, \kpinfty}^\dagger)    \quad &&  \\
    \simeq &    C_{\phi, \nabla_\gamma}(\bbD_{\kpinfty}^{[r, pr]})    \quad && \text{by Lem. \ref{lemphidescent} and Convention \ref{convention: phi coho closed interval}} \\
      \simeq  & C_\phi(\rg(\Lie \gammak,  \bbD_{ \kpinfty}^{[r, pr]}) ), \quad && \text{re-writing above}   \\
      \simeq  & C_\phi( \injlim_n \rg_\la( \gal(\kpinfty/K(\mu_n), \bbd_{\kpinfty}^{[r, pr]}) )   \quad && \text{by Thm. \ref{thm: Tamme ana coho} }   \\
      \simeq  & C_\phi( \injlim_n \rg_\cont( \gal(\kpinfty/K(\mu_n), \bbd_{\kpinfty}^{[r, pr]}) )     \quad && \text{by Thm. \ref{thm: Tamme ana coho} }\\
       \simeq  &  \injlim_n C_\phi(\rg_\cont( \gal(\kpinfty/K(\mu_n), \bbd_{\kpinfty}^{[r, pr]}) )  \quad && \text{cohomology commutes with colimits} \\
        \simeq  &  \injlim_n C_{\phi, \Gamma_{K(\mu_n)}}( \bbd_{\rig, \kpinfty}^\dagger ) , \quad && \text{by Lem. \ref{lemphidescent}  }  \\
         \simeq  &  \injlim_n \rg(G_{K(\mu_n)},W), \quad && \text{by Thm. \ref{thm: coho B pair} }
   \end{align*}
Taking cohomology, we obtain \eqref{eqn: thm gamma lie}: indeed, the inductive limit of cohomologies of $B$ pairs above is an increasing union because
\begin{equation}
    \rg(G_{K(\mu_n)},W) \simeq  \rg(\gal( K(\mu_{n
+1})/K(\mu_{n})), \rg(G_{K(\mu_{n
+1})},W) 
\end{equation}
and finite group cohomology  on $\qp$-vector spaces is concentrated in degree zero (cf. e.g. \cite[Cor. 16.5]{Hilton_Stammbach_homological_GTM_v2}).
 
Consider Item (2). We have
       \begin{align*}
  & \    C_{\phi, \nnabla}(\bfD_{\rig, \kinfty}^\dagger)    \quad &&  \\
    \simeq &    C_{\phi, \nnabla}(\bfD_{\kinfty}^{[r, pr]})    \quad && \text{by Lem. \ref{lemphidescent}} \\
    \simeq &    C_{\phi, \nnabla}(\bfD_{\kinfty, \infty}^{[r, pr]})    \quad && \text{ by Lem. \ref{lemphicoho} } \\
      \simeq  & C_\phi( (\rg(\Lie \hatg,  \wtD_{L}^{[r, pr], \la})^{\gal(L/\kinfty)=1} ), \quad &&  \text{  by Cor. \ref{cor: lie alg coho verified applied}  }   
  \end{align*}
 For notation simplicity, we first compute above cohomology without taking $\gal(L/\kinfty)=1$. 
Very similar to computations in Item (1), we have
   \begin{align*}
  & \ C_\phi  (\rg(\Lie \hatg,  \wtD_{L}^{[r, pr], \la})) \quad &&  \\
      \simeq  & C_\phi( \injlim_n \rg_\la( \gal(L/K(\pi_n,\mu_n), \wtd_{L}^{[r, pr], \la}) )   \quad && \text{by Thm. \ref{thm: Tamme ana coho} }   \\
      \simeq  & C_\phi( \injlim_n \rg_\cont( \gal(L/K(\pi_n,\mu_n), \wtd_{L}^{[r, pr]}) )     \quad && \text{ by Thm. \ref{thm: Tamme ana coho} }\\
        \simeq  &  \injlim_n C_{\phi, \gal(L/K(\pi_n,\mu_n)) }(  \wtd_{\rig,L}^\dagger ) , \quad && \text{ by Lem. \ref{lemphidescent} }   \\
         \simeq  &  \injlim_n \rg(G_{K(\pi_n,\mu_n)},W), \quad && \text{by Thms. \ref{thmcohononetale}  and \ref{thm: coho B pair} }
   \end{align*}
We can conclude by taking $\gal(L/\kinfty)$-invariants. (Note here, the $\gal(L/\kinfty)$-action on $\rg(G_{K(\pi_n,\mu_n)},W)$ factors through a finite quotient.)
\end{proof}

\begin{rem} \label{rem: artificial tau 1}
    \begin{enumerate}
        \item 
Item (1) of Thm. \ref{thm: phi Lie algebra} is strictly stronger than the comparison in Item (3) of Thm. \ref{thmcohononetale}; since the later follows by taking $\gammak$-invariant of the former.

 \item From the comparison 
\[  H^i_{\phi, N_\nabla}(\bfD_{\rig, \kinfty}^\dagger)  \simeq \bigcup_n H^i(G_{K(\pi_n)}, W )  \]
One can  define
\[ (H^i_{\phi, N_\nabla}(\bfD_{\rig, \kinfty}^\dagger) )^{\tau=1} := H^i_{\phi, N_\nabla}(\bfD_{\rig, \kinfty}^\dagger)  \cap ( \bigcup_n H^i(G_{K(\mu_n, \pi_n)}, W )   )^{\tau=1} \]
And then obviously
\[ (H^i_{\phi, N_\nabla}(\bfD_{\rig, \kinfty}^\dagger) )^{\tau=1} \simeq H^i(\gk, W) \]
We do not expect this very \emph{artificial} construction to be very useful.
    \end{enumerate}
\end{rem}

\subsection{Some ``wrong" $(\varphi, \tau)$-complexes} \label{subsec: wrong phi tau complex}
This subsection is a continuation of \S \ref{subsecwrongphi}.
Recall there we studied some ``natural" $\varphi$-complexes that turn out to be ``wrong" ones; we now show they induce some ``natural" $(\varphi, \tau)$-complexes that also turn out to be ``wrong" complexes (as mentioned in the statements of Thms. \ref{thmetalcompa} and \ref{thmcohononetale}).
Let $K/\qp$ be a finite extension, and consider  $V \in \rep_\qp(\gk)$. Use Notations  \ref{notaetalmod} and \ref{notarigmod}.

\begin{prop} \label{propwrongphitau}
We have  a quasi-isomorphism 
\begin{equation}\label{eqwrongintrocomp}
      C_{\phi, \tau}(\bfD_{\kinfty}^\dagger, \wtD_{L}^\dagger) \simeq C_{\phi, \tau}(\bfD_{\rig, \kinfty}^\dagger, \wtD_{\rig, L}^\dagger);
\end{equation}
but they are (in general) \emph{not} quasi-isomorphic to $C_{\phi, \tau}(\wtD_{\rig, \kinfty}^\dagger, \wtD_{\rig, L}^\dagger)$ (equivalently, to $\rg(\gk, V)$): this is already so when $V=\qp$ is the trivial representation.  (Thus, both   complexes in Eqn. \eqref{eqwrongintrocomp} are \emph{wrong} ones.)
\end{prop}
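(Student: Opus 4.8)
The first quasi-isomorphism \eqref{eqwrongintrocomp} should be a formal consequence of the $\varphi$-cohomology comparison already established. Indeed, inspecting the double complex defining $C_{\phi,\tau}(\bfD_\kinfty^\dagger,\wtD_L^\dagger)$, the horizontal arrows are $\varphi-1$ on $\bfD_\kinfty^\dagger$ and on $(\wtD_L^\dagger)^{\delta-\gamma=0}$; the same is true for $C_{\phi,\tau}(\bfD_{\rig,\kinfty}^\dagger,\wtD_{\rig,L}^\dagger)$ with $\bfD_{\rig,\kinfty}^\dagger$ and $(\wtD_{\rig,L}^\dagger)^{\delta-\gamma=0}$. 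By Prop.~\ref{propwrongphi}(2) we have $C_\varphi(\bfD_\kinfty^\dagger)\simeq C_\varphi(\bfD_{\rig,\kinfty}^\dagger)$, and by the arguments of Prop.~\ref{propphicohorig}/Cor.~\ref{cor: verify axiom q for rings} (applied to the $\hatg$-ring $\wtb_L^\dagger$ versus $\wtb_{\rig,L}^\dagger$, where Axiom~\ref{axiomgroupQ} holds so the $\delta-\gamma=0$ fibre behaves well) we get $C_\varphi((\wtD_L^\dagger)^{\delta-\gamma=0})\simeq C_\varphi((\wtD_{\rig,L}^\dagger)^{\delta-\gamma=0})$. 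Since these identifications are compatible with the $\tau-1$ maps (which are induced from the ambient $\hatg$-action on $\wtD_L^\dagger\subset\wtD_{\rig,L}^\dagger$), a diagram chase on the associated triple complex yields \eqref{eqwrongintrocomp}.

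For the negative statement, the plan is to reduce to a statement purely about $\varphi$-cohomology, exactly as in the sketch following Thm.~\ref{thm3complexwrong}. By Thm.~\ref{thmcohononetale}(1), $C_{\phi,\tau}(\wtD_{\rig,\kinfty}^\dagger,\wtD_{\rig,L}^\dagger)\simeq C_\phigamma(\bbD_{\rig,\kpinfty}^\dagger)\simeq\rg(\gk,V)$, and the $\tau$-part of both complexes agrees (both use $(\wtD_{\rig,L}^\dagger)^{\delta-\gamma=0}$, with the same $\tau-1$). Hence $C_{\phi,\tau}(\bfD_{\rig,\kinfty}^\dagger,\wtD_{\rig,L}^\dagger)\simeq C_{\phi,\tau}(\wtD_{\rig,\kinfty}^\dagger,\wtD_{\rig,L}^\dagger)$ would force a quasi-isomorphism between the ``$\phi$-only'' columns, i.e. between $C_\varphi(\bfD_{\rig,\kinfty}^\dagger)$ and $C_\varphi(\wtD_{\rig,\kinfty}^\dagger)$; more precisely one compares the cones of the two vertical maps $\tau-1$, and since the target column $(\wtD_{\rig,L}^\dagger)^{\delta-\gamma=0}$ is the same, a quasi-isomorphism of totalizations is equivalent to a quasi-isomorphism $C_\varphi(\bfD_{\rig,\kinfty}^\dagger)\simeq C_\varphi(\wtD_{\rig,\kinfty}^\dagger)$. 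But the latter fails already for $V=\qp$: by Prop.~\ref{propwrongphi}(2), $C_\varphi(\bfD_{\rig,\kinfty}^\dagger)\simeq C_\varphi(\bfD_\kinfty^\dagger)$ is \emph{not} quasi-isomorphic to $C_\varphi(\bfD_\kinfty)\simeq\rg(\gkinfty,\qp)$, whereas $C_\varphi(\wtD_{\rig,\kinfty}^\dagger)\simeq\rg(\gkinfty,\qp)$ by Prop.~\ref{propphicohorig}(3). Concretely, the discrepancy is measured by $\bfb_\kinfty/\bfb_\kinfty^\dagger$ on which $\varphi-1$ is not bijective (the element $x=\sum_{i\le 0,\,p\nmid i}p^{\lfloor\log(-i)\rfloor}u^i$ witnessing a nonzero class in $(\bfb_\kinfty)^{\psi=0}/(\bfb_\kinfty^\dagger)^{\psi=0}$), so $H^1$ of the two $(\varphi,\tau)$-complexes already differ.

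The main obstacle is making rigorous the reduction ``quasi-isomorphism of the full $(\varphi,\tau)$-complexes $\Longleftrightarrow$ quasi-isomorphism of the $\varphi$-columns.'' One direction is clear, but for the converse one must check that the natural map $C_{\phi,\tau}(\bfD_{\rig,\kinfty}^\dagger,\wtD_{\rig,L}^\dagger)\to C_{\phi,\tau}(\wtD_{\rig,\kinfty}^\dagger,\wtD_{\rig,L}^\dagger)$ fits into a morphism of triangles whose third term is precisely the cone of $C_\varphi(\bfD_{\rig,\kinfty}^\dagger)\to C_\varphi(\wtD_{\rig,\kinfty}^\dagger)$; this uses that $\bfD_{\rig,\kinfty}^\dagger\hookrightarrow\wtD_{\rig,\kinfty}^\dagger$ commutes with everything in sight and that $(\wtD_{\rig,L}^\dagger)^{\delta-\gamma=0}$ is literally shared. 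Once this is set up, I would invoke Prop.~\ref{propwrongphi} together with Prop.~\ref{propphicohorig} to conclude the cone is nonzero, hence the two $(\varphi,\tau)$-complexes are not quasi-isomorphic. A minor subtlety worth flagging: one should confirm that passing to the $\delta-\gamma=0$ fibre before (versus after) inverting $p$-adic-radius issues does not introduce extra cohomology — but this is exactly the content of Axiom~\ref{axiomgroupQ} for $\wtb_L^\dagger$ and $\wtb_{\rig,L}^\dagger$, verified in Cor.~\ref{cor: verify axiom q for rings} and used implicitly in Thm.~\ref{thmcohononetale}.
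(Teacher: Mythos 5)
Your overall route is the paper's: view both $(\varphi,\tau)$-complexes as totalizations of the double complex \eqref{eqdoublephitau}, reduce the comparison \eqref{eqwrongintrocomp} to a column-wise comparison of $\varphi$-cohomologies, and reduce the negative statement (after the positive one, using that $C_{\phi,\tau}(\bfD_{\rig,\kinfty}^\dagger,\wtD_{\rig,L}^\dagger)$ and $C_{\phi,\tau}(\wtD_{\rig,\kinfty}^\dagger,\wtD_{\rig,L}^\dagger)$ literally share the right column) to the fact that $C_\varphi(\bfD_{\rig,\kinfty}^\dagger)\simeq C_\varphi(\bfD_{\kinfty}^\dagger)$ is not quasi-isomorphic to $C_\varphi(\wtD_{\rig,\kinfty}^\dagger)\simeq \rg(\gkinfty,\qp)$, i.e.\ to Prop.~\ref{propwrongphi} together with Prop.~\ref{propphicohorig}; this is exactly the paper's argument. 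The subtlety you flag (quasi-isomorphism of totalizations versus quasi-isomorphism of the natural map) is resolved the way you suggest: the quotient complex is $[\,Q\xrightarrow{\varphi-1}Q\,]$ with $Q=\wtD_{\rig,\kinfty}^\dagger/\bfD_{\rig,\kinfty}^\dagger$, and since its cohomology is nonzero (indeed infinite-dimensional, by the $E^{\psi=0}$ computation in Prop.~\ref{propwrongphi}) while every $H^i(\gk,\qp)$ is finite-dimensional ($K/\qp$ is finite in this subsection), the long exact sequence forces some $H^i$ of the two $(\varphi,\tau)$-complexes to differ — note the discrepancy lands in degree $\le 2$ via the cokernel of $\varphi-1$, not necessarily in $H^1$ as you assert, but this is inessential.

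There is, however, one genuine flaw: your comparison $C_\varphi((\wtD_{L}^\dagger)^{\delta-\gamma=0})\simeq C_\varphi((\wtD_{\rig,L}^\dagger)^{\delta-\gamma=0})$ is justified by invoking Axiom~\ref{axiomgroupQ} for both $\wtb_L^\dagger$ and $\wtb_{\rig,L}^\dagger$, ``verified in Cor.~\ref{cor: verify axiom q for rings}''. That corollary only covers $\wta_L$, $\wta_L^{[r,+\infty]}$, $\wta_L^\dagger$ and $\wta_L^{[r,s]}$; for the perfect Robba ring the axiom is explicitly listed as \emph{unknown} in Rem.~\ref{rem: do not konw axiom q}, and indeed Thm.~\ref{thmcohononetale} deliberately avoids it by first performing $\varphi$-descent to closed intervals (Lem.~\ref{lemphidescent}) before using any group cohomology. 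The repair requires nothing about $\wtb_{\rig,L}^\dagger$: since $\delta-\gamma$ is surjective on $\wtD_L^\dagger$ (TS-1 for $\wta_L^\dagger$, as in the proof of Thm.~\ref{thmaxiomnophi}), the sequence $0\to(\wtD_{L}^\dagger)^{\delta-\gamma=0}\to(\wtD_{\rig,L}^\dagger)^{\delta-\gamma=0}\to(\wtD_{\rig,L}^\dagger/\wtD_{L}^\dagger)^{\delta-\gamma=0}\to 0$ is exact, and $\varphi-1$ is bijective on $\wtD_{\rig,L}^\dagger/\wtD_{L}^\dagger$ by \cite[Prop. 1.5.4]{Ked08Ast_relative_Frob}; since $\varphi-1$ commutes with $\delta-\gamma$, it is then bijective on the $\delta-\gamma$-kernel of that quotient, which gives the desired column comparison. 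This is in effect what the paper means when it says it suffices to compare $C_\varphi(\wtD_L^\dagger)$ with $C_\varphi(\wtD_{\rig,L}^\dagger)$ (together with $C_\varphi(\bfD_{\kinfty}^\dagger)\simeq C_\varphi(\bfD_{\rig,\kinfty}^\dagger)$), so with this substitution your proposal agrees with the paper's proof.
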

  \begin{proof}
Note $C_{\varphi, \tau}(M, \wtm)$ is the totalization of the double complex
\begin{equation} \label{eqdoublephitau}
    \begin{tikzcd}
M \arrow[d, "\varphi-1"] \arrow[r, "\tau-1"] & \wtm^{\delta-\gamma=0} \arrow[d, "\varphi-1"] \\
M \arrow[r, "\tau-1"]                        & \wtm^{\delta-\gamma=0}                       
\end{tikzcd}
\end{equation}
Thus to prove
\[ C_{\phi, \tau}(\bfD_{\kinfty}^\dagger, \wtD_{L}^\dagger) \simeq C_{\phi, \tau}(\bfD_{\rig, \kinfty}^\dagger, \wtD_{\rig, L}^\dagger)\]
it suffices to prove
\[ C_\phi(\bfD_{\kinfty}^\dagger) \simeq  C_\phi(\bfD_{\rig, \kinfty}^\dagger) \text{ and } C_\phi(\wtD_{L}^\dagger) \simeq  C_\phi(\wtD_{\rig, L}^\dagger)  \]
these follow from \cite[Prop. 1.5.4]{Ked08Ast_relative_Frob}.

We now prove $C_{\phi, \tau}(\bfD_{\rig, \kinfty}^\dagger, \wtD_{\rig, L}^\dagger)$ is \emph{not} quasi-isomorphic to $C_{\phi, \tau}(\wtD_{\rig, \kinfty}^\dagger, \wtD_{\rig, L}^\dagger)$.
Staring at their associated double complexes   \eqref{eqdoublephitau}, it suffices to note that $C_\varphi(\bfD_{\rig, \kinfty}^\dagger)$ and $C_\varphi(\wtD_{\rig, \kinfty}^\dagger)$ are \emph{not} quasi-isomorphic, this follows from Prop \ref{propphicohorig}(3) and   Prop. \ref{propwrongphi}.
 \end{proof}

 \subsection{Final remarks} \label{subsec: final historical remarks}
In this subsection, we make some extensive   historical comparisons on the study of cohomology of $(\varphi, \Gamma)$-modules; most interestingly, we comment how we avoided using $\psi$-operator in the current paper (which leads to slightly stronger new results). Nonetheless, in the future, we  very much hope to gain some understanding of the very mysterious (indeed, very baffling) $\psi$-operator for the  $(\varphi, \tau)$-modules.

\begin{rem} \label{remhistphigamma}
We discuss the (rather involved)  history of Herr complexes of (various) $(\varphi, \Gamma)$-modules when they are associated to Galois representations. For simplicity,  we only discuss results concerning $V \in \rep_\gk(\qp)$.
    \begin{enumerate}
 \item In \cite{Her98}, Herr proves that
 \[C_\phigamma(\bbd_\kpinfty) \simeq \rg(\gk, V)\]
His results indeed hold for $V$ a torsion representation or integral $\zp$-representation; in fact, his proof goes via devissage to the torsion case, where he proves the construction of the complex $C_\phigamma(\bbd_\kpinfty)$ is a $\delta$-functor.

 \item The complexes $C_\phigamma(\bbD_\kpinfty^\dagger)$ and $C_\phigamma(\bbD_{\rig, \kpinfty}^\dagger)$ are \emph{much} trickier. 
In \cite{LiuIMRN08}, under the assumption $K/\qp$ is a \emph{finite extension}, Liu proves their comparison with $ \rg(\gk, V)$. His argument goes in the following route (slightly modified and simplified here): 
\begin{itemize}
    \item Liu first deals with $C_\phigamma(\bbD_\kpinfty^\dagger)$. 
    One has comparison
    \[ C_\phigamma(\bbd_\kpinfty^\dagger) \simeq C_{\psi, \gammak}(\bbd_\kpinfty^\dagger); \quad C_\phigamma(\bbd_\kpinfty) \simeq C_{\psi, \gammak}(\bbd_\kpinfty);\]
    these are proved by \cite{CC98}; cf. also \cite[Lem. 2.5]{LiuIMRN08} for a review and summary. Liu then proves \[C_{\psi}(\bbd_\kpinfty^\dagger) \simeq C_{\psi}(\bbd_\kpinfty)\]
    cf.  \cite[Lem. 2.6]{LiuIMRN08}, which makes essential use that $K/\qp$ is a finite extension (the authors  do not know if this still holds when $K/\qp$ is not a finite extension). 
    This implies \[C_{\psi, \gammak}(\bbd_\kpinfty^\dagger) \simeq C_{\psi, \gammak}(\bbd_\kpinfty).\]

\item It remains to compare $C_\phigamma(\bbD_\kpinfty^\dagger)$ and $C_\phigamma(\bbD_{\rig, \kpinfty}^\dagger)$; this is implied by   $C_\phi(\bbD_\kpinfty^\dagger) \simeq C_\phi(\bbD_{\rig, \kpinfty}^\dagger)$, as proved in \cite[Prop. 1.5.4]{Ked08Ast_relative_Frob}.
\end{itemize}
      \end{enumerate}
\end{rem}

\begin{rem} \label{remfinalourmethodphigamma} We compare our methods with those in Rem. \ref{remhistphigamma}.
\begin{enumerate}
  \item The complexes \[\text{ $C_\phigamma(\wtd_\kpinfty)$, $C_\phigamma(\wtd_\kpinfty^\dagger)$,  $C_\phigamma(\wtd_{\rig, \kpinfty}^\dagger)$ }\]are not often considered in the literature (to the knowledge of the authors). As we see in Thms \ref{thmetalcompa} and \ref{thmcohononetale}, it is easy to show that they are quasi-isomorphic to $ \rg(\gk, V)$; in addition, this leads to a rather easy (and conceptual) reproof of comparison with $C_\phigamma(\bbd_\kpinfty)$. The proofs of all these results essentially only make use of $\varphi$-operators.

  \item  As recalled  in  Rem. \ref{remhistphigamma}(2) above,  Liu  first treated $C_\phigamma(\bbD_\kpinfty^\dagger)$ using $\psi$-operator (where the proof requires  $[K:\qp]<\infty$), then he compares $C_\phigamma(\bbD_{\rig, \kpinfty}^\dagger)$ using $\phi$-operator. 
  We go in a \emph{different} route. We first treat $C_\phigamma(\bbD_{\rig, \kpinfty}^\dagger)$ in Step 2 of proof of Thm. \ref{thmcohononetale},  using \emph{locally analytic vectors} (which works for all $K$); we then compare with $C_\phigamma(\bbD_\kpinfty^\dagger)$ in Step 4 there,  using $\phi$-operator. 
\end{enumerate}
\end{rem}

\begin{rem} We comment on the $\psi$-operators in Construction \ref{construct: psi op}.
\begin{enumerate}
    \item In the study of $(\phi, \Gamma)$-modules, the $\psi$-operator ($\psi_\kpinfty$ in Construction \ref{construct: psi op}) has played a \emph{very} significant role.
    \begin{itemize}
            \item A first application of $\psi$-operator is in \cite{Her98, Her01}, where he proves finiteness and duality for cohomology of  $(\phi, \Gamma)$-modules, \emph{without} using its relation with Galois cohomology. For example, a key player in the theory is the \emph{heart}, cf. \cite[\S 3.4]{Her98} 
            \begin{equation} \label{eqcoeur}
                (\phi-1)(\bbD_\kpinfty) \cap (\bbD_\kpinfty)^{\psi=0}
            \end{equation}  
            
        \item In the first proof of overconvergence of $(\phi, \Gamma)$-modules, \cite{CC98} makes use of $\psi$-operators; however, as shown in \cite{BC08}, it is indeed unnecessary. Rather, overconvergence follows from a general formalism of \emph{Sen theory}, as axiomatized in \cite{BC08} and later further in \cite{BC16}. 

        \item The $\psi$-operator is a key player in Iwasawa theory. By \cite[Thm. II.1.3, Rem. II.3.2]{CC99}, we have
\[ (\bbd_\kpinfty^\dagger)^{\psi=1} = (\bbd_\kpinfty)^{\psi=1} \simeq H^1_{\mathrm{Iw}}(K, V) \]
and 
\[  \bbd_\kpinfty^\dagger/(\psi-1) \simeq  \bbd_\kpinfty/(\psi-1) \simeq H^2_{\mathrm{Iw}}(K, V) \]
where $H^i_{\mathrm{Iw}}$ is the Iwasawa cohomology.
    \end{itemize}
    \item Moving to the world of  $(\phi, \tau)$-modules, our understanding of the $\psi$-operator ($\psi_\kinfty$ in Construction \ref{construct: psi op}) is very limited. We   use this opportunity to pose some questions, hoping to come back  in future investigations.
    \begin{itemize}
        \item Can we develop some $(\psi, \tau)$-cohomology theory? If one uses the category $\mod_{\phi, \hatg}(\bfa_\kinfty, \wta_L)$, one quickly finds the problem that $\psi$ is only well-defined on \emph{imperfect} rings and hence is problematic on the ring $\wta_L$. Furthermore, say let us first only consider $\bfd_\kinfty$ over the imperfect ring $\bfa_\kinfty$, and thus one can form  the heart as in \eqref{eqcoeur}; however practically all methods in \cite{Her98} cannot be applied here, essentially because $\kinfty/K$ is not a Galois tower.
        \item  For the proof of overconvergence of $(\phi, \tau)$-modules in \cite{GL20, GP21},    the  $\psi$-operator is never used. In particular, the proof of \cite{GP21} makes use of the axiomatic (locally analytic) Sen theory developed by \cite{BC08, BC16}.
        \item Does $\psi$ on $\bfd_\kinfty$ lead  to any ``Iwasawa theory"? Note since $\kinfty/K$ is not Galois, it is not even clear what should be the correct ``Iwasawa algebra" here. Put in another way, we do not know if $\bfd_\kinfty^{\psi=1}$  has any interesting module structure.
    \end{itemize}
\end{enumerate}
    
\end{rem}


 \end{document}